 \newtheorem{thm}{Theorem}[section]
 \newtheorem{cor}[thm]{Corollary}
 \newtheorem{lem}[thm]{Lemma}
 \newtheorem{prop}[thm]{Proposition}
 \theoremstyle{definition}
 \newtheorem{defn}[thm]{Definition}
 \theoremstyle{remark}
 \newtheorem{rem}[thm]{Remark}
 \newtheorem*{ex}{Example}
 \numberwithin{equation}{section}
\newcommand{\e}{\mathbf e}
\def\m{\mathfrak{m}}
\let\b\bullet
\def\H{\mathbb H}
\def\C{\mathbb C}
\def\Q{\mathbb Q}
\def\P{\mathbb P}
\def\R{{\mathbb R}}
\def\Z{\mathbb Z}
\def\N{\mathbb N}
\def\DD{\mathcal D}
\def\FF{\mathcal F}
\def\HH{\mathcal H}
\def\LL{\mathcal L}
\def\KK{\mathcal K}
\def\II{\mathcal I}
\def\MM{\mathcal M}
\def\NN{\mathcal N}
\def\OO{\mathcal O}
\def\PP{\mathcal P}
\def\RR{\mathcal R}
\def\SS{\mathcal S}
\def\WW{{\mathcal W}}
\def\n{\noindent}
\newcommand{\pt}{ \,^{p}\!\tau  }
\newcommand{\ilm}{  j_{!*}{\mathcal L}}
\newcommand{\ph}[2]{ \,^p\!{\mathcal H}^{#1}(#2)}
\newcommand{\lorw}{\longrightarrow}
\newcommand{\im}{ \hbox{\rm Im} }
\newcommand{\coke}{ \hbox{\rm Coker} }
\newcommand{\ke}{ \hbox{\rm Ker} }
\newcommand{\can}{ \hbox{\rm can} }
\title{Decomposition,  purity and fibrations
 by normal crossing divisors}
\begin{document}
\author{Fouad El Zein}
\address{Institut de Math\'ematiques de Jussieu.\\
Paris, France}
 \email{fouad.elzein@imj-prg.fr, elzein@free.fr}
\author{D\~ung Tr\'ang L\^e}
\address{Universit\'e d'Aix-Marseille 
LATP, UMR-CNRS 7353
39, rue Joliot-Curie
F-13453 Marseille Cedex 13, France}
\email{ledt@ictp.it}
\author{Xuanming Ye}
 \address{School of Mathematics and Information Science in Guangzhou University, Guangzhou, P.R.China }
\email{yexm3@gzhu.edu.cn}
 

\begin{abstract}
 We   give a simple geometric proof of  the decomposition  theorem  in terms of Thom-Whitney stratifications by reduction to fibrations by  normal crossings divisors over the strata and explain the relation with the local purity theorem an unpublished result of Deligne and Gabber. 
 \end{abstract}
\maketitle

\tableofcontents
  
\keywords{\Small Keywords: Hodge theory, algebraic geometry, decomposition, derived category, Mixed Hodge structure, Perverse sheaves, Stratifications.
 
\n  Mathematics Subject Classification(2000):Primary
 14D07, 32G20; Secondary 14F05}

 \section{Introduction}

  Let $f: X \to V$ be a projective morphism of complex algebraic varieties. The decomposition theorem, as proved in  \cite{BBD}, describes the derived direct image complex $Rf_* \ilm$ on $V$, of an  intermediate extension $ \ilm$
 of a  geometric local system $\LL$ on a Zariski open subset of $X$, as  a direct sum of intermediate extensions of irreducible local systems on  Zariski open subsets of $V$.
  
 The proof 
 in \cite{BBD} is given first for  varieties over an algebraically closed field with positive characteristic, with coefficients in  a pure perverse sheaf on $X$,  then   deduced for geometric coefficients on complex algebraic varieties. 
 
 It has been preceded by an unpublished  note  by Deligne and Gabber  \cite{DG} on the local purity theorem. The draft of the proof is dense and it is established  for an algebraic variety over a field with positive characteristic. 
 
  According to the general theory of weight this result can be translated into a local purity theorem  in Hodge theory in  the transcendental case. 
  
  There exist a deep connection between these two results which doesn't appear in the existing proofs in the theory.
  The basic innovation  in this article is a combined direct proof of both theorems with coefficients in the intermediate extension
 of a  polarized variation of  Hodge structures (VHS), which  establish    the interaction between  local purity   and the decomposition   and leads to an overall clarification of the theory  in the complex case. 
 
  Precise statements are given below in the subsection \ref{state}(Theorem \ref{P} and corollary \ref{CP}).
  
 The proof is by induction on the dimension of the strata of  a Thom-Whitney stratification. 
 At the  center  of the inductive step we use a geometric interpretation of the local purity to deduce the decomposition (see subsections  \ref{p}, 2.3 and 2.4). 
    That is a  simplification of the  original proof  in \cite{DG}, when adapted to characteristic $0$.   
  
 The second innovation is to introduce  a class of   projective morphisms which are  fibrations  by normal crossing divisors (NCD)  over the strata.    
     The proof for such fibrations use Hodge theory  only in the basic case of a complement of a NCD. 
 
  We show in section 3 that any algebraic morphism $f: X \to V$ admits a desingularization $\pi$ of $X$ such that $\pi$  and $f \circ \pi$ are in such class of fibrations, from which we deduce without further effort the result for all projective morphisms. 
  
We rely in this paper on  computations in terms of perverse sheaves and perverse cohomology as explained in  \cite{BBD}.
 The perverse $\tau$-filtration on a complex $K$ in  the  derived category $D^b_c(X, \Q)$ of  constructible $\Q$-sheaves is defined in (\cite{BBD} proposition 1.3.3 p.29). A recent description of this filtration  in \cite{DM} 
  shows easily that for $K = Rf_* \ilm$ such filtration  consists of sub-MHS on the cohomology of the complement of a  NCD (see subsection 5.3).  This  is a basic result used  in our proof and extended to the local case.
  
In the case of constant coefficients, the proof of decomposition by de Cataldo and Migliorini \cite{DM0} shows the potential of applications  in understanding the topology of algebraic maps.   The proof here  for  coefficients in any admissible polarized VMHS covers all analytic cases and has similar  applications. 

       Complex computations  by Kashiwara intended  to check the  admissibility of (graded) polarized variation of mixed Hodge structures (VMHS) on the complement of a NCD
in codimension one (\cite{K}, find their natural applications here to develop Hodge theory with  coefficients  in the intermediate extension of
an admissible polarized VMHS.
 
 Using the decomposition theorem, the  full Hodge theory can be developed from the special case of fibrations over the strata, which gives an alternative approach to  the development in terms of differential modules \cite{MI}. Our proof is outlined  in the notes \cite{ED1}  and  \cite{ED2}.

It should be noted that 
 holonomic differential Hodge modules on a smooth complex algebraic variety \cite{MI}, are associated with  polarized VHS on smooth open subsets of  irreducible algebraic subvarieties for which   the   decomposition theorem is proved here. 
\subsubsection{ Origin of the theory}
  The  first statement of a  decomposition theorem in the sense of this article 
 appears  in the work of Deligne \cite{DL} as a criteria for a complex $K$ in the   derived category of an abelian category  \cite{VDC} to be isomorphic to the direct sum of its shifted cohomology:
$ K \overset{\sim}{\longrightarrow}   \oplus_{i \in\Z}H^i (K)[-i]$, which means essentially the degeneration of the spectral sequence with respect to the truncation functor $\tau$. 

The criteria in Deligne's  paper  is applied to the complex direct image $K := Rf_* \C_X$ by a smooth projective morphism $f: X \to V$. The  spectral sequence with respect to the  perverse truncation filtration  $\tau$ coincides up to indices with Leray's spectral sequence and the degeneration  is deduced from Hodge and Lefschetz theorems. The decomposition of $K$ follows. 
 
 This case apply on a big strata on $V$ of a Thom - Whitney stratification of any projective morphism $f$ if $X$ is smooth. This is the starting point of our induction.  
  
 Deligne's effort to present the proofs in full generality is a prelude to the vast generalization of the theorem to any projective morphism  in \cite{BBD} on schemes $X$ of finite  type 
on an  algebraically closed field  $k$ of characteristic $p > 0$, for pure  perverse sheaves in  the  derived category $D^b_c(X, \Q_l)$ of  constructible $\Q_l$-sheaves for $l \not= p$ \cite{BBD}.

 The statements may be  transposed in characteristic $ 0$  in terms of  Hodge theory
according to a   dictionary established by  Deligne   \cite{HI}.

   A general procedure to deduce  results concerning  geometric statements in characteristic $0$ from corresponding statements in characteristic $p > 0$, is described in (\cite{BBD}, 6.2). This covers geometric variations of Hodge structures.

However, the next development  waited until the  introduction of the Intersection complex
   in \cite{G-M}.
 The intermediate extension
 of a   local system on a Zariski open subset is a version of such complex. The link with Kashiwara's work on differential modules and the need to extend the theory to algebraic varieties over fields of positive characteristic lead to the introduction of  perverse cohomology.

\subsubsection{ Hodge theory}  According to the referee, it was  necessary  to add a note \cite{EY} to explain how to deduce Hodge theory  in terms of  logarithmic complexes in the NCD case with coefficients in admissible variation of Hodge structures (subsection \ref{LC}, more in  sections $4$ and $5$). 

This allows us to  state the local purity theorem \ref{pure} and to deduce the decomposition in section $2$. 
  By  using logarithmic complexes we  avoid the elaborate  development of the theory of Hodge modules.

\subsubsection{   The paper is organized as follows} In the rest of this introduction we give the main statements of the article and describe the ideas based on a simultaneous proof of decomposition and local purity by induction along the strata. 
 
 Full proofs using  logarithmic complexes  in  section $2$ are based on the reduction to the case of a special fibration  in  section $3$, while Hodge theory on logarithmic complexes is reviewed in  sections $4-5$.

\subsection{Statements}\label{state}
{\it We give below the statement of the theorem} \ref{P}   in terms of Thom-Whitney stratifications and the perverse cohomology sheaves ${^p\HH}^{i}(K)$ of a complex $K$ with constructible  cohomology  sheaves on an algebraic variety $V$. The perverse cohomology refers to a cohomology theory with value in the abelian category of perverse sheaves \cite{BBD}.

Let  $f: X \to V$ be a projective morphism  of  
 complex algebraic varieties, $\tilde \LL$ 
an admissible  polarized  variation of Hodge structures (VHS) on a smooth open subset $ \Omega $ in $X$ of dimension $m$ and   $j: \Omega \to X$  the embedding.
We denote by $\LL := \tilde \LL [m]$  the complex of sheaves reduced to   $\tilde \LL$ in degree  $ - m $,  by $j_{!*} \LL$ the  intermediate extension of  $\LL$ 
 (\cite{BBD}, prop. 2.1.11 p. 60, 2.2.7 p. 69) and by $ K := R f_* \ilm $ the direct image
  in the derived  category of cohomologically constructible sheaves of $\Q$-vector spaces $D^b_c (V, \Q)$.    
   
 The decomposition consists of two results. First, each perverse cohomology sheaf  $ \ph{i}{Rf_*(j_{!*}\LL)}$ decomposes naturally as a direct sum of 
 intermediate extensions (formula \ref{De} below).  
 This decomposition  is naturally expressed here in terms of {\it intersection morphisms} (\ref{E} below) which appear naturally in the theory (\cite{BBD} formula 1.4.6.1), and which importance appears with   a Thom-Whitney stratification of $f$.
 
The second result is the relative  Hard  Lefschetz theorem. Let $ \eta \in R^2 f_* \Q_X $  denote the section   of cohomology classes defined by a relative  hyperplane section of $f$. The corresponding   morphism of sheaves $K \xrightarrow{\eta} K[2]$ induce  morphisms on the perverse cohomology of degree $2$.
  Then, the result states that the $i$-th iterated  morphism 
\begin
{equation}\label{HLe}
 {^p\HH}^{-i}(Rf_*j_{!*}\LL)
\xrightarrow{\eta^i}{^p\HH}^i(Rf_*j_{!*}\LL).
\end{equation} 
is an isomorphism of perverse cohomology sheaves,  from which 
 the degeneration of the perverse Leray spectral sequence  is classically deduced.
 
 One consequence is  the existence of a non canonical
 isomorphism in $D^b_c (V, \Q)$ of the direct image complex $ Rf_*j_{!*}\LL$ with its shifted perverse cohomology sheaves (formula \ref{De1} below). It is deduced from Hard Lefschetz following the same  pattern as in \cite{DL}. 
 
 \subsubsection
{Thom-Whitney stratification $\SS$ of  $f$} Let $f: X \to V$ be a projective morphism on an algebraic variety $V$ of dimension $n$. The  proof presented here is by induction on the dimension of the strata  of  the  Whitney stratification   of $V$ underlying a Thom-Whitney stratification $\SS$ of  $f$ (section \ref{TW} below).

 We say that the stratification $\SS$ is adapted to $\ilm$ when the cohomology of $\ilm$ is locally constant when restricted to the strata of $X$, and the cohomology of $Rf_*(j_{!*}\LL)$ is locally constant when restricted on the strata of $V$.  We consider   in the text  only stratifications adapted  to $\ilm$. 
 
 Let   $ V_j := \cup_{dim.S \leq j}S$ denote  the union of all strata of dimension $\leq j$. 
 We start with a smooth open subset $U$, union of all strata of dimension $n$ over which $f$ is a topological fibration with fibers stratified by the induced stratification of $X$, and we set $V_{n-1}:= V - U$  the complement variety. Successively, let $U_1$ be the union of all algebraic smooth connected strata  $S$ of dimension $n-1$ over which $f$ is a topological fibration with induced stratification adapted  to $\ilm$ and set  $U_1 := V_{n-1} - V_{n-2}$.
 We continue the construction  by induction on the dimension of the strata $S$ of $V_j-V_{j-1}$ over which $f$ is a topological fibration with induced stratification adapted  to $\ilm$.

  \subsubsection
{Intersection morphisms }\label{E}
 Let  $X_{S}:= f^{-1}(S)$ be the inverse image of a strata $S$ of dimension $l \leq n$,  $i_{X_{S}}: X_{S} \to X$, $f_S: X_{S} \to S$. The intersection morphism $I_{S}:  R i_{X_{S}}^{!}\ilm 
\rightarrow i_{X_{S}}^* \ilm $, is defined on  $ S$ by the  composition of the  morphism $   i_{X_{S}*}  R i_{X_{S}}^{!}\LL \rightarrow   \LL$ with the  restriction morphism $
 \LL \rightarrow  i_{X_{S}*} i_{X_{S}}^* \ilm $. It defines   local systems ${\LL}^i_S $ as image of cohomology sheaves:
 \begin{equation}\label{E1}
 {\LL}^i_S := \im \biggl\{R^{-l+i} {f_S}_*
 ( R i_{X_{S}}^{!}   \ilm)  \stackrel{I_{S}}{\rightarrow}
 R^{-l+i} {f_S}_* ( i_{X_{S}}^* \ilm)\biggr\} ,
\end{equation}
 For a Thom-Whitney stratification adapted to $\ilm$,  the  images ${\LL}^i_S$ of $ I_{S}$  are local  systems  on the various   strata. 
  These local systems are necessarily components of any eventual  decomposition as it follows from  the proof of proposition \ref{prop}.

 When  $\LL$ is a variation of HS  of weight $a$,  the  image ${\LL}^i_{S_l}$ of $ I_{S_l}$   on a strata $S_l$ of dimension $l$ is a VHS  of weight  $a+i-l$,  as image of a  variation of MHS
of  weight $w \geq a+i-l$  into a  variation of MHS
of  weight $w \leq a+i-l$ (corollary \ref{wi}).  
   \subsubsection
{The  inductive step}\label{decomp }
 The  proof  is by induction on the dimension of the strata  of  the  Whitney stratification  $\SS$ of $V$ underlying a Thom-Whitney stratification of  $f: X \to V$.   
 Let  $V$ be  a complex  algebraic variety of dimension $n$, and  $\SS$ the stratification of  $f$, $ V_j := \cup_{dim.S \leq j}S$ the union of all strata of dimension $\leq j$, $k_j: (V-V_j) \to V, \,  i_j: V_j \to V$.
 The immersion of a  strata of dimension $l$ in $V_l^*:= V_l-V_{l-1}$ is denoted by $ i_{S_l}: S_l \to V$.  
 
  \begin{thm}
 [Inductive step] \label{P}
  Let $K = R f_ *  \ilm$ where $\LL$ is a shifted  polarized variation of Hodge structures (VHS) of weight $a$,  and $\SS$  a Thom-Whitney stratification of $f$ adapted to $\ilm$.
 
 We suppose by induction, there is a   decomposition  of the  perverse cohomology  on  $(V - V_j) \xrightarrow {k_j} V $ for   all degrees  $i$
 \begin{equation}
 \ph{i}{ (R f_ *  \ilm)_{\vert V-V_j}} \overset{\sim}{\longrightarrow} 
 \oplus^{S_l \subset V_l^*}_{j < l \leq n } \,\, k_j^* {i_{S_l}}_{! *}{\LL}^i_{S_l}[l]
\end{equation}
  into a direct sum    of intermediate extensions     of polarized VHS ${\LL}^i_{S_l}$ (formula \ref{E1})  of weight $a+i-l$ over the  strata $S_l$  for all  $l > j$.
 Let
\begin{equation*}
\cdots \to  \ph{i}{(i_j)_* R i_j^!  K}
\stackrel{^p\!\alpha_i}{\rightarrow}\ph{i}{K}
     \stackrel{^p\!\rho_i}{\rightarrow} \ph{i}{R k_{j *} K_{\vert V- V_j}}
    \stackrel{^p\!\delta_i}{\rightarrow}\cdots
\end{equation*}
be the  long exact sequence of  perverse  cohomology  associated to the triangle

\n  $
 (i_j)_* R i_j^{!} K
   \stackrel{\alpha}{\rightarrow}   K
   \stackrel{\rho}{\rightarrow} R k_{j *} K_{\vert V- V_j}
    \stackrel{[1]}{\rightarrow }$.
  
 i) The restriction to $V -V_{j-1}$ of the short exact sequences  of perverse sheaves 
\begin{equation*}
0 \to  Im\,  {^p\!\alpha_i}  \to {^p\!\mathcal { H}}^i ( R f_ *  \ilm)  \stackrel{^p\!\rho_i}{\rightarrow}  Im \,{^p\!\rho_i} \to 0
\end{equation*}
 is split over $V -V_{j-1}$:
 \begin{equation}
\begin{split}
{^p\!\mathcal { H}}^i &( R f_ *  \ilm)_{\vert V- V_{j-1}} =  k_{j-1}^*\im\,  {^p\!\alpha_i} \oplus  k_{j-1}^*Im \,{^p\!\rho_i}\\
  k_{j-1}^*&\im\,  {^p\!\alpha_i} =  \ke \, {^p\!\rho_i} \overset{\sim}{\longrightarrow}  \oplus_{S_j \subset V_j^*}\,\,
k_{j-1}^*{i_{S_j}}_{! *} {\LL}^i_{S_j}  \\
 k_{j-1}^*Im \,{^p\!\rho_i} &=  k_{j-1}^*(k_j)_{! *} k_j^*\, \ph{i}{R f_ *  \ilm} \overset{\sim}{\longrightarrow} 
\oplus^{S_l \subset V_l^*}_{j < l \leq n } \,\, k_{j-1}^* {i_{S_l}}_{! *} {\LL}^i_{S_l}[l].
\end{split}
\end{equation}
ii) Hard Lefschetz: If we  suppose  by induction Lefschetz isomorphisms on the open subset  $V - V_j $, then $\eta^i$ extends into an isomorphism  over  $V-V_{j-1}$

\centerline{$
 \eta^i: {^p\!\HH}^{-i}(Rf_*j_{!*}\LL)_{\vert V - V_{j-1}}
\stackrel{\sim}{\rightarrow}{^p\!\HH}^i(Rf_*j_{!*}\LL)_{\vert V - V_{j-1}}$} 
 \end{thm}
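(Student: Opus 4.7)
The plan is to analyze the long exact sequence of perverse cohomology associated to the distinguished triangle, restricted to the larger open set $V - V_{j-1}$, where the first and third terms have support contained in the $j$-dimensional open strata $V_j^* = V_j - V_{j-1}$. The first step is to identify the two images in the short exact sequence. By the inductive hypothesis, $k_j^* \ph{i}{K}$ decomposes on $V - V_j$ as a direct sum of intermediate extensions from strata of dimension $> j$; applying $(k_j)_{!*}$ and restricting to $V - V_{j-1}$ identifies $k_{j-1}^* \im\, {^p\!\rho_i}$ with $\bigoplus_{l>j} k_{j-1}^* {i_{S_l}}_{!*}\LL^i_{S_l}[l]$, since each summand is already the intermediate extension from $S_l$. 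The other image $k_{j-1}^* \im\, {^p\!\alpha_i}$ is a perverse sheaf supported on $V_j^*$; a local computation along $f_{S_j}: X_{S_j} \to S_j$ on each $j$-dimensional stratum identifies its restriction to $V_j^*$ with $\bigoplus_{S_j} \LL^i_{S_j}[j]$, where $\LL^i_{S_j}$ is the image of the intersection morphism in (\ref{E1}).

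The decisive step is to split this short exact sequence. This is where the local purity theorem~\ref{pure} enters: it endows the stalks of $\ph{i}{K}$ along $S_j$ with a pure polarized Hodge structure, and by Corollary~\ref{wi} the image $\LL^i_{S_j}$ is a polarized VHS of weight $a + i - j$, while the complementary piece $k_{j-1}^*\im\, {^p\!\rho_i}$ is a direct sum of polarized pure intermediate extensions from higher-dimensional strata. Since both terms in the short exact sequence are polarized pure polarizable perverse sheaves, the extension is semisimple and splits, and $k_{j-1}^* \im\, {^p\!\alpha_i}$ is thereby identified with $\bigoplus_{S_j} {i_{S_j}}_{!*}\LL^i_{S_j}$ as the intermediate extension of its restriction to $V_j^*$. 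This proves (i).

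For (ii), the morphism $\eta^i$ respects the decomposition just established. On each summand $k_{j-1}^* {i_{S_l}}_{!*}\LL^i_{S_l}$ with $l > j$, Hard Lefschetz holds on $V - V_j$ by induction, and extends uniquely to $V - V_{j-1}$ since a morphism between intermediate extensions is determined by its restriction to the open set from which they are extended. On the new summands $k_{j-1}^* {i_{S_j}}_{!*}\LL^i_{S_j}$, Hard Lefschetz for $\eta^i$ reduces via the intersection morphism to the classical projective Hard Lefschetz theorem for $\eta$ acting on the fibers of $f_{S_j}: X_{S_j} \to S_j$, combined with the polarization on $\LL^i_{S_j}$ produced in the previous step.

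The principal obstacle is the splitting in the middle step, where local purity and decomposition interact most intricately. Controlling the weights of $\im\, {^p\!\alpha_i}$ and $\im\, {^p\!\rho_i}$ simultaneously, and invoking polarizability to annihilate the extension class in the Hodge-theoretic framework set up in the paper via logarithmic complexes on NCD, is the crucial technical input; this is precisely the place where treating the two theorems in a single combined induction is simpler than establishing them separately.
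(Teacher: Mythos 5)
Your proposal reproduces the overall shape of the argument — identify the two images in the long exact sequence, split the short exact sequence, then prove Hard Lefschetz on the new summands — but the two decisive technical steps are either wrong or elided, and they are exactly where local purity does its work.

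\textbf{Identification of $\im\,{^p\!\rho_i}$.} You write that applying $(k_j)_{!*}$ and restricting ``identifies'' $k_{j-1}^*\im\,{^p\!\rho_i}$ with $\oplus_{l>j}k_{j-1}^*{i_{S_l}}_{!*}\LL^i_{S_l}[l]$, as if this were automatic. It is not. A priori the image of ${^p\!\rho_i}:\ph{i}{K}\to\ph{i}{Rk_{j*}K_{\vert V-V_j}}$ can contain components supported on $V_j^*$, because the target has a quotient supported on $V_j^*$ (Lemma \ref{lem}, ii). Showing that $\rho_i$ actually factors through the intermediate extension ${k_j}_{!*}k_j^*\ph{i}{K}$ is the content of Lemma \ref{L1}: one must check that the induced map $\rho^i_0$ on degree-zero stalk cohomology at a point $v$ of a $j$-dimensional stratum vanishes, and that vanishing is precisely where semi-purity (Proposition \ref{Sp}) is used — the source $\H^{i}(X_v,\ilm)$ has weight $\leq a+i$ while the target $Gr^{\pt}_i\H^i(B_{X_v}-X_v,\ilm)$ has weight $> a+i$. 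Your proposal never confronts this.

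\textbf{The splitting.} You split the short exact sequence by asserting that local purity ``endows the stalks of $\ph{i}{K}$ along $S_j$ with a pure polarized Hodge structure'' and that polarized pure polarizable perverse sheaves form a semisimple category. Neither claim is justified in the paper's framework. Local purity (Theorem \ref{pure}) is a weight bound on the hypercohomology of the link $B_v - v$, not a structure theorem for the stalks of $\ph{i}{K}$; and semisimplicity of the category of pure polarizable perverse (Hodge) sheaves is a theorem of Saito/Kashiwara--Kawai that the paper deliberately avoids invoking — the paper's whole point is to bypass Hodge modules via logarithmic complexes. Instead, the paper's proof of Proposition \ref{prop} (iii) constructs an \emph{explicit} splitting: the dual morphism ${^p\!\beta^i}:\ph{i}{Rk_{j!}k_j^*K}\to\ph{i}{K}$ also factors through the intermediate extension (by the dual argument, which again uses semi-purity through Lemma \ref{L2}), and then ${^p\!\rho^i}\circ{^p\!\beta^i}$ is an isomorphism onto $\im\,{^p\!\rho^i}$, yielding the section. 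This constructive splitting is the crux of the argument, and it is entirely absent from your proposal.

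\textbf{Hard Lefschetz.} Saying that on the new summand ${i_{S_j}}_{!*}\LL^i_{S_j}$ Hard Lefschetz reduces to ``the classical projective Hard Lefschetz theorem for $\eta$ acting on the fibers'' is too quick: $\LL^i_v$ is not the cohomology of a (smooth projective) fiber but the image of the intersection morphism $\H^i_{X_v}(X,\ilm)\to\H^i(X_v,\ilm)$ on a singular (NCD) fiber. The paper handles this by induction on a general hyperplane section $H$, using Artin--Lefschetz affine vanishing on $X_v-(H\cap X_v)$ to show $\rho'_i:\LL^i_v\to\LL(H)^i_v$ is an isomorphism for $i<0$ (and dually for the Gysin map), which is a genuinely delicate step. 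Your sketch would need to be fleshed out along these lines to close.
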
 
Thus we obtain the following statement of the decomposition theorem proved in \cite{BBD} in the geometric case  and in \cite{MI} for Hodge modules.
 \begin{cor}\label{CP} 
  Let $K = R f_ *  \ilm$ where $\LL$ is a shifted  polarized variation of Hodge structures (VHS) of weight $a$,  and $\SS$  a Thom-Whitney stratification of $f$ adapted to $\ilm$. 
  
  i) There exists   a   decomposition  of the  perverse cohomology  on  $V $ in  each  degree  
 \begin{equation}\label{De}
 \ph{i}{ (R f_ *  \ilm)} \overset{\sim}{\longrightarrow} 
 \oplus^{S_l \subset V_l^*}_{ l \leq n } \,\,  {i_{S_l}}_{! *}{\LL}^i_{S_l}[l]
\end{equation}
  into a direct sum    of intermediate extensions     of shifted polarized VHS ${\LL}^i_{S_l}$ (formula \ref{E1})  of weight $a+i-l$ over the  strata $S_l$  for all  $l  \leq n$. 
    Moreover, 
   \begin{equation}\label{compat}
\im\, \biggl(\ph {i}{R
{k_j}_! k_j^*K}  \rightarrow  \ph {i}{K }\biggr) = {k_j}_{!*}k_j^*\ph {i}{ K}
\end{equation}

On  a projective variety $V$, we have an orthogonal decomposition of a polarized HS of weight $a+i+j$ 

 \begin{equation}\label{De3}
Gr^{\pt}_i \H^{i+j}(X,   \ilm) \simeq \H^j(V,  \ph{i}{ (R f_ *  \ilm)}) \overset{\sim}{\longrightarrow} 
\oplus^{S_l \subset V_l^*}_{ l \leq n } \,\H^j(V,    {i_{S_l}}_{! *}{\LL}^i_{S_l}[l]).
\end{equation}
  
 ii) Hard Lefschetz: The iterated cup-product  $\eta^i$ is an isomorphism  over  $V$

\centerline{$ \eta^i: {^p\!\HH}^{-i}(Rf_*j_{!*}\LL)
\stackrel{\sim}{\rightarrow}{^p\!\HH}^i(Rf_*j_{!*}\LL)$} 

Moreover, the perverse cohomology $\ph{-i}{ K}$ is dual to $\ph{i}{ K}$, and the duality is compatible with the natural decomposition:
 the local systems $\LL^i_{S_l}$ are polarized variation of Hodge structure (VHS) on $S_l$, $\LL^{-i}_{S_l}$ is dual to $\LL^i_{S_l}$, and  each $
 \eta^i$ induces an isomorphism $ \LL^{-i}_{S_l}\stackrel{\sim}{\rightarrow} \LL^{i}_{S_l}$
  for each $i$ and $l$.
  
  iii) There exists  an isomorphism in the derived category of constructible sheaves \begin{equation} \label{De1}
 K  =   \oplus_{i \in \Z}{^p\mathcal { H}}^i
( K) [-i].
\end{equation}

\smallskip 
 \end{cor}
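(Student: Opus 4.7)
The plan is to deduce (i) and (ii) by a descending induction on $j$ via Theorem \ref{P}, and then to deduce (iii) from global Hard Lefschetz by Deligne's argument \cite{DL}. For the base of the induction at $j = n-1$, the open set $V - V_{n-1}$ is the union $U$ of top-dimensional strata: over $U$, $f$ is a topological fibration whose fibers are stratified compatibly with $\ilm$, and $\ilm$ restricted to the big stratum of $X$ lying over $U$ is a shifted polarized VHS. The decomposition of $(Rf_*\ilm)_{|U}$ as a direct sum of shifted intermediate extensions of the local systems $\LL^i_{S_n}$ (formula \ref{E1}), together with relative Hard Lefschetz over $U$, both reduce to Deligne's theorem \cite{DL} for smooth projective morphisms applied stratum by stratum. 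This supplies the hypotheses of Theorem \ref{P} at $j = n-1$. Applying Theorem \ref{P} successively for $j = n-1, n-2, \ldots, 0$, the split short exact sequence in part (i) of that theorem glues the already-established decomposition on $V - V_j$ with the new pieces ${i_{S_j}}_{!*}\LL^i_{S_j}$ supported on the strata of dimension $j$; this yields both the global decomposition (\ref{De}) on $V$ and the global Hard Lefschetz isomorphism in (ii).

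The compatibility (\ref{compat}) then follows directly: the summands of (\ref{De}) indexed by strata of dimension $l > j$ are exactly the direct summands of $\ph{i}{K}$ whose support meets $V - V_j$ generically, and by uniqueness of the intermediate extension their sum equals ${k_j}_{!*} k_j^* \ph{i}{K}$, which coincides with the image of $\ph{i}{R{k_j}_!\, k_j^* K} \to \ph{i}{K}$ (cf.\ \cite{BBD}, 1.4.6.1). For the global Hodge statement (\ref{De3}), one applies $H^j(V, -)$ to (\ref{De}): Hard Lefschetz in (ii) forces, by the argument of \cite{DL}, the $E_2$-degeneration of the perverse Leray spectral sequence, whose abutment is identified with $Gr^{\pt}_\b H^*(X, \ilm)$ through the description of $\pt$ in \cite{DM} recalled in section 5.3. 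Each summand $H^j(V, {i_{S_l}}_{!*}\LL^i_{S_l}[l])$ is a polarized pure Hodge structure of weight $a + i + j$, induced from the polarized VHS $\LL^i_{S_l}$ of weight $a + i - l$ on $S_l$, and the direct sum is orthogonal for the induced polarization.

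The duality in (ii) follows from Verdier self-duality (up to twist) of $\ilm$, which holds because $\LL$ is the shift of a polarized VHS: it induces a duality between $\ph{-i}{K}$ and $\ph{i}{K}$, and by the uniqueness of the decomposition (\ref{De}) this duality is transported to the summands, yielding $\LL^{-i}_{S_l} \simeq (\LL^i_{S_l})^{\vee}$ and showing that $\eta^i$ restricts to an isomorphism on each piece. Finally, (\ref{De1}) is a formal consequence of global Hard Lefschetz: the $\mathfrak{sl}_2$-action produced by $\eta$ splits the perverse filtration on $K$ in $D^b_c(V, \Q)$, exactly as in \cite{DL}. The main technical obstacle is concentrated entirely in Theorem \ref{P}; the role of the corollary is the global bookkeeping needed to assemble the stratum-by-stratum information and to match the decomposition with the Hodge-theoretic perverse filtration, so that (\ref{De3}) becomes an equality of polarized Hodge structures.
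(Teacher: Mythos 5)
Your overall architecture (descending induction via Theorem \ref{P} starting from Deligne's smooth case over the big strata, transport of duality via Verdier duality and uniqueness of decomposition, and (iii) from global Hard Lefschetz by \cite{DL}) matches the paper, and the derivation of (\ref{compat}) from the proof of Proposition \ref{prop} is the same route as the paper's remark \ref{imp}. However, there is a genuine gap in your treatment of the orthogonal Hodge decomposition (\ref{De3}).

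You pass from the topological decomposition $\H^j(V, \ph{i}{K}) \simeq \oplus_l \H^j(V, {i_{S_l}}_{!*}\LL^i_{S_l}[l])$ to the conclusion that it is an orthogonal decomposition of the polarized HS on $Gr^{\pt}_i \H^{i+j}(X, \ilm)$ by merely noting that each summand carries an intrinsic pure HS of the right weight. That is not enough: the decomposition of $\ph{i}{K}$ in $D^b_c(V,\Q)$ is a topological splitting, and nothing so far guarantees that each summand is a sub-Hodge structure of $Gr^{\pt}_i \H^{i+j}(X, \ilm)$ for the MHS inherited from $\H^{i+j}(X, \ilm)$ via Proposition \ref{global}, nor that the intrinsic HS on each $\H^j(V, {i_{S_l}}_{!*}\LL^i_{S_l}[l])$ agrees with the induced one, nor that the summands are mutually orthogonal. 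The paper treats this as a separate, substantive statement (Lemma \ref{De4}), proved by a double induction on $\dim V$ and $\dim X$: for $i \neq 0$ one reduces to a general hyperplane section $H$ via Corollary \ref{surj0} and duality; for the crucial case $i = 0$ one first shows each non-generic summand $\H^j(V, {i_{S_l}}_{!*}\LL^0_{S_l}[l])$ (with $l < n$) is a sub-HS by choosing a projection $\pi: V \to V_l$ finite on $\overline{S_l}$ and invoking the inductive hypothesis for $\pi \circ f$, and then identifies the generic summand as the orthogonal complement by the polarization. This is an adaptation of the de Cataldo--Migliorini argument \cite{DM0} to arbitrary coefficients; you would need to reproduce it (or supply an equivalent argument) to close the gap. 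Your appeal to perverse Leray degeneration only gives the graded identification as vector spaces, not the compatibility with the Hodge structures.
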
 
 The duality follows from the auto-duality of $\ilm$  and Verdier duality for the projective morphism $f $. The polarization follows from the duality and Hard Lefschetz.
  
 We use  the local topological   triviality property of a morphism along a strata (subsection \ref{TW}), to reduce the proof to the case of a zero dimensional  isolated strata (proposition {\ref{prop}).
 
  Indeed, we cut 
 a strata $S$ of $V_j^*$ by a general  normal section $N_v$ to $S$ at a point $v$, so that the proof is carried on the restriction  to $N_v$ at $v$.

{\it Hard Lefschetz}.
 We check the proof of Hard  Lefschetz isomorphism (\ref{HLe}) in the subsection \ref{Hard}  on the terms $ {\LL}^i_S$ of the explicit formula  (\ref{E1}). 
The  decomposition   follows from  Hard Lefschetz   by Deligne's   criteria  in \cite{DL}; which completes the proof of the decomposition  (formula \ref{De} above).

The rest of  the article is devoted to the proof.
 \begin{rem}
i) The statement is local on $V$. However, we may suppose $V$ projective if necessary during the proof. In fact we may suppose $V$ affine and then embed $V$ into a projective variety $\overline {V}$, extend $f$ into $\overline {X} \xrightarrow{\overline {f}}\overline {V}$ and the intermediate extension onto $\overline {X}$.
  
  ii) In the special case of a fibration by NCD over the  strata (definition \ref{St}  below),  we  use logarithmic complexes to describe Hodge theory.  The  use of Hodge theory occurs in
  the proofs of the three lemma \ref{L3}, \ref{L1} and \ref{L2}, where the last two follow from  the local purity theorem stating conditions on the MHS on cohomology of  the link at $v$ with value in $Rf_* \ilm$ (theorem \ref {pure} and proposition \ref{Sp} below).

The proof for any projective  morphism follows from this special fibration case.
\end{rem}

  \subsection
{Fibration over the  strata by normal crossing divisors} 
 Hodge theory appears in the proof  via a local purity statement (theorem \ref{pure} below).  We reduce the proof to the case where $f$ is a special fibration, hence we only need  to develop Hodge theory on the complement of a NCD (sections $4-5$). 
 
   Let $\pi: \tilde X \to X$ be a desingularization such that $\pi $ and $f\circ \pi$ are special fibrations. The decomposition theorem for $f$  follows   from the case of  $\pi$ and $f\circ \pi$ (\cite{DL} proposition 2.16),  in particular we can suppose $X$ smooth.
 In fact, the reduction to the case where  $f: X \to V$ is a fibration by NCD over the strata of an adequate Thom-Whitney stratification of $f$ in the  sense below  is possible (\cite{ED1} D\'efinition 2.1):
 
 \begin{defn}
 [topological fibration  over the strata by NCD]\label{St}
 
 \
 
i)  A morphism $f: X \to V$ is a topological fibration  by NCD over the strata of a    stratification ${\SS} = (S_\alpha)$ of  $V$ underlying a Thom-Whitney stratification of $f$, if  $X$ is smooth and the spaces
$V_l=\cup_{\dim S_\alpha \leq l} S_\alpha$ satisfy the following  properties:

1)  The  sub-spaces  $ X_{V_i} := f^{-1} ( V_i)$ are successive  sub-NCD embedded in $X$.

2)  The restriction of $f$ to $X_S := f^{-1} (S)$ over each  strata $ S$  of ${\mathcal S}$ is a topological fibration: 
 $f_{\vert}: X_{S} \to S$.

 ii) The fibration  is adapted to a  NCD  $Y$ in $X$, or to a    local system $\LL$ defined on an open algebraic set in the  complement of
 $Y$ in $X$, if in addition,  for each $1\leq i\leq n$,
  the union  of the sub-spaces  
 $ X_{V_i} \cup Y$ are relative NCD over the strata of $V$, and the intermediate extension of $\LL$ is constructible with respect to  the strata.
  \end{defn}
  
\subsubsection{Fibration by NCD over the strata}

 We explain the terminology.  Let $X$ be smooth, and $v \in  V_i - V_{i-1}$ (hence $V_i$ is smooth at $v$); the inverse image of  a normal section  $N_v$ to $V_i$ at $v$ in general position, is a smooth subvariety $f^{-1}(N_v)$ of $X$ and intersects the NCD   $X_{V_i} $ transversally, then
  $X_{V_i} \cap f^{-1}(N_v) =  f^{-1} (v) $  is a NCD  in  $f^{-1} (N_v)$.
 
 We say for simplification, that $X_{{V_i} - V_{i-1}}:= f^{-1} ( {V_i}- V_{i-1})$ is a relative NCD (eventually empty), and that  the stratification  $ \SS $ is admissible for $f$. 

Due to the next proposition, proved in  section  $3$, it is enough  to prove the decomposition
for a fibration by NCD over the strata.  
\begin{prop}
 \label{RSt}
Let $f: X \to V $ be a  projective morphism and $Y$ a closed  algebraic strict sub-space  containing the singularities of $X$. There  exist a diagramm 
$X  \stackrel{\pi'}{\leftarrow} X' \stackrel{f'}{\rightarrow}  V$ where  $X'$  is a non-singular variety, and stratifications of $V, \, X$, and $X'$ such that   $\pi'$ and  $f':= f \circ \pi'$ are fibrations by relative NCD  over the strata, adapted to  $Y':= \pi'^{-1}(Y)$. 

 Moreover,  $\pi'$ is a modification  of $X$, and there exist an open subset  
 $U \subset f(X) $ dense in the image $f(X) $ of $f$ such that $f$ is smooth over $U$,  $\pi'$ 
 induce an  isomorphism of 
 $  f'^{-1}(U) - (f'^{-1}(U) \cap Y') \overset{\sim}{\longrightarrow} f^{-1}(U) - ( f^{-1}(U) \cap Y)  $, and $f'^{-1}(U) \cap Y'$ is a strict relative NCD in the smooth fibers of $f$ eventually empty (called  horizontal or strict in the fibers).
  \end{prop}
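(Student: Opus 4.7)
The plan is to produce $X'$ in two stages. The first is a Hironaka-type resolution $\pi_{1}:X_{1}\to X$ that makes $\pi_{1}^{-1}(Y)$ into a NCD which, over a dense open $U\subset f(X)$ on which $f$ is smooth, is horizontal in the fibres. The second is a further sequence of blow-ups guided by a Thom--Whitney stratification of $f\circ\pi_{1}$, which principalises in turn each $(f\circ\pi_{1})^{-1}(V_{j})$ together with the NCD coming from the first stage. Composing everything gives a modification $\pi':X'\to X$ over which both $\pi'$ and $f':=f\circ\pi'$ are fibrations by NCDs over the strata, adapted to $Y'=(\pi')^{-1}(Y)$.

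\textbf{Step 1: initial resolution.} Generic smoothness on $f(X)$ yields a dense open $U\subset f(X)$ over which $f$ is smooth. Hironaka's embedded resolution applied to $(X,Y)$ provides a proper birational $\pi_{1}:X_{1}\to X$ with $X_{1}$ smooth, $\pi_{1}$ an isomorphism outside the singular locus of $X$ (which is contained in $Y$), and $Y_{1}:=\pi_{1}^{-1}(Y)$ a NCD. Applying generic smoothness again to the restrictions of $f\circ\pi_{1}$ to the natural stratification of $Y_{1}$ by intersections of its irreducible components, one may shrink $U$ so that each such intersection meets the smooth fibres of $f\circ\pi_{1}$ transversally over $U$. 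This yields the required horizontality of $Y_{1}\cap(f\circ\pi_{1})^{-1}(U)$.

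\textbf{Step 2: iterated principalisation along the strata.} Choose a Thom--Whitney stratification $\SS=(S_{\alpha})$ of $f\circ\pi_{1}$, adapted to $Y_{1}$, such that $(f\circ\pi_{1})^{-1}(U)$ is a union of open strata, and set $V_{j}=\bigcup_{\dim S_{\alpha}\le j}S_{\alpha}$. I proceed by induction on $j$, from $j=0$ upward. At step $j$ one applies Hironaka's principalisation to the ideal sheaf of $Y_{1}\cup(f\circ\pi_{1})^{-1}(V_{j})$ inside the variety built at the previous step, using only blow-ups with smooth centres supported above $V_{j}\setminus V_{j-1}$. Since such centres are disjoint from $(f\circ\pi_{1})^{-1}(U)$ and from the open stratum of $Y_{1}$, Step 1 is preserved. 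Composing all the blow-ups yields $\pi_{2}:X'\to X_{1}$; set $\pi'=\pi_{1}\circ\pi_{2}$, $f'=f\circ\pi'$ and $Y'=(\pi')^{-1}(Y)$. Then $X'$ is smooth and $Y'\cup(f')^{-1}(V_{j})$ is a NCD for every $j$; after a Whitney refinement of $\SS$ incorporating the new exceptional components, together with the induced stratification of $X'$, one obtains stratifications over which both $f'$ and $\pi'$ are topological fibrations by NCDs in the sense of Definition~\ref{St}, adapted to $Y'$.

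\textbf{Main obstacle.} The core difficulty is that after each principalisation the Whitney condition along the new exceptional divisor must be re-established and the induced stratification on $V$ refined so as to remain compatible with the fibre structure of the modified morphism. The argument is organised so that at step $j$ the blow-up centres lie above $V_{j}\setminus V_{j-1}$ and therefore leave the NCD structure already built above $V\setminus V_{j}$, as well as horizontality over $U$, entirely untouched; existence of a Whitney refinement at each stage is standard for projective morphisms. The most delicate bookkeeping is to carry through this induction simultaneously for $\pi'$ and for $f'$, so that $\pi'$ is itself a fibration by NCD over a compatible stratification of $X$; this is arranged by taking the stratification of $X$ to be the one induced from $\SS$ via $f$, refined by the components of $Y$ and by the loci where $f$ fails to be smooth, all of which are respected by the centres chosen above.
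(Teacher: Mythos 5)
Your proposal shares the paper's overall shape (initial Hironaka resolution, then iterated blow-ups guided by a stratification), but the central mechanism of Step 2 has a genuine gap. You fix a Thom--Whitney stratification $\SS$ of $f\circ\pi_1$ a priori, set $V_j=\cup_{\dim S_\alpha\leq j}S_\alpha$, and claim to principalise the ideal of $Y_1\cup(f\circ\pi_1)^{-1}(V_j)$ ``using only blow-ups with smooth centres supported above $V_j\setminus V_{j-1}$.'' In general you cannot confine the centres of a principalisation to a locally closed stratum: the centres must be closed subschemes supported in the singular locus of the ideal, which lies over all of $V_j$ (and over all of $V$ via $Y_1$), so their closures will meet $(f\circ\pi_1)^{-1}(V_{j-1})$ and undo work done at earlier steps. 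Moreover, once you have modified $X_1$ by blow-ups, the stratification $\SS$ you fixed no longer stratifies the new morphism; the Whitney refinement you invoke at the end changes the sets $V_j$ themselves, so a bottom-up induction on a fixed filtration is circular.

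The paper (lemmas \ref{rec}, \ref{dense} and \ref{relatif}) resolves both issues by recomputing the bad locus after each blow-up rather than fixing a stratification in advance: it produces a decreasing family of closed subsets $V^1 \supset V^2 \supset \cdots$ of strictly decreasing dimension, blows up over the whole closed set $f_i^{-1}(V^i)$ at step $i$, and then uses lemma \ref{dense} to show that after that blow-up there is a dense open $V^i-V^{i+1}$ over which the preimage is a \emph{relative} NCD, i.e.\ a NCD in normal sections. That relative statement is what makes $f'$ a fibration by NCD over the strata in the sense of definition \ref{St}; your argument passes from ``$Y'\cup(f')^{-1}(V_j)$ is a NCD in $X'$'' to ``topological fibration by NCDs'' without this step, and there is no analogue of lemma \ref{dense} to bridge it. Finally, obtaining the NCD-fibration property simultaneously for $\pi'$ (over a stratification of $X$) and for $f'$ (over a stratification of $V$) requires a second, interleaved induction, which the paper carries out via lemma \ref{relatif} by carefully constraining the centres; your closing paragraph only asserts that the centres respect the induced stratification of $X$ without an argument.
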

  \subsubsection{
 Reduction to the case of zero dimensional strata with NCD fiber}
 The notion of  relative NCD  is used to reduce the proof at  a general point $v$ of a strata  $S$, to the case of a point $v$ in a transversal section $N_v$ to $S$ at $v$, hence to a point $v$ in a zero dimensional strata. 
 In such situation we can use logarithmic complexes (\cite{C} section 8.3.3, theorem 8.3.14) since the inverse image of $v$ is a NCD.
\subsubsection
{Purity of Intersection cohomology on a singular variety (lemma \ref{polar})}
We illustrate  the results of theorem \ref{P} and corollary \ref{CP} in the case of a polarized variation of HS  $\LL$ on a smooth open subset  $U$ of a singlular projective variety $X$ with intermediate extension $j_{!*}\LL$.
  The HS  on  $H^k(X, \ilm)$ is deduced from the compatibility 
  of the perverse filtration with the HS on the intersection cohomology of a desingularization of $X$.
  
  We consider an adequate desingularization diagram $X'  \overset{\pi}{\longrightarrow} X $ of $X$ and the intermediate extension $j'_{!*}\LL$ on $X'$ of $\LL$ on $U \subset X'$. The proof is based on  the fact that the perverse filtration defined by $\pi$ is compatible with  HS defined on a projective $X'$.
  
   The polarized VHS  on the components $\LL^i_S$ of the decomposition  on $X$  are defined  on the various strata $S$   via   the Intersection  formula $\ref{E1}$ and Hodge theory on $X'$. 
   
   We deduce from the compatibility with  $\pt$  the HS on  
$H^k(X,  \ph{0}{R\pi_* j'_{!*}\LL}) \simeq Gr^{\pt}_0 H^k(X',   j'_{!*}\LL)$ for $X$ projective. 
The decomposition 
  $H^k(X,  \oplus_{S_l \in \SS} \LL^0_{S_l}[l]) $ is compatible with the HS on $Gr^{\pt}_0 H^k (X,   j'_{!*}\LL)$ which follows from the formula \ref{De3}
  
\subsubsection{Variation of Hodge structure on $\LL^i_{S}$
    over a strata $S\subset V$ (corollary \ref{Vpolar})}
    To illustrate  the construction of  the variation of HS on $\LL^i_{S}$, we consider an adequate diagram $X'  \overset{\pi}{\longrightarrow} X  \overset{f}{\longrightarrow}V$ with a desingularization $X'$ of $X$ and the intermediate extension $j'_{!*}\LL$ on $X'$,  then $\ilm$   is a component of  the decomposition of $\ph{0}{R\pi_* j'_{!*}\LL}$ on $X$. 
  
   At  a point $v$ in the zero dimensional strata of $f$, 
$\H_{X_v}^i(X, \ilm) $ (resp. $\H^i(X_v, \ilm) $) is a summand of $Gr^{\pt}_0 \H_{X'_v}^i(X', j'_{!*}\LL) $ (resp. $Gr^{\pt}_0 \H^i(X'_v, j'_{!*}\LL) $ ) with induced sub-mixed Hodge  structure   of weight $\geq a+i $ (resp.   $\leq a+i $), hence the image HS on $\LL^i_v$ is pure of weight $ a+i$ .  The polarization is the result of Poincar\'e duality and Hard Lefschetz.
At  a point $v$ in a strata $S$ of $V$,  we are reduced to the zero dimensional case for the induced stratification  on  a normal section to $S$ at $v$. 

\subsection
{Logarithmic complexes}\label{LC}
We give in  section  $4$ an account of Hodge theory with coeficients in an admissible variation of mixed  Hodge  structure   $(\LL, W, F)$  
on the  complement of a normal crossing divisor  $Y$  in a smooth  complex projective variety $X$, using   logarithmic complexes.

Moreover, we enlarge the theory to logarithmic complexes asssociated  with a NC subdivisor $Z \subset Y$, which is necessary in the inductive step (subsection \ref{Log Z}). 

{\it Let $\widetilde \LL$  be a variation of mixed  Hodge  structure, $\LL := \widetilde \LL[m]$ the associated  perverse sheaf on $X-Y$ with the conventional shifted degree}, 
   $j := (X- Y) \to X$, $j_Z := (X-Z) \to X$, and  $i_Z: Z \to X$.
   
We  describe logarithmic complexes with weight  and Hodge filtrations,  obtained for   various functors  applied to the intermediate extension of the  admissible variation of MHS $\LL$ on $X-Y$ as
$ j_{Z !} (j_{!*}\LL)_{\vert X-Z}, Rj_{Z *} (j_{!*}\LL)_{\vert X-Z}$,
$ i_Z^*Rj_{Z *} (j_{!*}\LL)_{\vert X-Z}, i_Z^!j_{!*}\LL$ and $ i_Z^*j_{!*}\LL$,
 defining thus a MHS on the corresponding hypercohomology groups.
 
 \n ($\star$) {\it We refer to such complexes as bifiltered logarithmic complexes}.

\n The starting point is   the  realization of the direct image $ Rj_{Z *}( j_{!*}\LL)_{\vert X-Z} $
as a sub-complex   $IC^*\LL(Log Z) $ of the logarithmic  complex   $ \Omega^* \LL := \Omega^*_X(Log Y)\otimes \LL_X$  (denoted as   $\Omega^*( \LL, Z)$  in \cite{C} definition (8.3.31)).  
We refer back to a local study by Kashiwara \cite{K} to construct the weight filtration with a property of local decomposition.

 We refer to the last sections $4$ and $5$ for the details of the constructions. Indeed, the details are not needed earlier. Only the existence  of the  weight and the compatibility with the perverse filtrations are needed  in the earlier sections, as well  the following general properties:

\n 1) the statement and the proof of local purity at the end of section $2$.

\n 2) the condition $w \leq a+i$ on the  weight  of $\H^i(Z ,\ilm)$ for a pure $\LL$ of weight $a$.

\n 3) the condition  $w \geq a+i$ on the  weight  of $\H_Z^i(X,\ilm)$ for a pure $\LL$ of weight $a$.
\subsubsection
{Compatibility of Hodge structure with   perverse filtration}\label{compa}

 Let $f: X \to V$ be a  morphism, 
   and $K$ a complex on $V$ with constructible cohomology. The
     topological middle
perversity truncations on $ K  $ on $V$ (\cite{BBD} section $2$ and prop. 2.1.17)
define an  increasing {\em  perverse  filtration} $\pt$ on $ K$, from which we deduce for each    closed sub-variety $W$ of $V$,  an  increasing  filtration $\pt$   on the hypercohomology:
\begin{equation}
    \pt_i {\mathbb H}^k(V-W, K) :=\, \im
    { \,\left\{ {\H}^k(V - W,\pt_i K) \to {\H}^k(V- W, K)
      \right\}}.
\end{equation} 
Let $ Z := f^{-1} W := X_W  \subset X$,  $j_Z: X-Z \to X$ and  $K:= Rf_* R(j_Z)_* j_Z^* j_{!*}\LL$. The  {\em   perverse filtration} $\pt$   on the hypercohomology  groups ${\H}^k(X-Z,  j_{!*}\LL)$   is deduced by the isomorhism ${\mathbb H}^k(X-Z,  j_{!*} \LL) \, ={\mathbb H}^k(V-W, Rf_*  j_{!*}\LL)$
\begin{equation}
    \pt_i {\H}^k(X-Z,  j_{!*} \LL) \,:= \pt_i \H^k(V-W, Rf_*  j_{!*}\LL)
    \end{equation} 
 Similarly,  we define functorially $ \pt_i$ on $ \H_Z^k(X,  j_{!*} \LL), \H_c^k(X-Z,  j_{!*} \LL)$ and 
 $\H^k(Z,  j_{!*} \LL) $, after $\pt$ on $ K$.   
 
When $f$ is projective, we check that the  filtration ${\pt_i}$  is a  filtration  by sub-MHS on the hypercohomology of $X-Z$ (\cite{EY}, theorems 1.1 and 3.2) using a result of \cite{DM}. We say  simply: the perverse filtration ${\pt}$ is compatible with the MHS on the hypercohomology of  $X-Z$.
 \begin
 {prop}\label{global}
 Let $f: X \to V $ be a projective
morphism  on $X$  smooth, $j: X-Y \to X$ the open
embedding of the complement of  a normal crossing divisor $Y$ in
$X$, $\mathcal {L}$ an admissible variation of $MHS$ on  $X - Y$ shifted by  dim.$X$, $W \subset V$ a
closed algebraic subset of $V$ 
 s.t. $X_W := f^{-1}(W)$ is a sub-normal crossing divisor
  of $Y$. Then, the perverse filtration $\pt$ on  the cohomology $\H^* ( X -
X_W,\ilm)$ (resp. $\H_{X_W}^* ( X ,\ilm)$ and  by duality $ \H_c^* ( X -
X_W,\ilm)$, $\H^* ( X_W ,\ilm)$),
is a filtration by sub-$MHS$.
\end{prop}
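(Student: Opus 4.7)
The assertion is local on $V$, so by the remark following Theorem~\ref{P} we may assume $V$ projective. With $Z := X_W$, the four hypercohomology groups are related by the long exact sequence of the localization triangle $(i_Z)_* i_Z^! \to \mathrm{id} \to Rj_{Z*} j_Z^*$ and by Verdier duality, which interchanges the open and compactly supported variants. Because morphisms of MHS are strict, it suffices to prove that $\pt$ is a filtration by sub-MHS on the open cohomology $\H^k(X - X_W, \ilm)$; the three other variants then follow from the localisation and duality long exact sequences.

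\textbf{Geometric description of $\pt$.} The proof rests on the description of the perverse filtration due to de Cataldo--Migliorini \cite{DM}: fixing a projective embedding $V \hookrightarrow \P^N$ and a sufficiently generic flag of iterated linear sections $V \supset V^1 \supset V^2 \supset \cdots$, each step $\pt_p \H^k(V - W, K)$ of $K = Rf_*(Rj_{Z*} j_Z^* \ilm)$ is realised as the kernel of restriction to the appropriate member of the flag, the codimension being a function of $k$ and $p$. Pulling the flag back through $f$ yields divisors $X^i := f^{-1}(V^i)$ in $X$. Iterated Bertini applied to the Thom--Whitney stratification $\SS$ of $f$ adapted to $\ilm$ allows us to arrange that every $X^i$ is smooth and transverse to every stratum of $\SS$, and that
\begin{equation*}
Y \cup X_W \cup X^1 \cup \cdots \cup X^n
\end{equation*}
is a normal crossing divisor in $X$.

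\textbf{Compatibility via logarithmic complexes.} With this genericity, each cohomology group $\H^k(X^i - (X^i \cap X_W), \ilm_{|X^i})$ inherits the canonical MHS constructed in sections~4--5 from the bifiltered logarithmic complex $IC^\ast\LL(\log(X_W \cap X^i)) \subset \Omega^\ast_{X^i}(\log(Y \cap X^i)) \otimes \LL$. The restriction morphism $\H^k(X - X_W, \ilm) \to \H^k(X^i - X^i \cap X_W, \ilm_{|X^i})$ is induced at the level of these bifiltered complexes by pull-back of logarithmic forms along the transverse smooth embedding $X^i \hookrightarrow X$, hence preserves both the weight and Hodge filtrations and is a morphism of MHS. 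Its kernel is therefore a sub-MHS, and the \cite{DM} formula identifies $\pt_p \H^k(X - X_W, \ilm)$ with precisely such a kernel. The long exact sequences and Verdier duality, combined with strictness of MHS maps, propagate the property to $\H_{X_W}^k$, $\H_c^k(X - X_W, \ilm)$ and $\H^k(X_W, \ilm)$.

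\textbf{Main obstacle.} The essential technical point is the simultaneous genericity of the hyperplane flag: one needs every iterated section to be transverse to $Y$, to $X_W$ and to the entire Thom--Whitney stratification $\SS$, so that on every $X^i$ the bifiltered logarithmic formalism of section~4 reproduces itself and the restriction morphism between these logarithmic complexes is strict for both $F$ and $W$. This iterated Bertini--transversality argument is the technical heart of \cite{EY}; once it is in place, the identification of $\pt_p$ with a kernel of MHS morphisms is formal and the proposition follows.
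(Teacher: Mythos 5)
Your proposal follows the same overall strategy as the paper: reduce via Proposition~\ref{RSt} to a fibration by NCD over the strata, invoke the de Cataldo--Migliorini characterisation of the perverse filtration via generic hyperplane flags, pull the flags back to $X$ and choose them transversal to $Y$, $X_W$ and the Thom--Whitney stratification (iterated Bertini), and then realise everything by bifiltered logarithmic complexes so that the relevant maps become morphisms of MHS. You also correctly identify transversality as the technical crux.

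The genuine gap is in the identification of the perverse filtration with a flag filtration. You assert that $\pt_p\H^k(V-W,K)$ is realised as the \emph{kernel of restriction} to a member of a \emph{single} generic flag; that is the affine form of the de Cataldo--Migliorini theorem and is not available here, since $V-W$ is merely quasi-projective. The paper instead uses the filtration $\delta$ of formula~(\ref{delta}), which involves \emph{two} generic flags $H_*$ and $W_*$ and expresses $\delta_p$ as an \emph{image}
$\delta_p \H^*(V,K) = \im \bigl(\oplus_{i+j = p} \H^*_{W_{-j}}(V,(h_i)_! h_i^* K) \to \H^*(V,K)\bigr)$,
i.e. via cohomology with supports along one flag applied to an extension by zero along the other, and even then only after an affine embedding is produced by blowing up $W$ in $V$ and factoring $f$ through $\widetilde V$. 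Because the perverse step is an image of this bilateral construction rather than a kernel of a single restriction, the point that actually needs proving is that each group $\H^*_{W_{-j}}(V,(h_i)_! h_i^* K)$ (and its map to $\H^*(V,K)$) carries a compatible MHS; the paper does this by assembling the bifiltered logarithmic complexes $R\, i_{W_j}^! (h'_i)_!\, h'^*_i\, j_{!*}\LL$ from sections~\ref{St1} and~\ref{4} together with the Thom--Gysin isomorphisms of~\ref{TG}. Your proposal, by collapsing the double flag and the image/support structure into a single restriction kernel, does not reach the statement that actually has to be verified, so the logarithmic-complex argument you sketch is aimed at the wrong target even though the ingredients you list are the right ones.
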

The proof   is carried in terms of logarithmic complexes in subsection (\ref{pglobal}). 
\subsubsection
{Mixed Hodge structure (MHS) on the Link}

 The basic criteria for a complex $K$ on $V$ to underly a structure of mixed Hodge complex (MHC),   is given locally at a point  $v \in V $ in terms of 
the Link at $v$ (proposition \ref{mainp} below).  The criteria is translated into a property of the tubular neighborhood of $X_v := f^{-1}(v)$,  in terms of  $X^*_{B_v}:= X_{B_v} -X_v$, the inverse image of a small  ball $B_v$ with  center $v$  in $V$ minus  the central fiber $X_v$. 

Let $j_{X_v }: (X - X_v) \to X, \,  i_{X_v } : X_v  \to X,  \, k_v: (V-v) \to V$ and $i_v: v \to V$. 
 We apply the functor $ i_v^* R (k_v)_*k_v^*$ functorially to  the filtration ${\pt}$   on $Rf_* j_{!*}\LL$ to define the perverse  filtration ${\pt_i}$ on   $R\Gamma(X_v , i_{X_v }^*R(j_{X_v })_* j_{X_v }^* j_{!*}\LL )$  via   the isomorphism $i_{X_v }^*R(j_{X_v })_* j_{X_v }^* j_{!*}\LL ) \overset{\sim}{\longrightarrow}  i_v^* R (k_v)_*k_v^*Rf_* j_{!*}\LL$.

The perverse  filtration is considered on the hypercohomology $ \H^r(X^*_{B_v}, j_{!*}\LL) $, which 
 coincide with $R\Gamma(X_v , i_{X_v }^*R(j_{X_v })_* j_{X_v }^* j_{!*}\LL)$ if the radius of $B_v$  is small enough. Thus, it may look  a less abstract  object.

 \begin{prop}[MHS on the Link]\label{mainp}
 
   Let  $v$ be a  point in $ V$ with fibre  $X_v := f^{-1}(v) $. We suppose   $ X_v $ and $ X_v  \cup Y$
 are  NCD in $X$ and $\LL$ defined on $X-Y$; then, for a ball $B_v$ with center $v$ small enough,  the  perverse filtration
 on $\H^r(X^*_{B_v}, j_{!*}\LL) \overset{\sim}{\longrightarrow}  \H^r ((X_v , i_{X_v }^*R(j_{X_v })_* j_{X_v }^* j_{!*}\LL)$   is   compatible with  the  MHS  (that is a filtration sub-MHS).  
\end{prop}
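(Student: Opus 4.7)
The plan is to reduce the local statement at $v$ to the global Proposition \ref{global} via a compactification argument, combined with the explicit bifiltered logarithmic complex model from Section~$4$ applied to the sub-NCD $X_v \cup Y$.

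First, I would set $\tilde Y := X_v \cup Y$, which is a NCD in $X$ by hypothesis and contains $X_v$ as a sub-NCD. Since the statement is local on $V$ at $v$, I would invoke the remark following Corollary \ref{CP} to embed $V$ into a projective $\bar V$ and extend $f$ to $\bar f : \bar X \to \bar V$ with $\bar X$ smooth, in such a way that $\bar X - (X - Y)$ is a NCD and, in a neighborhood of $X_v$, the union $X_v \cup \bar Y$ remains a NCD; one simultaneously extends $\ilm$ to the intermediate extension on $\bar X$. This allows me to work with a projective morphism while preserving the local NCD hypothesis at $v$, at the cost of replacing the ambient NCD by $\tilde Y := X_v \cup \bar Y$.

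Second, I would apply Proposition \ref{global} with $W = \{v\}$ and with the NCD $\tilde Y$: the preimage $\bar f^{-1}(v) = X_v$ is then a sub-NCD of $\tilde Y$, so $\pt$ is a filtration by sub-MHS on the global hypercohomology groups $\H^r(\bar X - X_v, \ilm)$, $\H^r_{X_v}(\bar X, \ilm)$, and $\H^r(X_v, \ilm)$. At the same time, the bifiltered logarithmic complex $IC^* \LL(\log \tilde Y)$ together with its variants from Subsection \ref{Log Z} produces, at the level of sheaf complexes on $X_v$, a functorial MHS on $R\Gamma(X_v, i_{X_v}^* R(j_{X_v})_* j_{X_v}^* j_{!*}\LL)$.

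Third, I would exploit the local-to-global comparison. For $B_v$ sufficiently small, the restriction from $\bar X - X_v$ to $X^*_{B_v}$ is the announced quasi-isomorphism between $R\Gamma(X_v, i_{X_v}^* R(j_{X_v})_* j_{X_v}^* j_{!*}\LL)$ and $\H^r(X^*_{B_v}, \ilm)$. Because the bifiltered logarithmic complex is local on $\bar X$, the MHS it induces on the stalk coincides with the MHS obtained, via adjunction and excision along the exact triangle for $X_v \hookrightarrow \bar X$, from the global MHS of Proposition \ref{global}. The perverse filtration, on the other hand, localizes by construction through the functor $i_v^* R(k_v)_* k_v^*$ applied to $Rf_* \ilm$, so the local $\pt$ is the restriction of the global $\pt$; thus the sub-MHS property is transferred.

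The main obstacle will be verifying that the MHS obtained from the local logarithmic complex agrees with the one induced by the compactified global setup, and that both filtrations are preserved by the comparison morphism in a strictly compatible way. Concretely this amounts to checking that the weight spectral sequence of the logarithmic complex $IC^*\LL(\log \tilde Y)$ is compatible with the truncation-theoretic description of $\pt$ from \cite{DM} and \cite{EY}, and then showing that the natural restriction map from the global logarithmic complex on $\bar X$ to its germ along $X_v$ is a bifiltered quasi-isomorphism on a tubular neighborhood. Once this bifiltered identification is established, the compatibility of $\pt$ with the MHS proved globally in Proposition \ref{global} descends to the link cohomology $\H^r(X^*_{B_v}, \ilm)$, yielding the asserted sub-MHS property.
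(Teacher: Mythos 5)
Your plan is to deduce the local statement from the global Proposition~\ref{global} by compactifying and then restricting to a tubular neighborhood. This does not close the gap, for two reasons.

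First, the link cohomology $\H^r(X^*_{B_v}, \ilm) \simeq \H^r(X_v, i_{X_v}^*R(j_{X_v})_* j_{X_v}^* \ilm)$ is not among the four hypercohomology groups covered by Proposition~\ref{global}, which treats $\H^*(X - X_W, \ilm)$, $\H_{X_W}^*(X,\ilm)$, $\H_c^*(X - X_W,\ilm)$, $\H^*(X_W,\ilm)$. The link object is the cone over $\mathrm{can}: j_{X_v!}(\ilm_{\vert X-X_v}) \to Rj_{X_v*}(\ilm_{\vert X-X_v})$, a genuinely local complex supported on $X_v$. Knowing that $\pt$ is by sub-MHS on the source and target of $\mathrm{can}$ does not automatically propagate to the cone: the MHS on the cone is the mixed cone from definition~\ref{dual0}/lemma~\ref{dual1}, while the perverse filtration on the cone is defined independently via $i_v^*R(k_v)_*k_v^*\pt_{\leq \bullet}Rf_*\ilm$, and these two structures have to be matched, not inherited.

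Second, your key assertion that ``the local $\pt$ is the restriction of the global $\pt$'' elides exactly the difficulty the authors flag when they write that this result \emph{does not} follow from \cite{DM}. The de Cataldo--Migliorini identification of $\pt$ with the geometric filtration $\delta$ (formula~\ref{delta}) is proved for hypercohomology over an affine or projective algebraic variety, not for $\H^*(B_v - v, K)$ with $B_v$ a small analytic ball. There is no algebraic variety whose hypercohomology with the \cite{DM} flags computes the link. What the paper actually does in subsection~\ref{Link1} is observe that $B_v$ is Stein, so the Artin--Lefschetz vanishing still holds, and that one can therefore set up the analogue of the $\delta$-filtration using two families of hyperplane sections in general position \emph{through $v$}. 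This local Stein version of the \cite{DM} characterization is the ingredient your proposal is missing and cannot be replaced by a compactification-plus-restriction argument, because restriction maps from $\H^r(\bar X - X_v)$ to $\H^r(B^*_{X_v})$ are neither injective nor surjective and do not transport the filtration in the naive sense your third step requires.
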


 \n The proof in  subsection  \ref{Link1}  depends on a    local version  of a result in \cite{DM}
 easy to check.
 Once the above structure is defined, we can  express  the notion of local purity in positive characteristic  and give a meaning to the purity theorem \cite{DG}.
\subsection
{Deligne-Gabber's local purity}\label{p}
This  basic result is stated in positive characteristic in \cite{DG} as follows:\\
{\it Let  ${K}$ be a pure complex of
weight $a$ on  an algebraic variety $V$ and $B_v$ a henselization
of $V$ at  a point $v$ in the zero dimensional strata of $V$. The
weight $w$ of the cohomology
 $ \H^i( B_v- \{v \}, K )$,
    satisfy:
\begin{equation*}
w \leq a+i  \; \hbox {if }\;i \leq -1\; \hbox {and }\; w > a+i
\; \hbox {if }\;i \geq 0.
\end{equation*}}
According to Deligne's dictionary between the purity in positive
caracteristic and Hodge theory,   a pure complex corresponds in
the transcendental case to an intermediate extension $k_{!*}\LL $ of a
polarized VHS $\tilde \LL$ on a smooth algebraic  open subset $k:
V^*\hookrightarrow V$ embedded in a
  complex algebraic variety $V$.

To state the  above result,
 the first task is  to construct a $MHS$ on the hypercohomology  $\H^*(B_v - v,
k_{!*}\LL )$ where $ B_v$ is a small ball with center $v \in
V$, with coefficients in an intermediate extension  of $\LL :=
\tilde \LL [n]$ where $n$ is the dimension of $V$, that is the VHS
shifted as a complex  by $n$ to the left, or more intrinsically on $ H^*(i_v^* R (k_v)_*k_v^*k_{!*}\LL )$.
Once we have such $MHS$
we can state the conditions   on the weight  as follows:
\begin{thm}
 [local purity]\label{pure}
Let  ${\tilde \LL}$ be a polarized variation of Hodge structures
 of weight $b$ on the smooth $n$-dimensional open subset
$V^{*}$ of an algebraic variety $ V$, $k: V^* \to V$  the
embedding, and let $v$  be a point in the zero dimensional strata
of $V$, $B_v$ a small ball with center $v$ and $a = b + n$. 

The weight $w$ of the space $ H^i(i_v^* R (k_v)_*k_v^*k_{!*}\LL )$,   isomorphic to the Intersection cohomology $ \H^i( B_v- \{v \}, k_{!*}\LL )$
 of the Link at $v$,  satisfy the relations:
\begin{equation*}
w \leq a+i  \; \hbox {if }\;i \leq -1\; \hbox {and }\; w > a+i
\; \hbox {if }\;i \geq 0
\end{equation*}
(the intermediate extension $k_{!*}\LL  $,  after the shift $\LL := \tilde \LL [n]$,  is a complex of weight $a = b + n$).
\end{thm}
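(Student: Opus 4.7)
The strategy reduces the local purity statement on $V$ to the two general weight bounds of subsection \ref{LC} by transferring the Link cohomology to a smooth desingularization via proper base change and exploiting the support/co-support conditions of the intermediate extension.

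\emph{Step 1 (MHS on the Link as a summand upstairs).} Applying Proposition \ref{RSt} with $f = \mathrm{id}_V$ and closed subset $Y_0 := V\setminus V^*$, choose a proper birational $\pi: X \to V$ with $X$ smooth, $Y := \pi^{-1}(Y_0) \subset X$ a NCD, and $X_v := \pi^{-1}(v) \subset Y$ a sub-NCD. Since $\pi$ is an isomorphism over $V^*$, the pullback of $\tilde \LL$ is an admissible VHS on $X \setminus Y = \pi^{-1}(V^*)$ with intermediate extension $j_{!*}\LL$ on $X$. The decomposition theorem for the birational morphism $\pi$ (available by Deligne's criterion \cite{DL}) realizes $k_{!*}\LL$ as a direct summand of $R\pi_* j_{!*}\LL$. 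Proper base change together with the identity $R\pi_* R(j_{X_v})_* = R(k_v)_* R\pi'_*$ (for $\pi' := \pi|_{X \setminus X_v}$) identify $H^i(i_v^* R(k_v)_* k_v^* k_{!*}\LL)$ as a direct summand of $\H^i(X^*_{B_v}, j_{!*}\LL)$, where $X^*_{B_v} := \pi^{-1}(B_v) \setminus X_v$. The latter carries a MHS by Proposition \ref{mainp} and the bifiltered logarithmic complexes of subsection \ref{LC}, and the summand inherits a MHS via the projector arising from the polarization.

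\emph{Step 2 (Weight estimates via the attaching triangle).} From the distinguished triangle
\begin{equation*}
i_v^! k_{!*}\LL \longrightarrow i_v^* k_{!*}\LL \longrightarrow i_v^* R(k_v)_* k_v^* k_{!*}\LL \xrightarrow{[1]}
\end{equation*}
combined with the vanishings $H^i(i_v^* k_{!*}\LL) = 0$ for $i \geq 0$ and $H^i(i_v^! k_{!*}\LL) = 0$ for $i \leq 0$ (the support and co-support conditions of $k_{!*}\LL$ at $v \notin V^*$), the long exact sequence yields a surjection $H^i(i_v^* k_{!*}\LL) \twoheadrightarrow H^i(\mathrm{Link})$ for $i \leq -1$ and an injection $H^i(\mathrm{Link}) \hookrightarrow H^{i+1}(i_v^! k_{!*}\LL)$ for $i \geq 0$. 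Proper base change along $\pi$ realizes these two groups as direct summands of $\H^i(X_v, j_{!*}\LL)$ and $\H^{i+1}_{X_v}(X, j_{!*}\LL)$, respectively. Items (2) and (3) of subsection \ref{LC} (for coefficients pure of weight $a$, weights $\leq a+i$ on $\H^i$ of a NCD, respectively weights $\geq a+i$ on $\H^i$ supported on a NCD) then give $w \leq a+i$ on the first and $w \geq a+(i+1)$ on the second. Transferring through the surjection and injection above yields the two inequalities of the theorem.

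\emph{Main obstacle.} The delicate point lies in Step 1: one must verify that the decomposition $R\pi_* j_{!*}\LL \simeq k_{!*}\LL \oplus (\cdot)$ is realized at the level of bifiltered logarithmic complexes, so that the MHS (and in particular the weight filtration) on the summand is intrinsic and the weight estimates on $X$ transfer faithfully to $V$. This rests on Proposition \ref{global} applied to $\pi$, on the auto-duality of $j_{!*}\LL$ (which singles out the summand $k_{!*}\LL$ via its polarization), and on the compatibility of the perverse filtration with the MHS developed in sections 4-5.
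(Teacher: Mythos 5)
Your proposal has a fundamental circularity that collapses the argument. In Step~1 you invoke ``the decomposition theorem for the birational morphism $\pi$'' to realize $k_{!*}\LL$ as a direct summand of $R\pi_* j_{!*}\LL$, and in Step~2 you use this splitting to transfer weight bounds from $\H^i(X_v, j_{!*}\LL)$ and $\H^{i+1}_{X_v}(X, j_{!*}\LL)$ onto $H^i(i_v^* k_{!*}\LL)$ and $H^{i+1}(i_v^! k_{!*}\LL)$. But the decomposition of $R\pi_* j_{!*}\LL$ \emph{at the point} $v$ is precisely what the inductive step of the paper (Proposition~\ref{prop}) establishes, and it takes local purity at $v$ as one of its three hypotheses; the logical order in the paper is: decomposition on $V - v$ $\Longrightarrow$ local purity at $v$ $\Longrightarrow$ decomposition across $v$. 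Presupposing the decomposition of $\pi$ at $v$ therefore presupposes the theorem you are trying to prove. The appeal to Deligne's criterion \cite{DL} does not rescue this: that criterion requires the relative Hard Lefschetz isomorphisms between the $\ph{-i}{}$ and $\ph{i}{}$ of $R\pi_*j_{!*}\LL$, which at the point $v$ are proved in Proposition~\ref{prop}~(iv) only \emph{after} local purity at $v$ is available.

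If you retrench to the inductive hypothesis actually in hand (decomposition on $V - v$ only), Step~2 fails. The long exact sequence carrying a MHS lives on the smooth model $X$, namely
$\cdots \to \H^i_{X_v}(X, j_{!*}\LL) \to \H^i(X_v, j_{!*}\LL) \to \H^i(B_{X_v}-X_v, j_{!*}\LL) \to \H^{i+1}_{X_v}(X, j_{!*}\LL) \to \cdots$,
and there the support/co-support vanishing you use is simply not true: $\H^i(X_v, j_{!*}\LL) = H^i(i_v^*R\pi_*j_{!*}\LL)$ and $\H^{i+1}_{X_v}(X, j_{!*}\LL) = H^{i+1}(i_v^!R\pi_*j_{!*}\LL)$ are non-zero in a wide range of degrees because $R\pi_*j_{!*}\LL$ is not perverse, let alone an intermediate extension at $v$. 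Corollary~\ref{wi} then only says that the image of $\H^i(X_v)$ in the Link cohomology has weight $\leq a+i$ while the cokernel (mapping into $\H^{i+1}_{X_v}(X)$) has weight $\geq a+i+1$; both pieces can be nonzero, and nothing in this LES forces the summand $\H^i(B_v-v, k_{!*}\LL) \subset Gr^{\pt}_0\H^i(B_{X_v}-X_v, j_{!*}\LL)$ to land on the low-weight side when $i\leq -1$. That is precisely the crucial case handled in subsection~\ref{cru}: the exclusion of $Gr^W_a Gr^{\pt}_0\H^{-1}$ demands the hyperplane-section induction via Corollary~\ref{surj0}, the lifting Lemma~\ref{ext0}, the injectivity Lemmas~\ref{del} and~\ref{2}, and ultimately the polarization of the intersection cohomology of $X$. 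Your attaching-triangle shortcut bypasses all of this and, if it were valid, would render the paper's entire intricate argument superfluous --- which is strong evidence that it is not valid.
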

\subsubsection{Equivalent statement}  
  The $MHS$  is defined  in fact along the NCD $X_v$ in $X$, so we prove  equivalent conditions on the weight above on $X$ as follows
\begin{prop}
[Semi purity]\label{Sp}
 Given a (shifted) polarized variation of Hodge structures $\ilm $ on $X$ of weight $a$, the weights of the mixed  Hodge structure on  the graded-cohomology spaces
\begin{equation*}
Gr_i^{\pt} \H^r(B_{X_v}-X_v,\ilm)
 \end{equation*}
 satisfy the inequalities:
$w \leq a+ r$ if $r-i \leq -1$, $w > a+ r$ if $r-i \geq
0$.
\end{prop}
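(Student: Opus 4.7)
The plan is to prove Proposition \ref{Sp} as the inductive step along the Thom-Whitney strata of $f$, combining the inductive hypothesis on higher-dimensional strata (from Theorem \ref{P}) with the two logarithmic weight bounds of subsection \ref{LC}. By Proposition \ref{RSt} we may assume $f$ is a fibration by NCD over the strata, and cutting by a transversal section to the stratum of $v$ reduces to the case where $v$ lies in a zero-dimensional stratum and $X_v = f^{-1}(v)$ is a NCD in the smooth $X$. In this situation the complexes $i_{X_v}^* R(j_{X_v})_* j_{X_v}^* \ilm$ and $i_{X_v}^! \ilm$ admit bifiltered logarithmic models, and Proposition \ref{mainp} ensures that the perverse filtration $\pt$ on $\H^r(B_{X_v}-X_v, \ilm)$ is a filtration by sub-MHS, so the weight claim on $Gr_i^{\pt}$ is meaningful.

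The weight dichotomy comes from the local cohomology long exact sequence
\[\cdots \to \H^r_{X_v}(X_{B_v},\ilm) \to \H^r(X_{B_v},\ilm) \to \H^r(B_{X_v}-X_v,\ilm) \to \H^{r+1}_{X_v}(X_{B_v},\ilm) \to \cdots,\]
compatible with $\pt$. By items (2) and (3) of subsection \ref{LC}, $\H^r(X_{B_v}, \ilm) \simeq \H^r(X_v, \ilm)$ has weights $\leq a+r$ and $\H^{r+1}_{X_v}(X_{B_v},\ilm)$ has weights $\geq a+r+1$. Therefore classes in the image of $\H^r(X_{B_v},\ilm) \to \H^r(B_{X_v}-X_v,\ilm)$ have weight $\leq a+r$, while classes with non-zero image in $\H^{r+1}_{X_v}(X_{B_v},\ilm)$ have weight $\geq a+r+1 > a+r$. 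The strict inequality required in the proposition is thus automatic once the matching with $\pt$-degrees is settled.

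To match these two sides with the $\pt$-split by the sign of $r-i$ we invoke the inductive hypothesis on $V - \{v\}$ from Theorem \ref{P}(i):
\[\ph{i}{Rf_*\ilm}\big|_{V-\{v\}} \simeq \bigoplus_{l \geq 1}\bigoplus_{S_l \subset V_l^*}\bigl((i_{S_l})_{!*}\LL^i_{S_l}[l]\bigr)\big|_{V-\{v\}},\]
with polarized VHS $\LL^i_{S_l}$ of weight $a+i-l$. Applying $i_v^* R(k_v)_* k_v^*$ to the distinguished triangle $\pt_{i-1} K \to \pt_i K \to \ph{i}{K}[-i] \xrightarrow{[1]}$ for $K = Rf_*\ilm$ shows that $Gr_i^\pt \H^r(B_{X_v}-X_v,\ilm)$ is a subquotient of $\H^{r-i}(B_v-\{v\}, \ph{i}{Rf_*\ilm}|_{V-\{v\}})$. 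The sign of $r-i$ then controls on which side of the local cohomology sequence this subquotient lies: $r - i \leq -1$ places it in the image of $\H^r(X_{B_v})$, forcing weight $\leq a+r$, while $r-i \geq 0$ places it in the classes mapping non-trivially to $\H^{r+1}_{X_v}$, forcing weight $\geq a+r+1$.

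The principal obstacle is precisely this matching of $\pt$-degrees with the two sides of the local cohomology long exact sequence. Concretely, one must show that the perverse truncation interacts with the adjunction triangle $(i_v)_* i_v^! \to \mathrm{id} \to R(k_v)_* k_v^* \xrightarrow{[1]}$ at $v$ so that each $Gr_i^\pt$ sits entirely on one side; the inductive decomposition on $V-\{v\}$ and the explicit bifiltered logarithmic complex of sections $4$--$5$ are combined at this step, and this bookkeeping of perverse cohomology degrees versus $r$ is the technical heart of the argument.
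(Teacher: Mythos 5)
You have set up the right framework: reduction via Proposition \ref{RSt} to a fibration by NCD, cutting by a normal section to reduce to a zero-dimensional stratum, use of Proposition \ref{mainp} to make the perverse filtration a filtration by sub-MHS, the inductive decomposition on $V-\{v\}$ giving $Gr_i^{\pt}\H^r(B_{X_v}-X_v,\ilm)\simeq \H^{r-i}(B_v-\{v\},\ph{i}{Rf_*\ilm})$, and the two elementary weight bounds from Corollary \ref{wi}. Up to this point you agree with the paper. The paper also uses the local-cohomology long exact sequence you invoke (see diagram \ref{dia} and Lemmas \ref{del} and \ref{2}), so the ingredients are the right ones.

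The gap is exactly where you flag it, but it is worse than a ``bookkeeping'' issue: the assertion that $r-i\leq-1$ forces $Gr_i^{\pt}$ to land in the image of $\H^r(X_{B_v},\ilm)$ (hence weight $\leq a+r$), while $r-i\geq0$ forces it into the part injecting into $\H^{r+1}_{X_v}$, \emph{is} the semi-purity theorem, not something that follows from tracking degrees. The LES dichotomy says only that $W_{a+r}\H^r(B_{X_v}-X_v,\ilm)$ equals the image of $\H^r(X_{B_v},\ilm)$ and the quotient has weight $>a+r$; it tells you nothing a priori about how the independent filtration $\pt$ distributes across the two sides. In particular, for $i=0$, $r=-1$, both the Gysin/Artin--Lefschetz argument and the easy large-inequality lemma give only $w\leq a$ on $\H^{-1}(B_v-v,\ph{0}{K})$; the required strict bound $w\leq a-1$ is precisely $Gr^W_a Gr^{\pt}_0\H^{-1}(B_{X_v}-X_v,\ilm)=0$, and this the paper proves by a global argument that your proposal never introduces. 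Concretely: (1) the paper first reduces, via a general hyperplane $H$ of $X$, Artin--Lefschetz vanishing for the affine $X_v-(H\cap X_v)$, and an induction on $\dim X$ (Lemma \ref{D}, Corollary \ref{surj0}, Corollary \ref{cor0}), to the single subquotient $\ke\,\rho_0\subset\ph{0}{Rf_*\ilm}$ in degree $j=-1$; (2) it then lifts a class $\overline{u}$ in $Gr^W_a$ to a class $u$ coming from $j_{H!}j_H^*\ilm$ (Lemma \ref{ext0}), shows $\gamma(u)=\alpha_X(\partial_X u)$ is primitive and vanishes by the polarization $Q$ of the intersection cohomology of the projective $X$, and then uses the injectivity Lemmas \ref{2} and \ref{del} to deduce $\overline{u}=0$. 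Your proposal uses neither the hyperplane induction nor the polarization, and without the polarization input there is no known way to exclude the boundary weight $w=a$; this is why both \cite{DG} and the paper go beyond the LES argument. As written, the proof is incomplete at its technical heart, and the remaining step is not a refinement of what you have but a different (global, polarization-based) argument.
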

The crucial case  of the proof is treated in section \ref{cru}.  The original proof in positive characteristic  may be adapted to the transcendental case, but we give a relatively simple new proof based on the inductive hypothesis which assumes Hard Lefschetz theorem on $B_v -v$. In fact,
 both the proposition  and Hard Lefschetz theorem (\ref{HLe}) are proved
 simultaneously by induction on the decreasing dimension of the strata
 of $V$. 
 
 Let $\SS$ be a stratification of $V$, and  $V_j:= \cup_{S \in \SS: \hbox { dim} \, S \leq j} S$; if  we assume by an inductive argument the decomposition theorem established on $ V- V_j$,  then we deduce first the local purity theorem at a point $v$ of a strata $S \subset V_j$  of dimension $j$.
Let $N_v$ be a normal section to $S$ at $v$ in general position, the subvariety $f^{-1}(N_v)$ is smooth in $X$ (we can suppose $X$ smooth) and intersects the NCD   $X_S $ transversally. On $N_v$ the  point $v$ is  isolated  in the lowest strata of dimension zero; hence, we reduce the proof of   local purity to  the case of an  isolated point.
 On its turn, the semi-purity at $v$ is used to extend the proof of the decomposition from  $V-V_j$ to $ (V-V_j) \cup S$, as well Hard Lefschetz theorem.

  Thus, we obtain the decomposition on $V-V_{j-1}$, from which we repeat the argument of local purity at points of $V_{j-1}$.
Finally, we obtain by induction, a  simultaneous proof   of the local purity and the  decomposition at the same time. 

\section{Proof of the decomposition theorem}

 {\it 
 We present here a    proof of theorem \ref{P} and corollary \ref{CP}  for  projective morphisms, based  on a reduction to a stratification by  NCD over the strata (definition \ref{St}, proposition \ref{RSt}, section \ref{St1}). Three lemma (\ref{L3}, \ref{L1}, \ref{L2}) below use Hodge theory for which we refer to sections $4 - 5$}.
  
\subsection{Proof of theorem \ref{P}}\label{deco}   Let $f: X \to V$ be a projective morphism of complex algebraic varieties,    $\SS$  a Thom-Whitney stratification   adapted to $\ilm $ on $X$ and $K:= Rf_*\ilm$.  Set $V_k$  the union  of all strata of $V$ of dimension $ \leq k$.
  
 \subsubsection
{The big strata} By construction, the restriction of $f$ to any strata of $V$ is a topological fibration.  The union of the big  strata of  $V$  form an open smooth set $U$ of dimension  $n:= dim. V$  as  we   suppose  the   singularities of  $V$ contained in  $V_{n-1}$. The restriction to $U$   of the local cohomology sheaves ${\HH}^{i}(K)$    are locally constant,   hence they coincide with the  perverse  cohomology sheaves: $\ph{i}{K} = {\HH}^{i}(K)$.
\begin{lem}[Initial step] \label{L3}
The relative Hard Lefschetz theorem apply over the big strata $U$ of $V$. 
\end{lem}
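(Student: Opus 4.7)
The plan is to reduce the statement to the fiberwise Hard Lefschetz theorem on the smooth projective fibers of $f$ over $U$. By proposition~\ref{RSt} I may suppose $X$ is smooth and $f$ is a fibration by NCD over the strata; moreover, shrinking $U$ to the Zariski open subset of the top-dimensional stratum of $V$ over which $f$ is smooth (dense by generic smoothness), I may further assume that $f_U := f|_{f^{-1}(U)} : f^{-1}(U) \to U$ is smooth and projective and that $Y \cap f^{-1}(U)$ is a relative NCD over $U$. Then on each fiber $X_u := f^{-1}(u)$, the restriction $\ilm|_{X_u}$ is the intermediate extension of the polarized VHS $\LL|_{X_u - Y \cap X_u}$ across the NCD $Y \cap X_u$ in the smooth projective variety $X_u$.

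Next, since $f_U$ is smooth projective, proper base change identifies $\ph{i}{K}|_U$ with a shifted local system on $U$ whose stalk at $u \in U$ is (a shift of) the hypercohomology $\H^{\bullet}(X_u, \ilm|_{X_u})$; the cup-product with the relative hyperplane class $\eta$ acts on this local system stalkwise as cup-product with the restriction $\eta_u := \eta|_{X_u}$, which is an ample class on $X_u$. Thus I would be done once I establish, for every $u \in U$ and every $i \geq 0$, the isomorphism
\[
\eta_u^i : \H^{-i}(X_u, \ilm|_{X_u}) \xrightarrow{\sim} \H^{i}(X_u, \ilm|_{X_u}),
\]
since a pointwise isomorphism compatible with monodromy is automatically an isomorphism of local systems.

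The main obstacle is therefore concentrated in this fiberwise statement, namely Hard Lefschetz for the intermediate extension of a polarized VHS across a NCD on a smooth projective variety. In the framework of this paper this is supplied by the logarithmic complex description of $\ilm|_{X_u}$ developed in sections $4$-$5$: the hypercohomology $\H^{\bullet}(X_u, \ilm|_{X_u})$ carries a polarized pure Hodge structure, the Lefschetz operator $\eta_u$ is a morphism of pure Hodge structures of bidegree $(1,1)$, and the standard $\mathfrak{sl}_2$-representation argument using the primitive decomposition then yields the Hard Lefschetz isomorphism. This plays in our setting the role of Deligne's argument in \cite{DL} for constant coefficients on a smooth projective fiber. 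Granting this fiberwise input, the lemma follows from the reductions above.
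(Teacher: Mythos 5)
Your proof is correct and follows essentially the same route as the paper: both reduce, via proper base change over the smooth top stratum $U$ (where $Y_U$ is a horizontal relative NCD), to the fiberwise Hard Lefschetz theorem for $\ilm|_{X_u}$ on each smooth projective fiber, with the Hodge-theoretic input delegated to the logarithmic complex theory of sections~4--5 and to \cite{DL} for passing from Hard Lefschetz to degeneration/decomposition. One small imprecision in your last paragraph: saying that the polarized pure Hodge structure and ``the standard $\mathfrak{sl}_2$-representation argument using the primitive decomposition then yields the Hard Lefschetz isomorphism'' reverses the logical order --- the primitive decomposition and $\mathfrak{sl}_2$-module structure are \emph{consequences} of Hard Lefschetz, not a mechanism for proving it, and fiberwise Hard Lefschetz for intersection cohomology with polarized VHS coefficients is itself a substantive theorem (Kashiwara--Kawai, cf.\ \cite{K1}, \cite{K2}, which is what the paper is implicitly invoking). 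Your phrase ``granting this fiberwise input'' is the honest description of where the argument stands, and it matches the paper's own appeal to section~\ref{rl}.
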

The proof follows from Hodge theory in section \ref{rl}, where we suppose  $\LL $ defined on the complement of $Y_U$  an horizontal relative  NCD in $X_U$. Then, $R^if_* \ilm$ underlies a VHS such that at each point $v \in U$,  the fiber $(R^if_* \ilm)_v \simeq \H^i ( X_v, \ilm)$  is isomorphic to the intersection cohomology of the restriction of $\LL$ to $U_v$ the fiber of $U$ at $v$ with its HS. The family $R^if_* \ilm$  for various degrees satisfy the relative Hard Lefschetz isomorphisms.
Hence,  the decomposition theorem  for the smooth induced morphism  $f_U: X_U \to U$ follows  from  the  results of Deligne \cite{DL}. 

\subsubsection
{The inductive step}\label{induction}  
   The  decomposition theorem is proved by descending induction on the dimension of the strata. We suppose the {\it decomposition} proved   over  the open subset $U_j:= V-V_j$ for some $j < n$, then we extend 
 the decomposition to  $U_{j-1}$ along $V_j - V_{j-1}$, that is across the union of  smooth strata $S$ of dimension $ j$. Let $N_v$ denote a general normal section to the strata  $S$ at a point $v \in S$, and $f_{N_v}: X_{N_v} \to N_v$ the restriction of $f$.
 
  By construction  of $\SS$, $f$ is locally trivial along $S$: let  $B_v$ be a small ball on $S$ with center   $v$, then  $f$ is locally homeomorphic to $f_{N_v}\times Id_{\vert B_v}: X_{N_v}\times B_v \to N_v \times B_v $ (subsection \ref{TW}) , and   the proof  may be  reduced to the case of $f_{N_v}: X_{N_v} \to N_v$ with the induced  stratification on $N_v$; that is  the  case  of an isolated point $ v \in N_v$ with $j=0$.
\subsubsection{The case of  zero  dimensional strata $V_0$}
  We prove now the case of  the zero  dimensional strata in   theorem  \ref{P}. Since the proof is local, we may suppose $V_0 = v $ reduced to one point $v$. Let 
 $ k_{v} : (V- v) \to V$ and $ i_{v} : v \to V$, $\LL$ of weight $a$ on a Zariski open subset of $X$: 
  \begin {prop}
  \label{prop}
  Let
   $K = {R f}_{ *}  \ilm$ on an algebraic variety $V$ of dim.$n$,  $\SS$ a  Whitney stratification of $V$ underlying a Thom  stratification of $f$, $v \in V_0 $ a point in the strata of dimension $0$, and suppose the following conditions on $V-v$ satisfied:
   
 \smallskip 
   1)  There exists  for each $i \in \Z $,
a  decomposition over the open subset $ V -v$  into a
direct sum  of intermediate extensions of  shifted local systems $\LL^i_{S_l}[l]$ (formula \ref{E1}) 
on the various strata $S_l \subset V-v$ of dimension $l \leq n$: 

 \centerline{$ \ph{i}{ K}_{\vert V - v} :=  k_{v}^* \ph{i}{ K}_{\vert V - v} \overset{\sim}{\longrightarrow} 
 \oplus_{S_l \subset V-v } \;\;  k_{v}^* {i_{S_l}}_{! *}\LL^i_{S_l}[l]$}
 
 2) Hard Lefschetz: $  {^p\!\HH}^{-i}(K)_{\vert V-v}
\stackrel{\eta^i}{\rightarrow}{^p\!\HH}^i(K)_{\vert V-v}$ is an isomorphism on $V-v$.
 
 3) The local purity  theorem   apply to $K$ at $v$ (theorem \ref{pure},  subsection \ref{pure1}).
 
 \medskip 
 \n Then    the decomposition as well Hard Lefschetz extend over $v$.
 
 Precisely, let $\rho: K \to {R k_v}_* K_{\vert V-v}$ and $ \alpha: {i_{v,*} R{i}_v^{!} K}
 \to K$,
 and consider  the long exact sequence
of perverse cohomology on $V$
 \begin{equation}\label{elf}
\stackrel{^p\!\delta^{i-1}}{\rightarrow}\ph{i}{i_{v,*} R{i}_v^{!} K}
\stackrel{^p\!\alpha^i}{\rightarrow}\ph{i}{K}
     \stackrel{^p\!\rho^i}{\rightarrow}\ph{i}{{R k_v}_* K_{\vert V-v}}
    \stackrel{^p\!\delta^i}{\rightarrow}
\end{equation}
 
 \n i) The perverse image of
${^p\!\rho^i}$ is isomorphic to: 
\begin{equation*}
Im \,{^p\!\rho^i}\overset{\sim}{\longrightarrow}  {k_v}_{! *} k_v^*\, \ph{i}{ K} \overset{\sim}{\longrightarrow} 
\oplus_{S_l }{i_{S_l}}_{! *} {\LL}^i_{S_l}[l].
\end{equation*}
ii) We have $\ph{i}{i_{v,*} R{i}_v^{!} K} \overset{\sim}{\longrightarrow} 
i_{v,*}H^i ( i_v^! K) $. Let ${\LL}^i_v:= \im(H^i ( i_v^! K) \to H^i ( i_v^* K))$, then $\ke \, {^p\!\rho^i} = \im \,{^p\!\alpha^i}\simeq i_{v,*}{\LL}^i_v$ is a polarized Hodge structure of weight $a +i$.

\smallskip
\n iii) 
The perverse cohomology
decomposes on $V$  
\begin{equation}\label{elf1}
 \ph{i}{K} \overset{\sim}{\longrightarrow}  \ke\,
{^p\!\rho^i} \oplus Im \,{^p\!\rho^i}. 
\end{equation}
In particular, we can extract from the sequence (\ref{elf}) an exact sub-sequence 
\begin{equation} 0 \to
  \oplus_{S_l \subset V-v, i-1-j \geq 0} R^{i-1-j} k_{v *}
 k_v^*({i_{S_l}}_{!*} \LL^j [l] \stackrel{^p\!\delta^{i-1}}{\rightarrow}\H^i_v(X, K)\stackrel{^p\!\alpha^i}{\rightarrow}i_{v,*}{\LL}^i_v \to 0
\end{equation}
iv)  Hard Lefschetz:  $  {^p\!\HH}^{-i}(K)
\stackrel{\eta^i}{\rightarrow}{^p\!\HH}^i(K)$ is an isomorphism on $V$.  
\end{prop}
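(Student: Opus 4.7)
Since $\dim\{v\}=0$, the inclusion $i_{v*}$ is perverse $t$-exact, so perverse sheaves supported at $v$ are just finite-dimensional vector spaces placed at $v$; the extension problem across $v$ then reduces to weight estimates supplied by local purity. Concretely, $\ph{i}{i_{v*}Ri_v^!K}=i_{v*}H^i(Ri_v^!K)$, and $\ke{^p\!\rho^i}=\im{^p\!\alpha^i}$, being a perverse subsheaf of $\ph{i}{K}$ supported at $v$, has the form $i_{v*}\LL^i_v$. Tracing the adjunction triangle $i_{v*}Ri_v^!K \to K \to Rk_{v*}k_v^*K$ and comparing stalks at $v$ identifies $\LL^i_v$ with $\im(H^i(Ri_v^!K) \to H^i(i_v^*K))$. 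Purity of $\LL^i_v$ of weight $a+i$ then follows from Proposition \ref{Sp}: $H^i(Ri_v^!K)$ has weights $\geq a+i$ while $H^i(i_v^*K)$ has weights $\leq a+i$, so their image is pure of weight $a+i$; the polarization is inherited from that on $\LL$ via Verdier self-duality of $\ilm$.

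\textbf{Part (i).} The image $\im{^p\!\rho^i}$ restricts on $V-v$ to $k_v^*\ph{i}{K}$, which by induction equals $\oplus_{S_l\subset V-v} k_v^* i_{S_l!*}\LL^i_{S_l}[l]$. I upgrade this local identification to the global statement $\im{^p\!\rho^i}\simeq k_{v!*} k_v^*\ph{i}{K}$ using the characterization of the intermediate extension as the unique perverse extension satisfying $i_v^*(k_{v!*}F)\in D^{<0}$ and $i_v^!(k_{v!*}F)\in D^{>0}$, equivalently admitting no nonzero simple subquotient supported at $v$. The verification of these vanishing conditions for $\im{^p\!\rho^i}$ is the content of Lemmas \ref{L1} and \ref{L2}: the strict upper weight bound $w<a+i$ on the shifted Link cohomology at $v$ supplied by Proposition \ref{Sp} prevents any simple subobject of $\im{^p\!\rho^i}$ supported at $v$, and the dual bound $w>a+i$ excludes simple quotients via Verdier self-duality of $\ilm$.

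\textbf{Parts (iii) and (iv).} By Parts (ii) and (i) both $\ke{^p\!\rho^i}=i_{v*}\LL^i_v$ and $\im{^p\!\rho^i}=k_{v!*}k_v^*\ph{i}{K}$ are pure polarizable perverse sheaves of the same weight $a+i$, so the short exact sequence $0\to\ke{^p\!\rho^i}\to\ph{i}{K}\to\im{^p\!\rho^i}\to 0$ splits by semisimplicity of the category of polarizable pure Hodge perverse sheaves of a fixed weight. Extracting the claimed sub-sequence from \eqref{elf} then follows from the compatibility of ${^p\!\alpha^i}, {^p\!\rho^i}, {^p\!\delta^i}$ with the stratum-indexed direct sum \eqref{E1} on $V-v$. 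For Hard Lefschetz (iv), decompose $\ph{\pm i}{K}$ via (iii) and check $\eta^i$ on each summand: on $k_{v!*}k_v^*\ph{i}{K}$ it is the intermediate extension of the inductive Hard Lefschetz isomorphism on $V-v$; on $i_{v*}\LL^i_v$, the isomorphism $\LL^{-i}_v\xrightarrow{\sim}\LL^i_v$ is forced by the polarization on the pure Hodge structure $\LL^i_v$ together with Hard Lefschetz applied to the relative hyperplane section class in the fiber.

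\textbf{Main obstacle.} The crux is Part (i): promoting $\im{^p\!\rho^i}$ to precisely the intermediate extension by excluding simple subquotients at $v$ using only weight information on the Link cohomology. This is where Lemmas \ref{L1} and \ref{L2} do the essential work, combining the Hodge theory of Sections 4--5 with the local purity of Theorem \ref{pure} / Proposition \ref{Sp} to yield the geometric interpretation of local purity advertised in the introduction as the paper's key innovation.
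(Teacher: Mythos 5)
Your strategy for Parts (i) and (ii) broadly follows the paper, but Part (iii) contains a genuine circularity, and Part (iv) elides the real work.

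On Part (ii): your statement ``tracing the adjunction triangle and comparing stalks at $v$ identifies $\LL^i_v$ with $\im(H^i(Ri_v^!K) \to H^i(i_v^*K))$'' glosses over the actual content. The object $\im\,{^p\!\alpha^i}$ is computed as a cokernel of the \emph{perverse} connecting map ${^p\!\delta^{i-1}}$, whereas $\LL^i_v$ is a cokernel of the \emph{ordinary} connecting map $\delta^{i-1}$; the domain of ${^p\!\delta^{i-1}}$, traced down to stalks at $v$, is only the perverse-truncated piece $\pt_{\leq i-1}\H^{i-1}(B_{X_v}-X_v,\ilm)$, not all of $\H^{i-1}(B_{X_v}-X_v,\ilm)$. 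The equality of the two images is precisely Lemma~\ref{L2}, and it is proved by a weight comparison: the quotient by $\pt_{\leq i-1}$ has weight $<a+i$ by semi-purity, while the target $\H^i_{X_v}(X,\ilm)$ has weight $\geq a+i$, so the truncation cannot lose image. You cite Proposition~\ref{Sp} but for the weights on $H^i(i_v^!K)$ and $H^i(i_v^*K)$ (these actually come from Corollary~\ref{wi}); you do not isolate the step where local purity forces the two images to coincide, which is the crux.

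On Part (iii): the appeal to ``semisimplicity of the category of polarizable pure Hodge perverse sheaves of a fixed weight'' is not available here. That semisimplicity is exactly what Saito's theory (or the decomposition theorem itself) delivers; invoking it to split the exact sequence is circular, since the whole point of this paper is to give an independent proof avoiding the machinery of Hodge modules. One would also have to know a priori that the middle term $\ph{i}{K}$ belongs to that semisimple category, which is again essentially the statement being proved. The paper instead produces the splitting \emph{explicitly}: it considers the dual morphism ${^p\!\beta^i}: \ph{i}{Rk_{v!}k_v^*K} \to \ph{i}{K}$ (Verdier dual to ${^p\!\rho^{-i}}$), shows by an argument dual to Part (i) and Lemma~\ref{lem}(iii) that ${^p\!\beta^i}$ factors through the intermediate extension $\oplus_{S_l} i_{S_l!*}\LL^i_{S_l}[l]$, and observes that ${^p\!\rho^i}$ restricted to $\im\,{^p\!\beta^i}$ is an isomorphism onto $\im\,{^p\!\rho^i}$. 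This gives a section of ${^p\!\rho^i}$ without any appeal to abstract semisimplicity, and is the Deligne--Gabber-style use of duality that the paper emphasizes as its key device. You should replace your appeal to semisimplicity with this explicit dual construction.

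On Part (iv): the assertion that $\eta^i: \LL^{-i}_v \to \LL^i_v$ is ``forced by the polarization together with Hard Lefschetz applied to the relative hyperplane section class in the fiber'' is not a proof. The paper's Section~\ref{Hard} carries out a genuine induction on dimension via a general hyperplane section $H$: it shows $\rho^!_i: \H^i_{X_v}(X,\ilm) \to \H^i_{X_v\cap H}(H,\ilm)$ is an isomorphism for $i<0$ using Artin--Lefschetz affine vanishing (and its dual for the Gysin maps), then transfers that isomorphism to the kernels of the intersection morphisms by a stratum-by-stratum comparison. The paper explicitly remarks that, unlike Weil~II, there is no single crucial case: the statement does not reduce to a polarization argument at $v$ alone but propagates through the induction on $\dim H$. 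Your sketch does not account for the injectivity/surjectivity range of $\rho'_i$ and $G'_{i+2}$ or the comparison of $\LL^j_{S_l}$ with $(\LL_{\vert H})^j_{S_l}$, which is where the real argument lives.
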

The proof  occupy the rest of subsections \ref{deco} and \ref{Hard}.

 In fact the third condition on local purity follows  from the first condition on the decomposition on $V-v$ (see subsection \ref{pure1});  its proof is just postponed to ease the exposition. The statement is given in full generality but the proof is given first for fibrations by NCD from which case it is deduced in general.
\subsubsection{Perverse cohomology of ${R k_{v}}_* K_{\vert
V-v}$ and ${R k_{v}}_! K_{\vert V-v}$}
In order to compute   successively: $Im \, {^p\!\rho_i}$,  $ Im\,  {^p\!\alpha_i}$ and to prove the splitting of  $\ph{i}{ K}$, 
we rely on the next calculus of  the  perverse cohomology of $ R k_{v *} (K_{\vert V-\{v\}})$ under the hypothesis  of the decomposition  on  $V- v$. 

\smallskip 
\begin{lem} \label{lem}
 Let $i: S \to V$ be a fixed strata of $V$ of dim.$l$, $\LL^j
[l] $ a family  of  local systems on  $S$ shifted by $l$ for  $j\in \Z$,  and  $K' := \oplus_j i_{ !*}(\LL^j[l])[-j]$ the direct sum, hence $\ph{i}{ K'} =  i_{ !*}\LL^i [l]$.
Let $v$  be a point in the closure of $S$ and
 $k_v: V-v \to V$. 

\smallskip 
i)    $\ph{i}{R k_{v *}
k_v^*i_{ !*} \LL^j[l]} =   R^{i} k_{v *}
(k_v^*i_{ !*}{\LL}^j [l])$ for $i > 0$, vanish for $i < 0$, and we have a short exact sequence of perverse sheaves for $i = 0$
 \begin{equation*}
 0 \to
i_{ !*}(\LL^j[l]) \to \ph{0}{R
k_{v *}k_v^*  i_{ !*}(\LL^j[l])}\stackrel{h}{\rightarrow}  R^0 k_{v *}
(k_v^*i_{ !*}\LL^j [l]) \to 0 
 \end{equation*}
 in particular ${\underline H}^0(  \ph{0}{R
k_{v *}k_v^* i_{ !*}\LL^j [l])})\overset{\sim}{\longrightarrow}  R^0 k_{v *}
(k_v^*i_{ !*}\LL^j [l])$, and 
a morphism  of   perverse sheaves   $\varphi:  \PP \to \ph{0}{R
k_{v *}k_v^*  i_{ !*}
\LL^j [l]}$
   factors through  $i_{ !*}
{\LL}^j [l]$ if  and only if $h \circ \varphi = 0$. 

\medskip
 ii) We deduce
 $\ph{i}{R k_{v *} k_v^*K'} = \oplus_{j\leq i} \ph{i-j}{R k_{v *}
k_v^*i_{ !*} \LL^j[l]}  =  \ph{i}{R k_{v *}
k_v^*  {^p\!\tau}_{\leq i}K'} $

\n  and  short exact sequences of perverse sheaves for all $i$
 \begin{equation*}
  0 \to
i_{ !*}({\LL}^{i}[l]) \to \ph{i}{R
k_{v *}k_v^*  K'} \stackrel{h}{\rightarrow} \oplus_{j\leq i} R^{i-j} k_{v *}
(k_v^*i_{ !*}{\LL}^j [l]) \to 0
 \end{equation*}
 where the last term  is a sum of vector spaces supported on $v$ in degee zero.
 
\smallskip 
 There are isomorphisms of sheaf cohomology  concentrated on $v$ in degree $0$
 \begin{equation*}\label{miss}
 {\underline H}^0(  \ph{i}{R k_{v *}k_v^* K')}\overset{\sim}{\longrightarrow}   \oplus_{j \leq i} R^i k_{v *} ( k_v^*i_{ !*}{\LL}^j [l-j])  
\end{equation*}
  and 
a morphism  of   perverse sheaves   $\varphi:  \PP \to\ph{i}{R k_{v *}k_v^* K'} $ factors through  $i_{ !*}
({\LL}^i [l])$ if  and only if $h \circ \varphi = 0$. 

\smallskip 
iii)  Dually,   $\ph{i}{R k_{v !} k_v^*K'} =  \oplus_{j\geq i} \ph{i-j}{R k_{v !}
k_v^*i_{ !*}\LL^j[l]}$ and  we have short exact sequences
\begin{equation*}
 0 \to  \oplus_{j\geq i} i_{v *} H^{i-j-1}( i_v^*i_{ !*}{\LL}^j_{S_l}[l] )  \stackrel{h'}{\rightarrow} \ph{i}{R
k_{v !}k_v^* K'} \to  i_{ !*}{\LL}^i [l] \to 0
 \end{equation*}
where the first term is 
 supported on $v$ in degee zero.

A  morphism $\varphi$ defined on  $\ph{i}{R k_{v !}k_v^* K' }$  factors through  $i_{ !*}
({\LL}^{i}[l])$ if and only if $ \varphi  \circ h'= 0$.
  \end{lem}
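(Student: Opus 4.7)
My plan is to derive all three statements from a single distinguished triangle together with a direct $t$-structure analysis. The starting point is the adjunction triangle
\begin{equation*}
i_{v*}i_v^{!}\mathcal{P} \longrightarrow \mathcal{P} \longrightarrow Rk_{v*}k_v^{*}\mathcal{P} \xrightarrow{+1}
\end{equation*}
for a perverse sheaf $\mathcal{P}$ on $V$, specialized to $\mathcal{P} = i_{!*}\LL^j[l]$. Three standard facts will do the work: $Rk_{v*}$ is left $t$-exact for the perverse $t$-structure (open immersion); the defining property of the intermediate extension at the closed point $v$ gives the support/cosupport vanishings $i_v^{*}\mathcal{P} \in D^{\leq -1}$ and $i_v^{!}\mathcal{P} \in D^{\geq 1}$; and a complex supported at a point has perverse cohomology equal to ordinary cohomology, so $\ph{m}{i_{v*}\CC} = i_{v*}H^m(\CC)$. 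The long exact sequence of perverse cohomology of the triangle, combined with $\ph{m}{\mathcal{P}}=0$ for $m\ne 0$, will then give at once $\ph{m}{Rk_{v*}k_v^{*}\mathcal{P}} = 0$ for $m<0$, the asserted short exact sequence for $m=0$, and isomorphisms $\ph{m}{Rk_{v*}k_v^{*}\mathcal{P}} \cong i_{v*}H^{m+1}(i_v^{!}\mathcal{P})$ for $m\geq 1$.

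To identify this last group with $R^m k_{v*}(k_v^{*}\mathcal{P})$, I will apply $i_v^{*}$ to the same triangle and take ordinary cohomology. The vanishings above collapse the sequence into isomorphisms $H^m(i_v^{*}Rk_{v*}k_v^{*}\mathcal{P}) \cong H^{m+1}(i_v^{!}\mathcal{P})$ for $m\geq 0$, and the left-hand side is by construction the stalk at $v$ of the skyscraper $R^m k_{v*}(k_v^{*}\mathcal{P})$. This will complete (i); the factorization criterion is then immediate from the exactness of the SES, since the subobject is precisely $\ke h$.

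For (ii), decompose $K' = \bigoplus_j i_{!*}\LL^j[l][-j]$, use additivity of perverse cohomology and $\ph{i}{C[-j]} = \ph{i-j}{C}$, and apply (i) termwise. Only summands with $j\leq i$ contribute, which simultaneously yields the identity $\ph{i}{Rk_{v*}k_v^{*}K'} = \ph{i}{Rk_{v*}k_v^{*}\pt_{\leq i}K'}$. Assembling the SES of (i) across summands then produces the asserted SES for $K'$; passing to $\underline H^0$ annihilates the perverse subsheaf $i_{!*}\LL^i[l]$ (its ordinary cohomology sheaves sit in strictly negative degrees, granted $v\notin S$), leaving only the direct sum of skyscraper contributions.

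Part (iii) I will obtain as the Verdier dual picture: replace the triangle by $Rk_{v!}k_v^{*}\mathcal{P} \to \mathcal{P} \to i_{v*}i_v^{*}\mathcal{P} \xrightarrow{+1}$, use that $Rk_{v!}$ is right $t$-exact, and let the support condition $i_v^{*}\mathcal{P} \in D^{\leq -1}$ play the role that the costalk vanishing played in (i). The identical LES analysis will then produce the dual SES with $i_{v*}H^{i-j-1}(i_v^{*} i_{!*}\LL^j[l])$ in place of the costalk contributions, and the dualized factorization criterion $\varphi\circ h'=0$. The only substantive obstacle I anticipate is careful bookkeeping of perverse versus cohomological degrees for the skyscraper terms at $v$; once one exploits that these coincide on a point, the argument is otherwise pure $t$-structure formalism.
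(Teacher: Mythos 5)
Your proposal is correct and lands on essentially the same argument as the paper. The paper leads with the truncation identity $i_{!*}\LL'[l] = \tau_{\leq -1}Rk_{v*}k_v^*i_{!*}\LL'[l]$ (a standard characterization of intermediate extension across a single point), whereas you unwind the equivalent perverse support/cosupport conditions $i_v^*\PP \in D^{\leq -1}$, $i_v^!\PP \in D^{\geq 1}$ via the adjunction triangle and its long exact sequence; these two routes are interchangeable, and your triangle chase in fact spells out the identification of $\ph{m}{Rk_{v*}k_v^*\PP}$ with the skyscraper $R^mk_{v*}(k_v^*\PP)$ a bit more carefully than the paper does. The reduction to $K'$ by shifts in (ii) and the dual triangle for $Rk_{v!}$ in (iii) match the paper's treatment.
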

 \begin{proof} 
 In the case of  a unique  local system $K' = i_{!*}\LL'[l]$,  we have  by definition  $ i_{
!*}{\LL}'[l] = \tau_{\leq -1} Rk_{v *} k_v^*i_{ !*}{\LL}'[l]$; moreover $ \tau_{\geq 0} Rk_{v *} k_v^*i_{ !*}{\LL}'[l]$ is supported by $v$. 

Then we have the following exact sequence and  isomorphisms 
\begin{equation*}
\begin{split}
&0 \to  i_{!*}{\LL}'[l] \to \ph{0}{Rk_{v
*}k_v^*i_{S_l!*}{\LL}'[l]}\to i_{v,*} R^0 k_{v *}k_v^*(i_{!*}{\LL}'[l]\to 0\\
 & \ph{i}{R k_{v *} k_v^*i_{!*}{\LL}'[l]} = 0 \,\, {\rm for}\,\,  i < 0\\
& \ph{i}{Rk_{v *} k_v^*i_{!*}
 {\LL}'[l]} = R^i k_{v *}k_v^*i_{!*}
({\LL}'[l]) \, {\rm for} \, i >0 \\
&{\underline H}^0(\ph{0}{ Rk_{v *} k_v^*i_{!*}{\LL}'[l])}\overset{\sim}{\longrightarrow}   R^0 k_{v *}k_v^*(i_{!*}{\LL}'[l] 
 \end{split} 
 \end{equation*}
 respectively, we have  dual statements for $Rk_{v
!}k_v^*$
 \begin{equation*}
\begin{split}
&0 \to  i_{v *} H^{-1}( i_v^*(i_{!*}{\LL}'[l]) \to  \ph{0}{Rk_{v
!}k_v^*i_{S_l!*}{\LL}'[l]} \to i_{!*}{\LL}'[l]  \to 0\\
 & \ph{i}{R k_{v !} k_v^*i_{!*}{\LL}'[l]} = 0 \,\, {\rm for}\, \, i >  0\\
& i_{v *} H^{i-1}( i_v^*i_{!*}{\LL}'[l] )\overset{\sim}{\longrightarrow}  
\ph{i}{Rk_{v !} k_v^*i_{!*}
 {\LL}'[l]}  \, \,{\rm for} \,\, i < 0 \\
& i_{v *}H^{-1}( i_v^*(i_{!*}{\LL}'[l]) \overset{\sim}{\longrightarrow}  
{\underline H}^0(\ph{0}{Rk_{v !} k_v^*i_{!*}{\LL}'[l])}
 \end{split} 
 \end{equation*}

 If we set  $\LL' = \LL^j$ and since for any complex $C$, $\ph{i}{ C [-j]} = \ph{i-j}{C}$,  we deduce the contribution of  the component $\LL^j[l]$  in  the sum in $K'$:\\
$\ph{i}{R k_{v *}k_v^* i_{!*}\LL^j[l-j]} = R^i k_{v *}
 (k_v^* i_{!*}\LL^j[l-j]) $ for $i > j$,\\
 $\ph{i}{Rk_{v *}k_v^* i_{!*}\LL^j[l-j]} = 0 $ for $i <
j$,   and the dual statements for $R k_{v !}k_v^*$.

Moreover, we deduce  a short exact sequence \\
$ 0 \to  i_{!*}\LL^j[l] \to
\ph{j}
{R k_{v *}k_v^* i_{!*}\LL^j [l-j]} \xrightarrow{h} R^j k_{v *}
 k_v^* i_{!*}\LL^j [l-j] \to 0$
 
\n and $ {\underline H}^0(  \ph{j}{Rk_{v
*}k_v^* i_{!*}{\LL}^j[l-j]} \overset{\sim}{\longrightarrow}   R^j k_{v *}k_v^* i_{!*}{\LL}^j[l-j]$. Dually\\
$ 0 \to   i_{v *} H^{j-1}( i_v^*i_{!*}\LL^j[l-j] )  \stackrel{h'}{\rightarrow} \ph{j}{R
k_{v !}k_v^* i_{!*}\LL^j[l-j] } \to  i_{ !*}\LL^j [l] \to 0$\\
\n and 
 $i_v^* H^{j-1}( i_v^*i_{!*}\LL^j[l-j] )  \overset{\sim}{\longrightarrow}  {\underline H}^0 (\ph{j}{R
k_{v !}k_v^* i_{!*}\LL^j[l-j] }).$
 \end{proof}
 
  {\bf Proof of the proposition \ref{prop}.}
 Although the statement is for any projective $f$,  the proof is given first in the case of fibrations by NCD.
 By the above lemma  (\ref{lem}, ii) we have 

\smallskip 
\centerline{$ {k_v}_{!*} k_v^*  \ph{i}{K} = \oplus_{S_l \subset V-v }{i_{S_l}}_{! *} \LL^i_{S_l}[l] \subset  \ph{i}{R k_{v *} k_v^*  \ph{i}{K}[-i]} \subset  \ph {i}{R k_{v *} k_v^*K}$}

\smallskip 
 i) {\it  We prove ${k_v}_{!*}k_v^*\ph {i}{ K} \subset \im \, {^p\!\rho^i} \subset  \ph{i}{R k_{v *} k_v^*  \ph{i}{K}[-i]}$}:

\n We deduce from the canonical morphism $\phi: R
{k_v}_!  \to R
{k_v}_*$ and the decomposition hypothesis 
 $ k_v^*K = \oplus_j k_v^* \ph {j}{ K}[-j] $ on $V - v$, the first equality below
 \begin{equation*} 
\begin{split}
&\im\, \biggl(\ph {i}{R
{k_v}_! k_v^*K}  \xrightarrow{^p\!\phi_i}  \ph {i}{R
{k_v}_* k_v^*K }\biggr) =
 \\
   \oplus_j & \im\, \biggl(\ph {i}{ R {k_v}_ ! \ph {j}{ k_v^*K}[-j]}  \xrightarrow{^p\!\phi^j_i}  \ph {i}{ R {k_v}_ * \ph {j}{ k_v^*K}[-j]}\biggr) =\\
 &  \im\, \biggl(\ph {0}{ R {k_v}_ ! k_v^*\ph {i}{ K}}  \xrightarrow{^p\!\phi^i_i}  \ph {0}{ R {k_v}_ *k_v^*\ph {i}{ K}}\biggr) = {k_v}_{!*}k_v^*\ph {i}{ K}
\end{split}
\end{equation*}
 In the direct sum over $j$, only the term for $j = i$ is significant since for  $j<i$
 
 \n  $\ph {i}{R {k_{v}}_! k_v^* {i_{S_l}}_{! *} {\LL}^j_{S_l}[l-j]} = 0$,  and   $\ph {i}{R {k_{v}}_* k_v^* {i_{S_l}}_{! *} {\LL}^j_{S_l}[l-j]} = 0$ for   $j>i$; hence the second equality follows.     
The last equality follows from  the construction of ${k_v}_{!*}k_v^*\ph {i}{ K}$ in (\cite{BBD}, subsection 2.1.7) as image of  the morphism $ {^p\!\phi^i_i}$. 

Since $ {^p\!\phi^i_i}$ factors through $\ph {i}{K}$, its  image  ${k_v}_{!*}k_v^*\ph {i}{ K}$
is contained in the image of  $\ph {i}{ K}$.
 Since   $k_v^*  \ph{i}{K}$ is a component of $k_v^*  K[i]$, we remark that $ \ph {0}{ R {k_v}_ *k_v^*\ph{i}{ K}}$ is a component of $\ph{0}{R k_{v *} k_v^*K[i]} = \ph{i}{R k_{v *} k_v^*K } $.

The existing  morphism on $V-v$: $k_v^*  \ph{i}{K} \rightarrow k_v^* K[i] $ extends by $R k_{v *}$ and $R{k_{v}}_!$ such that 
 $ {^p\!\phi^i_i}$ is compatible with the factorization
 
\centerline{$\ph {0}{R{k_{v}}_! k_v^* K[i]} \rightarrow \ph {0}{K[i]}
\xrightarrow{\ph {0}{\rho^i}} \ph {0}{R k_{v *} k_v^*K[i]}$}

\smallskip
\n  Hence $\im 
\, {^p\!\rho^i}$ in the formula \ref {elf} contains ${k_v}_{!*}k_v^*\ph {i}{ K}= \oplus_{S_l \subset V-v }{i_{S_l}}_{! *} \LL^i_{S_l}[l] $.

\n   To show that   ${^p\!\rho^i}$     factors  through
 $  \oplus_{S_l \subset V-v }{i_{S_l}}_{! *} \LL^i_{S_l}[l] \subset \ph {i}{R k_{v *} k_v^*K}$, we   prove,  in view of  lemma (\ref{lem}, ii), that the induced morphism  $ \rho^i_0 := H^0 (i^*_v \, {^p\!\rho^i})$   vanish in degree $0$.
 \begin{lem}\label{L1}
  The  morphism 
 $ \rho^i_0 $ induced by $i^*_v \, {^p\!\rho^i}$   on the cohomology in degree zero
vanish

\smallskip
\centerline{ $ \rho^i_0:  H^0( i^*_v \ph {i}{K})
{\longrightarrow} 
H^0( i^*_v \ph {i}{ R k_{v *} k_v^*K}) $}
 \end{lem}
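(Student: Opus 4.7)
The strategy is a weight argument: I plan to show $\rho^i_0$ is a morphism of mixed Hodge structures whose source has weights $\leq a+i$ and whose target has weights $> a+i$, forcing $\rho^i_0 = 0$ by strictness of the weight filtration on morphisms of MHS.

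My first step is to identify the relevant stalks. For $v$ an isolated zero-dimensional stratum and $f$ projective, proper base change yields $i_v^* K \simeq R\Gamma(X_v, \ilm)$ and $i_v^* R k_{v*} k_v^* K \simeq R\Gamma(X^*_{B_v}, \ilm)$ for $B_v$ a sufficiently small ball around $v$. The perverse $t$-structure distinguished triangles $\pt_{\leq i-1} K \to \pt_{\leq i} K \to \ph{i}{K}[-i] \to$, after applying $i_v^*$ and taking cohomology, connect $H^0 i_v^* \ph{i}{K}$ to the graded piece $Gr^{\pt}_i H^i(X_v, \ilm)$ and, in parallel, $H^0 i_v^* \ph{i}{R k_{v*} k_v^* K}$ to $Gr^{\pt}_i H^i(X^*_{B_v}, \ilm)$. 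Functoriality of $\ph{i}{\cdot}$ and $\pt$ ensures that $\rho^i_0$ matches the natural restriction induced by the inclusion of the punctured tubular neighborhood into the ambient space.

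I would then install MHS on both sides. Working in the fibration-by-NCD case via proposition \ref{RSt}, the bifiltered logarithmic complex machinery of sections $4$--$5$, together with propositions \ref{global} and \ref{mainp}, endows $H^*(X_v, \ilm)$ and $H^*(X^*_{B_v}, \ilm)$ with MHS for which $\pt$ is a filtration by sub-MHS. Consequently $Gr^{\pt}_i H^i(X_v, \ilm)$ and $Gr^{\pt}_i H^i(X^*_{B_v}, \ilm)$ are themselves MHS, and the restriction between them is a morphism of MHS; functoriality transfers this to $\rho^i_0$. The weight bounds then finish the job: by property (2) of subsection \ref{LC}, $H^i(X_v, \ilm)$ has weights $\leq a+i$, hence so does $Gr^{\pt}_i H^i(X_v, \ilm)$; by proposition \ref{Sp} (semi-purity) applied with $r = i$, the inequality $r - i = 0 \geq 0$ gives $Gr^{\pt}_i H^i(X^*_{B_v}, \ilm)$ weights $> a+i$. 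Strictness of the weight filtration forces the map to vanish, so $\rho^i_0 = 0$.

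The main obstacle lies not in the weight argument itself --- which is essentially immediate --- but in rigorously setting up the MHS framework. One must verify that the perverse filtration on the cohomology of $X_v$ and of the punctured tubular neighborhood $X^*_{B_v}$ is a filtration by sub-MHS, and that the identifications linking $H^0 i_v^* \ph{i}{\cdot}$ to graded pieces of $\pt$ are MHS-compatible. This is precisely the content of propositions \ref{global} and \ref{mainp} and the logarithmic complex theory of sections $4$--$5$, which are set up in advance for exactly this application; once they are in hand, the lemma is an immediate consequence of weight strictness.
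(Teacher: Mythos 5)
Your proposal is correct and takes essentially the same route as the paper: both proofs factor $\rho^i_0$ through the cohomology of the fibre $X_v$, which has weight $\leq a+i$, into a graded piece $Gr^{\pt}_i$ of the link cohomology $\H^i(B_{X_v}-X_v,\ilm)$, which has weight $> a+i$ by proposition \ref{Sp}, and conclude by strictness of morphisms of MHS. The paper is a touch more careful in the identification of the source (establishing $H^0(i_v^*\ph{i}{K}) \simeq \H^0(B_v, \pt_{\leq i}K)$ and exhibiting the factorization through $\pt_{\leq i}\H^i(X_v,\ilm)$, rather than equating the source with $Gr^{\pt}_i\H^i(X_v,\ilm)$ outright), but this does not change the weight argument and your strategy is the same one used in the text.
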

 This is a basic argument where Hodge theory is needed in the proof. We assume  that $f $ is a fibration by NCD over the strata (section \ref{St1}), in which case  we refer to section $4$ for  Hodge theory   and to (section \ref{pure} below),  for the semi-purity theorem  needed here.

  First, we remark the following geometrical interpretation in terms of a small ball $B_v  \subset V$ and its inverse $B_{X_v} \subset X$. 
 By lemma (\ref{lem} i), we have

\centerline{ $ H^0(i_v^* \ph {0}{ R k_{v *} k_v^*\ph {i}{K}}) \simeq H^0(\oplus_{S_l \subset V-v }k_v^* {i_{S_l}}_{! *}{\LL}_l^i [l])
\overset{\sim}{\longrightarrow}     \H^0 (B_v - v, \ph {i}{K})  $}
\n Second we deduce from the triangle:
  
\centerline {   $ \pt_{< 0}(K[i]) \to   \pt_{\leq 0}(K[i]) \to \ph {i}{K} \xrightarrow{[1]}$} 
 
\n  an exact sequence:
 
\centerline { $\H^{0} (B_v , \pt_{< 0}(K[i]))\to \H^{0} (B_v , \pt_{\leq 0}K[i])  \to \H^0 (B_v , \ph{i}{K})$
$ \to \H^1 (B_v , \pt_{< 0}K[i])$} 
\n where $\H^{r} (B_v , \pt_{< 0}(K[i])) = 0$ for $r =0, 1$,  hence

\smallskip
 \centerline{$ \H^{0} (B_v , \pt_{\leq i}(K)) \simeq   \H^0 (B_v , \ph{i}{K}) \simeq   H^0(i^*_v\,\ph {i}{ K}) $}

\smallskip
\n which enables the   factorization of $\rho^i_0$  as a morphism 

\smallskip
\n $ H^0(i^*_v\,\pt_{\leq i}K)=  \H^{0} (B_v, \pt_{\leq i}K) \to  \H^{0} (B_v -v , \pt_{\leq i}K)  \longrightarrow  \H^{0}
(B_v - v, \ph {i}{K})$.

\smallskip
\n 
 In other terms, we have an interpretation  of  $\rho^i_0$ as a composition morphism via $\rho'$
$$  \H^{0} (B_v, \pt_{\leq i}K)  \overset{\rho'}{\longrightarrow} \pt_{\leq i}\H^{i} (B_{X_v} - X_v, \ilm){\longrightarrow}  Gr^{\pt}_i\H^{i} (B_{X_v} - X_v, \ilm
).$$

\n  and through $\H^{i} (X_v,  \ilm)$ as follows:

\smallskip 
\n  $  \H^{0} (B_v, \pt_{\leq i}K)  \to \pt_{\leq i}\H^{i} (X_v,  \ilm)\to  \pt_{\leq i}\H^{i} (B_{X_v} - X_v, \ilm
)$
 
 \smallskip 
 \n where the space $\H^{i} (X_v,  \ilm) $ has weight $w \leq a+i$ (corollary \ref{wi}). By assumption  the semi purity  apply to $ Gr^{\pt}_i\H^{i} (B_{X_v} - X_v,  \ilm)$ of weight
   $w > a+i$ (proposition \ref{Sp}), hence we deduce  $\rho^i_0 =0$. 
 
 \medskip
 \n  ii) {\it Proof of } $Im \,{^p\alpha^i}\overset{\sim}{\longrightarrow}  i_{v,*}{\LL}^i_v$.
 We deduce from  lemma (\ref{lem} ii) as above:

$H^0({^p\!\mathcal { H}}^{i-1} ( {{R k}_{v}}_*
      K_{\vert V-\{v\}}))
\overset{\sim}{\longrightarrow}  R^{i-1} k_{v *}k_v^* ({^p\!\tau_{\leq i-1}}K)$.
  
\n    We deduce from the formula (\ref{elf}) and the computation of ${^p\!\rho^{i-1}}$ above, a sub-exact sequence of perverse sheaves :
   \begin{equation*}
 0 \to  R^{i-1} k_{v *}k_v^* ({^p\!\tau_{\leq i-1}}K)
 \stackrel{^p\!\delta^{i-1}}{\rightarrow} \H^i_{X_v} (X,\ilm)
\stackrel{^p\!\alpha^i}{\rightarrow}{^p\!\mathcal H}^i ( K)
     \stackrel{^p\!\rho^i}{\rightarrow}
\end{equation*} 
where   the perverse
  cohomology of $i_v^{!}(K)$ coincides with its cohomology 
   $\ph{i}{i^!_v K}= H^i_{X_v}(X, \ilm)$ as a complex of vector spaces, hence $ \im \,{^p\!\alpha^i} = \coke \, {^p\!\delta^{i-1}}$.
  
   On the other hand, by definition  ${\LL}^i_v$
is the image of   $I^i$ in the exact sequence 
\begin{equation}\label{I}
H^{i-1} ({i^*_v}{ R k_{v}}_* K_{\vert V-\{v\}} )
\stackrel{\delta^{i-1}}{\rightarrow}
 \H^i_v (V, K) \stackrel{I^i}{\rightarrow} H^i({i^*_v}  K)
  \stackrel{\rho^i}{\rightarrow}
   H^i ({i^*_v} R k_{v*} K_{\vert V-\{v\}} ).
\end{equation}
hence ${\LL}^i_v :=  \im \, I^i = \coke \, \delta^{i-1}$.
  
We need to prove $\im \, \delta^{i-1} = \im \,{^p\!\delta^{i-1}}$,
 to deduce the result in the form of  an  exact sequence
\begin{equation}\label{P1}
  0 \to i_{v,*}{\LL}^i_v \to {^p\!\mathcal { H}}^{i}(K) \to
\oplus_{S_l \subset V-v } {i_{S_l}}_{! *} {\LL}^j_{S_l}[l] \to 0.
\end{equation}

 We deduce from  (lemma \ref{lem}
  ii) and the decomposition on $V-v$:

\smallskip
\n $R^{i-1} k_{v *}k_v^* ({^p\!\tau_{\leq i-1}}K)  \overset{\sim}{\longrightarrow}  
H^0({^p\!\mathcal { H}}^{i-1} ( {{R k}_{v}}_*
      K_{\vert V-\{v\}}))
 = ({^p\!\tau_{\leq i-1}}\H^{i-1}(B_{X_v}-X_v, \ilm)) $

\smallskip
\n  and the following interpretation  of the image:
 
 \smallskip
  \n  $ {^p\!\delta^{i-1}} ( R^{i-1} k_{v
*} k_v^* ({^p\!\tau_{\leq i-1}}K)) \overset{\sim}{\longrightarrow} 
\delta^{i-1}({^p\!\tau_{\leq i-1}}\H^{i-1}(B_{X_v}-X_v, \ilm))  $.
 
 \smallskip
\n 
 Since $H^{i-1} ({i^*_v}{ R k_{v}}_* K_{\vert V-\{v\}} ) = \H^{i-1}(B_{X_v}-X_v, \ilm))$,  we need to prove 
\begin{lem}\label{L2}

$\delta^{i-1}({^p\!\tau_{\leq i-1}}\H^{i-1}(B_{X_v}-X_v, \ilm)) = \delta^{i-1}(\H^{i-1}(B_{X_v}-X_v, \ilm)) $ 

\end{lem}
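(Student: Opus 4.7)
\textbf{Proof plan for Lemma \ref{L2}.} The equality asserts that the connecting map $\delta^{i-1}$ from the local cohomology long exact sequence takes the same image on the sub-$\pt_{\leq i-1}\H^{i-1}(B_{X_v}-X_v,\ilm)$ as it does on the full $\H^{i-1}(B_{X_v}-X_v,\ilm)$. Equivalently, I want to show that $\delta^{i-1}$ vanishes on the quotient $\H^{i-1}(B_{X_v}-X_v,\ilm)/\pt_{\leq i-1}\H^{i-1}(B_{X_v}-X_v,\ilm)$, i.e. on all graded pieces $Gr^{\pt}_j\H^{i-1}(B_{X_v}-X_v,\ilm)$ for $j\geq i$. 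The strategy is the same weight-theoretic pigeonhole used for Lemma \ref{L1}, but applied to the opposite end of the long exact sequence: source weights are strictly less than target weights.

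First, I invoke proposition \ref{mainp} together with proposition \ref{global} to ensure that the perverse filtration on $\H^{i-1}(B_{X_v}-X_v,\ilm)$ is by sub-MHS and that the local cohomology long exact sequence
\[
\cdots\to\H^{i-1}(B_{X_v}-X_v,\ilm)\xrightarrow{\delta^{i-1}}\H^i_{X_v}(X,\ilm)\to\H^i(B_{X_v},\ilm)\to\cdots
\]
is a sequence of MHS morphisms. In particular, $\delta^{i-1}$ respects the MHS structures on both sides.

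Second, I apply the semi-purity statement (proposition \ref{Sp}): for $r=i-1$ and $j\geq i$, one has $r-j\leq -1$, hence
\[
Gr^{\pt}_j\H^{i-1}(B_{X_v}-X_v,\ilm)\quad\text{has weight}\quad w\leq a+(i-1)=a+i-1.
\]
On the target side, by property (3) recalled in subsection \ref{LC}, the local cohomology $\H^i_{X_v}(X,\ilm)$ of the pure complex $\ilm$ of weight $a$ satisfies $w\geq a+i$. Since $\delta^{i-1}$ is a morphism of MHS, its restriction to $Gr^{\pt}_j\H^{i-1}(B_{X_v}-X_v,\ilm)$ for $j\geq i$ factors as a map from a MHS of weights $\leq a+i-1$ into a MHS of weights $\geq a+i$, and therefore must vanish. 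Consequently $\delta^{i-1}$ factors through $\pt_{\leq i-1}\H^{i-1}(B_{X_v}-X_v,\ilm)$, which gives the desired equality of images.

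The main obstacle is verifying that the relevant structures are genuinely available at this point of the induction: that the perverse filtration on $\H^{i-1}(B_{X_v}-X_v,\ilm)$ is by sub-MHS in the local setting of the link at $v$, and that the coboundary $\delta^{i-1}$ is compatible with the MHS. This relies on the local version of proposition \ref{global} formulated in proposition \ref{mainp}, which in turn uses the logarithmic-complex description of sections $4$--$5$; in the fibration by NCD case this is direct, and the general case follows via the reduction of proposition \ref{RSt}. The weight bound on the target, $w\geq a+i$ on $\H^i_{X_v}(X,\ilm)$, is one of the three key properties ((1)--(3)) of the bifiltered logarithmic complexes listed at the end of subsection \ref{LC}, and is therefore available as an input here.
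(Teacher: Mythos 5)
Your argument is correct and coincides with the paper's own proof of Lemma \ref{L2}: both observe that by semi-purity (Proposition \ref{Sp}) the quotient $\H^{i-1}(B_{X_v}-X_v,\ilm)/\pt_{\leq i-1}$ has weights $<a+i$, while the target $\H^i_{X_v}(X,\ilm)$ has weights $\geq a+i$, so the MHS morphism $\delta^{i-1}$ must vanish on that quotient and its image is already achieved on $\pt_{\leq i-1}$. Your version merely spells out more explicitly that $\delta^{i-1}$ is compatible with the MHS via Propositions \ref{mainp} and \ref{global}, which the paper leaves implicit.
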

The proof is based again on  the semi purity theorem.
Indeed, the quotient space
$\H^{i-1}(B_{X_v}-X_v, \ilm))/ {^p\!\tau_{\leq i-1}}$ has weight
$w < a+i$ by the semi purity theorem, hence the image   of $
{^p\!\delta^{i-1}} $ and $ \delta^{i-1}$ in $\H^i_{X_v} (X, \ilm)$
of weight $w \geq a+i$, are the
same. 

iii)  {\it Proof of the splitting $  \ph {i}{K} \overset{\sim}{\longrightarrow}  i_{v *} {\LL}^i_v \oplus (\oplus_{S_l \subset V-v } {i_{S_l}}_{! *}
{\LL}^i_{S_l}[l])$.}
We consider the morphisms
\begin{equation}\label{import}
\ph {i}{R k_{v!} k_v^*K}
  \stackrel {^p\!\beta^i}{\rightarrow}
   \ph {i}{K} \stackrel{^p\!\rho^i}{\rightarrow}  \, \im\,
{^p\!\rho^i} \subset \ph {i}{R k_{v*} k_v^*K}
\end{equation} 
 where $\beta^i$ is dual to 
 $\rho^{-i}$ as $\beta: R k_{v!} k_v^*K \to K$ is dual to $\rho$. We deduce from  lemma (\ref{lem}, iii) a  dual argument to the proof  in ii) above to assert that $\beta^i$
 factors through $\oplus_{S_l \subset V-v } {i_{S_l}}_{! *}
{\LL}^i_{S_l}[l]$, hence  ${^p\!\rho^i}$ induces an isomorphism $ \im \, {^p\!\beta^i} \to \im \, {^p\!\rho^i}\circ {^p\!\beta^i} := \oplus_{S_l \subset V-v } {i_{S_l}}_{! *}
{\LL}^i_{S_l}[l]$ and  defines a splitting.

 In the sequence of morphisms: $\H^i_v(V, K) \xrightarrow {^p\!\alpha^i}
 \ph {i}{K}\xrightarrow {^p\!\gamma^i}H^i(i_v^* K)$ we have $\im \,{^p\!\alpha^i} = \ke \, {^p\!\rho^i}$ and $ \ke \,{^p\!\gamma^i}= \im\, {^p\!\beta^i}$, hence
${^p\!\gamma^i}$ induces  on $\im \,{^p\!\alpha^i}$ an isomorphism to $\im \,{^p\!\gamma^i}\circ {^p\!\alpha^i}:= i_{v *}{\LL}^i_v$.
 
\begin{rem}\label{special} 
i) We deduce from the above proof  (formula 2.3 and 2.4) 

\smallskip
\centerline{ $ \im \ {^p\delta^{i-1}}  \simeq  \oplus_{S_l \subset V-v , i-j > 0 } H^{i-j}_v ({i_{S_l}}_{! *}
\LL^j_{S_l}[l]), \, \im \ {^p\delta^{i-1}} = \ke \ {^p\alpha^i} = \ke I^i =   \im \ \delta^{i-1}$}

\smallskip
\n and  the following  isomorphism induced by ${^p\delta^{i-1}}$:

$ \oplus_{S_l \subset V-v, i-1-j \geq 0} R^{i-1-j} k_{v *} 
 k_v^*{i_{S_l}}_{!*} \LL^j [l]
\overset{\sim }{\longrightarrow}  \oplus_{S_l \subset V-v , i-j > 0 }
 H^{i-j}_v ({i_{S_l}}_{! *}\LL^j_{S_l}[l])$

\medskip
ii) Hodge theory is used in the proof of the three lemmas \ref{L3}, \ref{L1}, \ref{L2}.
{\it We use semi purity at $X_v$ or equivalently   local purity at   $v$ (section \ref{pure1})  to extend the decomposition property  across $v$}.
\end{rem}
 \subsection{Hard Lefschetz} \label{Hard}
  We  check  for all $i \geq 0$, $ \eta^{i}: \ph {-i}{K} \overset{\sim}{\longrightarrow}  \ph {i}{K}$ is an isomorphism. By assumption the restriction  of $ \eta^{i}$ to  $V-v$ on $  k_v^*\ph{-i}{K}$
  is an  isomorphism, hence it remains an  isomorphism  on the intermediate extension 
  
\smallskip
\centerline{ $  (k_v)_{!*} k_v^*\ph{-i}{K} \overset{\sim}{\longrightarrow}  \oplus_{S_l \subset V-v, j} {i_{S_l}}_{! *}{\LL}^j_{S_l}[l] \subset   \ph {-i}{K} $}

\n across the point $v$ in the strata $V_0$ of dimension $0$.
  It remains to prove Hard Lefschetz   $ {\LL}^{-i}_v   \overset{\sim}{\longrightarrow} {\LL}^{i}_v$
 where ${\LL}^i_v := \im \, (\H^i_{Y_v}
(X, \ilm) \stackrel{I^i}{\rightarrow} \H^i(Y_v, \ilm))$.
\begin{lem} i) The cup-product with the class of an hyperplane section 
induces isomorphisms  $\eta^{i}:{\LL}^{-i}_v \to {\LL}^{i}_v$ for $i  > 0$.\\
ii) The $HS$ on ${\LL}^{i}_v$   is Poincar\'e dual to
${\LL}^{-i}_v$ for $i \geq 0$. 
 \end{lem}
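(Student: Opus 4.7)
The plan is to exploit the compatibility of cup product with $\eta$ and the intersection morphism $I^i$, then reduce Hard Lefschetz on $\LL_v^{\pm i}$ to a global Hard Lefschetz statement via the splitting already established in part (iii) of Proposition \ref{prop}. First I would observe that since $\eta : K \to K[2]$ is a morphism of complexes on $V$, it induces compatible maps on both $\H^{*}_{X_v}(X, j_{!*}\LL)$ and $\H^{*}(X_v, j_{!*}\LL)$ fitting into a commutative square with the intersection morphisms $I^{-i}$ and $I^i$. Taking images yields a well-defined morphism of pure Hodge structures $\eta^i : \LL^{-i}_v \to \LL^{i}_v$ (of weights $a-i$ and $a+i$ respectively, by Corollary \ref{wi}).

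For (i), I would reduce to a global Hard Lefschetz argument. By the first remark following Corollary \ref{CP}, one may assume $V$ (hence $X$) projective. The splitting of part (iii) gives $\ph{i}{K} \cong i_{v,*}\LL^i_v \oplus (k_v)_{!*}k_v^*\ph{i}{K}$, and by the inductive hypothesis on $V-v$ the morphism $\eta^i$ is already an isomorphism on $k_v^*\ph{-i}{K} \to k_v^*\ph{i}{K}$, hence on the intermediate extensions. Taking global sections and using formula (\ref{De3}), $\eta^i$ acts on an orthogonal decomposition of the pure Hodge structure $Gr^{\pt}_i \H^{*}(X, j_{!*}\LL)$. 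Global Hard Lefschetz for the polarized VHS $\LL$ on projective $X$ then forces $\eta^i$ to be an isomorphism summand by summand; in particular on the direct factor $\H^{0}(V, i_{v,*}\LL^i_v) = \LL^i_v$, giving the desired isomorphism $\LL^{-i}_v \xrightarrow{\sim} \LL^i_v$.

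For (ii), the polarization of $\LL$ makes $j_{!*}\LL$ Verdier self-dual up to shift and Tate twist. Applying Verdier duality at the closed point $v$ identifies $\H^{i}_{X_v}(X, j_{!*}\LL)$ with the dual of $\H^{-i}(X_v, j_{!*}\LL)$, and under this pairing the intersection morphism $I^{-i}$ is transposed to $I^{i}$. Passing to images therefore gives a perfect pairing $\LL^i_v \otimes \LL^{-i}_v \to \Q$ of Hodge structures. Combined with the isomorphism $\eta^i$ of (i), this pairing yields the polarization of $\LL^i_v$ as a pure Hodge structure of weight $a+i$.

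The main obstacle will be verifying that the splitting of part (iii) is actually respected by the operator $\eta$, so that the restriction of $\eta^i$ to each summand is well defined and the global Hard Lefschetz pieces can be transferred to $\LL^{\pm i}_v$. This should follow from the fact that the splitting arises from the canonical factorization through ${k_v}_{!*}k_v^*\ph{i}{K}$ together with the compatibility $\eta \circ \rho = \rho \circ \eta$ (and similarly for $\beta$), but it requires checking that the projector onto the skyscraper summand $i_{v,*}\LL^i_v$ produced in (iii) commutes with $\eta^i$; once this is in hand, both (i) and (ii) reduce to the projective global case already controlled by classical Hodge theory.
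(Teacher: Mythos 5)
Your part (ii) is essentially the paper's argument: Verdier self-duality of $j_{!*}\LL$, applied to $R i_{X_v}^!$ and $i_{X_v}^*$ in the definition of $\LL^i_v$, transposes $I^{-i}$ into $I^i$ and gives the perfect pairing. That piece is fine. Part (i), however, has a genuine gap. You propose to deduce the isomorphism $\eta^i : \LL_v^{-i} \to \LL_v^i$ from ``global Hard Lefschetz for the polarized VHS $\LL$ on projective $X$'' after splitting $Gr^{\pt}_*\H^*(X, j_{!*}\LL)$ via formula (\ref{De3}). But the \emph{relative} Lefschetz class $\eta$ (Chern class of a relatively ample bundle for $f$) and the \emph{absolute} Lefschetz class on $X$ are different operators. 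Hard Lefschetz for absolute intersection cohomology $\H^*(X, j_{!*}\LL)$ gives no control over how $\eta$ acts between the perverse graded pieces $Gr^{\pt}_{-i}$ and $Gr^{\pt}_{i}$; the relative Hard Lefschetz statement is exactly the nontrivial content here, not a formal consequence of the absolute one. Note also that $\eta^i$ shifts perverse degree by $2i$, so it does not ``act on a fixed orthogonal decomposition $Gr^{\pt}_i\H^*$''; it maps one graded piece to another, so there is no single polarized HS for the absolute Hard Lefschetz to bear on. The commutativity of the projector with $\eta$ that you flag as the main obstacle is secondary: even with it in hand, the Hodge-theoretic input you invoke is of the wrong kind to force the conclusion.

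What the paper actually does is a geometric induction on $\dim X$. It takes a general hyperplane section $H \subset X$ transversal to all strata (including $X_v$), factors $\eta = G \circ \rho$ through $H$ as a Gysin map composed with restriction, and studies the induced maps $\rho'_i: \LL^i_v \to \LL(H)^i_v$ and $G'_{i+2}: \LL(H)^i_v \to \LL^{i+2}_v$. Artin--Lefschetz vanishing on the affine open $X_v - (H\cap X_v)$ shows $\rho^!_i$ is an isomorphism for $i<0$; a further check (using the description of $\ke\, I^i$ coming from the decomposition on $V-v$ applied fibre by fibre over the strata $S_l$) promotes this to an isomorphism $\rho'_i$ on the images, and dually $G'_{i+2}$ is an isomorphism for $i+2>0$. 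Hard Lefschetz for $\LL_{\vert H}[-1]$ is assumed by the induction on dimension, and one chains these pieces to produce $\eta^i$. This is a different mechanism from the reduction you propose, and the crucial hyperplane-section step is what your argument is missing.
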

In the case of a big strata $S$, we have   a local system  $ {\LL}^i_S = R^if_* \ilm$ and the lemma follows from Hard Lefschetz for intersection cohomology  then  the decomposition follows by Deligne's argument in \cite {DL} applied to the VHS on $ {\LL}^i_S = R^if_* \ilm$ (subsection \ref{rl}).  The general statement of Verdier duality for proper morphisms apply to prove the duality between $ {\LL}^i_S$ and $ {\LL}^{-i}_S$.
 
 At a point $v$ of a lower dimensional strata, Verdier duality between perverse cohomology sheaves in degree $i$ and $-i$ is compatible with the decomposition established above and apply to $  (k_v)_{!*} k_v^*\ph{i}{K}$ by induction.  The duality between 
 ${\LL}^{i}_v$ and ${\LL}^{-i}_v$ in ii) is deduced from the duality of  $ R i_{X_{v}}^{!}$ and $R  i_{X_{v}}^*$
  and Verdier duality on $X_v$ in the definition of ${\LL}^i_v$ (formula  \ref{E1}).

   {\it To prove i)  at the point $v$} we consider an hyperplane
section $H$  intersecting all
NCD in  $X$ normally and proceeds  by induction. Let $i_H$ denote the closed embedding of $H$ in $X$; the cup
 product    defines a morphism $\eta$ equal  to the composition of
 the morphisms $ \ilm  \stackrel{ \rho}{\rightarrow} i_{H*}i^{*}_H \ilm
\overset{\sim}{\longrightarrow}  i_{H*}Ri^{!}_H ( \ilm) [ 2] \stackrel{G}{\rightarrow}  \ilm
[ 2]$. We apply the functors $R i^{!}_{X_v}$ and
$i^{*}_{X_v}$ to the above morphisms $\rho,\, G$, and $ \eta$ as in the commutative diagram
$$\begin{array}{ccccc} \H^i_{X_v}(X, \ilm)
&\xrightarrow { \rho_i ^!} &\H^i_{X_v \cap H} (H, \ilm)
 &\xrightarrow{ G_{i+2}^!}& \H^{i+2}_{X_v}(X, \ilm)\\
I^i{\downarrow}\quad  &   &I^i_{ H}{\downarrow}\quad  && I^{i+2}{\downarrow}\qquad  \\
\H^i(X_v, \ilm)
 &\xrightarrow { \rho_i ^*}&\H^i ({X_v \cap H} ,\ilm)
 &\xrightarrow{ G_{i+2}^*}&  \H^{i+2}(X_v, \ilm)
 \end{array}$$
where  on the first line: $(\eta^{!}_v)_ i = G_{i+2}^! \circ  \rho_i ^!$
and on the second line: $(\eta^*_v)_i = G_{i+2}^* \circ  \rho_i^*$ are functorially induced by $\rho$
and $G$, while by definition 
${\LL}^i_v := \im \, I^i,\, {\LL}(H)^i_v := \im \, I_H^i,\,
  {\LL}^{i+2}_v := \im \, I^{i+2} $ are the images of the vertical
maps induced by $I$ from the top line to the bottom line. 

     The morphisms $\rho^*_i$ and the dual morphisms  $G^*_{i+2}$ 
   induce $\rho'_i$ and $G'_{i+2}$ below 

\smallskip
\centerline {
 ${\LL}^i_v \xrightarrow { \rho'_i }
 {\LL}( H)^i_v \xrightarrow{ G'_{i+2}}
{\LL}^{i+2}_v $ \quad  where ${\LL}( H)^i_v := \im \, I_H^i$}

\smallskip
\n with composition equal to $\eta'$ induced by $\eta$.
The proof continue  by induction on $H$.
A striking point however, there is no crucial case as  in (Weil II \cite {WII}, th\'eor\`eme 4.1.1)
 unless $v $ is on the generic strata.

  \begin{lem}  The induced morphism $\rho'_i:{\LL}^i_v \to {\LL}(H)^i_v$
 is an isomorphism for $i < 0$
and by duality $G'_{i+2}: {\LL}(H)^i_v \to
  {\LL}^{i+2}_v $ is an isomorphism for $ i+2 > 0 $.
 \end{lem}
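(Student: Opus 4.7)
The plan is to derive both isomorphisms from a local weak-Lefschetz / Artin vanishing for the perverse sheaf $\ilm$ along the hyperplane $H$, passing from the first assertion to the second by Verdier duality. Since $H$ has been chosen to meet $X_v \cup Y$ and every stratum of $\SS$ transversally, the functors $i_H^*\ilm[-1]$ and $Ri_H^!\ilm[1]$ are perverse on $H$ and coincide, up to shift, with the intermediate extension on $H$ of the restriction of $\LL$ to $(X\setminus Y)\cap H$. This first ensures that the rows of the displayed diagram really are intersection morphisms for the induced VHS on $H$, so that $\rho'_i$ and $G'_{i+2}$ are well-defined morphisms of local intersection complexes.

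I would then consider the open-closed distinguished triangles
\[
j_{H!}j_H^*\ilm \to \ilm \to i_{H*}i_H^*\ilm \xrightarrow{[1]},\qquad
i_{H*}Ri_H^!\ilm \to \ilm \to Rj_{H*}j_H^*\ilm \xrightarrow{[1]},
\]
with $j_H:X\setminus H\hookrightarrow X$. Applying $R\Gamma(X_v,-)$ to the first and $R\Gamma_{X_v}(X,-)$ to the second produces two long exact sequences whose boundary terms compute, respectively, the cohomology and the compactly supported cohomology of a small analytic neighborhood of $X_v$ in $X\setminus H$ with values in $\ilm$. Together with the intersection morphisms $I^i$, $I^i_H$, $I^{i+2}$ of the displayed diagram, these long exact sequences assemble into a commutative ladder that refines the four displayed columns.

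The key input is Artin's vanishing theorem for the perverse sheaf $\ilm$ restricted to the affine complement $X\setminus H$: in the local form needed here, the cohomology of $\ilm$ on a small analytic neighborhood of $X_v$ in $X\setminus H$ vanishes in strictly positive degrees, and dually the compactly supported cohomology vanishes in strictly negative degrees. These vanishings kill the relevant boundary terms in the two long exact sequences and force $\rho^*_i$ to be surjective and $\rho^!_i$ to be injective in the range $i<0$. Passing to images via $I^i$ and $I^i_H$ and using the definitions $\LL^i_v = \im I^i$, $\LL(H)^i_v = \im I^i_H$, a chase in the ladder yields the desired isomorphism $\rho'_i:\LL^i_v\to\LL(H)^i_v$ for all $i<0$: surjectivity of $\rho^*_i$ lifts to surjectivity of $\rho'_i$ on images, and injectivity of $\rho^!_i$ together with the commutativity of the ladder forces injectivity of $\rho'_i$.

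The second assertion on $G'_{i+2}$ for $i+2>0$ is then obtained by Verdier duality applied to the first at index $-i-2<0$: the autoduality of $\ilm$ on the projective $X$ pairs $\LL^j_v$ with $\LL^{-j}_v$ and $\LL(H)^j_v$ with $\LL(H)^{-j}_v$, and identifies $G'_{i+2}$ with the transpose of $\rho'_{-i-2}$, which is already known to be an isomorphism. The main obstacle in the whole argument is the passage from the classical global Artin vanishing on the affine variety $X\setminus H$ to the local form at $v$ that is actually used; this is handled by replacing $X$ with a tubular neighborhood of $X_v$ that deformation retracts onto $X_v$, so that the cohomology of this neighborhood agrees with that of $X_v$ and Artin's theorem for $\ilm$ on $X\setminus H$ transfers intact.
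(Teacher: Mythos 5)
Your setup is sound as far as it goes: Artin--Lefschetz vanishing on the affine $X_v - (X_v\cap H)$ with coefficients in $i_{X_v}^*\ilm$ and $i_{X_v}^!\ilm$ does give that the restriction $\rho^!_i : \H^i_{X_v}(X, \ilm) \to \H^i_{X_v\cap H}(H, \ilm)$ is an isomorphism for $i < 0$, and the reduction of the second assertion to the first by Verdier duality (identifying $G'_{i+2}$ with the transpose of $\rho'_{-i-2}$) is also correct. The gap is in the final step. Writing the two columns of short exact sequences $0 \to \ke\, I^i \to \H^i_{X_v}(X, \ilm) \to \LL^i_v \to 0$ and $0 \to \ke\, I^i_H \to \H^i_{X_v\cap H}(H, \ilm) \to \LL(H)^i_v \to 0$, the snake lemma --- even granting that $\rho^!_i$ is an isomorphism in the middle --- only yields that the induced map $\ke\, I^i \to \ke\, I^i_H$ is injective and that $\rho'_i$ is surjective. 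Injectivity of $\rho'_i$ is equivalent to surjectivity of $\ke\, I^i \to \ke\, I^i_H$, that is, to $\rho^!_i(\ke\, I^i) = \ke\, I^i_H$, and this does not follow from commutativity of the ladder: already for finite-dimensional vector spaces one can have the middle vertical arrow an isomorphism, $\rho^*_i$ surjective, and yet the induced map on images fails to be injective (take $\LL(H)^i_v = 0$). So the sentence "injectivity of $\rho^!_i$ together with the commutativity of the ladder forces injectivity of $\rho'_i$" is not a valid chase, and a separate argument is required.

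What is actually needed, and what the paper supplies, is an explicit comparison of the two kernels. The inductive decomposition hypothesis on $V-v$ identifies $\ke\, I^i = \im\, \delta^{i-1}$ and $\ke\, I^i_H = \im\, \delta^{i-1}_H$ concretely as images of direct sums, over strata $S_l \subset V-v$ and indices $j<i$, of cohomology of the corresponding intermediate extensions $ {i_{S_l}}_{!*}\LL^j_{S_l}[l]$ and ${i_{S_l}}_{!*}(\LL_{\vert H})^j_{S_l}[l]$. The comparison then reduces, stratum by stratum, to comparing $\LL^j_{S_l}$ with $(\LL_{\vert H})^j_{S_l}$ at a general point $v_l\in S_l$: passing to a normal slice $N_{v_l}$, one reinterprets these as $\H^{j}_{X_{v_l}}(X_{N_{v_l}},\ilm[-l])$ and its analogue on $H$, and a second application of the hyperplane-section theorem on the fibre $X_{v_l}$ (where $j<i<0$ forces $j<-1$, which is what makes that application work) gives the required isomorphism of kernels. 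Your proposal never invokes the inductive hypothesis on $V-v$, never reduces to the strata $S_l$, and never applies a second round of Artin vanishing over the normal slices, so this core of the argument --- the surjectivity of $\rho^!_i$ restricted to $\ke\, I^i$ --- is genuinely missing.
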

 We prove first the isomorphisms for $i < 0 $
\begin{equation}
  \rho_i^!:
\H^i_{X_v}(X, \ilm) \rightarrow \H^i_{X_v\cap H} (H,
\ilm)
\end{equation}
We  apply Artin- Lefschetz theorem  (\cite{BBD}, corollary 4.1.5) to the affine open subset
  $X_v- (H\cap X_v)$  with coefficients in a complex of  sheaves $\KK \in {^p\!D}^{\leq 0}(X_v)$ to prove the vanishing of $\H^i(X_v-(H\cap X_v), \KK)$ for $i > 0$ and its dual statement:  $\H_c^i(X_v-(H\cap X_v), \KK') = 0$ with coefficients in a complex of  sheaves $\KK' \in {^p\!D}^{\geq 0}(X_v)$ for $i < 0$. 
  
 Since $  i_{X_v}^{*}  (\ilm)[-1] \in {^p\!D}^{\leq 0}(X_v)$  and  its dual   $  i_{X_v}^{!}  (\ilm)[1] \in {^p\!D}^{\geq 0}(X_v)$ we deduce
  $\H_c^i(X_v-(H\cap X_v), i_{X_v}^{!}(\ilm)[1]) = 0 $ for $i < 0$, hence we have for $i < 0$
$$\begin{array}{ccc}
 \H^i_{X_v}(X, \ilm)
&= &\H^{i-1} (X_v, i_{X_v}^{!}\ilm[1])\\
 \rho_i^!\downarrow { \simeq} &   &\downarrow { \simeq}   \\
\H^i_{X_v \cap H}(H , \ilm)
 &=&
 \H^{i-1} (X_v\cap H, i_{X_v}^{!}\ilm[1])
 \end{array}$$
\n The  last equality is the main point   and  follows from  the  isomorphisms 
\begin{equation*}
   i_{X_v \cap H}^*R i_{X_v}^! (\ilm)\overset{\sim}{\rightarrow}  R i_{X_v \cap
H}^! i_H^* (\ilm), \quad   i^{*}_H (\ilm[-1]) \overset{\sim}{\rightarrow} 
 { j_H}_{!*} i^{*}_{H\cap (X-Y)} \LL[-1])
\end{equation*}
due to the transversality of the intersection  of $H$ and $X_v$.

\smallskip
{\it Proof of the lemma.}   We extend the  diagram above by  introducing the  kernel 
 of  the Intersection morphisms $I$ to get  two columns of  short  exact sequences 
 $$\begin{array}{ccc}
 H^{i-1} (i^*_v R(k_v)_* K_{\vert V-\{v\}})\qquad  &
\xrightarrow{ \rho^*_{i-1}} & \qquad H^{i-1} ({i^*_v} R (k_v)_ * {K_H}_{\vert V-\{v\}})\\
 \delta^{i-1} \downarrow \qquad && \delta^{i-1}_{ H} \downarrow \qquad \\
 \H^{i}_v (V,K) \xrightarrow{\sim}  \H^i_{X_v}(X, \ilm)&\xrightarrow{ \rho^!_i}&  \H^i_{X_v \cap H} (H,\ilm)
 \xrightarrow{\sim}  \H^{i}_v (V,K_H) \\
  I^i \downarrow &&
I^i_{ H} \downarrow \quad  \\
{\LL}^i_v = \im \, I^i& \xrightarrow { \rho'_i }&
 {\LL}( H)^i_v= \im \, I^i_H
 \end{array}$$
where  $K_H = R(f_{\vert H})_*
 ({j_{\vert H}})_{!*}\ilm_{\vert H}$, 
 ${\LL}^i_v \subset
 \H^{i} (i^*_v K) =   \H^i(X_v, \ilm)$ and  
 ${\LL}( H)^i_v \subset  \H^i ({X_v \cap H} ,\ilm)
  =  \H^{i}(i_v^* K_H)$.
   
On the right column the short  exact sequence is defined  by
the  perverse shifted restriction ${\LL}_{\vert H}[-1]$ of ${\LL}$, for which we  suppose Hard Lefschetz  by induction. 

  We prove that for $i < 0$, the morphism $\rho^!_i$, which  is an isomorphism by the lemma, 
induces isomorphisms 
\begin{equation}
\rho^!_{i \vert}: Im \, \delta^{i-1} = \ke \, I^i \overset{\sim}{\longrightarrow}  \im \,
\delta^{i-1}_H = \ke \, I_H^i
\end{equation}
 hence induces isomorphisms: $\rho'_i:
{\LL}^i_v \xrightarrow{\sim} 
 ({\LL}{\vert H})^i_v $ for $i < 0$.
 
{\it Proof of the isomorphism $\rho^!_{i \vert}:  \ke \, I^i \overset{\sim}{\longrightarrow}   \ke \, I_H^i$}.
By the  remark (\ref{special} i)
 
 \smallskip
 \centerline{
   $ \ke (I^i) = Im (\delta^{i-1})
= Im (\oplus_{S_l \subset V-v , j < i} H^{i-1-j} ( {i^*_v}
 {R k_v}_* k_v^*{i_{S_l}}_{!
  *}\LL^j_{S_l} [l]))$}
  
 \smallskip
  \centerline{ $  \ke (I_H^i) = Im (\delta^{i-1}_H) =
   Im (\oplus_{S_l \subset V-v, j < i}
 H^{i-1-j}(i^*_v R (k_v)_* k_v^*{i_{S_l}}_{! *}
 (\LL_{\vert H})_{S_l}^j [l])) $.}
 
  \smallskip
 \n The proof is reduced to the comparison of $\LL^j_{S_l}$ and  $(\LL_{\vert H})_{S_l}^j $
  on each component  $S_l$.
 
 We consider for each $S_l$,   a normal section $N_{v_l}$ to $S_l$ at a general point
$v_l \in S_l$, then:
 \[
 (R^{-l+j} f_*i_{X_{S_l}}^{! }{\ilm})_{v_l}\overset{\sim}{\longrightarrow} 
\H^{-l+j}_{X_{v_l}}(X_{N_{v_l}},{\ilm})\overset{\sim}{\longrightarrow} 
\H^{j}_{X_{v_l}}(X_{N_{v_l}},{\ilm}[-l])
 \]
 where ${\ilm}[-l]$  restricts  to a perverse sheaf on $X_{N_{v_l}}$
 of  dimension dim.$X - l$.  Since $i < 0$ and $ j
< i $ we have $ j < -1 $, moreover $X_{v_l}-(H\cap X_{v_l})$ is affine,
hence  the restriction of $ \rho^!_j$ to $X_{N_{v_l}}$
 is an  isomorphism by 
the hyperplane section theorem on the  fibres of $f$ over $S_l$: $\rho^!_{j\vert}:  \H^{j}_{X_{v_l}}(X_{N_{v_l}},{\ilm}[-l])\to \H^{j}_{X_{v_l}\cap H}(X_{N_{v_l}}\cap H,{\ilm}[-l])$
where the last term is isomorphic to $ (R^{-l+j}
f_*i_{H \cap X_{S_l}}^! \ilm_{\vert H})_{v_l}$.

 \begin{cor}  The iterated cup-product with the  class  of a relative  hyperplane
section induces isomorphisms $\eta^i:{^p\HH}^{-i}(K) \to
{^p\HH}^i(K) $ for $i
> 0$.
 \end{cor}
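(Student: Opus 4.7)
The plan is to assemble the corollary from the splitting of $\ph{i}{K}$ established in Proposition \ref{prop}(iii) together with the two Hard Lefschetz statements already at our disposal: the inductive hypothesis on $V-v$ and the stalk-wise isomorphism $\eta^i:\LL^{-i}_v \stackrel{\sim}{\to} \LL^{i}_v$ of the preceding lemma. For a point $v$ lying in a stratum $S$ of positive dimension, I reduce to the zero-dimensional situation along a general normal section $N_v$, as already explained in the inductive step (subsection \ref{induction}).

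Recall the canonical decomposition at a point $v$ of the lowest dimensional stratum,
$$
\ph{\pm i}{K} \simeq i_{v,*}\LL^{\pm i}_v \,\oplus\, (k_v)_{!*} k_v^* \ph{\pm i}{K}.
$$
The relative hyperplane class $\eta \in R^2 f_* \Q_X$ is a natural global object, so cup-product with $\eta^i$ defines a morphism $K \to K[2i]$ compatible with the adjunction triangle $i_{v,*} R i_v^{!} K \to K \to R k_{v,*} k_v^* K \xrightarrow{[1]}$ and with the intersection morphism $I^i$ of formula \ref{E1}. Consequently the induced map $\eta^i:\ph{-i}{K} \to \ph{i}{K}$ is block-diagonal with respect to the above splitting.

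On the skyscraper block, the preceding lemma gives the isomorphism $\LL^{-i}_v \stackrel{\sim}{\to} \LL^{i}_v$. On the intermediate-extension block, the inductive hypothesis furnishes the isomorphism $k_v^* \ph{-i}{K} \stackrel{\sim}{\to} k_v^* \ph{i}{K}$ over $V-v$, and functoriality of the intermediate extension $(k_v)_{!*}$ then yields an isomorphism $(k_v)_{!*} k_v^* \ph{-i}{K} \stackrel{\sim}{\to} (k_v)_{!*} k_v^* \ph{i}{K}$. Assembling the two block-diagonal isomorphisms gives the desired global $\eta^i:\ph{-i}{K}\stackrel{\sim}{\to} \ph{i}{K}$.

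The main obstacle is verifying the block-diagonal property, i.e.\ that $\eta^i$ carries $\ke\, {^p\!\rho^i}$ into $\ke\, {^p\!\rho^i}$ and $\im\, {^p\!\rho^i}$ into $\im\, {^p\!\rho^i}$ in the relevant degrees, and that the induced action on the skyscraper summand is exactly the $\eta^i$ on $\LL^{\pm i}_v$ computed via $I^i$ in the preceding lemma. This is routine naturality applied to the morphism $K \xrightarrow{\eta^i} K[2i]$, but is essential, since without it the two isomorphisms supplied on the blocks could not be glued into a single isomorphism of perverse cohomology sheaves.
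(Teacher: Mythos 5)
Your proof is correct and follows essentially the same route as the paper: the paper's argument in subsection~\ref{Hard} opens by observing that the induction hypothesis on $V-v$ already gives the isomorphism on $(k_v)_{!*} k_v^*\ph{-i}{K}$, then reduces the remaining work to the isomorphism $\LL^{-i}_v \overset{\sim}{\longrightarrow} \LL^i_v$, which is the content of the two lemmas preceding the corollary. Your identification of the block-diagonality of $\eta^i$ with respect to the splitting $\ph{\pm i}{K}\simeq i_{v,*}\LL^{\pm i}_v \oplus (k_v)_{!*}k_v^*\ph{\pm i}{K}$ as the point requiring explicit verification is a fair spelling out of what the paper leaves implicit (naturality of $\rho$, $\beta$ and of the intersection morphism $I^i$ under cup-product with $\eta$), and you correctly localize the proof to the zero-dimensional stratum via a normal section exactly as the paper's inductive framework prescribes.
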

 \begin{cor} \label{Vpolar}
For each strata $S$ of $V$, the local system $\LL^i_S$ is a polarized variation of Hodge structure on the smooth variety $S$.
\end{cor}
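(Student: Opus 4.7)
The plan is to prove the corollary stratum by stratum, reducing the claim at each point $v \in S$ to the zero-dimensional case already treated in Proposition \ref{prop}. Fix a stratum $S$ of dimension $l$ and a point $v \in S$. By the Thom-Whitney property the morphism $f$ is locally topologically trivial along $S$, so passing to a normal section $N_v$ transverse to $S$ at $v$ in general position reduces the problem to $f^{-1}(N_v) \to N_v$, where $v$ is an isolated zero-dimensional stratum. The intersection formula (\ref{E1}) identifies the stalk $(\LL^i_S)_v$ with $\LL^i_v := \im(I^i : \H^i_{X_v}(X, \ilm) \to \H^i(X_v, \ilm))$ computed on this slice.

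Next I would put a Hodge structure on the stalk. By the compatibility of the perverse filtration with the MHS (Proposition \ref{global} and Proposition \ref{mainp}) applied to the intermediate extension of the admissible variation $\tilde\LL$, together with the bounds from Corollary \ref{wi}, the space $\H^i_{X_v}(X, \ilm)$ carries a MHS of weights $w \geq a+i$ while $\H^i(X_v, \ilm)$ carries a MHS of weights $w \leq a+i$. Since $I^i$ is a morphism of MHS, its image $\LL^i_v$ is pure of weight $a+i$ and inherits a canonical Hodge structure.

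Then I would check that these pointwise Hodge structures glue to a variation. The local topological triviality of $f$ along $S$ identifies the stalks of $\LL^i_S$ at nearby points with the corresponding intersection images on neighbouring normal slices, and the local system itself has already been produced functorially by (\ref{E1}). The Hodge filtration is constructed from the holomorphic bifiltered logarithmic complexes described at $(\star)$ in subsection \ref{LC}, which vary holomorphically in the parameter $v \in S$; Griffiths transversality is inherited from $\tilde\LL$ and from the transversality of $N_v$ with the NCD in the special fibration obtained via Proposition \ref{RSt}. The polarization is then obtained from Verdier duality on $X_v$ (which pairs $\LL^{-i}_v$ with the dual of $\LL^i_v$, using the auto-duality of $j_{!*}\LL$) composed with the Hard Lefschetz isomorphism $\eta^i : \LL^{-i}_v \overset{\sim}{\to} \LL^i_v$ established in the preceding lemmas, giving a non-degenerate bilinear form that varies flatly because $\eta$ comes from a relative hyperplane class and $f$ is proper.

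The main obstacle will be the admissibility of the resulting VHS on $S$, which requires the Hodge and weight filtrations to extend compatibly across the boundary $\bar S \setminus S$ and the local monodromy to be quasi-unipotent with the appropriate relative weight filtration. I would handle this by propagating admissibility through the same inductive scheme used for the decomposition: once the decomposition and Hard Lefschetz are known on $V - V_{j-1}$, the logarithmic complexes of sections $4$–$5$ compute the HS on the pieces $\LL^i_{S_l}$ for $\dim S_l \geq j$, and admissibility at boundary points of $S_l$ reduces, by the same normal-slice argument, to the admissibility of the intermediate extension of $\tilde\LL$ itself, which is the input hypothesis.
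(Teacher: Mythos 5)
Your overall strategy matches the paper's: reduce to a normal slice, identify the stalk with the image of the intersection morphism, derive purity of weight $a+i$ from the inequalities of Corollary \ref{wi}, and obtain the polarization from Verdier duality together with Hard Lefschetz. But the essential content of this corollary is the passage from a pointwise Hodge structure on each stalk to an actual polarized \emph{variation} on $S$, with a holomorphic Hodge bundle satisfying Griffiths transversality, and this is precisely the step you handle by assertion. Declaring that the bifiltered logarithmic complexes of $(\star)$ in subsection \ref{LC} ``vary holomorphically in $v$'' is the thing that has to be proved, not assumed. The paper supplies the mechanism: choose a local projection $p_v \colon B_v \to S_v$ so that the normal slices $N_a := p_v^{-1}(a)$ fit into a smooth family with $p_v \circ f$ smooth, pass to the relative logarithmic complex $\Omega^*_{X_{B_v}/S_v}\LL$ of subsection \ref{rl}, and invoke the Katz--Oda construction (Proposition \ref{ri}) to show that the weight filtration is locally constant while the Hodge filtration is a horizontal sub-bundle on the local systems $R^k f_*(i_{X_S}^! j_{!*}\LL)$ and $R^k f_*(i_{X_S}^* j_{!*}\LL)$; the image $\LL^k_S$ of the intersection morphism between them then inherits the structure of a pure polarized VHS. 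You should appeal to this relative construction rather than to the absolute bifiltered complexes of subsection \ref{LC}.

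A secondary remark: the corollary asserts only a polarized VHS on the smooth stratum $S$ itself. Your closing worry about admissibility of the resulting VHS across $\bar S \setminus S$ is not part of the claim and need not be addressed here, though it is of course relevant to the broader program of producing mixed Hodge complexes from the decomposition.
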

 We consider here the case of a fibration  $f: X \to V$ by  NCD  over the strata with $X$ smooth,  we consider at a point $v \in S$   open neighborhoods  $B_v$ of $v$ in $V$ and $S_v $ in $S$, and a projection $p_v: B_v \to S_v$, inducing the identity on $S_v$, such that $p_v \circ f : X_{B_v} \to S_v$ is smooth.  

The fibers  $N_a :=  p_v^{-1}(a) $ at  points $a \in S_v$ form a  family of normal sections to $ S_v$ such that $X_{N_a} := f^{-1}(N_a) $ is a smooth sub-variety of $X_{B_v}$. By an argument based on the relative complex $\Omega^*_{X_{B_v}/ S_v} \LL $ as in section \ref{rl}, the families $R^k f _*(i_{X_{S_v}}^!(j_{!*}\LL)$
(resp. $R^k f _*(i_{X_{S_v}}^*(j_{!*}\LL)$) form variations of MHS on $S$ 
 of weight $\geq a+k+l$  (resp. $\leq a+k+l$) (corollary \ref{wi}).

Then  the image of the Intersection morphisms $\LL^k_{S_v}$   are pure variation of HS of weight $a+k+l$ on $S_v$. 

 \subsection
{Proof of  the local purity theorem \ref{pure}}\label{pure1}
We did assume in the statement of the proposition \ref{prop} the local purity at $v$ to simplify the exposition. We show now the local purity at $v$ follows in fact from the decomposition on $V-v$.
We assume  here that $f$ is  a fibration by NCD over the strata in order  to apply  the construction 
of Hodge theory  in sections $4 - 5$.  We refer precisely to the subsection (\ref{tube}) for the construction of the mixed Hodge structure used in this theorem and to the compatibility with the perverse filtration.

The  proof of the equivalent semi purity at $X_v$ (proposition 
 \ref{Sp}) differs from \cite{DG}
as we use the polarization of the Intersection cohomology of $X$. 

Let $\ilm $ be a shifted polarized VHS of weight $a$ on the smooth and compact
 variety $X$, 
    $X_v := f^{-1} (v)$  the fiber at $v \in V$, and  $ B_{X_v} =  f^{-1} (B_v)$ the inverse of a small neighborhood $B_v$ of $v$.

\subsubsection{Inductive hypothesis}\label{indu}
{\it   We assume by induction the decomposition  theorem on $V -v$,  hence we have an isomorphism:
\begin{equation} 
Gr^{\pt}_i \H^r(B_{X_v}- X_v,\ilm)\overset{\sim}{\longrightarrow} 
\H^{r-i}(B_v-\{v\},\ph{i}{Rf_* \ilm}).
 \end{equation}
 This isomorphism is used to carry  the MHS from the left  term (subsection \ref{C6}) to  the right}.
 Under such isomorphism, we  prove the following inequalities  on the weights $ w$ of the $MHS$  
  
  i) $w >  a+r $ on ${\pt}_{\leq r} \H^r(B_{X_v} -
 X_v,\ilm) $, and
 dually: 
 
 i') $w
\leq a+r $ on $ \H^r(B_{X_v}-X_v,\ilm) / {\pt}_{\leq r}
\H^r(B_{X_v}-X_v,\ilm) $.\\
 {\it or equivalently  for  $j =r-i $:
 
 ii) $w > a+i+j $ on $H^j(B_v-v,\ph{i}{Rf_*\ilm})$ if $j \geq 0$,
and dually:

 ii') $w \leq a+i+j $ on $H^j(B_v-v,\ph{i}{Rf_*\ilm})$ if $j
\leq -1$}.
\begin{ex}
 The dual of $Gr^W_{a+l} \H^{-1}(B_v- v, \ph{i}{Rf_*\ilm} ) $  for  all $i$ and $l \leq i-1$ in the assertion ii'),  is
 $Gr^W_{a-l} \H^0(B_v- v,  \ph{-i}{Rf_*\ilm}) $ 
 for all $i$ and  $ a-l \geq a-i+1$  in the assertion ii); we remark the  change of the variable $i$ into the variable $-i$.
\end{ex}
\subsubsection{
 Duality}  
 Let $K := Rf_* \ilm$ and $k_v: V-\{v\} \to V$. We have:
$$ K(*v) := i_v^*Rk_{v*}k_v^* K \overset{\sim}{\longrightarrow}  R\Gamma (B_v - v, K) \overset{\sim}{\longrightarrow} 
 R\Gamma (B_{X_v} - X_v, \ilm). $$
 The duality isomorphism $D (K(*v)) [1]
\overset{\sim}{\longrightarrow}  K(*v) $ where $D$ stands for Verdier dual, is deduced from the duality between
$Rk_{v*}$ and $Rk_{v !}$ (resp. $i_v^*$ and $R i_v^!$) as
follows. We apply to $k_v^* K$ two sequences of functors 
$R k_{v!} \to Rk_{v *} \to i_{v *}i_v^* R k_{v *}$ and 

\n $ Rk_{v!}\to Rk_{v *} \to i_{v *}Ri_v^! Rk_{v !} [1]$, 
 defining dual triangles,   from which we deduce 

\centerline {$ D(K(*v))[1]  \overset{\sim}{\longrightarrow}  i_{v *}Ri_v^! Rk_{v !} k_v^* K[1]
\overset{\sim}{\longrightarrow}  K(*v)$.}

\n which corresponds to the duality of the Intesection cohomology of  the ``link'':  $ \partial B_{X_v} = f^{-1} ( \partial B_v) $ with coefficients in  the restriction of  $\ilm$. 

Since the perverse filtration is compatible with the MHS over $V-v$, we deduce
\begin{equation*}
 D( Gr^W_{a-q} H^{-j}( K(*v))) \overset{\sim}{\longrightarrow}  Gr^W_{a+q} H^j(D K(*v))\overset{\sim}{\longrightarrow} 
   Gr^W_{a+q} H^{j-1}( K(*v)).
\end{equation*}
\n and since the duality: $D(\ph{-i}{K}) \overset{\sim}{\longrightarrow}  \ph{i}{K}$ follows from the auto-duality of $\ilm$ by Verdier's direct image theorem for $f$ proper, we deduce
\begin{equation*}
 D( Gr^W_{a-q} Gr^{^p\!\tau}_{-i} H^{-j}( K(*v)))
  \overset{\sim}{\longrightarrow} 
   Gr^W_{a+q}Gr^{^p\!\tau}_{i} H^{j-1}( K(*v)).
\end{equation*}
Hence, the proof is reduced by the above duality, to one of the two  cases ii) or ii')  in degree $j \geq 0$ or 
$j \leq -1$.

\subsubsection{Proof by induction on dim.$X$} 

Let $H$ be a general hyperplane section  of $X$
transversal to all strata, $i_H: H \to X$ and  $j_H:(X-H) \to X$.  
The restriction to $H$ of the intermediate extension $  i_H^* \ilm$   is  equal to $j_{!*}(\LL_{\vert H})$ by transversality. 

 {\it We assume   the local purity theorem for the perverse cohomology sheaves of  $ Rf_* j_{!*}(\LL_{\vert H}[-1])$, and we use Artin-Lefschetz vanishing theorem to deduce the local purity  for the perverse cohomology shaves of  $ K$ in degree $i \not= 0$ as follows.}
 
\begin{lem}\label{D} 
Let $ K_c = Rf_* ( j_H)_! j_H^*\ilm$ (resp.  $ K(*) = Rf_* ( j_H)_* j_H^*\ilm$), then
the complex  $ K_c \in {^pD}_V^{\geq 0}$ , or equivalently   $ \ph{i}{ K_c} = 0$  for $i < 0$
(resp.  $ K(*) \in {^pD}_V^{\leq 0}$  or equivalently   $ \ph{i}{ K(*)} = 0$  for $i > 0$).
\end{lem}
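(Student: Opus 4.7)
The plan is to reduce Lemma~\ref{D} to Artin's vanishing theorem (\cite{BBD}, 4.1.1), applied to the composite morphism $g := f \circ j_H : X - H \to V$. Recall that Artin's theorem asserts that for an affine morphism $g$, the functor $Rg_*$ is right $t$-exact and $Rg_!$ is left $t$-exact for the perverse $t$-structure.

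First I would verify that $g$ is an affine morphism. Since $f$ is projective, after fixing a relative projective embedding $X \hookrightarrow V \times \P^N$ over $V$, one may choose the hyperplane section $H$ to be the intersection of $X$ with the pullback of a generic hyperplane $H_0 \subset \P^N$ (transverse to all strata, by genericity, as required in the statement). Then $X - H$ is a closed subvariety of $V \times (\P^N - H_0) \cong V \times \A^N$, which is affine over $V$; as a closed immersion followed by an affine morphism, $g$ is affine.

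Next, since $j_H$ is an open immersion of complex varieties of equal dimension, $j_H^*$ is $t$-exact for the perverse $t$-structure, so $j_H^* \ilm$ is a perverse sheaf on $X-H$. The two assertions then follow directly. For $K_c$: using $Rf_* = Rf_!$ for $f$ proper, we get $K_c = R(f j_H)_!(j_H^* \ilm) = Rg_!(j_H^* \ilm)$, and by Artin vanishing $Rg_!$ sends ${^p D}^{\geq 0}(X-H)$ to ${^p D}^{\geq 0}(V)$, so $K_c \in {^p D}_V^{\geq 0}$, i.e.\ $\ph{i}{K_c} = 0$ for $i<0$. Symmetrically, $K(*) = Rg_*(j_H^* \ilm)$, and right $t$-exactness of $Rg_*$ yields $\ph{i}{K(*)} = 0$ for $i>0$; alternatively, this second statement is Verdier dual to the first, using the self-duality of $\ilm$ as the intermediate extension of a polarized VHS together with the properness of $f$.

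The only nontrivial point is establishing that $g = f \circ j_H$ is affine, which rests on choosing $H$ in a way compatible with the projective structure of $f$; once this is in place, the rest is a direct appeal to Artin vanishing together with the $t$-exactness of $j_H^*$ and the identity $Rf_* = Rf_!$ for the proper morphism $f$.
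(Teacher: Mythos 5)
Your proof is correct and follows essentially the same route as the paper: identify $g = f\circ j_H : X-H \to V$ as an affine morphism, note that $j_H^*\ilm$ is perverse, and apply Artin's vanishing (BBD 4.1.1 and its corollary 4.1.2) to $Rg_!$ and $Rg_*$. The paper's proof simply asserts the affineness and cites corollary 4.1.2, then handles $K(*)$ by duality; you additionally spell out the verification that $g$ is affine via the relative projective embedding of $f$, which is a harmless and reasonable supplement, not a different method.
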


\begin{proof}
  Since the morphism $f\circ j_H: (X -H) \to X$ is affine, it follows that  the functor $Rf_*\circ j_{H !} =  R(f \circ j_H)_ ! $ is left t-exact (\cite{BBD}, corollary 4.1.2), which means that it transforms  ${^pD}_{X-H}^{\geq 0}$ into $ {^pD}_V^{\geq 0}$.   It  applies to $Rf_*(j_H)_! j_H^*\ilm$ and shows that $K_c \in  {^pD}_V^{\geq 0}$. The result  is a version of Artin-Lefschetz vanishing theorem.
  The statement $ \ph{i}{ K(*)} = 0$  for $i > 0$ follows by duality.
\end{proof} 

Let  $ K = Rf_*\ilm$,  and $ K_H = R(f \circ i_H)_* i_H^* \ilm$.
The restriction morphism  $\rho:  K \to   K_H$, and dually the Gysin morphism: $G:  K_H[-2] \to  K$, induce morphisms compatible with the $\pt$ filtration
\begin{equation*}
\rho_i: \ph{i}{K} \rightarrow \ph{i}{K_H},\, G_i: \ph{i-2}{K_H}\rightarrow \ph{i}{K}, \, L_i:\,{^p{\HH}}^i(K) \lorw \,{^p{\HH}}^{i+2}(K), 
\end{equation*}
where $ L_i = G_{i+2} \circ \rho_i$ (resp. $L: \, K \lorw  K [2]$), is defined by the cup product with the Chern class $c_1$ of a relative ample bundle.
\begin{cor}\label{surj0}
 The restriction $ \rho_{i}:  \ph{i}{Rf_*\ilm}  \to \ph{i}{ Rf_* ( i_H)_* i_H^* \ilm} $ is an isomorphism  for each integer $ i < -1 $ and injective for $i = -1$.

 \n Dually,  the Gysin morphism $G_i: \ph{i-2}{ Rf_* ( i_H)_* i_H^* \ilm} \to \ph{i}{ Rf_*\ilm} $
 
 \n  is an isomorphism for $i >1$,  and it is surjective for $i = 1$.
\end{cor}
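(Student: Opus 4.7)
The plan is to extract Corollary~\ref{surj0} from Lemma~\ref{D} by running the long exact sequence of perverse cohomology attached to the open-closed decomposition along $H$, together with its Verdier dual. Because $K_c$ and $K(*)$ differ from $K_H$ only by one step in the perverse $t$-structure, this should immediately give the asserted isomorphism range and the boundary injectivity/surjectivity at $i = \pm 1$.

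First I would write down the attaching triangle associated with $j_H : X - H \hookrightarrow X$ and $i_H : H \hookrightarrow X$,
\begin{equation*}
(j_H)_!\, j_H^* \ilm \to \ilm \to i_{H*}\, i_H^* \ilm \xrightarrow{+1},
\end{equation*}
and apply $Rf_*$ to obtain $K_c \to K \to K_H \xrightarrow{+1}$, using that $Rf_*\, i_{H*}\, i_H^* \ilm = R(f \circ i_H)_*\, i_H^* \ilm = K_H$. Taking perverse cohomology yields
\begin{equation*}
\cdots \to \ph{i}{K_c} \to \ph{i}{K} \xrightarrow{\rho_i} \ph{i}{K_H} \to \ph{i+1}{K_c} \to \cdots .
\end{equation*}
By Lemma~\ref{D}, $\ph{i}{K_c} = 0$ for $i < 0$. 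Hence both flanking terms vanish as soon as $i+1 < 0$, forcing $\rho_i$ to be an isomorphism for $i < -1$, while for $i = -1$ only the left flanking term $\ph{-1}{K_c}$ vanishes, giving injectivity of $\rho_{-1}$.

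For the Gysin assertion I would apply the same recipe to the Verdier-dual triangle
\begin{equation*}
i_{H*}\, i_H^! \ilm \to \ilm \to (j_H)_*\, j_H^* \ilm \xrightarrow{+1}.
\end{equation*}
Because $H$ is chosen in general position, it is transverse to every stratum of the Thom-Whitney stratification adapted to $\ilm$, so the smoothness of the pair $(X, H)$ along the support of $\ilm$ gives the purity identification $i_H^! \ilm \simeq i_H^* \ilm[-2]$, exactly as used already in the Hard Lefschetz argument above. The triangle thus rewrites as $K_H[-2] \xrightarrow{G} K \to K(*) \xrightarrow{+1}$, producing
\begin{equation*}
\cdots \to \ph{i-1}{K(*)} \to \ph{i-2}{K_H} \xrightarrow{G_i} \ph{i}{K} \to \ph{i}{K(*)} \to \cdots .
\end{equation*}
The dual vanishing $\ph{j}{K(*)} = 0$ for $j > 0$ from Lemma~\ref{D} then forces $G_i$ to be an isomorphism whenever $i > 1$ and at least surjective for $i = 1$.

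The only non-formal step is the purity identification $i_H^! \ilm \simeq i_H^* \ilm[-2]$, and this was already used in the earlier induction; everything else is bookkeeping with the perverse $t$-structure and the two affine-vanishing statements of Lemma~\ref{D}. So I do not expect any serious obstacle beyond verifying the shift conventions in the Gysin triangle.
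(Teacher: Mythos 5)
Your proposal is correct and follows the same route as the paper: apply $Rf_*$ to the attaching triangle along $H$, take the long exact sequence of perverse cohomology, and invoke the Artin--Lefschetz vanishing of Lemma~\ref{D} ($\ph{i}{K_c}=0$ for $i<0$) to get the $\rho_i$ half, then dualize for the $G_i$ half. The paper leaves the dual half as "follows by duality"; you make it explicit by writing the dual triangle and using the transversality identification $i_H^!\ilm \simeq i_H^*\ilm[-2]$, which is exactly how the Gysin morphism $G\colon K_H[-2]\to K$ is set up earlier in the text, so this is a welcome elaboration rather than a different argument.
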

We deduce from  the triangle 
$  ( j_H)_! j_H^*\ilm \to \ilm \to i_{H *}i_H^* \ilm \xrightarrow{(1)}$,
 and its  direct image by $Rf_*$, an exact sequence of perverse cohomology 
\begin{equation}\label{es}
 \cdots \to \ph{i}{ K_c} \xrightarrow{\can_i}  \ph{i}{K} \xrightarrow{\rho_i}   \ph{i}{K_H}
 \xrightarrow{\partial_i} \ph{i+1}{ K_c} \to   \cdots
\end{equation} 
The corollary follows from   lemma \ref{D} as the complex  $ K_c \in {^pD}_V^{\geq 0}$ , or equivalently   $ \ph{i}{ K_c} = 0$  for $i < 0$. 

{\it Next,  we use  the inductive assumptions over $V-v$ in the  crucial case for $i = 0$}.
\begin{lem}
 If we suppose  the relative Hard Lefschetz theorem for  the morphisms $f $ and  $f \circ i_H$
 restricted  to $V-v$,
 then $G_0 $ is injective and $ \rho_{0}$ is surjective on $V-v$. 
  Moreover, we have a decomposition
 \begin{equation}\label{1+}
   \ph{0}{Rf_*\ilm}_{\vert V-v}   \overset{\sim}{\longrightarrow} \im \, G_0  \oplus \ker \, \rho_{0}.
\end{equation}
Respectively, $ \rho_{-1}$ is injective, $G_1 $ is surjective and 
we have a decomposition
\begin{equation} \label{2+}
  \ph{-1}{Rf_* ( i_H)_* i_H^* \ilm}_{\vert V-v}  \overset{\sim}{\longrightarrow} \im \,  \rho_{-1}  \oplus \ker \, G_1 
\end{equation}
 Moreover  $G_0: \ph{-2}{Rf_* ( i_H)_* i_H^* \ilm}_{\vert V-v}  \overset{\sim}{\longrightarrow} \im \,  G_0 \subset \ph{0}{ Rf_*\ilm}_{\vert V-v}$
 is   an isomorphism onto its image.
\end{lem}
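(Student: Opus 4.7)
Throughout, factor the Lefschetz operator $L = G \circ \rho$ in the two ways $L_j = G_{j+2} \circ \rho_j$ on $K$ and $L^H_j = \rho_{j+2} \circ G_{j+2}$ on $K_H$. The plan is to combine the long exact sequences of perverse cohomology attached to the $K_c$-triangle $(j_H)_! j_H^* \ilm \to \ilm \to i_{H*} i_H^* \ilm \xrightarrow{+1}$ and its Verdier-dual $K(*)$-triangle $i_{H*} Ri_H^! \ilm \to \ilm \to (j_H)_* j_H^* \ilm \xrightarrow{+1}$ with the Hard Lefschetz hypotheses on $K$ and $K_H$ over $V-v$ and the statements of Corollary~\ref{surj0}.

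I will first dispose of the injectivity/surjectivity claims. That $\rho_{-1}$ is injective and $G_1$ is surjective is already contained in Corollary~\ref{surj0}. The injectivity of $G_0$ then follows because $\rho_0 \circ G_0 = L^H_{-2}$ and, by Hard Lefschetz on $K_H$, the iterate $(L^H)^2 = L^H_0 \circ L^H_{-2}: \ph{-2}{K_H} \overset{\sim}{\longrightarrow} \ph{2}{K_H}$ is an isomorphism, so $L^H_{-2}$ is injective. The surjectivity of $\rho_0$ is the delicate point: from the $K_c$-long exact sequence it is equivalent to $\partial_0 = 0$, equivalently to $\can_1: \ph{1}{K_c} \to \ph{1}{K}$ being injective. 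Verdier self-duality of $\ilm$ (with $f$ proper) exchanges the $K_c$- and $K(*)$-triangles and identifies $\can_1$ with the dual of a morphism $\beta_{-1}: \ph{-1}{K} \to \ph{-1}{K(*)}$; from the $K(*)$-long exact sequence, surjectivity of $\beta_{-1}$ amounts to vanishing of the connecting map $\delta''_{-1}: \ph{-1}{K(*)} \to \ph{-2}{K_H}$, and exactness identifies $\im \delta''_{-1} = \ker G_0 = 0$ by the injectivity of $G_0$ just proved.

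For the two splittings, the intersection of the summands is immediately trivial: if $x = G_0(y) \in \ker \rho_0$ then $L^H_{-2}(y) = \rho_0 G_0(y) = 0$ forces $y = 0$ by injectivity of $L^H_{-2}$; and if $x = \rho_{-1}(y) \in \ker G_1$ then $L_{-1}(y) = G_1 \rho_{-1}(y) = 0$ forces $y = 0$ by the Hard Lefschetz isomorphism $L_{-1}: \ph{-1}{K} \overset{\sim}{\longrightarrow} \ph{1}{K}$ on $K$. The direct sum then fills the ambient space by a dimension count. In degree $-1$: $\dim \im \rho_{-1} + \dim \ker G_1 = \dim \ph{-1}{K} + \dim \ph{-1}{K_H} - \dim \ph{1}{K} = \dim \ph{-1}{K_H}$, using $\dim \ph{-1}{K} = \dim \ph{1}{K}$ from HL on $K$. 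In degree $0$: Corollary~\ref{surj0} supplies the isomorphisms $\rho_{-2}: \ph{-2}{K} \overset{\sim}{\longrightarrow} \ph{-2}{K_H}$ and $G_2: \ph{0}{K_H} \overset{\sim}{\longrightarrow} \ph{2}{K}$, whence $\im G_0 = G_0 \rho_{-2}(\ph{-2}{K}) = L(\ph{-2}{K})$ and $\ker \rho_0 = \ker L_0$ (the degree-$0$ primitive part of $K$); the primitive Lefschetz decomposition $\ph{0}{K} = \ker L_0 \oplus L(\ph{-2}{K})$ furnished by HL on $K$ then yields the splitting $\ph{0}{K}|_{V-v} \simeq \im G_0 \oplus \ker \rho_0$. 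The concluding \emph{moreover} assertion that $G_0|_{V-v}$ is an isomorphism onto its image is a restatement of the injectivity of $G_0$.

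The principal obstacle is the surjectivity of $\rho_0$, which does not follow directly from Hard Lefschetz on $K_H$ and requires chasing Verdier self-duality of $\ilm$ through both the $K_c$- and $K(*)$-triangles together with the prior injectivity of $G_0$; keeping careful track of which assertion invokes HL on $K$, which HL on $K_H$, and which Corollary~\ref{surj0}, is the main bookkeeping burden.
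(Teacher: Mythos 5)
Your argument reaches the right conclusion, but it misses the one observation that makes the paper's proof short, and your added detour is considerably harder than the lemma deserves.

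The paper's proof rests on a single fact: by transversality $i_H^*\ilm$ is the (shifted) intermediate extension $j'_{!*}(i_H^*\LL)[1]$, so the Hard Lefschetz hypothesis for $f\circ i_H$ applies to $K_H[-1]$, and for $i=1$ it reads $\ph{-1}{K_H[-1]} \overset{\sim}{\longrightarrow} \ph{1}{K_H[-1]}$, i.e.\ $L^H_{-2}= \rho_0\circ G_0: \ph{-2}{K_H}\to\ph{0}{K_H}$ is an \emph{isomorphism}, not merely injective. Once a composition $g\circ f$ in an abelian category is an isomorphism, $f$ is injective, $g$ is surjective, and the middle object splits as $\im f \oplus \ker g$ via the idempotent $f(gf)^{-1}g$. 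This gives (\ref{1+}) in one line. Likewise $G_1\circ\rho_{-1}=L_{-1}:\ph{-1}{K}\to\ph{1}{K}$ is the $i=1$ Hard Lefschetz isomorphism for $K$, giving (\ref{2+}) immediately. No Corollary \ref{surj0}, no primitive Lefschetz decomposition, no duality chase, no dimension count.

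You explicitly assert that the surjectivity of $\rho_0$ ``does not follow directly from Hard Lefschetz on $K_H$'' and call it ``the delicate point.'' This is a misreading: it does follow directly, since $\rho_0\circ G_0$ is itself a Hard Lefschetz isomorphism. Two further imprecisions: (i) you attribute the injectivity of $L^H_{-2}$ to the iterate $(L^H)^2:\ph{-2}{K_H}\to\ph{2}{K_H}$, but the Hard Lefschetz statement for $K_H[-1]$ at level $i=1$ gives $L^H_{-2}$ itself as an isomorphism, which is both the stronger and the correctly-shifted statement (the level-$2$ iterate for $K_H[-1]$ runs $\ph{-3}{K_H}\to\ph{1}{K_H}$, not where you place it); (ii) the ``dimension count'' is applied to perverse sheaves on $V-v$, which are not finite-dimensional vector spaces; one should invoke additivity of length in the abelian category of perverse sheaves, but there is no reason to do so once the idempotent argument is available. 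Your duality chase through the $K(*)$-triangle, while correct as far as I can verify, solves a problem you have already solved.
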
 
\begin{proof}
Recall that $\LL$ is defined on $X-Y$. Let $ j': (H - H\cap Y) \to H$. We have  by  transversality:  
  $i_H^*\ilm \overset{\sim}{\longrightarrow}  j'_{!*} (i_H^* \LL)$, hence $ \ph{i}{ K_H} =  \ph{i}{Rf_* j'_{!*} (i_H^* \LL)}$. Since by the inductive hypothesis, 
  Hard Lefschetz theorem apply on $V-v$ to $K_H [-1]$, image of  the intermediate extension  $ j'_{!*}i_H^* \LL[-1]$ of the shifted restriction of $\LL$ on $H$, we deduce that the composition morphism  $\smile c_1 = \rho_0 \circ G_0$ in the diagram is an isomorphism on $V-v$ 

\smallskip
\centerline {$
   \ph{-2}{ K_H}_{\vert V-v} \overset{G_0}{\longrightarrow} \ph{0}{ K}_{\vert V-v} \overset{\rho_0}{\longrightarrow} \ph{0}{ K_H}_{\vert V-v} $} 

\smallskip
\n where the first term is $ \ph{-1}{Rf_* j'_{!*}i_H^* \LL[-1]}$ and the last $\ph{1}{Rf_* j'_{!*}i_H^* \LL[-1]}$.
  Hence  $\rho_{0}$ is surjevtive, $G_0$ is injective and the decomposition \ref{1+} follows. 

Respectively, the composition morphism  $\smile c_1 = G_{1} \circ  \rho_{-1}$ in the diagram
 
 \smallskip
\centerline {$
 \ph{-1}{ K}_{\vert V-v} \overset{\rho_{-1}}{\longrightarrow} \ph{-1}{ K_H}_{\vert V-v} \overset{G_{1}}{\longrightarrow} \ph{1}{ K}_{\vert V-v}$}

\smallskip
\n
is an isomorphism and the decomposition \ref{2+}  follows.
\end{proof}
The next result is based on Hodge theory as in (proposition \ref{mainp}, definitions \ref{dual0} and 
\ref{dual})
\begin{cor}\label{cor0}
 If the local purity theorem applies for $H$, then it applies for $X$, except eventually for $(\ke \, \rho_0)_{\vert V-v} \subset \ph{0}{Rf_*\ilm}_{\vert V-v}$. 
\end{cor}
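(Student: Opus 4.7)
The plan is to transfer the local purity bounds degree by degree from the perverse cohomology sheaves attached to the hyperplane restriction $K_H$ onto those of $K$, using the comparison morphisms $\rho_i$ and $G_i$ of Corollary \ref{surj0} together with the decompositions (\ref{1+}) and (\ref{2+}) on $V-v$. The basic book-keeping is that $\rho_i$, being a restriction morphism, preserves weights, whereas $G_i$ is a Gysin morphism of type $(1,1)$ and therefore shifts weights by $+2$. By the inductive hypothesis on $\dim X$ applied to the polarized VHS of weight $a-1$ on $H$ that produces $K_H[-1]$, the weight bounds of Proposition \ref{Sp} are already known for $\H^j(B_v-v,\ph{i'}{K_H})$ in every degree $i'$.

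For $|i|\geq 2$ the transfer is immediate. By Corollary \ref{surj0}, $\rho_i$ is an isomorphism of perverse sheaves on $V$ for $i<-1$ and $G_i$ is an isomorphism for $i>1$, so the weights of $\H^j(B_v-v,\ph{i}{K})$ coincide with those of $\H^j(B_v-v,\ph{i}{K_H})$, or respectively with those of $\H^j(B_v-v,\ph{i-2}{K_H})$ shifted by $+2$; in both cases the required inequalities $w>a+i+j$ for $j\geq 0$ and $w\leq a+i+j$ for $j\leq -1$ fall out of the inductive bounds on $H$. For $i=\pm 1$ I invoke the decomposition (\ref{2+}): $\rho_{-1}$ realizes $\ph{-1}{K}_{\vert V-v}$ as the summand $\im\,\rho_{-1}$ of $\ph{-1}{K_H}_{\vert V-v}$, while $G_1$ restricted to that same summand is an isomorphism onto $\ph{1}{K}_{\vert V-v}$; the inductive purity for $\ph{-1}{K_H}$ then descends directly to $\ph{-1}{K}$ and, after the Gysin shift by $+2$, to $\ph{1}{K}$.

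It remains to treat $i=0$. The splitting (\ref{1+}) gives $\ph{0}{K}_{\vert V-v}\simeq \im\,G_0 \oplus \ke\,\rho_0$. On $\im\,G_0$ the isomorphism $G_0:\ph{-2}{K_H}_{\vert V-v}\overset{\sim}{\longrightarrow}\im\,G_0$ together with the Gysin weight-shift by $+2$ turns the inductive bound $w>a+(-2)+j$ for $j\geq 0$ into exactly $w>a+j$, as required, and the dual half follows identically. The remaining summand $(\ke\,\rho_0)_{\vert V-v}$ is precisely the part of $\ph{0}{K}$ that is invisible to the hyperplane section, and the comparison morphisms provide no relation between it and the perverse cohomology of $K_H$; this is the exception recorded in the corollary, to be dispatched separately in the crucial case of Section~\ref{cru}.

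The principal technical obstacle is not the combinatorics of the splitting but the verification that $\rho_i$ and $G_i$ induce strict morphisms of mixed Hodge structures on the $\pt$-graded pieces of the Link cohomology $\H^j(B_v-v,-)$, with weight preservation for the restriction and with weight shift by $+2$ for the Gysin. This rests on the MHS on the Link (Proposition~\ref{mainp}), its compatibility with the perverse filtration (Proposition~\ref{global}), and the construction of the restriction and Gysin morphisms through the bifiltered logarithmic complexes of Sections~$4$--$5$.
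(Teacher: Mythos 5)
Your proof is correct and follows essentially the same route as the paper's: the corollary \ref{surj0} isomorphisms handle $|i|\geq 2$, the splittings \ref{1+} and \ref{2+} handle $i=\pm 1$ and the $\im\,G_0$ summand of $i=0$, and $\ke\,\rho_0$ remains as the exception. The only cosmetic difference is that for $i=1$ the paper cites the Lefschetz isomorphism $\eta:\ph{-1}{K}|_{V-v}\to\ph{1}{K}|_{V-v}$ directly, whereas you reach the same conclusion by composing the identification $\ph{-1}{K}\cong\im\,\rho_{-1}$ with $G_1$ restricted to that summand — but since $\eta = G_1\circ\rho_{-1}$, these are the same mechanism with the intermediate step made explicit.
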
 
\begin{proof}
The  restriction morphisms $ \rho_i$  are compatible with MHS for all $i$ and $j$ 

\centerline {$H^j(B_v-v,\ph{i}{K}) \to H^j(B_v-v,\ph{i}{K_H})$}
\n and they are isomorphisms for $i < - 1$ by the corollary above.  Then,   the conditions of local purity on $X$ are satisfied  for $i < -1$, since they are satisfied on  $H$ by induction. The dual argument for $G_i$ apply, and we are left with the cases $i = 0, -1, 1$.

The case  $i = -1$ follows from the decomposition (formula \ref{2+}), as $\rho_{-1}$ is an isomorphism onto a direct summand $ \im \,  \rho_{-1}$ of  $ \ph{-1}{Rf_* ( i_H)_* i_H^* \ilm}_{\vert V-v}$. 
 
 By Lefschetz isomorphism on $V-v$, we deduce the case $i = 1$ from the case $i = -1$.
 For $i = 0$, as $G_0$ is an isomorphism by the  lemma, the  inequality holds for $\im\, G_0$.   Only the case of $\ke \, \rho_0$ can not be deduced by induction from $H$. 
 \end{proof}
\subsection
{The crucial case} $w \leq a+j $ on $H^j(B_v-v,  \ph{0}{Rf_*\ilm})$ for $j <0$.
\label{cru}
\

\n The result is local at $v$, so we can  suppose $V$ affine, then  choose a projective embedding to allow the use of  the polarization of the cohomology. 
 
The proof is subdivided in many steps.   
  We  apply the next lemma for $ \ph{i}{Rf_*\ilm}$, in the case of $i = 0$, but it is equallly proved next for all $i$ .

\begin{lem}
 Let $\ilm$ be of weight $a$, then the $MHS$ on
  $$Gr_i^{\pt}H^{i+j}(B_{X_v}-X_v, \ilm)\simeq H^j(B_v-v, \ph{i}{Rf_*\ilm})$$
   is  of weight $\omega >  a+i+j$ for $j > 0$ and dually of weight  $\omega
\leq a+i+j$ for $ j <  -1$.
\end{lem}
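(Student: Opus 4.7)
The plan is to reduce the two bounds to one via the Verdier duality of Subsection 2.3 (which interchanges weights $a \pm q$ and perverse degrees $\pm i$), and then to proceed by induction on $\dim X$ through a general hyperplane section $H$ of $X$ transverse to all strata of $\SS$. It is enough to prove $\omega > a + i + j$ on $H^j(B_v - v, \ph{i}{K})$ for $j > 0$, with $K := Rf_* \ilm$ and $K_H := Rf_*(i_H)_* i_H^* \ilm$; the dual assertion for $j < -1$ then follows by applying the duality isomorphism $D(K(*v))[1] \overset{\sim}{\longrightarrow} K(*v)$ of Subsection 2.3 to the graded pieces.

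Under the inductive hypothesis, the lemma is known for $K_H$. Corollary \ref{surj0} shows $\rho_i : \ph{i}{K} \to \ph{i}{K_H}$ is an isomorphism for $i < -1$ and the Gysin $G_i : \ph{i-2}{K_H} \to \ph{i}{K}$ is an isomorphism for $i > 1$, which transfers the weight bound from $H$ to $X$ in these ranges. For $i = \pm 1$, the splittings (\ref{1+}) and (\ref{2+}) exhibit $\ph{\pm 1}{K}_{\vert V-v}$ as a direct summand of $\ph{\pm 1}{K_H}_{\vert V-v}$, and for $i = 0$ the summand $\im G_0$ appearing in (\ref{1+}) is again controlled by $\ph{-2}{K_H}$. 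By Corollary \ref{cor0} this leaves only the weights of $H^j(B_v - v, \ke \rho_0)$ with $\ke \rho_0 \subset \ph{0}{K}_{\vert V-v}$ to bound for $j > 0$.

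For this crucial case I would use the exact sequence (\ref{es}) to identify $\ke \rho_0$ with the image of $\can_0 : \ph{0}{K_c} \to \ph{0}{K}$, where $K_c = Rf_*(j_H)_! j_H^* \ilm$. By Lemma \ref{D}, the affineness of $f \circ j_H : X - H \to V$ yields $K_c \in {^pD}_V^{\geq 0}$, which via the logarithmic complex realization in Section 4 imposes weight $> a + j$ on the relevant pieces of the MHS on $\H^j(B_v - v, K_c)$. After embedding $V$ in a projective variety, as permitted by the remark following Theorem \ref{P}, the polarization of the Intersection cohomology of the ambient compactification makes the inequality sharp, and the compatibility of $\pt$ with the MHS on the link furnished by Proposition \ref{mainp} then propagates this bound to $\ke \rho_0 \subset \ph{0}{K}$, completing the proof. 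The main obstacle will be extracting the strict inequality $\omega > a + i + j$ rather than the weak $\omega \geq a + i + j$: the weak bound comes formally from the affine vanishing of Lemma \ref{D}, but the strict one requires genuine Hodge-theoretic polarization, tracked through the logarithmic weight filtration of Section 5 on $IC^*\LL(\log Z)$ and through the Intersection cohomology of the projective compactification.
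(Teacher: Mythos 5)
Your approach diverges from the paper's at the very first step, and the divergence leaves a gap you cannot close. You propose to take $H$ a hyperplane section of $X$ and to induct on $\dim X$ by comparing $\ph{i}{K}$ with $\ph{i}{K_H}$ via Corollary \ref{surj0}; but that is precisely the argument of the preceding subsection (``Proof by induction on dim.$X$''), whose conclusion, as stated in Corollary \ref{cor0}, is that everything reduces to bounding the weights of $H^j(B_v-v,\ke\,\rho_0)$ --- and that residual problem \emph{still ranges over all $j$}. So you have merely relocated the lemma's content onto the summand $\ke\,\rho_0\subset\ph{0}{K}$ without removing the need to prove it.

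At that point your argument for $\ke\,\rho_0$ and $j>0$ does not work: Lemma \ref{D} gives $K_c\in{^pD}_V^{\geq 0}$, which is a statement about the perverse $t$-structure, not about the weight filtration. These are independent structures, and there is no implication from a perverse-degree bound to a strict weight bound on the MHS of the link. The polarization mechanism you invoke afterward (Lemma \ref{ext0}, the projective compactification, the bilinear form on intersection cohomology) is tailored to the single borderline value $j=-1$, $i=0$; it has no purchase on $j>0$, where no primitivity argument is available.

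The paper's own proof of the lemma uses a different and simpler device. It takes $H$ a general hyperplane of \emph{$V$} containing $v$, and works with the Gysin exact sequence on $V$ with the perverse sheaf $\ph{i}{K}$ itself as coefficient:
\[
\H^{j-2}(H_v-v,\ph{i}{K})(-1)\xrightarrow{G_j}\H^j(B_v-v,\ph{i}{K})\to\H^j(B_v-H_v,\ph{i}{K}).
\]
Since $\ph{i}{K}\in{^pD}_c^{\leq 0}(V)$ and $B_v-H_v$ is Stein, Artin--Lefschetz kills the last term for $j>0$, so $G_j$ is surjective (an isomorphism for $j>1$). The strict inequality $\omega>a+i+j$ is then automatic: it comes from the Tate twist $(-1)$ in the Gysin map (weight $+2$) combined with the inductive semi-purity bound on $H_v$, a variety of one lower dimension. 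No polarization is needed for $j>0$; your final paragraph, which anticipates that the strict inequality must be extracted from polarization, misidentifies where the strictness comes from. The subtle strictness problem does exist, but it is concentrated in the later ``crucial case'' $j=-1$, $i=0$, not here.
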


\begin {proof}  Let  $H$ be a general hyperplane
   section of $V$  containing  $v$, $H_v = B_v \cap H $ and $ K = Rf_*\ilm $. We suppose $H$ normally embedded outside $v$ so  that the perverse truncation
 commutes with the restriction to $H-v$ up to a shift in degrees;
 We have a Gysin  exact sequence
 \begin{equation*}
{\H}^{j-2} (H_v- \{v\},\ph{i}{K})(-1) \stackrel{G_j}{\rightarrow}  {\H}^j
(B_v- \{v\}, \ph{i}{K})\rightarrow
 {\H}^j (B_v - H_v ,\ph{i}{K})
\end{equation*}
Since
   $ \ph{i}{K} $ is in the category $^pD_c^{\leq 0}V$,
 and $B_v -H_v$ is Stein, we apply Artin Lefshetz hyperplane section theorem
to show that $ {\H}^j (B_v - H_v , \ph{i}{K}) \simeq 0$ for $ j > 0$.
 Then $G_j$ is an isomorphism for $j > 1$ and it is surjective for $j = 1$. 
 
 The smooth strict transform $H'$  of $H$ intersects transversally  in $X$ the various
 subspaces $Y_l$ inverse of the strata $S_l$ so that  Gysin
  morphisms are 
    are compatible with  the $MHS$.
  
  Hence, we deduce the statement for $B_v-v$ in the lemma for $j > 0 $ from the inductive
 hypothesis on the local purity of the $MHS$ structure
 on 
 
 \n   $ H^{j-1} (H_v- v, \ph{i}{K}[-1]) $ for $j-1 \geq 0$, since $H^{j-2} (H_v- v, \ph{i}{K})(-1) \simeq
  H^{j-1} (H_v- v, \ph{i}{K}[-1])$.
\end{proof}

\subsubsection{The large inequality: $w \leq a $ on $H^{-1}(B_v-v,  \ph{0}{Rf_*\ilm})$}
This case also is easily deduced by induction 
\begin{lem}
i)  ${\H}^0(B_v-\{v\}, \ph{i}{Rf_*\ilm})$ is  of weight
 $ \geq a+i$.\\
 ii) Dually: ${\H}^{-1}(B_v-v,\ph{i}{K})$ is of weight
 $ \leq a+i$.
\end{lem}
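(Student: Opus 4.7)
The two assertions (i) and (ii) are Poincar\'e--Verdier dual via the isomorphism $D(K(*v))[1] \overset{\sim}{\longrightarrow} K(*v)$ together with the auto-duality of $j_{!*}\LL$, as recalled in the duality paragraph above: applying statement (i) to index $-i$ yields (ii) for index $i$. Hence it suffices to establish (i), the bound $\H^0(B_v-v, \ph{i}{K}) \geq a+i$.

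The plan is to proceed as in the preceding lemma. I choose a general hyperplane section $H$ of $V$ through $v$, transverse to all strata outside $v$, and set $H_v := H \cap B_v$. By transversality the Gysin exact sequence
\begin{equation*}
\H^{-2}(H_v-v, \ph{i}{K})(-1) \xrightarrow{G_0} \H^0(B_v-v, \ph{i}{K}) \to \H^0(B_v-H_v, \ph{i}{K}) \to \H^{-1}(H_v-v, \ph{i}{K})(-1)
\end{equation*}
is a sequence of morphisms of mixed Hodge structures, and I aim to bound the central term by controlling the outer ones. The source of $G_0$ is handled by descending induction on $\dim V$: since $\dim H = n-1$, the full local purity theorem holds on $H$; the transversal restriction $i_H^* \ph{i}{K}[-1]$ is perverse on $H$ of weight $a+i-1$ (weight drops by $1$ under $i_H^*[-1]$), and the weak inequality of semi-purity at $j_H = -1$ gives $\H^{-1}(H_v-v, i_H^* \ph{i}{K}[-1]) \leq a+i-2$, so after the Tate twist $\im G_0 \leq a+i$. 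The term $\H^0(B_v-H_v, \ph{i}{K})$ is bounded by noting that $B_v-H_v$ is Stein (complement of a hypersurface in the Stein ball $B_v$) and that the inductive decomposition hypothesis on $V-v$ makes $\ph{i}{K}|_{V-v}$ pure of weight $a+i$ (as a direct sum of intermediate extensions $(i_{S_l})_{!*}\LL^i_{S_l}[l]$ of polarized VHS's $\LL^i_{S_l}$ of weight $a+i-l$); the standard Artin--Lefschetz purity for pure Hodge modules on affine varieties then yields $\H^0(B_v-H_v, \ph{i}{K}) \geq a+i$.

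Combining in the exact sequence, the quotient $\H^0(B_v-v, \ph{i}{K})/\im G_0$ embeds into $\H^0(B_v-H_v, \ph{i}{K})$ and so has weight $\geq a+i$. The remaining subtle point, which I expect to be the main obstacle, is to control $\im G_0$ itself: a priori its weights lie in $(-\infty, a+i]$ and I need to rule out any strictly lower ones, forcing $\im G_0$ to be pure of weight $a+i$ (or zero). My plan for this step is to invoke the compatibility of the perverse filtration with the Hodge and weight filtrations (Proposition \ref{global}) on a projective compactification of $V$: the logarithmic complex computation of sections $4$--$5$ bounds the global weights on $\H^*(X-X_v, j_{!*}\LL)$ from below, which combined with the purity of $\ph{i}{K}|_{V-v}$ forces the Gysin contribution into pure weight $a+i$. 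Statement (ii) then follows immediately from (i) by the duality above.
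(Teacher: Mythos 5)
Your duality reduction of (ii) to (i) is correct and matches the paper. The gap is in your proof of (i), and you have in fact put your finger on it yourself: your approach via a hyperplane $H$ \emph{through} $v$ is structurally unable to deliver the conclusion, because the Gysin term $\im G_0$ only comes with an upper weight bound $\leq a+i$ (the ``dual'' inequality of semi-purity on $H$ gives weights on the wrong side), while the exact sequence needs a \emph{lower} bound on both $\im G_0$ and the quotient. Your proposed repair---appealing vaguely to Proposition~\ref{global} on a compactification to force $\im G_0$ into pure weight $a+i$---does not actually close this; purity of the open-strata pieces on $V-v$ gives no direct control of a Gysin map whose source lives on $H_v - v$ in negative cohomological degree.

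The paper avoids the problem entirely by choosing the hyperplane \emph{not} through $v$. Take $H_1$ a general hyperplane section with $v \notin H_1$, so that $V - H_1$ is an affine neighborhood of $v$. Applying the open/closed attaching triangle
\begin{equation*}
R k_{v !} k_v^* \ph{i}{K}\;\longrightarrow\; R k_{v *} k_v^* \ph{i}{K} \;\longrightarrow\; i_{v *}\, i_v^* R k_{v *} k_v^* \ph{i}{K}
\end{equation*}
and taking hypercohomology over $V-H_1$ yields
\begin{equation*}
\H^0\bigl(V - H_1,\, R k_{v *} k_v^*  \ph{i}{K}\bigr)
   \xrightarrow{\;\gamma_0\;} H^0\bigl( i_v^* R k_{v *} k_v^* \ph{i}{K}\bigr)
      \longrightarrow  {\H}^1\bigl( V-H_1,\, R k_{v !} k_v^* \ph{i}{K}\bigr).
\end{equation*}
Since $R k_{v !}k_v^*\ph{i}{K}\in {}^pD^{\leq 0}$ and $V-H_1$ is affine, Artin vanishing annihilates the last term, so $\gamma_0$ is \emph{surjective} onto the link group $\H^0(B_v-v,\ph{i}{K})$. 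The source $\H^0(V - H_1 - v, \ph{i}{K})$ is the degree-$0$ cohomology of an open subvariety of $V-v$, where $\ph{i}{K}$ is pure of weight $a+i$ by the inductive decomposition; hence its weights are $\geq a+i$, and surjectivity transports this bound to the target. No Gysin term, and hence no problematic contribution from $H$, ever appears. You should replace your hyperplane through $v$ by one avoiding $v$ and use Artin vanishing for $Rk_{v!}$ in this way.
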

\begin{proof} Let $k_v: (V-v) \to V$ and consider  a general  hyperplane section $H_1$ not
containing $v$. We  deduce from the triangle of
complexes:\\ $ R k_{v !} k_v^* \ph{i}{K}\rightarrow R k_{v *} k_v^*
\ph{r}{K} \to i_{v *} i_v^* R k_{v *} k_v^* \ph{i}{K}$,
 the exact sequence
\begin{equation*}
\begin{split}
 {\H}^0(V - H_1,R k_{v *} k_v^*  \ph{i}{K})
   \xrightarrow{\gamma_0} H^0( i_v^* R k_{v *} k_v^* \ph{i}{K})
      \rightarrow  {\mathbb H}^1( V-H_1,
R k_{v !} k_v^* \ph{i}{K})
\end{split}
\end{equation*}
which shows that $\gamma_0$ is surjective,  since 
${\H}^1( V-H_1, R k_{v !}  k_v^* \ph{i}{K})$  vanishes as
$V-H_1$ is affine.
 We deduce the assertion i) since the weights
$w$ of the cohomology  ${\H}^0(V - H_1,R k_{v
*} R k_v^* \ph{i}{K})$ of the open set $V-H_1$ satisfy $w \geq a+i$.\end{proof}
\subsubsection
{The crucial step:  $w \not= a$}It remains  to exclude the case $w = a $, which  does not follow by induction, that is
   \begin{equation}\label{c}
Gr^W_a Gr^{\pt}_0
\H^{-1}(B_{X_v}-X_v, \ilm )  \overset{\sim}{\longrightarrow}  Gr^W_a \H^{-1}(B_v-v,
\ph{0}{K}) \overset{\sim}{\longrightarrow}  0.
\end{equation}
from which we  deduce by duality $Gr^W_a
Gr^{\pt}_0 \H^0(B_{X_v}-X_v, \ilm )\overset{\sim}{\longrightarrow}  0  $.
  
  By corollary \ref{cor0}, it remains to consider the case of the sub-perverse sheaf $\ke \rho_0 \subset \ph{0}{K}$ in which case we use the following lifting of  cohomology classes
  
  \begin{lem}\label{ext0}
Let  $B_v^* := B_v-v$ and $H$ a general hyperplane section of $X$. The following map is surjective:

$ W_a  \tau_{\leq 0}\H^{-1}(B_v^*,   Rf_*j_{H!}j_H^*\ilm) \rightarrow   Gr^W_a Gr^{\pt}_0 \H^{-1}(B_v^*,  \ke \,\rho_0)  $.
 \end{lem}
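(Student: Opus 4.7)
The plan is to produce the lift via the canonical morphism $\can\colon K_c\to K$, reducing to a short exact sequence of perverse sheaves and controlling the obstruction using the inductive local purity on the hyperplane section $H$. By Lemma \ref{D} one has $\ph{i}{K_c}=0$ for $i<0$, so $\pt_{\leq 0}K_c\simeq \ph{0}{K_c}$. The long exact sequence \ref{es} of perverse cohomology then gives a short exact sequence of perverse sheaves on $V$, compatible with the MHS (Proposition \ref{mainp}):
\begin{equation*}
0 \to N \to \ph{0}{K_c} \xrightarrow{\can_0} \ke\rho_0 \to 0, \qquad N := \im\bigl(\partial_{-1}\colon\ph{-1}{K_H}\to\ph{0}{K_c}\bigr).
\end{equation*}

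Applying $\H^{-1}(B_v^*,-)$ yields an exact sequence of MHS
\begin{equation*}
\H^{-1}(B_v^*,\ph{0}{K_c}) \xrightarrow{\psi} \H^{-1}(B_v^*,\ke\rho_0) \to \H^0(B_v^*,N).
\end{equation*}
By strictness of MHS morphisms for $W$, surjectivity of $\psi$ on the weight $a$ graded piece reduces to $Gr^W_a\H^0(B_v^*,N)=0$. Granting this, every $c\in Gr^W_a\H^{-1}(B_v^*,\ke\rho_0)$ lifts through $\psi$ to some $c'\in W_a\H^{-1}(B_v^*,\ph{0}{K_c})$, and the canonical factorization $\H^{-1}(B_v^*,\ph{0}{K_c})=\H^{-1}(B_v^*,\pt_{\leq 0}K_c)\to \pt_{\leq 0}\H^{-1}(B_v^*,K_c)$ furnishes the required class $\tilde c$.

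The heart of the argument is to show $Gr^W_a\H^0(B_v^*,N)=0$. As $N$ is a quotient of $\ph{-1}{K_H}$, it suffices to bound the weights on $\H^0(B_v^*,\ph{-1}{K_H})$ strictly below $a$. Now $K_H=Rf_*\,j'_{!*}(i_H^*\LL[-1])$ has coefficients in a shifted variation of Hodge structure of weight $a-1$, by the transversality identification $i_H^*\ilm[-1]\simeq j'_{!*}(i_H^*\LL[-1])$. The inductive local purity on $H$ (one dimension less) gives the lower bound $w>a-2$ at $j=0$; the matching upper bound $w\leq a-2$ comes from the inductive purity of $\ph{-1}{K_H}_{\vert V-v}$ (pure of weight $a-2$ by the inductive decomposition applied on $V-v$ to $f\circ i_H$) combined with standard weight considerations for the extension across $v$.

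The main obstacle is this upper weight bound: pinning it down requires carefully combining the inductive local purity on $H$ with the inductive decomposition on $V-v$ for the composite $f\circ i_H$, while tracking the weight shift $a\mapsto a-1$ induced by the hyperplane restriction $\LL\mapsto i_H^*\LL[-1]$.
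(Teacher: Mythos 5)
Your proposal follows the same overall route as the paper's proof: introduce $K_c$, use Lemma \ref{D} to get $\pt_{\leq 0}K_c\simeq \ph{0}{K_c}$, extract from the long exact sequence \ref{es} the short exact sequence $0\to N\to \ph{0}{K_c}\to \ke\,\rho_0\to 0$ (the paper uses the isomorphic object $\coke\,\rho_{-1}$ in place of your $N$), apply $\H^{-1}(B_v^*,-)$, and reduce surjectivity of the graded map to the vanishing $Gr^W_a\H^0(B_v^*,N)=0$. So the structure is correct.

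However, your weight analysis for the crucial vanishing has an off-by-one error that is fatal to the argument as written. You write $K_H = Rf_*j'_{!*}(i_H^*\LL[-1])$, but $i_H^*\ilm\simeq j'_{!*}(i_H^*\LL)$, so that it is $K_H[-1]$, not $K_H$, which equals $Rf_*j'_{!*}(i_H^*\LL[-1])$; the perverse coefficient sheaf $i_H^*\LL[-1]$ on $H$ has weight $a-1$. Consequently $\ph{-1}{K_H}=\ph{0}{K_H[-1]}$ is pure of weight $(a-1)+0=a-1$, not $a-2$ as you assert, and the lower bound from the inductive local purity on $H$ applied at $i=0$, $j=0$ is $w> a-1$, not $w>a-2$. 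Your claimed "matching upper bound $w\leq a-2$" would then contradict the corrected lower bound $w>a-1$ (forcing $\H^0(B_v^*,\ph{-1}{K_H})=0$, which is both unjustified and too strong). Moreover, $w>a-1$ only gives $w\geq a$, which does not by itself exclude $Gr^W_a\neq 0$; the vanishing you need is strictly sharper than what the raw purity bound delivers. You correctly flag this as the main obstacle, but the specific numerology you propose to resolve it is inconsistent. (The paper attributes the vanishing of $Gr^W_a\H^0(B_v^*,\coke\,\rho_{-1})$ simply to "the inductive hypothesis on $H$"; making this precise requires using the dual form of the crucial case already established inductively for $K_H[-1]$, i.e. $Gr^W_{a-1}\,Gr^{\pt}_0\H^0(B_v^*,K_H[-1](*v))=0$, together with the identification $\coke\,\rho_{-1}|_{V-v}\simeq \ker G_1$ and its placement inside $\ph{-1}{K_H}$ — not a naive weight estimate on all of $\ph{-1}{K_H}$.)
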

  The proof  relies  on  Hodge theory  appied to
  $K_c$ (formula \ref{es}) as follows.
 \subsubsection
 { Cohomology with compact support  of the fibers of $f_{\vert X-H}$}
  By transversality of $H$ with all strata of $X$, 
 the restriction to $H$ on $\ilm \simeq IC^*\LL$ is defined in terms of  complexes of logarithmic type (subsection \ref{globe})
 
  \n $ \rho: (IC^*\LL, F) \to i_{H *}i_H^*(IC^*\LL, F)$, where 
   $(IC^*(\LL_{\vert H}[-1]), F)[1] = i_H^*(IC^*\LL, F)$.

 We introduce  the  cone $(C(\rho)[-1], F)$ and let $ (K_c,   F):= Rf_* (C (\rho)[-1],   F)$. From the  triangle  $K_c \to K \to K_H \xrightarrow{(1)}$,
 we deduce  an   exact sequence of perverse cohomology 
\begin{equation*}
 \ph{i-1}{K} \xrightarrow{\rho_{i-1}} \ph{i-1}{K_H}\xrightarrow{\partial_{i-1}}\ph{i}{K_c} \xrightarrow{\can_i}  \ph{i}{K} \xrightarrow{\rho_i}   \ph{i}{K_H}
 \end{equation*} 
{\it Proof of lemma \ref{ext0}.}
In the local case, over a neighborhood of $v$, we consider  the long exact sequence 
  $$  \to  \H^{i-1}(B_v^*, K_H)\to \H^i(B_v^*, K_c) \xrightarrow{\gamma}     \H^i(B_v^*, K)  \rightarrow  \H^i(B_v^*, K_H) \to \cdots $$
We put a  MHS on the terms as follows (subsection \ref{tube}).  The mixed cone $C(I_X)$ (resp. $C(I_H)$) puts MHS on $ \H^* (B_v^*, K)$ (resp.  $\H^*(B_v^*, K_H)$) (lemma \ref{dual1}) in terms of  complexes of logarithmic type.
  The restriction   $ \rho:  C(I_X) \to i_{H *} C(I_H)$ is well defined and compatible   with the weight filtration $W$ and the Hodge filtration $F$. 
 We introduce  the mixed cone $(C(\rho)[-1], W, F)$ satisfying 

\smallskip
 \centerline {$(Gr^{W}_i C(\rho)[-1], F) = Gr^{W}_i (C(I_X)\oplus Gr^{W}_{i+1}( i_{H *}C(I_H)[-1])$.}

\smallskip
\n  We remark that  $C(\rho) [-1] \simeq ( j_H)_! j_H^* C(I_X)$,
 and we have  a triangle 
\begin{equation*}
\begin{split}
C(\rho)[-1]\to C(I_X) \to i_{H *}i_H^*C(I_H) \xrightarrow{(1)}
\end{split}
\end{equation*}
inducing   MHS on the terms of the exact sequence.
 
 Such MHS is compatible with the perverse filtration 
(proposition \ref{mainp}, sections \ref{C6}, \ref{Link1}), that is the subspaces  $\H^i(B_v^*, \pt_\leq j K), \,  \H^i(B_v^*,  \pt_\leq j K_H),  \H^i (B_v^*,  \ph{j}{K})$,  and $ \H^i (B_v^*,  \ph{j}{K_H})$ underly corresponding MHS, as well 
$\H^i(B_v^*, \pt_\leq j K_c)$ and  $\H^i(B_v^*,  \ph{j}{K_c})$.
We apply the previous results to the exact sequence 
 $$ \cdots \to \H^{-1}(B_v^*,  \ph{0}{K_c}) \xrightarrow{\gamma}     \H^{-1}(B_v^*, \ke \rho_0)  \xrightarrow{\partial}  \H^0(B_v^*, \coke \rho_{-1}) \to $$
 Taking $Gr^W_a$, we have an  exact squence
  \begin{equation*}
  Gr^W_a \H^{-1}(B_v^*,  \ph{0}{K_c}) \xrightarrow{\gamma_1}   Gr^W_a   \H^{-1}(B_v^*, \ke \rho_0)  \xrightarrow{\partial} Gr^W_a  \H^0(B_v^*, \coke \rho_{-1})  \to 
\end{equation*}
By the inductive hypothesis  on $H$, $ Gr^W_a  \H^0(B_v^*, \coke \rho_{-1}) = 0 $,  hence $\gamma_1$ is surjective so that  elements of  $Gr^W_a   \H^{-1}(B_v^*, \ke \rho_0)$ can be  lifted 
to elements in 

\n $W_a   \H^{-1}(B_v^*, \ph{0}{K_c})$.
 Moreover,   $   \tau_{ \leq 0} K_c =  \ph{0}{K_c}     $
  as $  \tau_{<0} K_c = 0 $, hence  we can lift the elements
to $ W_a\H^{-1}(B_v^*,   \tau_{ \leq 0} K_c)$. This ends the proof of lemma \ref{ext0}.

\subsubsection{Polarization} In this step we use the polarization of Intersection cohomology.
    The following proof  is  based on the idea that the  cohomology of  $B_v-v$ fits in two exact sequences  issued from the  two triangles for $K:= Rf_*\ilm$
\begin{equation}\label{eq}
 i_v^* R k_{v !} k_v^*K \to i_v^*K \to i_v^* R k_{v *} k_v^* K, \quad    R k_v^!  K \to i_v^*K \to i_v^* R k_{v *} k_v^* K 
\end{equation}
 from which
we deduce the commutative 
diagram  (see also the diagram \ref{dia})
$$\xymatrix{
 \H^{ -1}(B_{X_v}-X_v, \ilm )\ar[d]^{ \partial }\ar[dr]^{\gamma}\ar[r]^{\partial_X}&
 \H_c^0(X - X_v, \ilm) \ar[d]^{\alpha_X} \\
   \H_{X_v}^0(X, \ilm)\ar[r]^{A}&
   \H^0(X, \ilm) }$$
  Let $ \gamma $  denotes the composition morphisms 
   $ \gamma = \alpha_X  \circ \partial_X = A \circ \partial $.
   \begin{lem} 
\n   Let $u \in  Gr^W_a \H^{
-1}(B_{X_v}-X_v,\ilm) $ be in the image of   the canonical 

\n  morphism
$ Gr^W_a\H^{ -1}(B_{X_v}- X_v,  j_{H !}j_H^* \ilm ) \xrightarrow{\can} 
 Gr^W_a\H^{ -1}(B_{X_v}-X_v, \ilm )$, 
  then 
  
  \n $ \gamma (u) = 0 $
   in $Gr^W_a \H^0(X, \ilm ) = \H^0(X, \ilm )$.
\end{lem}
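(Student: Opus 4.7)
The plan is to use the lifting hypothesis to replace $u$ by a class with coefficients in $j_{H!}j_H^*\ilm$, trace it through the naturally lifted version of the commutative diagram in the statement, and then show that the resulting image in $\H^0(X,\ilm)$ vanishes by combining properness, Artin-Lefschetz vanishing on the affine complement $X-H$, and the polarization of $\H^0(X,\ilm)$. We may assume $X$ projective by the remark after Corollary \ref{CP}.

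First I would choose a lift $\tilde u\in W_a\H^{-1}(B_{X_v}-X_v,\,j_{H!}j_H^*\ilm)$ with $\can(\tilde u)\equiv u$ modulo $W_{<a}$, which exists by hypothesis. By naturality of the square in the statement with respect to the canonical morphism $j_{H!}j_H^*\ilm\to\ilm$, the class $\gamma(u)$ is the image of $\tilde u$ through
\[
\H^{-1}(B_{X_v}-X_v,\,j_{H!}j_H^*\ilm)\xrightarrow{\partial_X^H}\H_c^0(X-X_v,\,j_{H!}j_H^*\ilm)\xrightarrow{\alpha_X^H}\H^0(X,\,j_{H!}j_H^*\ilm)\xrightarrow{\can}\H^0(X,\ilm).
\]
With $X$ projective, properness gives $\H^0(X,j_{H!}j_H^*\ilm)\simeq\H_c^0(X-H,\ilm)$ and $\H_c^0(X-X_v,j_{H!}j_H^*\ilm)\simeq\H_c^0(X-X_v-H,\ilm)$, while Artin-Lefschetz vanishing on the affine $X-H$ for the perverse sheaf $j_H^*\ilm$ ensures that these lie in the first nonvanishing compactly supported degree.

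Next, the distinguished triangle $j_{H!}j_H^*\ilm\to\ilm\to i_{H*}i_H^*\ilm\xrightarrow{[1]}$ yields the long exact sequence
\[
\H^{-1}(H,\,i_H^*\ilm)\to\H^0(X,\,j_{H!}j_H^*\ilm)\xrightarrow{\can}\H^0(X,\ilm)\to\H^0(H,\,i_H^*\ilm),
\]
so $\gamma(u)=0$ is equivalent to the statement that $\alpha_X^H(\partial_X^H(\tilde u))$ lies in the image of $\H^{-1}(H,i_H^*\ilm)$. Equivalently, via the triangle for $X-X_v-H\hookrightarrow X$ with closed complement $X_v\cup H$, $\gamma(u)$ lies in the kernel of the restriction map $\H^0(X,\ilm)\to\H^0(X_v\cup H,\,i_{X_v\cup H}^*\ilm)$, and what must be shown is that this kernel intersects $Gr^W_a$ trivially.

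The main obstacle is this final step, which is where the polarization of intersection cohomology enters. The pure Hodge structure $\H^0(X,\ilm)$ has weight $a$, and by transversality $i_H^*\ilm[-1]$ is the intermediate extension on $H$ of the pure VHS $i_H^*\tilde\LL[-1]$, so by the inductive hypothesis on $\dim X$ the group $\H^{-1}(H,i_H^*\ilm)=\H^0(H,i_H^*\ilm[-1])$ is pure of weight $a$. Poincaré-Verdier self-duality of $\ilm$ on the projective $X$ gives a nondegenerate pairing on $\H^0(X,\ilm)$, and any class coming from $\H_c^0(X-X_v-H,\ilm)$ pairs trivially against the image of $\H^0(H,i_H^*\ilm[-1])$ by Lefschetz-duality bookkeeping on the affine $X-H$. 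Combined with the polarization, this forces $\gamma(u)\in Gr^W_a\H^0(X,\ilm)=\H^0(X,\ilm)$ to vanish; executing this tracking of weights through the compactly supported cohomology on $X-H$, while invoking the inductively-established purity on $H$, is the hard part of the proof.
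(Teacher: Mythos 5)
Your setup (lift $u$ to a class $\tilde u$ with coefficients in $j_{H!}j_H^*\ilm$, push $\gamma(u)$ through the corresponding diagram with $j_{H!}$-coefficients, identify $\gamma(u)$ as lying in the kernel of restriction to $X_v\cup H$) matches the paper's first step, which establishes that $\gamma(u)$ is a \emph{primitive} class. The gap is in your final step, and it is not merely a matter of "executing the hard part": you have misidentified what must be shown. You reduce to the claim that the kernel of $\H^0(X,\ilm)\to \H^0(X_v\cup H, i_{X_v\cup H}^*\ilm)$ intersects $Gr^W_a\H^0(X,\ilm)=\H^0(X,\ilm)$ trivially, i.e.\ that this restriction is injective. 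There is no reason this should be true in general, and the paper does not prove it; it proves only that the \emph{specific} class $\gamma(u)$ vanishes, using crucially the extra structure that $\gamma(u)=A(\partial u)$ lies in the image of the map $A:\H^0_{X_v}(X,\ilm)\to \H^0(X,\ilm)$ from cohomology with support.

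Second, the logic "$\gamma(u)$ pairs trivially against the Gysin image of $\H^0(H,\cdot)$, combined with the polarization, forces $\gamma(u)=0$" does not close: orthogonality to the Gysin image is just primitivity, and the polarization $Q$ is \emph{positive-definite} on the primitive subspace, so primitivity alone gives nothing. One must show $Q(\gamma(u),\overline{\gamma(u)})=0$. The paper does this with the adjunction $P(Ax,c)=P_v(x,A^*c)$ between $A$ and the restriction $A^*:\H^0(X,\ilm)\to\H^0(X_v,\ilm)$: writing $\gamma(u)=A(\partial u)$,
\begin{equation*}
Q(\gamma(u),\overline{\gamma(u)})=P\bigl(CA(\partial u),A(\overline{\partial u})\bigr)=P_v\bigl(C\partial u,\, A^*A(\overline{\partial u})\bigr)=P_v\bigl(C\partial u,\, A^*\overline{\gamma(u)}\bigr)=0,
\end{equation*}
the last vanishing because $\gamma(u)=\alpha_X(\partial_X u)$ lies in $\im\alpha_X=\ke A^*$ in the exact sequence $\H_c^0(X-X_v,\ilm)\xrightarrow{\alpha_X}\H^0(X,\ilm)\xrightarrow{A^*}\H^0(X_v,\ilm)$. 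So you need both facts you already noted — $\gamma(u)$ primitive, and $\gamma(u)$ vanishing on $X_v$ — but the second must enter through the adjunction that moves one argument of the pairing to $\H^0(X_v,\ilm)$ where it dies, not by pairing $\gamma(u)$ against the Gysin image. Without that computation the proposal does not yield the vanishing.
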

\begin{proof} 1) {\it $\gamma (u)$ is primitive}.  Let  $B^*_{X_v} := B_{X_v}-X_v$,   $b \in Gr^W_a\H^{ -1}(B_{X_v}^*,  j_{H !}j_H^* \ilm )
 $ such that $ u =  \can (b)$, and consider the diagram corresponding to an hyperplane section $H$
   \begin{equation*}
Gr^W_a\H^{ -1}(B_{X_v}^*,  j_{H !}j_H^* \ilm )
\stackrel {\partial_!} \rightarrow
 Gr^W_a\H_c^0(X^*,  j_{H !}j_H^* \ilm ) \stackrel {\alpha_!} {\rightarrow}
 Gr^W_a\H^0(X,  j_{H !}j_H^* \ilm ) 
 \end{equation*}
 Let  $\gamma_! :=  \alpha_! \circ \partial_!$, then  $\gamma (u) =  \gamma (\can \, b) = \can \, \gamma_! (b)$,  hence the restriction to $H$: $\rho_H(\gamma (u) )= \rho_H(\can \, \gamma_! (b)) = 0 $,  and  $\gamma (u)$ is a primitive element.
  
2) {\it $\gamma (u) = 0 $}.   We consider the diagram
  
$$\xymatrix{
 &
Gr^W_a \H^0(X, \ilm) \ar[dr]^{A^*}& \\
  Gr^W_a \H_{X_v}^0(X, \ilm)  \ar[ur]^{A} \ar[rr]^{I}&
  &Gr^W_a \H^0(X_v, \ilm) }$$
  
\n where $A^*$ is the dual of $A$.
 Let $P $ denotes the scalar product defined by Poincar\'e duality on $ \H^0(X, \ilm)$ and   $C $ the
Weil operator defined by the HS. The polarization $Q$ on the primitive part of $ \H^0(X, \ilm)$
is defined by $Q(a,b) := P (Ca , \overline b)$.

A non-degenerate pairing $P_v$ 

\smallskip
\noindent \centerline { $P_v:  Gr^W_a \H_{X_v}^0(X,
\ilm)\otimes Gr^W_a \H^0(X_v, \ilm) \to \C$.}

\smallskip 
\noindent  is  also defined by duality.
The duality between $ A$ and $ A^*$ is defined for all
$b \in Gr^W_a \H_{X_v}^0(X, \ilm)$  and $ c \in
Gr^W_a\H^0(X, \ilm)$ by the formula :

\smallskip
\noindent \centerline {$\; P ( A b ,  c ) = P_v( b, A^* c ).$}

\smallskip
\noindent  
Let  $C $ be the Weil operator defined by the $HS$ on $
Gr^W_{a+i}\H_{X_v}^i(X, \ilm)$ as well on $ Gr^W_{a+i}
\H^i(X, \ilm)$, then:

\centerline{ $\, P (C.
A(\partial u), A (\overline { \partial u}) ) = P_v(C \partial  u , A^*
\circ A ( \overline {\partial u })) = P_v(C.\partial (u),  I (\overline {\partial u }))$}

\n and since $I (\partial  u )= A^*\circ A\circ \can (b) =  0$, we  deduce $P (C.
\gamma ( u), \overline {\gamma ( \partial u) }) ) = $ \\
$P (C.
A(\partial u), A(\overline { \partial u }) ) = 0$,
 hence $\gamma (u) = 0$ by
polarization as $\gamma (u)$ is primitive.
\end{proof}
\subsubsection{}The last step of the proof of the crucial case is based on two lemma.
  \begin{lem}\label{del}
The connecting morphism $\H^{i -1}(B_{X_v}-X_v, \ilm
) \stackrel{\partial_X}{\rightarrow} \H_c^i(X-X_v, \ilm
)$   induced by  the triangles in formula (\ref{eq}) is  injective on $Gr^W_i$ \\
  $Gr^W_{a+i}Gr^{\pt}_i\H^{i -1}(B_{X_v}-X_v, \ilm )
\stackrel{ 
\partial_X}{\rightarrow} Gr^W_{a+i}Gr^{\pt}_i\H_c^i(X-X_v, \ilm ).$
\end{lem}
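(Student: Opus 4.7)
The plan is to use the long exact sequence of mixed Hodge structures
\begin{equation*}
\cdots \to \H^{i-1}(X - X_v, \ilm) \stackrel{r}{\longrightarrow} \H^{i-1}(B^*_{X_v}, \ilm) \stackrel{\partial_X}{\longrightarrow} \H^i_c(X - X_v, \ilm) \to \cdots
\end{equation*}
obtained by applying $R\Gamma(V, -)$ to the distinguished triangle $Rk_{v!}k_v^*K \to Rk_{v*}k_v^*K \to i_{v*}(i_v^*Rk_{v*}k_v^*K) \to [1]$ on $V$, using the identifications $R\Gamma(V, Rk_{v!}k_v^*K) = R\Gamma_c(X-X_v, \ilm)$, $R\Gamma(V, Rk_{v*}k_v^*K) = R\Gamma(X-X_v, \ilm)$ and $R\Gamma(V, i_{v*}i_v^*Rk_{v*}k_v^*K) = R\Gamma(B^*_{X_v}, \ilm)$. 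By Proposition \ref{mainp} and subsection \ref{pglobal}, this is a sequence of MHS compatible with the perverse filtration $\pt$, and all morphisms are strict with respect to both $W$ and $\pt$. By exactness and strictness, the kernel of $\partial_X$ on $Gr^W_{a+i}Gr^{\pt}_i$ coincides with the image of the restriction map $r$ on that piece, so it suffices to show $r$ vanishes on $Gr^W_{a+i}Gr^{\pt}_i$.

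Next, the inductive hypothesis on $V-v$ gives the decomposition
\begin{equation*}
\ph{i}{K}\big|_{V-v} \simeq \bigoplus_{S_l \subset V-v}(i_{S_l})_{!*}\LL^i_{S_l}[l],
\end{equation*}
where each summand is a pure perverse sheaf of weight $a+i$ (the VHS $\LL^i_{S_l}$ being of weight $a+i-l$). The perverse spectral sequence for $k_v^*K$ therefore degenerates on $V-v$, yielding the identifications $Gr^{\pt}_i \H^{i-1}(B^*_{X_v}, \ilm) \simeq \H^{-1}(B_v - v, \ph{i}{K}|_{V-v})$ and likewise for $\H^{i-1}(X-X_v, \ilm)$; under these, $r$ becomes the natural restriction $\H^{-1}(V-v, \ph{i}{K}|_{V-v}) \to \H^{-1}(B_v-v, \ph{i}{K}|_{V-v})$, split according to the summands.

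The decisive step is the weight analysis. For each summand $\PP_l := (i_{S_l})_{!*}\LL^i_{S_l}[l]$, the semi-purity statement at $v$ applied to the pure intermediate extension of the polarized VHS $\LL^i_{S_l}$ gives that the weights of $\H^{-1}(B_v-v, \PP_l)$ are strictly less than $a+i$; this is a smaller instance of Proposition \ref{Sp} applied to the projective morphism $\overline{S_l} \hookrightarrow V$ with coefficients $\PP_l$, handled by sub-induction on $\dim \overline{S_l}$ using the polarizability of $\LL^i_{S_l}$ together with the corresponding Hodge-theoretic constructions of sections $4$--$5$. Summing over the summands,
\begin{equation*}
Gr^W_{a+i} \H^{-1}(B_v-v, \ph{i}{K}\big|_{V-v}) = 0,
\end{equation*}
so $r$ vanishes on $Gr^W_{a+i}Gr^{\pt}_i$, and $\partial_X$ is injective on this piece as claimed.

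The main obstacle is the sub-induction establishing semi-purity of each summand $\PP_l$ at $v$, a point lying in the closure $\overline{S_l}$ but outside the stratum $S_l$: this must be available before the full semi-purity of $K$ at $v$ is known. It is built into the overall inductive scheme of subsection \ref{cru}, where the induction simultaneously runs on the dimension of the strata of $V$ and, for a fixed $v$, on the dimension of the closures $\overline{S_l}$ of the strata contributing summands to $\ph{i}{K}|_{V-v}$.
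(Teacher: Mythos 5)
There is a genuine gap. The decisive claim---that for each summand $\PP_l$ the weights of $\H^{-1}(B_v-v,\PP_l)$ are strictly less than $a+i$, so that $Gr^W_{a+i}\H^{-1}(B_v-v,\ph{i}{K}\vert_{V-v})=0$---is precisely the conclusion that the whole crucial case of subsection \ref{cru} is trying to reach: the final corollary there reads $Gr^W_a Gr^{\pt}_0\H^{-1}(B_{X_v}-X_v,\ilm)=0$, which under the decomposition on $V-v$ is exactly the $i=0$ instance of your claim. Were it available in advance, Lemma \ref{del} would be vacuously true (the source already vanishes), and the polarization argument together with Lemmas \ref{ext0} and \ref{2} would be superfluous. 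You correctly flag the obstacle and propose a sub-induction on $\dim\overline{S_l}$, but this does not break the circle: for the generic stratum $S_n$ one has $\dim\overline{S_n}=\dim V$, so that summand is not a smaller instance, and its semi-purity at $v$ is essentially the statement being established.

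The paper's argument goes the other way around the exact sequence. Instead of killing $Gr^W_{a+i}$ of the target of the restriction $r$ (which would require the not-yet-available purity at $v$), it kills $Gr^W_{a+i}$ of the source: it shows that $\H^{-1}(V-v,\ph{i}{K})$ is pure of weight $a+i-1$. This is carried out entirely on $V-v$, where the inductive decomposition is known: one dualizes to the claim that $\H^1_c(V-v,\ph{i}{K})=\H^1(V,Rk_{v!}k_v^*\ph{i}{K})$ is pure of weight $a+i+1$, chooses a general hyperplane section $H_1$ not containing $v$, applies Artin--Lefschetz vanishing on the affine $V-H_1$ to obtain a surjection $\H^1_{H_1}(V,Rk_{v!}k_v^*\ph{i}{K})\twoheadrightarrow \H^1(V,Rk_{v!}k_v^*\ph{i}{K})$, and identifies the source via the Gysin isomorphism with $\H^{-1}(H_1,\ph{i}{K})(-1)$, a pure Hodge structure because it is a subquotient of the intersection cohomology of the smooth projective $H'_1\subset X$. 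By strictness of $r$ with respect to $W$, the image of $r$ then has no weight-$(a+i)$ piece, giving the injectivity of $\partial_X$ on $Gr^W_{a+i}Gr^{\pt}_i$. This duality-plus-hyperplane-section step is the ingredient your proposal is missing.
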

 We prove equivalently $ Gr^W_{a+i}\H^{-1}(B_v -v, \ph{i}{K})
\stackrel{\partial_V}{\rightarrow}
Gr^W_{a+i}\H_c^0(V-\{v\},\ph{i}{K})$  (precisely  $Gr^W_{a+i}
(\partial_V)$) is injective, by considering
the long exact sequence:

\smallskip
\centerline {  $ \H^{-1}( V-\{v\}, \ph{i}{K})
\stackrel{\partial_V}{\rightarrow} H_c^0(V-\{v\},\ph{i}{K}) \to
\H^0(V -v, \ph{i}{K}) $.}

\smallskip 
 \noindent It is enough to prove that  $ H^{-1}( V-\{v\}, \ph{i}{K})$   is pure of weight
 $ a+i-1$ or by  duality $ H^1_c( V-\{v\},
\ph{i}{K})$ is pure of weight $ a+i+1$.  We consider as above the hyperplane section $H_1$ not containing $v$ and the 
  exact sequence:\\
$\H_{H_1}^1 (V, {R k}_{v
!}k_v^* \ph{i}{K}) \xrightarrow{\varphi} \H^1 (V,{R jk}_{v !}k_v^*
\ph{i}{K})
\rightarrow \H^1 (V - H_1 , R k_{v !}k_v^* \ph{i}{K}) $ where  $\varphi$ is surjective since the last term  vanish; the space \\
  $\H^{-1} (H_1,{R k}_{v !}k_v^* \ph{i}{K})(-1)\overset{\sim}{\longrightarrow}  \H_{H_1}^1 (V,{R k}_{v
!}k_v^* \ph{i}{K}) $ \\ is a pure HS of weight  $a+i+1$ (as a sub-quotient of the pure intersection  cohomology  of the inverse image $H'_1$ smooth in $X$).
 \begin{lem}\label{2}
 The following morphism is injective\\
 $ Gr^W_{a+i}\H_c^i(X-X_v, \ilm )
\stackrel {\alpha_X} {\rightarrow}
 Gr^W_{a+i}\H^i (X, \ilm )\overset{\sim}{\longrightarrow}  \H^i(X, \ilm ) $.
 \end{lem}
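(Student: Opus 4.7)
\textbf{Plan for the proof of Lemma \ref{2}.} The strategy is to exhibit $\ke\,\alpha_X$ as the image of a hypercohomology group whose weights are bounded above by $a+i-1$, and then pass to $Gr^W_{a+i}$. Following the remark after Theorem \ref{P} we may assume $X$ smooth and projective by embedding $V$ into a projective variety and extending $f$ and $\ilm$ correspondingly. Then $\H^i(X,\ilm)$ is pure of weight $a+i$ by the polarization of intersection cohomology, which justifies the identification $Gr^W_{a+i}\H^i(X,\ilm)\overset{\sim}{\longrightarrow}\H^i(X,\ilm)$ appearing in the statement.

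The main step is to consider the distinguished triangle on $X$
\[
(j_{X_v})_{!}\, j_{X_v}^{*}\,\ilm \longrightarrow \ilm \longrightarrow i_{X_v *}\, i_{X_v}^{*}\,\ilm \xrightarrow{[1]}
\]
with $j_{X_v}:X-X_v\hookrightarrow X$ and $i_{X_v}:X_v\hookrightarrow X$. Taking hypercohomology yields the exact sequence
\[
\H^{i-1}(X_v, i_{X_v}^{*}\ilm) \xrightarrow{\,\partial'\,} \H_c^{i}(X-X_v,\ilm) \xrightarrow{\,\alpha_X\,} \H^{i}(X,\ilm),
\]
which, by the bifiltered logarithmic complex constructions of sections $4$--$5$ (applicable because, in the fibration-by-NCD reduction of subsection \ref{St1}, $X_v$ is a sub-NCD in $X$), is an exact sequence of MHS. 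Hence $\ke\,\alpha_X = \im\,\partial'$ as sub-MHS of $\H_c^{i}(X-X_v,\ilm)$.

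The key input is then the weight inequality recalled as property $(2)$ in subsection \ref{LC}: for a pure coefficient $\ilm$ of weight $a$, the MHS on $\H^{i-1}(X_v, i_{X_v}^{*}\ilm)$ has weights $w\leq a+i-1$. It follows that $\im\,\partial'\subset W_{a+i-1}\H_c^{i}(X-X_v,\ilm)$, so $\ke\,\alpha_X\subset W_{a+i-1}$ and therefore $Gr^W_{a+i}\alpha_X$ is injective, as required. The only delicate point is verifying that the connecting morphism $\partial'$ is a morphism of MHS; this is not a routine compatibility but is precisely what is afforded by realizing all three terms of the triangle simultaneously inside the bifiltered logarithmic framework $IC^{*}\LL(\mathrm{Log}\,Z)$ of section $4$, so that the long exact sequence is the one associated to a short exact sequence of bifiltered complexes.
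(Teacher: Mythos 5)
Your proof is correct and follows essentially the same line as the paper's: both use the long exact sequence $\H^{i-1}(X_v,\ilm)\to\H_c^i(X-X_v,\ilm)\xrightarrow{\alpha_X}\H^i(X,\ilm)$ together with the bound $w\leq a+i-1$ on $\H^{i-1}(X_v,\ilm)$ (from $X_v$ closed) to kill $\ke\,\alpha_X$ after passing to $Gr^W_{a+i}$. The paper states this more tersely, but your added remarks on the MHS compatibility of the connecting map and the purity of $\H^i(X,\ilm)$ are exactly the ingredients it leaves implicit.
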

 In the long  exact sequence
 $ H^{i-1}(X_v,\ilm) \rightarrow \H_c^i(X-X_v,  \ilm)
\stackrel{\alpha_X}{\rightarrow} \H^i(X, \ilm)$,
 the weight $w$  of    $H^{i-1}(X_v,\ilm) $
satisfy  $w < a+ i$ since $X_v$ is closed,  then the morphism $\alpha_X$ (precisely  $Gr^W_{a+i}
(\alpha_X)$) is  injective and  $ H^i(X,
 \ilm)$ is pure of weight $a+i$.
 \subsubsection{}
   The morphisms
 $\partial_X $ and  $\alpha_X$ are compatible with  ${\pt}$ and $W$.
 We still  denote by  $\partial_X$ (resp. $\alpha_X$) their restriction  to
 ${\pt}_r \H^{r -1}(B_{X_v}-X_v,
\ilm )$ (resp. ${\pt}_r \H_c^{r}(X - X_v,  \ilm)$), then the
composed morphism has value in $ \H^{r}(X ,  \ilm)$ ( in fact in
the subspace ${\pt}_r$ but we avoid the filtration $\pt$ as   we have no information at the point $v$ yet).

\begin{cor} $ Gr^W_a Gr^{\pt}_0
\H^{-1}(B_{X_v}-X_v, \ilm )  = 0$. 
\end{cor}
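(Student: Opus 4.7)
The plan is to combine the three preceding ingredients---Lemma \ref{ext0} (lifting of weight-$a$ classes to the complex $j_{H!}j_H^*\ilm$), the ``primitive lemma'' immediately above (showing $\gamma(u)=0$ for any lifted class), and the injectivity Lemmas \ref{del} and \ref{2}---to force any class $u \in Gr^W_a Gr^{\pt}_0\H^{-1}(B_{X_v}-X_v,\ilm)$ to vanish.

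First I would apply Corollary \ref{cor0} to reduce to the only surviving case, namely a class $u$ whose image lies in the sub-perverse sheaf $\ke\,\rho_0 \subset \ph{0}{K}$: the contribution to $Gr^W_aGr^{\pt}_0$ coming from $\im\,G_0$, as well as the pieces with perverse degree $i\neq 0$, already satisfy the semi-purity inequality by the inductive hypothesis on $H$ together with the decompositions \ref{1+}--\ref{2+}. Then I would use Lemma \ref{ext0} to lift $u$ to an element of $W_a\,\tau_{\leq 0}\H^{-1}(B_v^*,\, Rf_*j_{H!}j_H^*\ilm)$; equivalently, $u$ lies in the image of the canonical morphism $\can$ from $Gr^W_a\H^{-1}(B_{X_v}^*, j_{H!}j_H^*\ilm)$.

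Next, I would invoke the primitive lemma just above: for any class $u$ in the image of $\can$, the composition $\gamma = \alpha_X\circ \partial_X = A\circ \partial$ sends $u$ to $0$ in $Gr^W_a\H^0(X,\ilm)=\H^0(X,\ilm)$; the proof there uses that the restriction $\rho_H(\gamma(u))$ vanishes so $\gamma(u)$ is primitive, and then the polarization $Q(a,b)=P(Ca,\overline b)$ on the primitive part of $\H^0(X,\ilm)$ together with the duality between $A$ and $A^*$ forces $\gamma(u)=0$. Now Lemma \ref{2} asserts that $\alpha_X$ is injective on $Gr^W_a\H^0_c(X-X_v,\ilm)$, so $\gamma(u)=\alpha_X(\partial_X u)=0$ implies $\partial_X(u)=0$ in $Gr^W_a Gr^{\pt}_0\H^0_c(X-X_v,\ilm)$. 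Finally Lemma \ref{del} guarantees that $\partial_X$ itself is injective on $Gr^W_aGr^{\pt}_0\H^{-1}(B_{X_v}-X_v,\ilm)$, which yields $u=0$ as required.

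The main obstacle is not the formal chain of injectivities, but the fact that every arrow in this diagram must be compatible with both the weight filtration $W$ and the perverse filtration $\pt$; this is precisely where the preceding Hodge-theoretic work on logarithmic complexes (subsections \ref{LC}, \ref{tube}, \ref{C6}, \ref{Link1}) is used, together with the fact that we have reduced to a fibration by NCD over the strata so that $C(I_X)$, $C(I_H)$ and the mixed cone $C(\rho)[-1]$ carry well-defined MHS. Once this bookkeeping is granted, the argument is essentially a one-line diagram chase, with the true depth concentrated in the polarization step of the primitive lemma.
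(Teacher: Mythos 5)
Your proposal reproduces the paper's proof step for step: the reduction to $\ke\,\rho_0$ via the splitting~\ref{1+} (equivalently Corollary~\ref{cor0}), the lift by Lemma~\ref{ext0} to a class coming from $j_{H!}j_H^*\ilm$, the vanishing $\gamma(u)=0$ by the polarization/primitivity lemma, then the two injectivity lemmas~\ref{2} and~\ref{del} to conclude $\overline u = 0$. The only minor imprecision is attributing $Gr^W_a Gr^{\pt}_0$-injectivity directly to Lemma~\ref{2}, which in fact gives injectivity of $\alpha_X$ on $Gr^W_a$ alone; the passage to the $\pt$-graded statement is then automatic since $\partial_X$ respects $\pt_{\leq 0}$, exactly as in the paper's careful two-step phrasing.
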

It is enough  to apply
the lemma \ref{ext0} to  $H^{-1}(B_v-v, \ke \, \rho_0)$,  since

  $Gr^W_a Gr^{\pt}_0
\H^{-1}(B_{X_v}-X_v, \ilm ) \simeq Gr^W_a H^{-1}(B_v-v, \ke \, \rho_0 \oplus \im \, G_0)$ 

\n splits ( formula \ref{1+}) and    $Gr^W_a H^{-1}(B_v-v, \im \, G_0) = 0$
by induction. 
 
   By lemma \ref{ext0}, each element $\overline {u} \in Gr^{\pt}_0 Gr^W_a
\H^{-1}(B_v^*, \ke \, \rho_0 ) $ is the class  modulo ${\pt}_{ -1}$ of an element $u \in ({\pt}_0 \cap W_a) \H^{
-1}(B_{X_v}^*,\ilm) $, where $u = \can (b)$ with $b \in ({\pt}_0 \cap W_a)\H^{-1}(B_{X_v}^*, j_{H !}j_H^* \ilm ) $.
    
 We use the commutative diagram to lift the elements as needed
$$\xymatrix{
 W_a \H^{-1}(B_v-v,   \tau_{\leq 0} Rf_*j_{H!}j_H^*\ilm)
 \ar[d]^{ }\ar[r]^{\quad \can}& 
  Gr^W_a H^{-1}(B_v-v, \ke \, \rho_0)
 \ar[d]^{} \\
 W_a  \tau_{\leq 0}\H^{-1}(B_{X_v}-X_v,   j_{H!}j_H^*\ilm)
  \ar[r]^{\quad \can}&
 Gr^W_a   Gr^\tau_0\H^0(B_{X_v} - X_v, \ilm) 
    }$$
  Since $ \gamma (u) = \alpha_X  ( \partial_X u) = 0$, we deduce  $\partial_X u = 0$ in $Gr^W_a\H^0_c(B^*_{X_v}, \ilm )$ by  lemma (\ref {2}), hence  the class $\overline {\partial_X u}= 0$ in $Gr^{\pt}_0 Gr^W_a
\H^0_c(B_{X_v}^*, \ilm ) $.  Finally, since  
 $\partial_X (\overline {u}) = \overline {\partial_X (u)} = 0$,
 we deduce  $  \overline {u} = 0$ by  lemma (\ref {del}). This ends the proof of the crucial case and the proposition \ref{prop} for $f$ a fibration by NCD.
 
 \begin{rem}\label{imp}
The  following relation follows from  the proof of the  proposition \ref{prop}
\begin{equation*}
 \im\, \biggl(\ph {i}{ R {k_v}_ ! { k_v^*K}}  \rightarrow  \ph {i}{ K}\biggr) =
 {k_v}_{!*}k_v^*\ph {i}{ K}
 \end{equation*}
 \end{rem}
 \subsubsection{} 
  The inductive step theorem \ref{P} and most of the corollary \ref{CP} follow for $f$ a fibration by NCD.  We complete now the proof of equation \ref {De3}.
\begin{lem}\label{De4}
On  a projective variety $V$, we have an orthogonal decomposition of a polarized HS of weight $a+i+j$
 \begin{equation*}
Gr^{\pt}_i \H^{i+j}(X,   \ilm) \simeq \H^j(V,  \ph{i}{ R f_ *  \ilm}) \overset{\sim}{\longrightarrow} 
 \oplus^{S_l \subset V_l^*}_{ l \leq n } \,\, \H^j(V,  {i_{S_l}}_{! *}{\LL}^i_{S_l}[l]).
\end{equation*}
\end{lem}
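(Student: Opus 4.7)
The plan is to combine four ingredients already established: the perverse-sheaf decomposition in Corollary~\ref{CP}(i), the non-canonical splitting $K \simeq \oplus_i \ph{i}{K}[-i]$ in Corollary~\ref{CP}(iii), the compatibility of $\pt$ with MHS in Proposition~\ref{global}, and the variation-of-Hodge-structure statement on $\LL^i_{S_l}$ in Corollary~\ref{Vpolar}. No new geometric input is needed; the content of the lemma is that these ingredients are mutually compatible at the level of polarized Hodge structures.

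First I would establish the leftmost isomorphism as an isomorphism of MHS. Taking $W = \emptyset$ in Proposition~\ref{global} gives that $\pt$ is a filtration by sub-MHS on $\H^{i+j}(X,\ilm)$, so $Gr^{\pt}_i \H^{i+j}(X,\ilm)$ carries a canonical MHS. The splitting in Corollary~\ref{CP}(iii) makes the perverse Leray spectral sequence degenerate at $E_2$, so the canonical edge map yields an identification $Gr^{\pt}_i \H^{i+j}(X,\ilm) \simeq \H^j(V, \ph{i}{K})$ as vector spaces; that this identification is an isomorphism of MHS uses the description of $\pt$ from \cite{DM} and the arguments of \cite{EY} recalled in Section~\ref{compa}. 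Applying $\H^j(V,-)$ to the decomposition of $\ph{i}{K}$ in Corollary~\ref{CP}(i) then produces the right-hand direct sum.

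Next I would verify purity and polarization for each summand. By Corollary~\ref{Vpolar}, $\LL^i_{S_l}$ is a polarized VHS of weight $a + i - l$ on the $l$-dimensional smooth stratum $S_l$. The hypercohomology $\H^j(V, {i_{S_l}}_{!*}\LL^i_{S_l}[l])$ is the intersection cohomology of the projective closure $\overline{S_l}$ in degree $j + l$ with coefficients in $\LL^i_{S_l}$, hence pure of weight $(a + i - l) + (j + l) = a + i + j$ by the Hodge theory developed in Section~\ref{LC} for intermediate extensions on a projective base (applied to the morphism $\overline{S_l} \to \mathrm{pt}$, or to a desingularization of $\overline{S_l}$ via Proposition~\ref{RSt}). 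The polarization follows from Verdier duality on $V$ applied to the auto-dual $\ilm$, combined with the Hard Lefschetz isomorphism of Corollary~\ref{CP}(ii), exactly as in Deligne~\cite{DL}.

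The main obstacle is showing that the decomposition is orthogonal with respect to the polarization. Verdier duality on $V$ pairs $\H^j(V, \ph{i}{K})$ with $\H^{-j}(V, \ph{-i}{K})$, and by Corollary~\ref{CP}(ii) this duality is compatible with the perverse-sheaf decomposition: ${i_{S_l}}_{!*}\LL^{-i}_{S_l}[l]$ is the Verdier dual of ${i_{S_l}}_{!*}\LL^i_{S_l}[l]$. Since simple intermediate extensions supported on distinct strata, or arising from non-isomorphic simple local systems on the same stratum, are pairwise non-isomorphic as perverse sheaves, the only non-zero components of the induced pairing on hypercohomology are the self-pairings of matched summands. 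Combining this orthogonality with the purity and polarization established in the previous step exhibits the total direct sum as an orthogonal decomposition of a polarized HS of weight $a+i+j$, completing the lemma.
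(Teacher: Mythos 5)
The main content of this lemma is precisely the claim that the vector-space decomposition of $Gr^{\pt}_i \H^{i+j}(X,\ilm)$ coming from the perverse-sheaf decomposition of $\ph{i}{K}$ is in fact a decomposition into sub-Hodge structures, and your argument never actually establishes this. Applying $\H^j(V,-)$ to the decomposition in Corollary~\ref{CP}(i) gives a direct-sum decomposition of the underlying $\Q$-vector space, and Proposition~\ref{global} gives a single MHS on the full graded piece $Gr^{\pt}_i \H^{i+j}(X,\ilm)$; but nothing you have said forces the individual summands $\H^j(V, {i_{S_l}}_{!*}\LL^i_{S_l}[l])$ to be sub-HS of that global structure. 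In your third paragraph you argue that each summand abstractly carries a pure polarized HS of the correct weight (via the Hodge theory of the projective $\overline{S_l}$) and that the pairing kills cross terms between distinct summands. But abstract purity of each summand plus orthogonality of the pairing does not imply that the pieces are rational sub-Hodge structures of the ambient space: a polarized HS can easily contain two orthogonal rational subspaces, each admitting an abstract HS of the right weight, that are nevertheless not compatible with the Hodge filtration $F$. Orthogonality only upgrades to ``sub-HS'' once at least one side of the pairing is already known to be $F$-stable; you never supply that seed.

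This is exactly the gap that the paper's induction is built to fill, and you have replaced the induction with nothing. The paper argues by double induction (on $\dim V$, and for fixed $V$ on $\dim X$): for $i\neq 0$ the claim is pushed to a general hyperplane section $H\subset X$ via Corollary~\ref{surj0} and duality; for the crucial piece $\ph{0}{Rf_*\ilm}$, each non-generic summand $\H^j(V,{i_{S_l}}_{!*}\LL^0_{S_l}[l])$ with $l<n$ is shown to be a sub-HS of $Gr^{\pt}_0\H^{i+j}(X,\ilm)$ by choosing a projection $\pi:V\to V_l$ onto a lower-dimensional projective variety which is finite on $\overline{S_l}$, so that the same summand appears in the decomposition attached to $\pi\circ f$ where the inductive hypothesis on $\dim V_l<n$ applies. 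Only then does the orthogonality argument you cite kick in, to conclude that the remaining generic summand, being the orthogonal complement of a sub-HS inside a polarized HS, is itself a sub-HS. Your proposal uses the last step without its prerequisite, so the argument does not close.
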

The proof is by induction on $dim.$V and for fixed $V$ on $dim.$X. The case of $dim.$V = $1$
is clear and may be proved by the technique of the inductive step.  We suppose the result true for  $dim.$V = $n-1$. For $dim.$V = $n$, 
by corollary \ref{surj0},
 the case of  $ \ph{i}{ R f_ *  \ilm}$ for $i <0$ follows from the case $\ph{i}{ Rf_* ( i_H)_* i_H^* \ilm} $ by  restriction to a general ample  hyperplane $H$, and by duality the case for $i > 0$. It remains to prove the crucial case  of  $ \ph{0}{ R f_ *  \ilm}$. 
 First we prove that for each non generic component $S_l$ with $l < n$, the component  
 $ \H^j(V,  {i_{S_l}}_{! *}{\LL}^0_{S_l}[l])$ is a sub HS.
 
 We may suppose there exists a projection $\pi: V \to V_l $ to a projective variety $V_l$
 inducing a finite morphism on $\overline {S_l}$. By induction, the property apply for the decomposition of  $\pi \circ f$ as $dim.V_l < n$, which contains  the component $\pi_* {i_{S_l}}_{! *}{\LL}^0_{S_l}[l]$. 
  
 It follows that the direct sum  $  \oplus^{S_l \subset V^*_l}_{ l < n } \,\, \H^j(V,  {i_{S_l}}_{! *}{\LL}^i_{S_l}[l]) \subset Gr^{\pt}_0 \H^{j}(X,   \ilm)$  is a sub-HS.  Then, it is a standard
argument to show that its orthogonal subspace, which coincides with $ \H^j(V,  {i_{S_n}}_{! *}{\LL}^i_{S_n}[l])$ (the generic component) is a sub-HS.  This is  a clarification of the proof for constant coefficients in \cite{DM0} and a generalization to coefficients.
\begin{rem}
For a quasi-projective variety, we must consider MHS. This apply in general for 
coefficients in a mixed Hodge complex generalizing the notion of  variation of MHS. The pure case in this paper is the building bloc.
\end{rem}
 \subsubsection
{ Proof for any projective morphism}\label{gen}
The above results establish the case of fibrations by NCD. Given $\ilm$ on $X$ and a diagram $ X' \overset{\pi}{\longrightarrow}  X \overset{f}{\longrightarrow} V$ such that the desingularization $\pi $  and $f':= f\circ \pi$ are  fibrations by NCD over the strata.
We apply the above result to $\pi $ and the extension $j'_{! *}\LL$ on $X'$ to deduce the decomposition of $K:= R\pi_* j'_{!*}\LL$ on $X$ into a direct sum of intermediate extensions.

We indicate here, how we can extend Hodge theory and deduce the structure of polarized VHS on $\LL^i_S$ in all cases  without reference to the the special fibration case.

Let $\LL$ be defined on a smooth open set $\Omega $ of $X$, $j: \Omega \to X$ and $j':  \Omega \to X'$ an open embedding into a desingularization of $X$.

\begin{lem}\label{polar}
Let  $\pi: X'  \to X$ be a desingularization defined by a fibration by NCD over the strata,  $ j'_{!*}\LL $  the intermediate extension  of $ \LL $ on $X'$. If $X$ is projective, 
$\H^i(X, \ilm )$ is a sub-Hodge structure of $Gr^{\pt}_0 \H^i(X', j'_{!*}\LL)$.
The induced Hodge structure on $\H^i(X, \ilm )$ is independent of the choice of $X'$.
\end{lem}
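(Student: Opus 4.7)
The plan is to deduce the statement from the decomposition theorem already established for the fibration $\pi$ combined with the compatibility of the perverse filtration with Hodge theory on the smooth projective variety $X'$ (proposition \ref{global}).

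First, since $\pi: X' \to X$ is a fibration by NCD over the strata, the previously established case of the decomposition theorem (corollary \ref{CP}) applies to $R\pi_* j'_{!*}\LL$ on $X$. Because $\pi$ is a modification which restricts to the identity over the open dense set $\Omega$, and $j'_{!*}\LL$ restricts to $\LL$ on $\Omega$, the component of $\ph{0}{R\pi_* j'_{!*}\LL}$ supported by the closure of $\Omega$ is precisely $\ilm$; the remaining summands $i_{S_l,!*} \LL^0_{S_l}[l]$ are supported on strata of strictly smaller dimension. This yields a canonical direct sum decomposition
\[
\ph{0}{R\pi_* j'_{!*}\LL} \overset{\sim}{\longrightarrow} \ilm \oplus \bigoplus_{S_l} i_{S_l,!*} \LL^0_{S_l}[l].
\]

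Second, since $X'$ is smooth and projective, $\H^i(X', j'_{!*}\LL)$ carries a pure Hodge structure of weight $a+i$ coming from the logarithmic complex description of sections $4$--$5$; by proposition \ref{global} applied with $W=\emptyset$, the perverse filtration $\pt$ induced by $\pi$ is a filtration by sub-HS. Using the degeneration of the perverse Leray spectral sequence (which follows from Hard Lefschetz for $\pi$, already proved in the fibration case), one gets
\[
Gr^{\pt}_0 \H^i(X', j'_{!*}\LL) \overset{\sim}{\longrightarrow} \H^i(X, \ph{0}{R\pi_* j'_{!*}\LL}),
\]
and this isomorphism identifies the right-hand side with a sub-HS of the left. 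Applying the decomposition of $\ph{0}{R\pi_* j'_{!*}\LL}$ then realizes $\H^i(X, \ilm)$ as a direct summand, hence as a sub-HS of $Gr^{\pt}_0 \H^i(X', j'_{!*}\LL)$.

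Third, to show independence of the choice of $X'$, suppose $X'_1, X'_2$ are two such desingularizations. By Hironaka, one can dominate both by a common desingularization $X'' \to X'_1 \times_X X'_2$ which is again a fibration by NCD over the strata of $X$. The induced morphisms $\pi_{ij}: X'' \to X'_j$ are themselves fibrations by NCD, so by the same argument $\H^i(X'_j, j'_{j,!*}\LL)$ embeds as a sub-HS of $Gr^{\pt}_0 \H^i(X'', j''_{!*}\LL)$, and both embeddings identify the common summand $\H^i(X, \ilm)$. The HS on $\H^i(X, \ilm)$ therefore agrees under the two constructions.

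The main obstacle is justifying that the orthogonal projection onto the distinguished summand $\ilm \subset \ph{0}{R\pi_* j'_{!*}\LL}$ is induced by a morphism compatible with MHS, so that the induced HS on $\H^i(X,\ilm)$ is really a sub-HS and not merely a subquotient. This compatibility follows from the fact that the decomposition is natural in the intersection morphisms (formula \ref{E1}) which themselves are morphisms of bifiltered logarithmic complexes in the sense of subsection \ref{LC}; the polarization of the components $\LL^0_{S_l}$ (corollary \ref{Vpolar}) then guarantees that the complementary summands are orthogonal to $\ilm$ with respect to Poincaré duality on $X'$, so the projection onto $\ilm$ is itself a morphism of Hodge structures.
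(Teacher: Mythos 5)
Your overall plan matches the paper's: apply the decomposition theorem to the NCD fibration $\pi$, identify $\ilm$ as the summand attached to the open dense set, use degeneration of the perverse Leray spectral sequence and the compatibility of $\pt$ with Hodge structure on the projective $X'$ (proposition \ref{global}), and then deduce uniqueness by dominating two desingularizations with a common one.

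The gap is in your final paragraph. You correctly identify the crux: one must show that within the $\pt$-graded piece $Gr^{\pt}_0 \H^i(X', j'_{!*}\LL) \simeq \oplus_{S}\H^i(X, {i_S}_{!*}\LL^0_S)$, the summand $\H^i(X, \ilm)$ is a \emph{sub-Hodge structure} and not merely an abstract direct summand. But your resolution --- ``the polarization of the components $\LL^0_{S_l}$ guarantees that the complementary summands are orthogonal to $\ilm$ with respect to Poincar\'e duality on $X'$, so the projection onto $\ilm$ is a morphism of Hodge structures'' --- asserts the conclusion rather than proves it. That the polarization on each $\LL^0_{S_l}$ exists does not by itself show that the Poincar\'e pairing on $Gr^{\pt}_0\H^i(X', j'_{!*}\LL)$ is block-orthogonal for the decomposition in a way compatible with the Hodge filtration; that compatibility is precisely what has to be established. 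The paper closes this exactly by invoking lemma \ref{De4}, which states that (\ref{De3}) is an \emph{orthogonal} decomposition of a polarized HS; its proof is nontrivial, proceeding by induction on dimension, using corollary \ref{surj0} and finite projections $\pi\colon V\to V_l$ to handle the non-generic components $\H^j(V,{i_{S_l}}_{!*}\LL^0_{S_l}[l])$ first, and only then deducing that the generic component is the orthogonal complement. Your ``naturality of the intersection morphisms'' remark points in the right direction but does not substitute for that inductive argument. In short, replace your third paragraph by a direct citation of lemma \ref{De4}; without it, the step is not justified.
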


\begin{proof} We apply the above result to  $\pi: X'  \to X$  to deduce  the decomposition  of $\ph{0}{R\pi_* j'_{!*}\LL} \simeq \oplus_{S \in \SS}\LL^0_S$  on $X$ into a direct sum  consisting of intermediate extensions of polarized VHS. 

\n It follows from lemma \ref{De4}, that the  decomposition 

\centerline {$Gr^{\pt}_0 \H^i(X', j'_{!*}\LL) \simeq  \H^i(X, \ph{0}{R\pi_* j'_{!*}\LL}) \simeq \oplus_{S \in \SS} \H^i(X, {i_S}_{! *}
\LL^0_S)$}

\n is compatible with HS.   Moreover,  on the big strata  $U$ (as we can  suppose $V$ irreducible), we have $\ilm = {i_U}_{! *}\LL^0_U$; from which we deduce a HS  on $\H^i(X, \ilm)$ as a a sub-quotient HS of $\H^i(X', j'_{! *}\LL)$.

The uniqueness is deduced by the method of  comparison of  the two 
desingularizations $X'_1$ and $X'_2$ with a common desingularization $X'$ of the fiber product $X'_1  \times_X   X'_2$.
\end{proof} 
 
\subsubsection{
 Proof  of the corollary \ref{Vpolar} and lemma \ref{De4} for any projective morphism}

We describe the underlying structure  of polarized VHS  on $\LL^i_v$. 

Let $f: X \to V$, $\pi: X' \to X$ such that $\pi$ and $f\circ \pi$ are fibrations, $v \in S$  a general point of a strata $S$ on $V$, and $N_v$ a general normal section to $S$ at $v$ such that the inverse image  $ X_{N_v}$ is normally embedded in $X$ (resp.  $ X'_{N_v}$
in $X'$). By definition the fiber $\LL^i_{S, v}$ is the image of the Intersection morphism: $\H^i_{X_v}(X_{N_v}, \ilm)  \overset{I}{\rightarrow} \H^i(X_v, \ilm) $. 

 By transversality,
 the perverse truncation $\pt_X$ on $K:= R \pi_*  j'_{!*}\LL$ on $X$, is compatible with the restriction of $K$ to $ X_{N_v}$.  Hence, we can suppose $v $ in the zero  dimensional strata.  As the decomposition is established for $\pi$ and $f\circ \pi$, it follows for $f$, and $\H^i_{X_v}(X, \ph{0}{K})  = Gr_0^{\pt_X}\H^i_{X'_v}( X', j'_{!*}\LL) $ carry an induced MHS (see subsection \ref{C6}). 
 
 The natural decomposition on $\ph{0}{K}$ induce a decomposition of  $\H^i_{X_v}(X, \ph{0}{K})$ into direct sum of MHS where $ \H^i_{X_v}(X, \ilm)$ is  a summand, on which the MHS is realized as sub-quotient of the MHS on $\H^i_{X'_v}( X', j'_{!*}\LL) $.

A similar argument puts on $ \H^i (X_v, \ilm)$ a MHS as a sub-quotient of the MHS on $Gr_0^{\pt_X}\H^i (X'_v, j'_{!*}\LL) $ such that the image MHS of the intersection morphism $I$  is pure on $\LL^i_v$.

Verdier  duality between $\LL^{-k}_S$ and $ \LL^k_S$ follows from the auto-duality of $\ilm$
by Verdier formula for the  proper morphism  $ f $. 
Since Hard Lefschetz isomorphisms between $\LL^{-k}_S$ and $ \LL^k_S$
are also satisfied, we  deduce that  the VHS $ \LL^k_S$ are polarized.

Similarly,  the decomposition for
$\ph{i}{Rf_* \ph{0}{R\pi_* j'_{!*}\LL}}$ is part of the decomposition for
$\ph{i}{R(f \circ \pi)_*  j'_{!*}\LL}$ and contains the decomposition of $\ph{i}{Rf_* j_{!*}\LL}$.
{\it This ends the proof of lemma \ref{De4} and altogether
the proof of corollary  \ref{Vpolar}.}
 
\section{Fibration by normal crossing  divisors}\label{St1}
The  proof of the decomposition is reduced to the case of a fibration  by NCD (definition  \ref{St}) over the strata, in which case the proof along a strata is reduced to the zero dimensional strata by intersecting with a normal section,  such that we can 
 rely on logarithmic complexes in all arguments based on Hodge theory.  
 
 The proof of the proposition \ref{RSt} is divided   into many steps,  {\it   first we transform the morphism $f$} and then,   simultaneously,  a desingularization $\pi: X' \to X $. 
 \subsubsection{
 Thom-Whitney stratification}\label{TW}
 Let $f:X\to V$ be an algebraic   map, and   ${\SS} =  (S_\alpha)$ a Whitney stratification  by a family of strata $S_\alpha$ of $V$.  The subspaces
$V_l=\cup_{\dim S_\alpha \leq l} S_\alpha$ form an increasing  family of 
 closed algebraic sub-sets of
$V$  of dimension $ \leq l$, with  index $l \leq n$,  where $n$ is the dimension of $V$, 
 $$V_0 \subset  V_1 \subset \cdots \subset V_{n-1} \subset  V_n = V$$ 
 The inverse   
of a  sub-space $Z\subset V$, is denoted  $X_Z:=f^{-1}(Z)$, in particular 
 $X_{S} = f^{-1}(S)$ for a strata $S$ of $\SS$. 

A  Thom-Whitney stratification of $f$ has the following  properties:
\begin{itemize}
\item (T) Over a strata $S$  of $ V$ the  morphism $f_\vert: X_{S} \to S$ induced by $f$,
 is a  locally trivial topological fibration.
 \item (W) The link   at any point of a strata  is a  locally constant topological invariant of the strata \cite {Ma}, \cite {L-T}.
\end{itemize}
 \begin{lem}
  \label{rec}
 
 Let $f: X \to V $ be a projective morphism, and $Y$ a   strict  closed algebraic subset
  containing   the singularities  of $X$. There exists 
 a commutative diagram:
 \[
\begin{array}{cccccccccc}
 X &  \overset{\pi_1}{\longleftarrow} & X_1    &\cdots 
 & \overset{\pi_i}{\longleftarrow} & X_i&  \cdots & X_k &\overset{\pi_{k+1}}{\longleftarrow} &X_{k+1}\\
 f \downarrow \quad & &  f_1 \downarrow  \quad&   && f_i \downarrow  \quad& & f_k \downarrow  \quad && f_{k+1} \downarrow \quad \\
  V & \overset{id}{\longleftarrow}&\,\, V & \cdots 
  &  \overset{id}{\longleftarrow} &\,\, V&  \cdots &\,\, V  & \overset{id}{\longleftarrow} &\,\, V\\ 
\end{array}
\]
such that $ f_{i+1} := f_i \circ \pi_{i+1} $ and a decreasing sequence $V^i$  for $0 < i \leq k$ of closed  algebraic  subspaces of $V$ of dimension $d_i > 0$  
such that for $j < i$, the  inverse  image $f_i^{-1}(V^j)$  are  NCD in the   non-singular variety $X_i$, 
and 
$f_i^{-1}(V^{j}) -f_i^{-1}(V^{j+1})$ is a relative NCD over $V^j -V^{j+1}$.

 Moreover,  there exists a    Whitney stra\-tification $\SS^i$ of $V$ adapted to $V^j$ 
 for $j \leq i$, satisfying the following   relation: the strata of $\SS^i$ and of  $\SS^{i-1}$ coincide outside  $V^i$.
  
 The morphism $\pi_{i+1}$, obtained by blowing-ups over   
 $ f_i^{-1}(V^i)$ in $ X_i$, transforms $ f_i^{-1}(V^i)$ into a NCD and $\pi_{i+1}$  induces an isomorphism: 
 $$(\pi_{i+1})_\vert: (X_{i+1} -  f_{i+1}^{-1}(V^i)) \overset{\sim}{\longrightarrow}  X_i -f_i^{-1}(V^i).$$
 
 Moreover, the morphisms $\pi_s$  are modifications over  $f_i^{-1}(V^i)$  for all $s \geq i+1$. 
 
 Let $\rho_i:=\pi_1\circ\cdots\circ\pi_i$ for all $i$. The open  subset $\Omega := f^{-1}(V -V^1) \subset X$ is   dense  in $X$,  the restriction of $\rho_i$ to $\rho_i^{-1}(\Omega) \subset X_i$ is an isomorphism, and the restriction of the morphism
$f_i$ to  $\rho_i^{-1}(Y \cap \Omega)$ is a fibration by  relative NCD  
over $V - V^1$.

\smallskip
  Moreover, we can suppose the family of subspaces $V^i$  maximal: the dimension of $V^i$ is $n-i$ for $i > 0$ and $ k+1 = n$.

We refer to  the  morphism $f_{k+1}:X_{k+1} \to V$ (resp. diagram)   as an admissible fibration with respect to the family $V_i$.
   \end{lem}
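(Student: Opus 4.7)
The plan is to build the tower $\pi_1,\pi_2,\ldots$ inductively on $i$, combining Hironaka's theorem on embedded log resolution for NCD pairs with the Thom--Mather theory of Whitney stratifications. First I fix a Thom--Whitney stratification $\SS^0$ of $f: X\to V$ adapted to $Y$, and set $V^i:=\bigcup_{\dim S\le n-i}\overline{S}$, so $V^0=V$, the codimension of $V^i$ in $V$ is $i$, and $\Omega=f^{-1}(V-V^1)$ lies over the top-dimensional strata where $f$ is already a smooth topological fibration by the definition of a Thom--Whitney stratification.

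For the base step, since $Y$ contains the singularities of $X$, apply Hironaka's embedded desingularization to the pair $(X, Y\cup f^{-1}(V^1))$: there is a composition of blow-ups $\pi_1:X_1\to X$ with smooth centers contained in $Y\cup f^{-1}(V^1)$ such that $X_1$ is smooth, $\pi_1^{-1}(Y\cup f^{-1}(V^1))$ is a NCD, and $\pi_1$ is an isomorphism outside $Y\cup f^{-1}(V^1)$; in particular $\rho_1$ restricts to an isomorphism over $\Omega$. For the inductive step, assume $\rho_i:X_i\to X$ has been constructed with $X_i$ smooth, each $f_i^{-1}(V^j)$ for $j\le i$ a NCD, $\bigcup_{j\le i}f_i^{-1}(V^j)$ itself a NCD, and a refined Thom--Whitney stratification $\SS^i$ agreeing with $\SS^{i-1}$ outside $V^i$ over whose strata $f_i$ is locally trivial and restricts to a relative NCD. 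Apply Hironaka to $(X_i,f_i^{-1}(V^{i+1}))$ in the presence of the ambient NCD $\bigcup_{j\le i}f_i^{-1}(V^j)$: one obtains $\pi_{i+1}:X_{i+1}\to X_i$ built from blow-ups with smooth centers contained in $f_i^{-1}(V^{i+1})$, transforming $f_{i+1}^{-1}(V^{i+1})$ into a NCD whose union with the $f_{i+1}^{-1}(V^j)$, $j\le i$, is still a NCD, and being an isomorphism outside $f_i^{-1}(V^{i+1})\subset f_i^{-1}(V^i)$. Then refine $\SS^i$ only along $V^{i+1}$ by Verdier's generic fibration theorem, yielding $\SS^{i+1}$ for which $f_{i+1}$ is locally trivial on each stratum, with the relative NCD property. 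Iterating $n$ times reaches the maximal case $k+1=n$.

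The main technical obstacle is the double compatibility at each stage: the blow-ups defining $\pi_{i+1}$ must both preserve the NCD structure previously assembled and produce the relative NCD property on each newly refined stratum. The first point is exactly the content of the functorial (log) version of Hironaka's theorem: because the chosen centers are required to have normal crossings with the pre-existing NCD $\bigcup_{j\le i}f_i^{-1}(V^j)$, the total transform remains a NCD. The second point is then handled stratum by stratum via Thom's first isotopy lemma applied to $\SS^{i+1}$: over a stratum $S$ of dimension $n-i-1$ the topological triviality of $f_{i+1}|_{X_{i+1,S}}\to S$, combined with transversality of the inverse image of a general normal slice $N_v$ with the NCD in $X_{i+1}$, produces the desired relative NCD structure in the fibers. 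Because the modification at level $i+1$ changes nothing over $V-V^i$, the statement that $\pi_s$ is a modification above $f_i^{-1}(V^i)$ for all $s\ge i+1$ is then immediate from the construction, and the density of $\Omega$ in $X$ together with the fact that $\rho_i$ is an isomorphism on $\rho_i^{-1}(\Omega)$ follows from the same observation applied at every level.
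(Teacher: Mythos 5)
Your plan fixes the filtration $V^i:=\bigcup_{\dim S\le n-i}\overline{S}$ from a Thom--Whitney stratification of the \emph{original} morphism $f$, and then tries to build the tower $X_i$ so that the relative NCD property holds over each prescribed difference $V^{j}-V^{j+1}$. This puts the cart before the horse and is where the argument breaks down. The locus over which $f_i$ can be made into a fibration by relative NCD depends on the resolution $\pi_1,\ldots,\pi_i$, and is only known \emph{after} the blow-ups are performed. Already at the first step the paper picks $V^1$ to be the complement of an open subset $U\subset V$ over which the modified map $f_1=f\circ\pi_1$ is smooth (and over which $\pi_1^{-1}(Y)\cap f_1^{-1}(U)$ is a relative NCD); that open set is determined by $\pi_1$, not by any stratification of $f$. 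In general nothing forces Hironaka's embedded resolution of $(X,Y\cup f^{-1}(V^1))$ to respect the prescribed fibering over $V-V^1$: the exceptional divisors are not constrained to stay transverse to the fibers of $f_1$ over $V-V^1$, so the relative NCD condition can fail on a proper closed subset of $V-V^1$ that you have not accounted for.

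You in fact acknowledge the tension when you invoke Verdier's generic fibration theorem to "refine $\SS^i$ only along $V^{i+1}$" --- but generic fibration theorems only give you a \emph{dense open} subset over which the structure is trivial, and that open set cannot be prescribed in advance; it is precisely the output of the step. This is why the paper's construction is genuinely iterative: one first blows up over $f_i^{-1}(V^i)$ to make it a NCD, then applies the dense-open Lemma \ref{dense} to the modified map, and only \emph{then} defines $V^{i+1}$ as the complement of the locus where the relative NCD structure obtains. The filtration $(V^i)$ is an output of the construction, not an input, and the statement even allows the dimensions $d_i$ to drop faster than one per step (with a separate argument to normalize to $d_i=n-i$ at the end). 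Your proposal is missing exactly the mechanism (the analogue of Lemma \ref{dense}) that produces the correct $V^{i+1}$ after each stage, and without it the inductive hypothesis "the relative NCD property already holds over $V^j-V^{j+1}$ for $j\le i$" cannot be established for a filtration fixed ahead of time.
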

 
 We can always  suppose $V = f(X)$.  
Let $\pi_1: X_1 \to  X $ be a desingularisation morphism of $X$ such that $ \pi_1^{-1}(Y)$, 
 as well the inverse image of the irreductible components of $Y$, are NCD in $X_1$. 
 Let $f_1:= f \circ \pi_1$ with $X_1$ smooth;
 there  exists an open subset 
 $U \subset V$ such that the restriction of $f_1$ over $U$ is smooth.
 
 If the dimension of $ f(Y)$ is strictly smaller than  
 $n :=\dim V$,  let   
 $U \subset V - f(Y)$ over which the  restriction of $f_1$ is smooth and let  $V^1:= V - U$. 
 
 In the case $f(Y) = V$,   we suppose moreover 
  the restriction of $f_1$ to $\pi^{-1}(Y) \cap f_1^{-1}(U)$ is a fibration by  relative NCD over $U$ and let  $V^1 := V - U$. 
  We remark that  $d_1 := \, \dim \, V^1$  is strictly smaller than   $\dim V = n$ and we can always choose $U$, hence $V^1$, such that $d_1 = n-1$. 
 
   We consider a  Thom-Whitney stratification  of the morphism $f_1:X_1\to V$; in particular  the image by $f_1$ of a strata of  $X_1$ is a strata of $V$. We suppose also that the stratification is compatible with the divisor $ \pi_1^{-1}(Y)$ and
 let $\SS^0$ denotes such  stratification of $V$.
 
We construct now the algebraic sub-spaces $V^i$ of $V$, the morphisms $f_i= X_i \to V$,  and $\pi_i: X_i \to X_{i-1} $ in the lemma. 

  Let  $\SS^1$ be a stratification compatible with $V^1$ and a refinement of $\SS^0$. 
Let $D^1:= f_1^{-1}(V^1)$; we  introduce the horizontal divisor: $Y^1_h := \overline { ( \pi_1^{-1}(Y)  \cap f_1^{-1}(V - V^1)}$.
   
We construct $\pi_2:X_2 \to X_1$ by blowings-up over  $D^1 \cup Y^1_h$, without modification of $X_1- D^1$, such that the inverse  image $ D^2 := \pi_2^{-1}(D^1)$
of $D^1$ and the union  $D^2 \cup Y^2_h$  with $ Y_h^2 := \pi_2^{-1}(Y_h^1)$ are NCD in the smooth variety $X_2$.

 Let $f_2:= f_1\circ \pi_2$, hence $ D^2 := f_2^{-1}(V^1)$,
 the next argument   allow us to construct a sub-space $V^2$  such that  $D^2 \cap f_2^{-1}(V^1-V^2)$
 is a relative NCD over  $V^1-V^2$.
  
  \begin{lem}\label{dense}
  Let $f: X \to Z$ be a projective morphism on a   non-singular space $X$, $T$ an algebraic sub-space 
  of $Z$  such that $D:= f^{-1} (T)$  is a NCD in $X$
as well the inverse image of each irreductible  component of $T$.

  Then, there  exists a non singular  algebraic subset $S_0$ in  $T$, such that the  dimension of $T - S_0$ is  strictly smaller than $\dim T$, over which  $f$ is a relative NCD.
  \end{lem}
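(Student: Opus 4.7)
\textbf{Proof plan for Lemma \ref{dense}.} The plan is to take for $S_0$ the intersection of dense open subsets of $T$ produced by applying generic smoothness (characteristic zero) to the restriction of $f$ to each stratum of the NCD $D$.

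First I would reduce to the case where $T$ is smooth and irreducible. The singular locus $T_{\mathrm{sing}}$ is a proper closed subvariety, and by hypothesis the inverse image of each irreducible component of $T$ is itself a NCD in $X$, so one may work component by component after removing $T_{\mathrm{sing}}$; the discarded locus has dimension strictly smaller than $\dim T$.

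Next, write $D = \bigcup_{i=1}^{r} D_i$ as its decomposition into smooth irreducible components, and for $I \subseteq \{1,\ldots,r\}$ let $D_I := \bigcap_{i \in I} D_i$, with $D_\emptyset := X$. Each non-empty $D_I$ is smooth of pure codimension $|I|$ in $X$. By generic smoothness applied to each projective morphism $f|_{D_I} : D_I \to T$, there is a dense open $U_I \subseteq T$ over which $f|_{D_I}$ is smooth, where we set $U_I := T \setminus \overline{f(D_I)}$ when $f|_{D_I}$ fails to dominate $T$. Then $S_0 := \bigcap_I U_I$ is a finite intersection of dense opens in $T$, so $\dim(T \setminus S_0) < \dim T$.

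The final step is to verify the relative NCD condition at a point $x \in f^{-1}(S_0)$. Let $I$ index the components of $D$ passing through $x$, and choose local coordinates $(x_1,\ldots,x_n)$ on $X$ at $x$ with $D_i = \{x_i = 0\}$ for $i \in I$. Smoothness of $f|_{D_I}$ at $x$ is equivalent to surjectivity of $df_x$ on $T_x D_I = \bigcap_{i \in I} \ker dx_i$; dually, the pullbacks $f^{*} dt_1, \ldots, f^{*} dt_d$ of local parameters $t_j$ on $T$ at $f(x)$ are linearly independent in $T^{*}_x X / \mathrm{span}(dx_i : i \in I)$. Extending $\{x_i\}_{i \in I} \cup \{f^{*} t_j\}_j$ to a full local coordinate system, one reads off that $D$ is cut out by $\prod_{i \in I} x_i = 0$ with the remaining coordinates split between vertical fiber directions and pullbacks from $S_0$ — the local model of a relative NCD over $S_0$.

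The main obstacle is this last verification: generic smoothness applied stratum by stratum must actually deliver transversality of the full NCD structure to the fibers of $f$ over $S_0$, not merely flatness or equidimensionality of the total preimage. This is where the hypothesis that the inverse image of every irreducible component of $T$ is a NCD plays its essential role: it guarantees that the stratification of $D$ by iterated intersections of its components is compatible with the stratification pulled back from $T$, so that smoothness on each $D_I$ over $S_0$ genuinely forces the relative normal crossings picture at every point of $f^{-1}(S_0)$.
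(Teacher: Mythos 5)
Your proof is correct, but it takes a genuinely different route from the paper's. The paper Noether-normalizes a maximal-dimensional component $S_1$ of $T$ via a finite projection $q : \mathcal{U} \to \mathbb{C}^d$ of an affine chart of $Z$ and applies generic smoothness to the composite $q \circ f : X \to \mathbb{C}^d$; since the fibers $(q \circ f)^{-1}(u)$ are precisely the preimages of the normal sections $q^{-1}(u)$, this delivers the smoothness of $f^{-1}(N_v)$, which the paper then combines with a topological fibration of each closed stratum $D_{\beta_1,\ldots,\beta_j}$ obtained over a further dense open $S'_0$ to infer that $D \cap f^{-1}(N_v)$ is a NCD. You bypass the auxiliary projection entirely: you apply generic smoothness directly to the restrictions $f\vert_{D_I} : D_I \to T$ (well-defined since $D_I \subset D = f^{-1}(T)$) and then exploit the cotangent-space observation that independence of $\{[f^* d t_j]\}_j$ in $T^*_x X / \operatorname{span}(dx_i : i \in I)$ forces independence of $\{dx_i\}_{i \in I} \cup \{f^* d\tilde{t}_j\}_j$ in $T^*_x X$ for any extension $\tilde{t}_j$ of $t_j$ to $Z$, producing a coordinate chart in which $f^{-1}(N_v)$ is a coordinate slice and $D \cap f^{-1}(N_v)$ is literally $\{\prod_{i\in I} x_i = 0\}$. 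That observation, not the hypothesis on preimages of irreducible components of $T$, is what closes the ``main obstacle'' you flag at the end; the component hypothesis only serves your initial reduction to $T$ smooth and irreducible. Your approach is arguably cleaner because it reads off the NCD-in-the-normal-slice conclusion explicitly rather than inferring it from a dimension count over $S'_0 \cap q^{-1}(U_1)$. Two small repairs: drop $I = \emptyset$ from the collection, since $f\vert_{D_\emptyset} = f : X \to Z$ does not have image in $T$ (its case is in any event implied by any nonempty $I'$, by the same quotient-to-total-space independence argument); and add a remark that the topological-fibration clause of Definition~\ref{St} now follows from your established smoothness and the properness of $f$ via Ehresmann, compatibly over the NCD strata thanks to the transversality you have just verified.
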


\noindent {\bf Proof}. i)  Let $D_{\beta_i}$ denote the components of the divisor $D$ and $D_{\beta_1,\ldots,\beta_j}$ the intersection of $D_{\beta_1}$, $\ldots, D_{\beta_j}$. 
Let $ d := \dim T$ be the  dimension of $T$; there  exists an open subset    $S_{\beta_1,\ldots,\beta_j}$ complementary of an algebraic  sub-space  of $T$ of dimension
strictly smaller than  $ d$, such that $f^{-1}(S_{\beta_1,\ldots,\beta_j}) \cap D_{\beta_1,\ldots,\beta_j}$ is either empty, or a  topological fibration over $ S_{\beta_1,\ldots,\beta_j}$.
Over the open subset $S'_0:=\cap_{\beta_1,\ldots,\beta_j}S_{\beta_1,\ldots,\beta_j}$, the divisor $D \cap f^{-1}(S'_0)$ is a  topological fibration, which is needed to satisfy the assertion (2) of the definition \ref{St}.

ii) Still, we need to check the assertion (3) of the definition \ref{St}. Let $\overline{S_1}$
be an irreductible  component of   $T$ of dimension $d$. There exists a dense open subset $S'_1$ in $\overline{S_1}$ with inverse image a sub-NCD of $D$, by the above hypothesis.
As the argument is  local, we consider an open affine subset $\mathcal U$ of $Z$  with non empty intersection  $S_1 := \overline{S}_1\cap {\mathcal U} \subset S'_1$  and a 
 projection $q:  {\mathcal U}\to{\mathbb C}^{d}$
   whose restriction to $S_1 $ is a finite projection. 
 There  exists an open affine dense subset  $U_1$ 
of ${\mathbb C}^{d}$ such that
$q\circ f$ induces a smooth  morphism over $U_1$.

Considering $S'_0$ in i) above, we notice that $f$ induces over  $q^{-1}(U_1)\cap S'_0$  a fibration by NCD: indeed,
let $x$ be a point in $q^{-1}(U_1)\cap S'_0 \cap S_1$. 
There  exists an open neighborhood ${\mathcal U}_x $ 
of  $x$ in $(q^{-1}(U_1)\cap S'_0) \subset  {\mathcal U}$, small enough such that the
 restriction of $q$  to ${\mathcal U}_x\cap S_1$ is non ramified on its  image.

 On the other hand, for all $y \in  {\mathcal U}_x $, we have:  $f^{-1}(q^{-1}(q(y)) \cap {\mathcal U}_x)$ is smooth, since
$q(y)\in U_1$. The dimension of $f^{-1}(q^{-1}(q(y)\cap {\mathcal U}_x)$ is $\dim X-d$. As
$x$ is in $S'_0$ the dimension of $f^{-1}(y)\cap D$ is $ \dim X-1-d_1$ and it is a divisor with
 normal crossings in   $f^{-1}(q^{-1}(q(y)\cap {\mathcal U}_x)$ of dimension: $ \dim X- d$.

We remark that $q^{-1}(q(x))\cap {\mathcal U}_x$ is a normal  section of $U_1\cap S'_0$
at $x$ in $V$. Hence $f$ induces on  $q^{-1}(U_1)\cap S'_0$  a fibration by NCD in the normal sections.

 The open  subset $S_0$ is the union of open  subsets   $q^{-1}(U_1)\cap S'_0$ for the various   irreductible components  $\overline {S_1}$ of $T$ of maximal dimension  $d$, which proves the assertion (3) 
 of the  definition and ends the proof of the  lemma \ref{dense}.

\vskip.1in
{\it End of the proof}. Let $d_1$ be the dimension of $V^1$. After the lemma \ref{dense} there exists an open algebraic subset $S_0$ of $V^1$, over which the restriction 
of  $f_2$ induces a  relative NCD  over $S_0$, moreover, the dimension $d_2$ of the algebraic set $V^2:=V^1-S_0$ is   $d_2 < d_1$ strictly less than $d_1$.
 If $d_1 = n - 1$, we can always  choose $S_0$ such that $d_2 = n-2$.  

We define  a new  Whitney stratification $\SS^2$ of $V$, compatible with $V^2$
 which coincide with $\SS^1$ away of $V^1$, by keeping the same strata 
away of 
  $V^1$ and adding   Thom-Whitney strata including 
  the connected  components of $S_0$ just defined
  and strata in their   complement $V^2$ in $V^1$. 
   
  We complete the proof by repeating this argument 
for the closed algebraic sub-space  
   $V^2$ of $V^1$.    This process is the beginning of an inductive  argument as follows.  

\vskip.1in 
{\it  Hypothesis of the inductive  argument:} Given projective morphisms $f_j: X_j  \to V $ for $1 \leq j \leq i $ as in the diagram above, and $\pi_j: X_j \to X_{j-1}$, $f_j := f_{j-1} \circ \pi_j$
where $X_j$ is non-singular,
 a Thom-Whitney  stratification $\SS^i$ of $V$ compatible with a family of algebraic sub-spaces  $V^j \subset\cdots \subset  V^{1}\subset V$ for $j \leq i $   such that there exists a stratification
 associated to $X_i$, stratifying $f_i$,   and such that $f_{i-1}^{-1}(V^j)$ are NCD in $X_i$, as well the inverse image of each   irreductible component of $V^j$   
for $1 \leq j < i$, and moreover $f_i$ induces an admissible morphism over  $V-V^i$.  Also, let $Y_h^i$ be a  divisor of  $X_i$ such that  $f_i^{-1}(V^1) \cup Y_h^i$ is a NCD and $Y_h^i 
 \cap (X_i- f_i^{-1}(V^1)) $ is a  relative NCD.
 
\vskip.1in 
 A sequence of blowings-up centered over $f_i^{-1}(V^i)$  leads to the construction of 
 $\pi_{i+1}:X_{i+1}\to X_i$
  such that $X_{i+1}$ is non-singular and the inverse images of  $ V^j$ by
 $f_i \circ \pi_{i+1}$,   as well its irreductible components for $1\leq j\leq i$, 
 and their  union with  $ Y_h^{i+1} := \pi_{i+1}^{-1}(Y_h^i)$  are NCD in  $X_{i+1}$.
 
  Let $f_{i+1} := f_i \circ \pi_{i+1}$.
The  stratification $\SS^i$ of $V$ underlies a stratification of $f_{i+1} $. 
 It follows from lemma \ref{dense} that in each  maximal strata $S$ of $V^i$ 
 in $\SS^i$, there exists an open dense subset $S^i_0(S)$ 
 over which $\pi_{i+1}$
 is  a relative NCD. Let  $V^i_0$ be the union of all $S^i_0$. The complement $V^i-V^i_0$ of $V^i_0$ is a closed algebraic strict sub-space   $V^{i+1}$ of $V^i$, and $f_{i+1}$ is admissible 
 over $V - V^{i+1}$. 
We construct a  refinement of the stratification $\SS^i$ and then  a Thom-Whitney stratification of $\SS^{i+1}$ compatible with $V^{i+1}$,
 keeping the same  strata away from  $V^i$ and  introducing as new strata the connected components of the open subset $V^i_0$  of $V^i$,  then completeing by a Thom-Whitney    stratification of the  
 complement in  $V^i$.
 
The inductive  argument ends when  $V^{k+1}=\emptyset$, which occurs after a  finite number of  steps since  the family $(V^i)$ is decreasing. 
\begin{rem}\label{c} 
Let $d_i :=$ dim.$V^i$,  we can suppose $V_{d_i} = V^i$ of dimension $d_i$ for $0 < i \leq k$. 
 \end{rem}
\begin{cor}
i)  In the  lemma  \ref{rec}, the morphism $f_{k+1}$  is a  fibration by NCD over the strata, moreover we can  suppose $k = n$.

ii) For any $X$, there exists a modification $\pi: X' \to X$ which is a fibration by NCD over the 
the strata, and  transform an algebraic  sub-space  $Y $ containing the singularities of $X$
into a divisor with normal crossings  $Y'  := \pi^{-1}( Y)$.

iii)  We can  suppose in the preceding  diagram that each morphism $\pi_i$ is a
 fibration by NCD over the  strata. 
\end{cor}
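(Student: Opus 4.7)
The three assertions will be deduced from the inductive construction assembled in Lemma~\ref{rec}.

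For (i), the plan is to read off Definition~\ref{St} directly from the output of the lemma. By construction $X_{k+1}$ is smooth, and the chain of inverse images $f_{k+1}^{-1}(V^j)$ for $j\leq k$ is a family of successive sub-NCDs in $X_{k+1}$, which is clause~(1) of the definition. The fact that each $f_{k+1}^{-1}(V^j-V^{j+1})$ is a relative NCD over $V^j-V^{j+1}$, combined with the Thom--Whitney property (T) of the stratification $\SS^{k+1}$, gives clause~(2), while adaptedness to $Y$ is witnessed by the horizontal divisors $Y_h^i$ tracked throughout the construction. To obtain $k=n$ I would exploit the flexibility emphasized in the remark following the lemma: at each step one is free to choose $V^{i+1}$ so that $\dim V^{i+1}=\dim V^i-1$, which forces the process to terminate after exactly the expected number of steps.

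For (ii), I would apply Lemma~\ref{rec} to the identity morphism $\mathrm{id}_X:X\to X$ together with the given closed subset $Y$ containing the singularities of $X$. The desingularisation $\pi_1$ is an isomorphism over the smooth locus, and each subsequent $\pi_i$ is a modification supported over $f_{i-1}^{-1}(V^{i-1})$, so the composition $\pi:=\rho_{k+1}=\pi_1\circ\cdots\circ\pi_{k+1}$ is a modification of $X$. By (i) applied with $V=X$, the morphism $\pi$ is a fibration by NCD over the strata of $X$, and $Y':=\pi^{-1}(Y)$ is a NCD adapted to this stratification by construction.

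For (iii), the main obstacle is that the individual morphisms $\pi_i:X_i\to X_{i-1}$ produced by the lemma are sequences of blow-ups, but nothing in the construction ensures that each of them is itself a fibration by NCD over the strata of $X_{i-1}$. I would proceed by descending induction on $i$: starting with $i=k+1$, apply (ii) to $\pi_i$, taking for $Y$ the NCD in $X_{i-1}$ that supports all the data already assembled, to obtain a modification $X_i'\to X_i$ on which $\pi_i$ becomes a fibration by NCD over a suitable refinement of the stratification of $X_{i-1}$. Propagating this refinement downward may require composing $X_{i+1},\dots,X_{k+1}$ with further blow-ups supplied by another application of the lemma, and the \emph{key technical point} I anticipate is to verify that this iteration terminates; this is the case because each pass strictly reduces the dimension of the closed locus where the fibration-by-NCD property fails, so after finitely many passes all $\pi_i$ are simultaneously fibrations by NCD over the strata while the admissibility of $f_{k+1}$ established in~(i) is preserved.
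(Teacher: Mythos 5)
Your treatment of (i) and (ii) follows the paper's intent closely: (i) is a matter of reading Definition~\ref{St} off the output of Lemma~\ref{rec}, and (ii) is exactly the paper's move of feeding $\mathrm{id}_X$ and $Y$ into the lemma.

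For (iii), however, you have chosen a genuinely different route, and I think the paper's is the safer one. The paper's phrase ``by the same argument,'' read together with the proof of Proposition~\ref{RSt} (``we apply at each step of the induction the assertion (ii) of the corollary''), indicates an \emph{ascending} strategy: when the inductive construction of Lemma~\ref{rec} produces $\pi_{i+1}\colon X_{i+1}\to X_i$, one immediately invokes (ii) to replace $\pi_{i+1}$ by an admissible modification \emph{before} anything is built on top of $X_{i+1}$. Because $\pi_1,\dots,\pi_i$ are never touched again, no downstream data is invalidated, and termination is automatic from the dimension bookkeeping of $V^i$ already present in the lemma. Your proposal instead fixes the tower \emph{after the fact} by descending induction, and you correctly identify the resulting difficulty: modifying $X_i$ forces you to rebuild $X_{i+1},\dots,X_{k+1}$, which may introduce fresh loci where the fibration-by-NCD property fails. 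Your termination claim --- that each pass strictly shrinks the bad locus --- is plausible but is not established in the proposal; the fresh blow-ups performed while rebuilding the upper part of the tower could a priori reintroduce failure loci, and ruling this out would require an argument of roughly the same length as simply folding admissibility into the inductive construction. So the approach could be made to work, but as written there is a gap at the termination step, and the ascending version is both shorter and free of it.
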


   The assertion  i) is clear.
   In the case where $f$ is the identity of   $X$ in  the lemma \ref{rec}, we construct a modification $X'$ of $X$ compatible with $Y$, which is a  fibration by relative NCD over $X$, which prove ii).
By the same argument, we can suppose all $\pi_i$ in the  lemma admissible which prove iii).
\begin{proof}[ Proof of the proposition \ref{RSt}]\label{pr}
Going back to the inductive argument for $f$ in the lemma, we apply at each step of the induction the assertion (ii) of the  corollary, in particular  we can start with the  desingularization by an admissible modification. 

 \smallskip
 \n \emph{ Hypothesis of the induction.} We suppose there  exists:\\
 1)  A diagram of morphisms
 $D_i$:
   $$ X   \overset{\pi'_i}{\leftarrow}  X_i \overset{f_i}{\rightarrow}  V, \quad f_i = f \circ \pi'_i$$
 where  $\pi'_i$ is admissible. \\
2) A decreasing  family of  algebraic sub-spaces  $V^i \subset V^j $   for  $j <  i$ with   inverse image  $ f_i^{-1}(V^j)$  consisting of  NCD in $X_i$ for  $j < i$, and a  stratification $\SS^i$ of $V$ compatible with the family  $V^j$, such that the restriction of $f_i$
to $X_i - f_i^{-1}(V^i)$ over  $V- V^i$  is a  fibration by relative NCD   over the strata.  

\smallskip
 \n \emph{Inductive step.} Let $d_i$ (resp. $n-i$) be the dimension of $V^i$. We want to define   a sub-space $V^{i+1} \subset V^i$ of dimension  strictly smaller $d_{i+1} < d_i $ (resp. $n- i-1 $) and to extend the diagram over the open subset    $V^i - V^{i+1}$, that is, to construct a diagram of morphisms $D_{i+1}$: 
   $$ X   \overset{\pi'_{i+1}}{\leftarrow}  X_{i+1}  \overset{f_{i+1}}{\rightarrow}  V, \quad f_{i+1} = f \circ \pi'_{i+1}$$
       such that:
 
 1)    $\pi'_{i+1}: X_{i+1}\overset{\pi_{i+1}}{\rightarrow} X_i \overset{\pi'_i}{\rightarrow} X$ is admissible and defined as a composition of  $ \pi'_i: X_i {\rightarrow} X$ with
 a  modification $ \pi_{i+1}$ inducing an  isomorphism: $ X_{i+1} - f_{i+1}^{-1} (V^i) \overset{\sim}{\longrightarrow}  X_i - f_i^{-1} (V^i)$.
       
     2)  $f_{i+1}^{-1} (V^i)$ is a relative NCD over the open subset  $V^i - V^{i+1}$. 

\smallskip  
To achieve this step, we apply a slightly modified   version of the lemma \ref{rec}.
 \begin{lem}
[Relative case]\label{relatif}  
 Let $f: X {\rightarrow} V $ be a projective  morphism and $Z$ a strict algebraic  sub-space  of $V$ of dimension $\ell$.
 
  With the notations of the definition \ref{St},  there exists an  admissible diagram in the  sense of the  lemma \ref{rec} and an  index $i$ such that $ Z \subset V^i$, dim.$V^i =$ dim. $Z$,  such that $Z' := f_i^{-1}(Z)$  is a   NCD in $X_i$  relative  over $V^i - V^{i+1}$.
  
 Then, we may construct  a stratification of $f$, such that
 $Z$ is a  sub-space of $V_{\ell}$, union of the strata of  dimension $\leq {\ell}$, and 
 $Z' := f_{\ell}^{-1}(Z)$  is a NCD  in $X_{\ell}$  relative over  $V_{\ell} - V_{\ell - 1}$. 
Hence, there exists a diagram
 $$X \xleftarrow{\pi'} X' \xrightarrow{f'}  V, \quad  f' := f \circ \pi'  $$
such that $f'^{-1} (Z) = \pi'^{-1} (f^{-1} (Z))$ is a  relative NCD over  $Z - (Z \cap V_{\ell - 1}) $.
 \end{lem}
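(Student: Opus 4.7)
The plan is to adapt the inductive construction of Lemma~\ref{rec} so that the given subspace $Z$ is forced into the decreasing chain $V^{\bullet}$ at the step at which its dimension is reached. The essential flexibility is that at each inductive step of Lemma~\ref{rec}, the next closed subspace $V^{j+1}$ was defined by removing from $V^j$ a dense open subset produced by Lemma~\ref{dense}; we are free to enlarge each $V^j$ at any step, provided that admissibility of the subsequent blow-ups is preserved.

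First I would run the construction of Lemma~\ref{rec} unmodified through the steps $j < i$, where $i$ is the first index with $\dim V^{j} = \ell = \dim Z$; at that step I would replace the candidate $V^{i}$ by $V^{i} \cup Z$, which still has dimension $\ell$. The subsequent blow-ups $\pi_{i+1}, \pi_{i+2}, \ldots$ are then performed with centers supported over $f_i^{-1}(V^i)$ exactly as in Lemma~\ref{rec}, so that after finitely many steps $f_i^{-1}(V^i)$ becomes a NCD in the smooth variety $X_i$; since $Z \subset V^i$, the inverse image $Z' := f_i^{-1}(Z)$ is a sub-NCD in $X_i$. Second, when constructing $V^{i+1}$, I would apply Lemma~\ref{dense} twice: once to the morphism $f_i^{-1}(V^i) \to V^i$ as in Lemma~\ref{rec}, and once to the restriction $f_i^{-1}(Z) \to Z$. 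Intersecting the two resulting dense open subsets of $V^i$ (the second extended trivially over $V^i - Z$) yields a dense open $S_0 \subset V^i$ over which both $f_i^{-1}(V^i)$ and $f_i^{-1}(Z)$ are relative NCD, and taking $V^{i+1} := V^i - S_0$ gives the desired $V^{i+1}$ of dimension strictly smaller than $\ell$, with $Z'$ relative NCD over $V^i - V^{i+1}$.

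For the stratification statement, I would then refine the Thom-Whitney stratification $\SS^i$ so that the connected components of $(V^i - V^{i+1}) \cap Z$ and of $(V^i - V^{i+1}) - Z$ appear as distinct strata, and re-index so that $V_\ell := V^i$ and $V_{\ell - 1} := V^{i+1}$ in the notation of the statement. Setting $X' := X_{i+1}$, $\pi' := \pi_1 \circ \cdots \circ \pi_{i+1}$ and $f' := f \circ \pi'$ gives the final diagram, with $f'^{-1}(Z) = \pi'^{-1}(f^{-1}(Z))$ a relative NCD over $Z - (Z \cap V_{\ell-1})$ by construction.

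The main obstacle I anticipate is the compatibility of the blow-ups with the transversality conditions involving $Z$: at each blow-up $\pi_{i+1}$ producing the NCD $f_i^{-1}(V^i)$, we must simultaneously ensure that the strict transform of $f_i^{-1}(Z)$ is in normal crossing position with the previously constructed NCD, with the horizontal divisor $Y^i_h$, and with the exceptional divisors just introduced. This is a standard application of Hironaka's embedded resolution: one takes the centers of the blow-ups adapted to the union $f_i^{-1}(V^i) \cup Y^i_h \cup f_i^{-1}(Z)$, treating $f_i^{-1}(Z)$ as an additional component. Once this embedded resolution is interleaved with the inductive procedure of Lemma~\ref{rec}, the double application of Lemma~\ref{dense} described above yields the required open stratum and completes the argument.
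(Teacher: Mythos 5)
Your proposal is essentially the paper's proof, but it has a small gap in the handling of the decreasing chain $V^1 \supset V^2 \supset \cdots$. You say ``run the construction of Lemma~\ref{rec} unmodified through the steps $j < i$'' and then replace the candidate $V^i$ by $V^i \cup Z$. If the earlier steps are truly run unmodified, there is no guarantee that $Z \subset V^{i-1}$: the open subsets $S_0$ removed from $V^j$ for $j < i$ (to produce $V^{j+1}$) need not avoid $Z$, so points of $Z$ may have already been discarded. Then $V^i \cup Z$ is not contained in $V^{i-1}$, the family $V^\bullet$ ceases to be decreasing, and the diagram you end up with is not ``admissible in the sense of Lemma~\ref{rec}'' as the statement requires. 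The paper avoids this by replacing \emph{all} $V^j$ for $j \le i$ by $V^j \cup Z$ (equivalently, by choosing the removed opens $S_0$ at every stage $j < i$ disjoint from $Z$; this is possible since $\dim Z = \ell < d_j$, so $Z$ is nowhere dense in $V^j$ and the density and codimension conditions of Lemma~\ref{dense} are preserved after shrinking). You do observe at the outset that ``we are free to enlarge each $V^j$ at any step,'' which is exactly the right remark, but you then fail to use it for $j < i$; that omission is the gap.

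The rest of the argument matches the paper. Your double application of Lemma~\ref{dense} (once for $f_i^{-1}(V^i)$, once for $f_i^{-1}(Z)$) is slightly redundant --- applying it once to $T = V^i \cup Z$ already yields a dense open $S_0 \subset V^i \cup Z$ meeting the $\ell$-dimensional components of $Z$, and over $S_0 \cap (Z - V^{i+1})$ one automatically gets that $f^{-1}(Z)$ is a relative NCD --- but redundancy is harmless. The resolution remark at the end, that the blow-up centers must be chosen for the full union $f_i^{-1}(V^i) \cup Y_h^i \cup f_i^{-1}(Z)$ via embedded resolution, is correct and is the same mechanism the paper invokes when it constructs $\pi_\ell$ so that $\pi_\ell^{-1}(f_k^{-1}(V^{k+1} \cup Z))$ is a NCD. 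Your re-indexing and stratification refinement are also as in the paper.
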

\begin{proof} Indeed, in the preceding  construction, we can suppose that all  sub-spaces $V^i$
of dimension $d_i \geq \ell$ contain  $Z$. Let  $V^k$ and $V^{k+1}$ such that $d_k > \ell \geq d_{k+1}$, then we choose   stratifications $\SS^i$ of $V$  compatible with $Z$   and replace $V^{k+1}$ by $V^{k+1} \cup Z$, hence we are obliged to replace all $V^{i}$ for $i \leq k+1$ by the sub-spaces $V^i \cup Z  \supset V^{k+1} \cup Z$, then we  construct $\pi_{\ell}: X^{\ell} \to X^k$ such that $\pi_{\ell}^{-1} (f_k^{-1}(V^{k+1} \cup Z)) \subset  X^{\ell}$ is a NCD. 
\end{proof} 
We remark that the variety $f_i^{-1} (V^i)$ in $X_i$ is over  $ f^{-1} (V^i)$  in $X$.  Then, we  apply the lemma  (\ref{relatif} ) for  $\pi'_i: X_i \to X $  instead of
    $ f: X \to V$  in the lemma and  $f^{-1} (V^i) \subset X$  instead of $Z \subset V$,
 to construct the admissible morphism 
    $\pi'_{i+1}$
  \smallskip
  \centerline{$\pi'_i: X_i \to X,  \quad f^{-1} (V^i) \subset X, \quad  X_i \xleftarrow{\pi_{i+1}} X_{i+1}  \xrightarrow{\pi'_{i+1}} X,  \quad \pi'_{i+1} := \pi'_i \circ \pi_{i+1}$}
      \smallskip
    \n   That is we  develop the constructions of the lemma  \ref{rec} to  construct $\pi'_{i+1}$ over $X$
  {\it by  modification only of sub-spaces over $f_i^{-1} (V^i)$} to transform  $f_i^{-1} (V^i)$ into a NCD, hence $X_{i+1}$ in the  diagram $D_{i+1}$ differs
  from $X_i$ in the diagram $D_i$ only over $f_i^{-1} (V^i)$.
  
    At this stage, we go back to the construction in the  lemma to extend $f_i$ over an open subset  $\Omega$ of $V^i$ such that 
  $V^{i+1} := V^i - \Omega$ is of dimension smaller than $d_i$ and we  define $f_{i+1}: f_i \circ \pi_{i+1}$, then we can complete the diagram $\DD_{i+1}$ and the inductive step.
 At the end we  define $\pi'_m$ and $f_m$  both admissible for  some index $m$.
   \end{proof}     
 \begin{cor}  The decomposition theorem for  $f $  can be  deduced from both cases  $\pi'$
 and $f'$ in the proposition \ref{RSt}. 
  \end{cor}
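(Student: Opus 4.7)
The plan is to apply Deligne's splitting criterion (\cite{DL}, proposition 2.16) to the diagram $X \overset{\pi'}{\leftarrow} X' \overset{f'}{\rightarrow} V$ provided by proposition \ref{RSt}. Since the decomposition theorem and Hard Lefschetz have been established for both $\pi'$ and $f' = f \circ \pi'$ in the fibration-by-NCD case, the goal is to extract $Rf_*(\ilm)$ as a direct summand of the complex $Rf'_*(j'_{!*}\LL) \simeq Rf_*(R\pi'_*(j'_{!*}\LL))$, and to inherit from this bigger complex both the decomposition into intermediate extensions on $V$ and the relative Hard Lefschetz isomorphisms.

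First I would isolate $\ilm$ inside $R\pi'_*(j'_{!*}\LL)$. Since $\pi'$ is a modification restricting to an isomorphism over the dense open subset $\Omega \subset X$ on which $\LL$ is defined, the decomposition theorem applied to $\pi'$ gives a direct sum decomposition $R\pi'_*(j'_{!*}\LL) \simeq \ilm \oplus C$ in $D^b_c(X,\Q)$: the component of $\ph{0}{R\pi'_*(j'_{!*}\LL)}$ whose support has full dimension is precisely $\ilm$ (by uniqueness of intermediate extension of a given generic VHS), while the remaining summands $C$ are shifted intermediate extensions of polarized VHS supported on proper closed subvarieties contained in the exceptional locus of $\pi'$.

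Next I would apply $Rf_*$ and use the decomposition for $f'$. We obtain
\begin{equation*}
Rf'_*(j'_{!*}\LL) \simeq Rf_*(\ilm) \oplus Rf_*(C),
\end{equation*}
and the left hand side decomposes, by the case already proved, into a direct sum of shifted intermediate extensions of polarized VHS on the strata of $V$ satisfying relative Hard Lefschetz. The full subcategory of perverse sheaves on $V$ consisting of finite direct sums of intermediate extensions of polarized VHS on strata is semisimple, so the summand $\ph{i}{Rf_*(\ilm)}$ of $\ph{i}{Rf'_*(j'_{!*}\LL)}$ is again such a sum, and the local systems arising are precisely the $\LL^i_S$ defined by the intersection morphism in formula \ref{E1}, by naturality of that construction under proper pushforward and direct summand.

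The main obstacle is compatibility of the Hard Lefschetz classes, since the relative ample class $\eta$ for $f$ lives on $X$ while HL for $f'$ requires a relative ample class on $X'$. I would take on $X'$ a class of the form $\pi'^*\eta_X + \varepsilon\, \eta_{\pi'}$, with $\eta_X$ relatively ample for $f$ and $\eta_{\pi'}$ relatively ample for $\pi'$. On the summand $Rf_*(\ilm)$ the vertical component $\eta_{\pi'}$ acts trivially, so the HL isomorphism for $f'$ restricts to the HL map for $f$ with class $\eta_X$ on this summand; iterates of $\eta$ are then isomorphisms ${^p\!\HH}^{-i}(Rf_*\ilm) \xrightarrow{\sim} {^p\!\HH}^{i}(Rf_*\ilm)$ as restrictions of isomorphisms. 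Once HL is in place, Deligne's criterion \cite{DL} yields the non-canonical isomorphism $Rf_*(\ilm) \simeq \oplus_i \ph{i}{Rf_*(\ilm)}[-i]$, completing the reduction of corollary \ref{CP} to the special fibration case.
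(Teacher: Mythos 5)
Your overall strategy coincides with the paper's, which is extremely terse here: it simply observes that the local system $\LL$ lives on a dense open $\Omega \subset X$ isomorphic to $\Omega' := \pi'^{-1}(\Omega)$, notes that $\ilm$ is therefore a direct summand of $R\pi'_*(j'_{!*}\LL)$, and then invokes Deligne's proposition 2.16 of \cite{DL} to pass the decomposition down from $f'$ to $f$. Your fleshing out of the first two steps is sound: the identification of $\ilm$ as the unique full-support summand of ${}^p\!\HH^0(R\pi'_*(j'_{!*}\LL))$, the splitting $Rf'_*(j'_{!*}\LL) \simeq Rf_*(\ilm) \oplus Rf_*(C)$, and the deduction that each ${}^p\!\HH^i(Rf_*\ilm)$ is a direct sum of intermediate extensions, as a direct summand of a semisimple object in a Krull--Schmidt category, are all correct.

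The gap is in the Hard Lefschetz step, which is precisely the content that \cite{DL} proposition 2.16 is designed to handle abstractly and which you try to replace with a concrete cup-product comparison. Your key assertion that ``on the summand $Rf_*(\ilm)$ the vertical component $\eta_{\pi'}$ acts trivially'' is not established, and in the naive reading it is false. The cup product $\eta_{\pi'} \smile (\,\cdot\,) : R\pi'_*(j'_{!*}\LL) \to R\pi'_*(j'_{!*}\LL)[2]$ does not respect the chosen (non-canonical) decomposition $\ilm \oplus C$: with respect to any splitting $R\pi'_*(j'_{!*}\LL) \simeq \oplus_i {}^p\!\HH^i[-i]$, the cup product is only block upper-triangular for the perverse filtration, and its diagonal block on the summand $\ilm \subset {}^p\!\HH^0$ is a class in $\hbox{Ext}^2(\ilm,\ilm)$ that need not vanish. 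What is true, and is the germ of the correct argument, is that the induced map on perverse cohomology sheaves on $X$, namely ${}^p\!\HH^0(\eta_{\pi'}\smile)|_{\ilm} : \ilm \to {}^p\!\HH^2(R\pi'_*(j'_{!*}\LL))$, vanishes because the target is supported on the exceptional locus while $\ilm$ admits no nonzero quotient supported on a proper closed subset. But this perverse-cohomology-level vanishing on $X$ does not by itself give the statement you need, namely that the $\eta$-action on ${}^p\!\HH^{j}_V(Rf_*\ilm)$ coincides with $\eta_X$; the operator $\eta_{\pi'}$ can still contribute off-diagonal blocks that couple $Rf_*\ilm$ to $Rf_*C$ after ${}^p\!\HH^j_V$. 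Either invoke \cite{DL} proposition 2.16 directly as the paper does, or supply the additional ingredient that the splitting of $R\pi'_*(j'_{!*}\LL)$ can be chosen compatibly with the Lefschetz operators (a refinement due to Deligne), after which your restriction argument becomes valid.
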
 
  Let $\LL$ be a local system on $X-Y$, there exists an  open    algebraic set $\Omega \subset X-Y$ dense in $X$ such that 
  $ \Omega':= \pi'^{-1}(\Omega) \subset X'$ is isomorphic to $\Omega$,  which carry the   local system $\LL$. Let $j': \Omega' \to X'$, then   the  decomposition theorem  for  $j_{!*}\LL$ 
  with respect to the orginal proper algebraic morphism  $f$ follows from both cases of $f'$ and $\pi'$ (\cite{DL} proposition 2.16).
     \begin{rem} 
 In the  construction of a relative NCD  $X_{V_i}$  overe $V_i - V_{i-1}$, for more clarity we ask for  both conditions:
 
i) The restriction of $f$ to $ X_S:= f^{-1}(S)$ over each strata $S$ of $V_i - V_{i-1}$ is a topological  fibration: $f_{\vert}: X_S \to S$.
 
ii) For each point $v \in  \subset  V_i - V_{i-1}$ smooth in $V_i$, let $N_v$ be a normal section  to $V_i$ at $v$ in general position, then $f^{-1}(N_v)$ is smooth in $X$ and its intersection with the NCD    $X_{V_i} $ is  transversal.

These two conditions may be equivalent.
    \end{rem}  
\section
{Logarithmic complexes}\label{4}
We develop    the construction of Hodge theory by logarithmic complexes with coefficients in an admissible graded  polarized variation of mixed Hodge structure (VMHS): $(\LL, W, F)$ with  singularities along a normal crossing divisor $Y$. We refer to \cite {K} and \cite{EY} for basic computations and to \cite{K, C, S-Z} for admissibility.   

The admissibility  on $X-Y$ refers to  asymptotic properties of   $(\LL, W, F)$ along the NCD.
Such asymptotic properties are expressed on Deligne's  extension $\LL_X$ defined in terms of the
  ''multivalued'' horizontal sections of $\nabla$ on $X-Y$, on which
   the connection $ \nabla$ extends on $\LL_X$ with logarithmic singularities along $Y$.
 
 The extension $\LL_X$ is a  locally free analytic sheaf of modules, hence  algebraic if $X$ is projective. It is  uniquely characterized by the  residues of the logarithmic singularities  of   the connexion $\nabla$, and
defined in terms of a  choice of the logarithm of the eigenvalues of the monodromy  (\cite{H} Th\'eorme d'existence Proposition 5.2). 

The fibre of the vector bundle $\LL(x):= \LL_{X,x}\otimes_{\OO_{X,x}} \C$ is  viewed as the space of the '' multivalued''  horizontal sections  of $\LL$ at $x$ (sections of a universal covering of the complementary of $Y$ in a ball $B_x$ at $x$).  

The extension over $Y$ of the Hodge filtration of a polarizable variation of HS by Schmid \cite{Sc, G-S, CKS} is a fundamental asymptotic property,
that is required  by assumption for a graded polarizable variation of MHS as a condition  to admissibility .

 The local monodromy $T_i$ around a component $Y_i$ of $Y$, defines a nilpotent endomorphism $N_i := Log \,T_i^u$ logarithm of the unipotent part of $T$ preserving the extension of the  filtration $W$ of $\LL$ by sub-bundles $\WW_X \subset \LL_X$. 

Deligne pointed out the problem of the existence  of the relative monodromy filtration $M( \sum_i N_i, W))$  in (\cite{WII}, I.8.15). The required properties are proved in the case of geometric variation of MHS over a punctured disc in \cite{E} and  studied axiomatically as conditions of admissibility in \cite{S-Z}. The  definition of admissibility in \cite {K} along a NCD is by reduction to the case of a punctured disc.

We construct below the weight filtration directly on the logarithmic complex (\ref{log}), generalizing the case of constant coefficients in \cite{HII}. 
 
 We remark that the construction of the Intersection complex (\ref{note}), the intermediate extension of a local system on $X-Y$, as well the development of mixed  Hodge theory, involve the behaviour at ``infinity'', along the NCD $Y$.
 
\subsubsection {Notations}\label{ls} 
 Let  $Y := \cup_{i\in I} Y_i $ be a NCD, union of smooth irreducible components with index $I$, and for $J
\subset I$, set $ Y_J:= \cap_{i\in J} Y_i$, $ Y_J^*:= Y_J -
\cup_{i\notin J} (Y_i \cap Y_J)$ ($ Y_{\emptyset}^*:= X - Y$). We denote uniformly the various embeddings by $j:Y_J^* \to X$. 

The local system $\LL$ on $X^*:= X-Y$ is defined by a connection  $\nabla$ on the fibre bundle  $\LL_{X^*} = \LL \otimes_\C \OO_{X^*} $ with
horizontal sections given by  $\LL$. The  extension of $\LL_{X^*}$ with a regular singular connection
is a couple consisting of a fibre bundle  $\LL_X$ and a connection $\nabla: \LL_X \rightarrow {\Omega}^1_X (Log Y) \otimes 
\LL_X $ (\cite{H}, \cite{M} definition 3.1).
The residue of $\nabla$ is defined along a component  $Y_i$  of  the NCD $Y$ as an endomorphism of the restriction
$ \hbox{Res} _{Y_i} \nabla: {\LL}_{Y_i} \rightarrow {\LL}_{Y_i}$,
 of $\LL_X$ to $Y_i$. 
 
  The eigenvalues of the  residue are constant along a connected component of $Y_j$ and related to the local monodromy $T_j$ of $\LL$ at a general point of $Y_j$ by the formula: $   Log \, T_j  =  -2i\pi  \hbox{Res} _{Y_j} \nabla$ (\cite{H}, theorem 1.17, proposition 3.11). 

 The construction of $\LL_X$ is local.  Deligne's idea is to fix the choice of the residues of the connection    by 
the condition that the eigenvalues of  the  residue  belong  to  the image of a section  of the projection $\C \to \C/ \Z$, determined by fixing the real part of $z: \,  n \leq \RR(z) < n + 1$, hence fixing the determination of the logarithm  $Log: \C^* \rightarrow \C$, and forcing  the uniqueness of the construction. Hence the local constructions glue into  a global bundle.  In Deligne's extension case  $n = 0$.  

Let $X(x) \overset{\sim}{\longrightarrow}  D^{n+l}$ be a neighborhood of a point $x$ in $Y$ isomorphic to a
product of complex discs,
such that $X(x)^* = X(x) \cap (X-Y) \overset{\sim}{\longrightarrow}  {(D^*)}^n \times D^l$ where $D^*$ is
the disc with the origin deleted. 
For simplification,  we  often set $l = 0$, then $X := D^n$ is a ball in $  \C^n$ with  $Y$ defined by $y_1\cdots y_n =0$.
The fundamental group $\pi_1 (X(x)^*)$ is a free abelian group
generated by $n$ elements representing  classes of closed paths
around the hypersurface $Y_i$,
defined locally by the equation $y_i=0$, at a general point of $Y_i$, one for each index $i$.

 The restriction of the local system $\LL$ to $X(x)^*$ corresponds to a representation of $
{\pi}_1(X(x)^*)$ in a vector space  $L$, hence to the action of commuting automorphisms
$T_i$ of $L$  for $i \in [1,n ]$ indexed by the local components $Y_i$ of $Y$ and called
local monodromy action around $Y_i$. 

Classically $L$ is viewed as the fibre of $\LL$ at the
base point of the fundamental group $\pi_1 (X^*)$, however to
represent the fibre  of  Deligne's extended bundle at $x$, we view
{\it  $L$ as the vector space    of multivalued horizontal sections of
$\LL$}, that is the sections of the inverse of $\LL$  on a universal cover $\pi: \widetilde D^n  \rightarrow D^n := X(x)^*$ defined for $z = (z_1,\cdots, z_n) \in \widetilde D^n \subset \C^n$ by $y = \pi(z)$ with components $  y_j := e^{2i\pi z_j}$, then $
L := H^0( \widetilde D^n, \pi^{-1}\LL)$.

The automorphisms $T_i$  are defined over $\Z$ and decompose as a product
of  semi-simple $T_i^s$  and  unipotent $T_i^u$  commuting automorphisms $T_i = T_i^s T_i^u$. On the complex
 vector space $L$, $T^s_i$ is diagonalizable and  represented over $\C$  by the diagonal matrix of its eigenvalues.
    The logarithm of $T_i$ is  defined as the  sum
$
Log \, T_i  =  Log \, T_i^s + Log \, T_i^u =   D_i + N_i 
$, 
 where  $  D_i = Log \,T_i^s$ is diagonalizable  over $\C$
with entries $Log \,{a}_i$  on the diagonal for all
eigenvalues ${a}_i$ of $T_i^s$ and for a fixed determination of $Log$ on
$ {\C}^* $, while $  N_i := Log \, T_i^u$ is a nilpotent endomorphism, defined by  $ N_i = - {\Sigma}_{k \geq 1} (1/ k) {(I-T^u_i)}^k $ as a polynomial function of the nilpotent morphisms
$(I - T^u_i)$, where the sum is finite. At a point $x$ on $\cup_{i\in J \subset I}Y_i$ a product of  closed paths corresponds to a sum of various
$N_i$.

 Locally, at a point $x \in Y$ on the intersection of $n$-components $Y_i, i \in [1,n]$, 
 a spectral  decomposition into a finite  direct sum is defined on $L := \LL(x)$ with index 
 sequences   of eigenvalues $(a.)$,  with one component $a_i$ for
each $T_i$

\centerline {$L = {\oplus }_{(a .)} L^{a .}, \quad  L^{a .}  = \cap_{i \in [1,n ]}
({\cup}_{j>0} \; \hbox{ker} \; {(T_i - {a}_i I)}^j) 
$.}
\subsubsection{} \label{tilda}
For a detailed description of $\LL_X$ at $x \in Y$, let $\alpha_j \in [0,1 [$ for $ j \in [1,n]$ such that $ e^{- 2i \pi {\alpha}_j} = a_j $ is an eigenvalue for $T_j$,  and $L $ the vector space of multivalued sections of $\LL$ on $X(x)^*$, then 
the fiber $ \LL_{X,x}$ is generated as an $\OO_{X,x}$-submodule of $(j_*\LL_{X^*})_x$ by   the image of the embedding of the space   $ L $ of multivalued horizontal sections into $\LL_{X,x} $ by the correspondence $ v \to  \widetilde v  $, defined for $y$ near $x$  as 
\begin{equation*}
\tilde{v} (y)  = (exp (\Sigma_{j \in J}  (log y_{j})({\alpha}_{j} -\frac{1}{2i\pi} N_{j}))). v =
{\Pi}_{j \in J} {y_{j}}^{{\alpha}_{j}} \, \hbox{exp} ({\Sigma}_{j \in J} -\frac{1}{2i\pi}(log  y_{j})  N_{j}). v,
\end{equation*}
 in (\cite{H}, 5.2.1- 5.2.3), then $\tilde{v} (y)$ is a uniform analytic section on $X(x)^*$. 
 It is important to stress that a basis $v_a$ of $L$ is sent onto a basis $\tilde {v}_a $ of
${\LL}_{X,x}$, and if $X$ is projective ${\LL}_{X}$ is an algebraic bundle by Serre's general correspondence. In the text, we omit in general the analytic notations with $X^{an}$ as our applications to the proof of the decomposition are on projective varieties. The action of $N_i$ on $L$   determines 
  the connection (\cite{H}, theorem 1.17, proposition 3.11) as  
 \begin{equation*}
 \nabla \tilde {v} = \Sigma_{j \in J} [\widetilde{(\alpha_j v) }  -\frac{1}{2i\pi}
\widetilde {(N_j v)} ] \otimes \frac {dy_j}{y_j}.
\end{equation*}
\subsection
 {The logarithmic complex  $\Omega^* \LL := \Omega^*_X(Log Y)\otimes \LL_X$}\label{log} 

\

Let $\LL$ be a  local system (shifted by $n$)
on the complement $U$ of the NCD  $Y$ in a smooth complex  algebraic variety $X$,  and  $(\LL_X, \nabla)$ Deligne's extension of $\LL \otimes \OO_U$ with logarithmic singularities.   The connection $\nabla:\LL_X \to \Omega^1_X(Log Y)\otimes \LL_X$ extends naturally into a complex, called the (shifted by $n$)
logarithmic complex  $\Omega^*_X(Log Y)\otimes \LL_X$ .

When $\LL$ underlies a  variation of MHS $(\LL, W, F)$, 
 the filtration by sub-local systems $W$ of $\LL$
extends  as a filtration by canonical  sub-analytic bundles $ \WW_X
\subset \LL_X$. By the condition of admissibility the
filtration $\FF_U$ extends by sub-bundles  $\FF_X \subset \LL_X$. Both $ \WW_X$ and  $\FF_X$
are combined here to define the structure of mixed Hodge complex.

 \begin{thm}
 Let $\LL$ be a shifted admissible graded polarized variation of MHS on $X-Y$. There exists a weight filtration  $W$ on  the logarithmic complex with coefficients  $\LL_X$
  by perverse sheaves,
 and a Hodge  filtration $F$ by  complexes of analytic sub-sheaves  such that the bi-filtered complex
\begin{equation}\label{log}
\Omega^* \LL := (\Omega^*_X(Log Y)\otimes \LL_X, W, F)
\end{equation} 
underly a structure of mixed Hodge complex and induces
  a canonical MHS on the cohomology
 groups $\H^i(X-Y, \LL)$.
\end{thm}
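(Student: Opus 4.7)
The plan is to adapt Deligne's construction from \cite{HII} (constant coefficients) to coefficients in an admissible graded polarized VMHS, via Kashiwara's local analysis \cite{K}. The inputs are: Deligne's extended bundle $\LL_X$ with its logarithmic connection $\nabla$; the extended weight filtration $\WW_X \subset \LL_X$; the extended Hodge filtration $\FF_X \subset \LL_X$ (supplied by admissibility); and the nilpotent monodromy logarithms $N_i$ around each component $Y_i$ of $Y$.

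The Hodge filtration is straightforward. I would set
\[ F^p(\Omega^k_X(\log Y) \otimes \LL_X) = \Omega^k_X(\log Y) \otimes \FF^{p-k}_X, \]
for which Griffiths transversality of $\nabla$ is precisely one of the admissibility axioms on $F$. The construction of the weight filtration $W$ is where the work lies. Locally at a point $x \in Y_J^*$, admissibility produces for each $K \subseteq J$ the relative monodromy filtration $M_K := M(\sum_{i \in K} N_i, W)$ on $L := \LL(x)$, with compatibility as $K$ varies. Imitating Deligne's pole-order construction, I would define $W_k \Omega^*\LL$ locally as the $\OO_X$-subsheaf generated by sections of the form
\[ \frac{dy_{i_1}}{y_{i_1}} \wedge \cdots \wedge \frac{dy_{i_r}}{y_{i_r}} \wedge \eta \otimes \tilde{v}, \qquad v \in (M_K)_{k-r} L, \qquad K = \{i_1, \ldots, i_r\} \subseteq J, \]
with $\eta$ holomorphic. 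Kashiwara's compatibility of the $M_K$ ensures that these local subspaces glue to a global subcomplex, and the Koszul-type description of $\Omega^*\LL$ in terms of the operators $N_i$ implies that each $W_k$ is a perverse subsheaf.

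The main task is then to verify the axioms of a mixed Hodge complex. Iterated residues along the strata of $Y$ should yield an isomorphism
\[ Gr^W_k \Omega^*\LL \simeq \bigoplus_{J \subseteq I} (a_J)_* \bigl(\Omega^*_{Y_J}(\log \partial Y_J)[-|J|] \otimes Gr^{M_J}_{k-|J|} \LL_{Y_J}\bigr), \]
where $a_J \colon Y_J \hookrightarrow X$ and $\partial Y_J$ is the induced NCD on $Y_J$. By Kashiwara's theorem, each $Gr^{M_J}_{k-|J|} \LL_{Y_J}$ is an admissible polarized VHS of weight $k$ on $Y_J^*$, so induction on $\dim X$ (and on $|J|$) reduces the purity of $Gr^W_k$ to the classical pure case of a polarized VHS on a smooth projective variety. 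The rational structure comes from the quasi-isomorphism $\Omega^*\LL \simeq Rj_*\LL$ with $j \colon X-Y \hookrightarrow X$, and the MHS on $\H^i(X-Y, \LL)$ then follows from the mixed Hodge complex formalism of \cite{HII}. The principal obstacle is establishing the global perverse weight filtration with the required local decomposition property: the proof rests on Kashiwara's compatibility of the relative monodromy filtrations $M_K$ along the NCD, which produces simultaneously the pure VHS on the strata $Y_J^*$ and the gluing data enabling the global identification of $Gr^W_k$ with a direct sum over the strata.
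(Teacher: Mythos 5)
There is a genuine gap, and it is fatal to the construction as written: the filtration you define is not, in general, a filtration by \emph{subcomplexes} of $\Omega^*\LL$. Your degree-zero term is $W_kL$ and your degree-$r$ term for $K=\{i_1,\ldots,i_r\}$ is $(M_K)_{k-r}L$; for $\nabla$ to preserve this you would need $N_i(W_kL)\subset (M_{\{i\}})_{k-1}L$, which fails as soon as $N_i$ has nilpotency order $\geq 3$. Concretely, take $\dim X=1$, $\LL$ pure of weight $0$ with $L=\Q^3$, $W_{-1}=0$, $W_0=L$, and $Ne_2=e_1$, $Ne_1=e_0$, $Ne_0=0$. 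Then $M_{-1}=\langle e_0\rangle$ but $N(W_0)=NL=\langle e_0,e_1\rangle\not\subset M_{-1}$, so $\nabla(e_2)=e_1\otimes\frac{dy}{y}$ leaves your candidate $W_0\Omega^*\LL$. The paper avoids this by \emph{not} using the relative monodromy filtrations $M_K$ directly; it uses Kashiwara's star operation $W^J:=N_{i_1}*(\cdots(N_{i_r}*W)\cdots)$ (formula \ref{*} and subsection \ref{data}), where $(N*W)_{k}:=NW_{k+1}+M_k(N,W)\cap W_k$. The very definition of $N*W$ forces $N_i:W_k^K\to W^{K\cup\{i\}}_{k-1}$, so the filtration $W_k\Omega^*L:=s(W^J_{k-|J|},N.)_{J\subset M}$ of definition \ref{lw} is automatically a filtration by subcomplexes. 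Note also that $N*W\ne M(N,W)$ in general: in the example above $(N*W)_{-1}=NL=\langle e_0,e_1\rangle\ne M_{-1}=\langle e_0\rangle$ (although $M(N,N*W)=M(N,W)$ always holds, by (\cite{K}, lemma 3.4.2)).

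The second problem, linked to the first, is your claimed form of the graded pieces. You propose $Gr^W_k\Omega^*\LL\simeq\bigoplus_J(a_J)_*\bigl(\Omega^*_{Y_J}(\log\partial Y_J)\otimes Gr^{M_J}_{k-|J|}\LL_{Y_J}\bigr)[-|J|]$, i.e.\ full direct images $Rj_*$ of polarized VHS from the open strata $Y_J^*$. But $Rj_*$ of a pure polarized VHS is mixed, not pure, so such $Gr^W_k$ would not yield a mixed Hodge complex without a further (infinite-regress) weight filtration. The correct statement (Theorem \ref{PPP} iii in the paper) is that $Gr^W_k\Omega^*\LL$ is a direct sum of \emph{intermediate extensions} $(i_{Y_J})_*j_{!*}\PP^J_{k-|J|}(\LL)[-|J|]$ of pure polarized VHS $\PP^J_{k-|J|}(\LL)$ on $Y_J^*$; purity is then Kashiwara--Kawai \cite{K1}. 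Getting this splitting into intermediate extensions is precisely what the ``distinguished pair'' decomposition property of the star operation (formula \ref{DP}, lemma \ref{ld1}, corollary \ref{decom}) is engineered to produce, and it has no analogue for the relative monodromy filtrations $M_K$ on their own. Kashiwara's compatibilities of the $M_K$ are an ingredient, but the essential mechanism you are missing is the star operation and the graded decomposition it carries.
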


 The weight is defined by  constructible sub-complexes, although it consists  in each degree, of analytic sub-sheaves
of $ \Omega^i_X (Log Y) \otimes\LL_X$. 

The filtration $F$ is  classically  deduced on the
logarithmic complex from the  sub-bundles $\FF^p_X $ in $ \LL_X$ satisfying Griffith's transversality:
 \[
 F^p  = 0 \rightarrow
\FF^p{\mathcal L}_X \cdots \rightarrow
 \Omega^i_X (Log Y) \otimes
\FF^{p-i}{\LL}_X \rightarrow \cdots  \]
{\it In the rest of this section the  direct definition of the weight filtration $W$
 as well its properties in the case of a NCD is based on  the local study in \cite {K} and  \cite{EY}}.
\subsection 
{The  direct image $Rj_* \LL \simeq  \Omega^*\LL $}
To represent the complex  $Rj_* \LL\otimes \C$ in the derived category, we use  its de Rham realization $ \Omega^*\LL := \Omega_X^*(Log Y) \otimes \LL_X $. Indeed, the quasi-isomorphism  $Rj_* \LL\overset{\sim}{\longrightarrow}   \Omega^*\LL$
follows from   Grothendieck's algebraic de Rham cohomology   \cite{Gr} and its  generalization  to   local systems by Deligne (\cite{H}, definition 3.1).

We also describe a sub-complex $IC^*\LL$ representing the intermediate extension \cite{G-M,  B, BBD}. Various related definitions  given here in terms of  local coordinates are  independent of  the choice of coordinates. This approach  is fit for calculus.

\subsubsection{The (higher) direct image $Rj_{*}{\LL} $}
 The residue of the connection $ \nabla$ on the analytic restriction ${\LL}_{Y_i}$ of $\LL_X$   decomposes into  Jordan sum ${D}_i -\frac{1}{2i\pi} {N}_i$ where ${N}_i$ is  nilpotent and ${D}_i$ diagonal with eigenvalues $\alpha_i \in \lbrack 0,1 \lbrack$ such that the eigenvalues of the monodromy $T_j$ are   $ a_j = \e(\alpha_j) := e^{- 2i \pi {\alpha}_j} $.
The   de Rham complex with coefficients  ${\LL}_X$, is quasi-isomorphic to  $R j_* {\LL} $ (\cite{H}, section II.3):
\begin{equation}
R j_* {\LL} \, \overset{\sim}{\longrightarrow}   \,  \Omega^*\LL \,
:= \, \Omega^*_X (Log Y) \otimes {\LL}_X 
\end{equation}
 In the local situation (\ref{ls})  with  $Y$ defined near a point $x$ at the origin in
$D^{n+l}$ by $y_1\cdots y_n =0$, the fiber of the
complex $ R j_* \LL $ 
 is quasi-isomorphic to a Koszul complex.
 We associate to the component of the spectral decomposition,  a strict simplicial
vector space
 $(L^{\e(\alpha.)}, \alpha_i Id -\frac{1}{2i\pi} N_i), i \in [1,n ])$
 such that for all sequences $(i.) = (i_1 < \cdots < i_p)$: 
 $L(\alpha., i.) = L^{\e(\alpha.)}, \,  \alpha_{i_j}Id -\frac{1}{2i\pi} N_{i_j} \colon L(\alpha., i. - i_j)
\rightarrow L(\alpha., i.)$.

 The Koszul complex is the sum   of  this simplicial vector space; it is denoted by  $s(L^{\e(\alpha.)}, \alpha_i Id -\frac{1}{2i\pi} N_i)_{i \in [1,n ]}$. 
\begin {defn} 
   The direct sum of the  complexes $s(L^{\e(\alpha.)}, \alpha_j Id -\frac{1}{2i\pi} N_j)_{j \subset [1,n ]}$  over all sequences $(\alpha.)$ is   denoted by
$s(L, \alpha. Id -\frac{1}{2i\pi} N.) $.

\n It  is also denoted  as an exterior algebra  
\begin{equation}\label{dec}
\Omega^*L := \Omega (L, \alpha . Id -\frac{1}{2i\pi} N.)  =    
{\oplus}_{\alpha .} \Omega (L^{\e(\alpha .)},  \alpha_i Id -\frac{1}{2i\pi} N_i), i \in [1,n ]).
\end{equation}
where $ \e(\alpha_j) = e^{- 2i \pi {\alpha}_j} = a_j $ is an eigenvalue for $T_j$. 
 \end {defn}
\subsubsection{The tilda emmbedding}\label{tilda}
For $M \subset I$ of length $n:= \vert M \vert $ and  $x \in Y^{*}_M$,  {\it the
 above correspondence $v \mapsto \tilde {v} $, from $L$ to
${\LL }_{X,x}$,  extends to}:
 
 \smallskip
 \centerline {$ L(i_1, \ldots, i_j) \to
({\Omega}^*_X (Log Y) \otimes {\LL}_X)_x \,$ by $\, v \mapsto
\tilde {v} \frac {dy_{i_1}}{y_{i_1}}\wedge \ldots \wedge  \frac
{dy_{i_j}}{y_{i_j}}$.}

 \smallskip
 It induces quasi-isomorphisms
 \begin{equation}
(R j_* {\mathcal L})_x \cong ({\Omega}^*_X (Log Y) \otimes {\LL}_X)_x \cong
\Omega^*L \cong s(L,\alpha_. -\frac{1}{2i\pi} N.). 
\end{equation}

 The  endomorphisms $ \alpha_j Id$ and $ N_j $  correspond to endomorphisms denoted by the
symbols $ \alpha_j Id$ and $ \NN_j $  on the image  sections $ \widetilde v$ in ${\LL}_X)_x$.
We recall below a proof  of  the following result:
 the subcomplex $\Omega (L^{\e(\alpha .)},  \alpha_i Id -\frac{1}{2i\pi} N_i), i \in [1,n ])$ is acyclic if  there exists an index $i$ such that $\alpha_i \not= 0$.

 \begin
 {prop}\label{support}   
 Let  $x \in Y^*_M$ be  a general smooth point of  $\cap_{i\in M} Y_i$ for $M \subset I$, $L := \LL_X(x)$,  and $L^u \subset L$  the subspace on which the action of each  monodromy   $T_i$ is unipotent.
 
The complex  of $ \OO_{X,x}$-modules $(\Omega^* \LL)_x$  
  is quasi-isomorphic to   the complex of exterior algebra $\Omega^*L^u$  defined by $(L^u, T_i)$:
 \begin{equation*} 
\Omega^*L^u:= \biggl( 0 \to  L^u \to  \cdots \to \oplus_{\{i_1< \ldots < i_{k-1}\}}  L^u \to \oplus_{\{i_1< \ldots < i_k\}}  L^u   \cdots  \to 0 \biggr)  \simeq (\Omega^* \LL)_x
\end{equation*}
 with differential in degree $k-1: \, (d_{k-1} v_\b)_{\{i_1< \ldots < i_k\}} ) = \sum_{j} (-1)^{j}  \frac{ N_{i_j}}{2i\pi} v_{\{i_1< \ldots \hat i_j \ldots < i_k\}} $.
  \end{prop}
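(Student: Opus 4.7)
The plan is to deduce the result from the quasi-isomorphism
\[
(\Omega^*\LL)_x \;\simeq\; s(L,\,\alpha_. Id -\tfrac{1}{2i\pi} N_.)\;=\;\bigoplus_{(\alpha_.)}\Omega\bigl(L^{\e(\alpha_.)},\,\alpha_i Id -\tfrac{1}{2i\pi}N_i\bigr)
\]
recalled just before the proposition (the tilda embedding realizes the logarithmic forms
$\tilde v\,\tfrac{dy_{i_1}}{y_{i_1}}\wedge\cdots\wedge\tfrac{dy_{i_k}}{y_{i_k}}$ and matches the differential with Koszul multiplication by $\alpha_i Id-\tfrac{1}{2i\pi}N_i$). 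Once this is in hand, the problem reduces to showing that only the spectral summand indexed by $(\alpha_.)=(0,\ldots,0)$ contributes to cohomology.

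First I would isolate the summands. Since the $T_i$ commute, the spectral decomposition $L=\bigoplus_{(\alpha_.)}L^{\e(\alpha_.)}$ is preserved by every operator $\alpha_j Id-\tfrac{1}{2i\pi}N_j$, so the Koszul complex splits into the indicated direct sum. The summand attached to $(0,\ldots,0)$ is exactly $L^u$, the subspace where each $T_i$ acts unipotently, with differentials $-\tfrac{1}{2i\pi}N_i$; this will be the candidate for $\Omega^*L^u$.

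Next I would prove the key acyclicity statement: if some coordinate $\alpha_{i_0}$ is non\-zero, then the subcomplex $\Omega(L^{\e(\alpha_.)},\alpha_i Id-\tfrac{1}{2i\pi}N_i)$ is acyclic. The operator $\alpha_{i_0}Id-\tfrac{1}{2i\pi}N_{i_0}$ is invertible on $L^{\e(\alpha_.)}$: it equals $\alpha_{i_0}(Id-\tfrac{1}{2i\pi\alpha_{i_0}}N_{i_0})$, and the second factor is unipotent hence invertible by the finite Neumann series $\sum_{k\ge 0}(2i\pi\alpha_{i_0})^{-k}(\tfrac{1}{2i\pi}N_{i_0})^k$. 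Writing elements of the Koszul complex as sums of terms with and without the factor $dy_{i_0}/y_{i_0}$, the standard contracting homotopy, obtained by stripping off this factor and applying $(\alpha_{i_0}Id-\tfrac{1}{2i\pi}N_{i_0})^{-1}$, produces $s d+ds=Id$ because all $N_j$ commute with this inverse. Thus only the $(0,\ldots,0)$-summand survives passage to cohomology.

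The main obstacle is bookkeeping for the homotopy: one must check that the chosen splitting along the $i_0$-direction is compatible with the Koszul differentials in the other directions, which is where the commutativity of the $T_j$ (equivalently of the $N_j$) and the fact that $L^{\e(\alpha_.)}$ is stable under every $N_j$ enter crucially. Once this is verified, the acyclicity of all non-unipotent summands yields
\[
(\Omega^*\LL)_x\;\simeq\;\Omega\bigl(L^u,\,-\tfrac{1}{2i\pi}N_i\bigr)_{i\in M},
\]
which is the stated complex $\Omega^*L^u$ with differential $(d_{k-1}v_\b)_{\{i_1<\cdots<i_k\}}=\sum_j(-1)^j\tfrac{N_{i_j}}{2i\pi}v_{\{i_1<\cdots\hat i_j\cdots<i_k\}}$, finishing the proof.
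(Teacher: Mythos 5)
Your proof is correct, but it takes a somewhat different route from the paper's. You work purely at the level of the finite-dimensional complex $\Omega^*L$: you split it into the spectral summands $\Omega(L^{\e(\alpha_.)},\alpha_iId-\tfrac{1}{2i\pi}N_i)$ and contract each non-unipotent one by the standard Koszul homotopy built from the invertible operator $\alpha_{i_0}Id-\tfrac{1}{2i\pi}N_{i_0}$. The paper instead works directly with the $\OO_{X,x}$-module complex $(\Omega^*\LL)_x$, decomposing it by monomials into subcomplexes $\Omega^*Ly^{m.}$ with differentials $(m_i+\alpha_i)Id-\tfrac{1}{2i\pi}N_i$ (this is Lemma~\ref{sup}), and kills each nontrivial piece by writing the Koszul complex as a cone over one of these isomorphisms; the single non-acyclic piece is the one with all $m_i=0$ and all $\alpha_i=0$, which is $\Omega^*L^u$. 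The two acyclicity devices (contracting homotopy versus cone) are of course equivalent. The substantive difference is that you take the quasi-isomorphism $(\Omega^*\LL)_x\simeq\Omega^*L$ as a given input, while the paper's monomial decomposition simultaneously establishes that quasi-isomorphism as a byproduct of the same computation (the pieces with $\sum m_i>0$ are also acyclic); so the paper's version is more self-contained, though yours is legitimate since that quasi-isomorphism is stated, via the tilda embedding and Deligne's comparison theorem, just before the proposition.
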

 \begin{proof}  We show that  for each  power of the maximal ideal $\m^r$ at $x$ with $r> 0$, the subcomplex $\m^r (\Omega^* \LL)_x$  is acyclic, and for $r =0$ only  the component $\Omega^*L^u$ is not acyclic.
 As $x$ is at the intersection of $n$ components of $Y$, we associate to each monomial $y^{m.} := y^{m_1} \cdots  y^{m_n} $ of degree $ r$ a  complex vector subspace $s L \subset \LL_x$ by the correspondence 
  $v \to y^{m.}\tilde v$ (\ref{tilda}) to get  a  complex of vector subspaces $\Omega^* Ly^{m.}$ of $  \m^r (\Omega^* \LL)_x$
  \begin{equation}\label{s}
\Omega^* Ly^{m.}:= \biggl( 0 \to s L \to  \cdots \to \oplus_{\{i_1< \ldots < i_k\}}  y^{m.}L   \frac{d y_{i_1}}{ y_{i_1}}\wedge \cdots \wedge \frac{d y_{i_k}}{y_{i_k}}  \cdots  \to 0 \biggr)
\end{equation}with   differential defined by:
$ y^{m.} \widetilde v   \to  y^{m.}  ([(m_i + \alpha_i)Id -\frac{1}{2i\pi}N_i]. \widetilde v)
 \otimes  \frac{d y_i}{y_i}$. The complex  $ \m^r (\Omega^* \LL)_x$ is an inductive limit 
 of the sub-complexes  $\Omega^* Ly^{m.}$ for various sections $y^{m.}$ such that $\sum_{i}m_i \leq r$. We  show in particular that each complex $\Omega^* Ly^{m.}$ is acyclic for $r>0$.
 \begin{lem}\label{sup}
 For each  monomial $y^{m.} := y^{m_1} \cdots  y^{m_n} $,  the sub-complex  $\Omega^* L y^{m.}$ of $ (\Omega^*\LL)_x$ (\ref{s})
 is acyclic if the degree $r := \sum_{i \leq n} m_i$ of $y^{m.}$ is strictly positive, hence 
 it   is quasi-isomorphic to the  complex 
   \begin{equation*}
 \Omega^*L:= \biggl( 0 \to  L \to  \oplus_{i\in [1,n]} L  \to \cdots \to \oplus_{\{i_1< \ldots < i_k\}}  L    \cdots  \to 0 \biggr)
 \end{equation*}
 with differential  in degree $k-1$:
 
 \smallskip 
 \centerline{ $  \forall v_{\b}, \,d_{k-1} (v_\b)_{\{i_1< \ldots < i_k\}}  = \sum_j (-1)^j ( (m_{i_j}+\alpha_{i_j})  Id -\frac{1}{2i\pi} N_{i_j} )v_{\{i_1< \ldots \hat i_j \ldots < i_k\}} $.}
 \end{lem} 
 \n  Since  
   $\nabla y^{m.} \tilde {v} = \sum_i m_i y^{m.}  \tilde {v}\otimes  \frac{d y_i}{y_i} + y^{m.} \Sigma_i [\widetilde{\alpha_i v }  -\frac{1}{2i\pi}
\widetilde {N_i v} ] \otimes \frac {dy_i}{y_i} = y^{m.} [(m_i + \alpha_i)Id -\frac{1}{2i\pi}N_i] \widetilde{ v } \otimes  \frac{d y_i}{y_i}$
 
 \n   The complex  $\Omega^* Ly^{m.}$ is isomorphic to the complex $ \Omega^*L$  in the lemma  where the differentials appear as given by morphisms $ (m_i + \alpha_i)Id -\frac{1}{2i\pi}N_i : L \to L$, hence it is  acyclic if  one of such morphisms is an isomorphism, that is  at least one $m_j +\alpha_j \not= 0$ in which case  $(m_j+\alpha_j ) Id  -\frac{1}{2i\pi}  N_j $
is an isomorphism of $L$ as $N_j$ is nilpotent; indeed, the complex may be written as a cone over such morphism. 
We deduce the proposition as $\alpha_j \not\in \Z$ unless $\alpha_j = 0$, then $m_j+\alpha_j = 0 $ iff $m_j =0$ and $\alpha_j = 0$.
 \end{proof}

\begin{rem}
[Reduction to the locally unipotent case]
\label{uni}

 It follows from the proposition that the  cohomology of the restriction to $Y$ is determined locally by the      unipotent  subspace under the monodromy actions.  This is a good reason to reduce the study of the weight filtration to local systems with locally  unipotent monodromy.

In particular, if $Z \subset Y$ is a component with non locally unipotent monodromy (the monodromy $T_Z$ around $Z$  has no eigenvalue equal to $1$), then $Rj_{Z *}(j_{!*}\LL_{\vert X-Z}) = j_{Z !}(j_{!*}\LL_{\vert X-Z})=  j_{!*}\LL$, where  $j_Z: X-Z \to  X$ denotes  the open embedding. In general, only the locally unipotent summand of  $\LL$    is interesting.
\end{rem}

{\it The above description of  $(R j_* {\LL })_x$ is the model for the next
description of various perverse sheaves in the rest of the section}.

\subsection
{The intermediate extension $  j_{!*}\LL \simeq IC^*\LL$} 

\
 We describe the intermediate extension  $ j_{!*}\LL$ at a point at  ``infinity'' along the NCD $Y$, by a sub-complex $ IC^*\LL \subset \Omega^* \LL$  containing the submodule $ \Omega_X^* \otimes \LL_X$.  Let $\II_Y $ denote the ideal product of the ideals of  the components $Y_i$.  The  complex of  $IC^*\LL$ contains the product $\II_Y \Omega^*\LL  $ as an acyclic sub-complex by lemma \ref{sup}. The quotient complex $\Omega^*\LL/IC^*\LL$ is supported by  $Y$.
 
 Due to (remark \ref{uni}), we state the results for locally unipotent local system to simplify the notations and we mention as a remark the case of a general local system.
\subsubsection
{Definition of $ IC^*\LL$ for a locally unipotent local system}\label{note}
 In the  local situation (\ref{ls}) at a general point $x\in Y^*_M$ of the intersection of $Y_i$ for $i \in M$, set  for all $K = \{i_1, \ldots, i_k\} \subset M = [1,n]$,    $A := \OO_x  \overset{\sim}{\longrightarrow}  \C\{\{x\}\}$, $ y_K := y_{i_1} \cdots y_{i_k} $ and $dy_K := dy_{i_1} \wedge \cdots \wedge dy_{i_k}$.
Let  $\widetilde L$ denote the vector space defined by sections $\widetilde  v$ for all flat vectors $v \in L$, generating  the fiber $ \LL_{X,x} := A \widetilde L$ of the sheaf $\LL_X$ at $x$. 
The fiber $ (\Omega^* \LL)_x $ of the sheaf $\Omega^*  \LL$ at $x$, is  generated as an    $(\Omega^*_X)_x $-module by $ \sum_{K \subset M} A \widetilde L \frac{dy_K}{y_K}$.  

   Let $N_J = \Pi_{j \in J} N_j$ denotes a composition of endomorphisms of $L$, and consider  the strict  simplicial  sub-complex
of the de Rham logarithmic complex (\ref{log}) defined by
$Im N_J := N_JL$ in $L(J) = L$. 

   More generally, each subspace $V \subset L$, defines a subspace $\widetilde V \subset \widetilde L$ generating  an  $A$-submodule of the fiber at $x$; for  $K, B \subset M$ we are interested  in the generating  subspaces  $V = N_K L$ image of $N_K$ and  $V = \ke \, N_B$  the kernel $N_B$.
\begin{defn}
[Stalk of the Intersection complex $ (IC^*\LL)_x$]\label{def}

\

Let $x\in Y^*_M $ be a general point in $Y_M$ and $A := \OO_x$. The sub-complex $(IC^*\LL)_x \subset (\Omega^*\LL)_x$ is generated as an $\Omega^*_{X,x}$-algebra 
by the  $ \OO_x$-sub-modules:
\begin{equation*}
 \sum_{B \subset K} y_{B} A (\widetilde {N_{K-B}L}) \frac{dy_K}{y_K}
\end{equation*}
sum for all $K\subset M$ and for each $K$ for all $B\subset K$.
\end{defn}
As an $\OO_{X,x}$-module, it is generated by 
$
  \sum_{B \subset K} y_{B} A (\widetilde {N_{K-B}L}) \frac{dy_K}{y_K}\wedge \Omega^*_{X,x} 
$.
In the proof, by reduction to a normal section to $Y_M$ at $x$, we  erase 
$\Omega^*_{X,x}$ in the formula.
\begin{lem}
[Independence of the coordinates]\label{ind} 
The definition of $(IC^*\LL)_x$ is independent of the choice of  the coordinates defining  $Y$.
\end{lem}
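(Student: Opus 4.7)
Since the lemma concerns the locally unipotent case, set $\alpha_j = 0$ for all $j$, and write $\Phi_u := \exp(-\frac{1}{2i\pi}\sum_{j\in M} (\log u_j) N_j)$ for any collection of units $u_j \in A^\times$ (choosing, locally at $x$, any branch of $\log u_j$, which is holomorphic since $u_j(x)\neq 0$). The first step is to observe that any two systems of local coordinates defining the same NCD components $Y_i$ at $x$ differ by $y'_i = u_i y_i$ with $u_i \in A^\times$, and that under such a change of coordinates the tilde section transforms as $\widetilde{v}' = \Phi_u(\widetilde{v})$ (a consequence of $\log y'_j = \log u_j + \log y_j$ together with the commutativity of the $N_j$'s), while the logarithmic generators transform by
\[
\frac{dy'_i}{y'_i} = \frac{dy_i}{y_i} + \frac{du_i}{u_i}, \qquad y'_B = \Bigl(\prod_{j\in B} u_j\Bigr) y_B,
\]
with $du_i/u_i \in \Omega^1_{X,x}$ regular.

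The second step is to verify that $\Phi_u$ preserves the $A$-submodule $A\,\widetilde{N_{K-B}L}$. Since $\Phi_u$ is a finite sum $\sum_J c_J(u) N_J$ with $c_J(u) \in A$ and the $N_j$'s commute pairwise, we have $\Phi_u(\widetilde{N_{K-B}v}) = \sum_J c_J(u)\, \widetilde{N_{K-B}N_J v}$ for any $v\in L$, and each $N_{K-B}N_J v$ lies in $N_{K-B}L$. Applying the same argument to $\Phi_u^{-1} = \Phi_{u^{-1}}$ gives the opposite inclusion, so $A\,\widetilde{N_{K-B}L}' = A\,\widetilde{N_{K-B}L}$.

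The third step is to expand the transformed logarithmic wedge as
\[
\frac{dy'_K}{y'_K} = \prod_{i\in K}\Bigl(\frac{dy_i}{y_i} + \frac{du_i}{u_i}\Bigr) = \sum_{S\subset K} \frac{dy_{K-S}}{y_{K-S}} \wedge \frac{du_S}{u_S},
\]
and to reindex: for fixed $B\subset K$, the summand indexed by $S$ contributes a form with polar part along $K-S$ whose coefficient sits in $A\,\widetilde{N_{K-B}L} \subset A\,\widetilde{N_{(K-S)-B}L}$ (using $N_{K-B}L \subset N_{(K-S)-B}L$ whenever $S\cap B = \emptyset$, which is automatic as $B\subset K-S$). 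Thus each primed generator $y'_B\, A\,\widetilde{N_{K-B}L}'\,\frac{dy'_K}{y'_K}$ lies in the $\Omega^*_{X,x}$-algebra generated by the unprimed generators, and by symmetry the two algebras coincide.

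The only nontrivial point is the interaction in step two between the polynomial $\Phi_u$ in the nilpotents and the filtration by the subspaces $\widetilde{N_K L}$; everything else is book-keeping. I expect no further obstacle, since the commutativity of the monodromy logarithms makes the filtration stable under every transformation that can arise from a change of defining equations of the $Y_i$.
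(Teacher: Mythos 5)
Your proof follows the same route as the paper's: expand the change-of-coordinate identity for the logarithmic forms and use the inclusion $N_{K-B}L \subset N_J L$ for $J \subset K-B$ to absorb the regular parts into the $\Omega^*_{X,x}$-algebra. The difference is that you treat the full multivariable change $y'_i = u_i y_i$ at once, whereas the paper reduces to a one-variable change at a time. Your second step is a genuine addition: the paper's proof never explicitly addresses the fact that the tilde embedding itself depends on the choice of coordinates, and your observation that $\Phi_u$, being a polynomial in the commuting $N_j$, preserves each $A$-submodule $A\,\widetilde{N_{K-B}L}$ (so that $A\,\widetilde{N_{K-B}L}'=A\,\widetilde{N_{K-B}L}$) is exactly what is needed to justify this identification, which the paper silently makes.

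There is, however, a bookkeeping slip in your third step. You assert that $S\cap B=\emptyset$ ``is automatic as $B\subset K-S$'', but $B$ and $S$ are arbitrary subsets of $K$ (take $B=K$ and $S\neq\emptyset$). What the argument actually requires is a re-indexing: set $K':=K-S$, $B':=B\cap K'$, and observe first that $y_B A\subset y_{B'}A$ since $y_{B\cap S}\in A$, and second that $K'-B'=K\cap S^c\cap B^c\subset K-B$, whence $N_{K-B}L\subset N_{K'-B'}L$ with no hypothesis on $S\cap B$. Then the summand indexed by $S$ lies in $y_{B'}A\,\widetilde{N_{K'-B'}L}\,\frac{dy_{K'}}{y_{K'}}\wedge\frac{du_S}{u_S}$, a generator for $(K',B')$ wedged with a regular form, as desired. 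So the argument is sound in substance; only the parenthetical justification is wrong, and it should be replaced by the re-indexing just described.
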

\begin{proof}
We  check the independence, after restriction to a normal section, for a one variable change at a time: $z_j= f y_j$ for  $j\in K$ with $f$ 
 invertible at $x$. We can    suppose $j = 1 $, and we write  $ IC^*L(z_1)$ (resp. $IC^*L(y_1)$) for the stalk at $x$ of the complex when defined with the coordinate $z_1$ and  $ y_j$ for $j > 1$ (resp.  with the coordinate $y_1$ instead of $ z_1$).  
We prove $  IC^*L(z_1)\subset  IC^*L(y_1) $. 

For   $K \subset M = [1, n]$ of length $\vert K\vert = k$, and $K_i \subset K$ of length $\vert K_i\vert = k-i$,  since
$ \frac{dz_1}{z_1} =  \frac{dy_1}{y_1} + \frac {df}{f} $, where $\frac {df}{f} $ is regular at $x$,  a section
$s \in z_{K_i} A (\widetilde { N_{K-K_i}L}) \frac{dz_K}{z_K} \subset IC^*L(z_1)$ is transformed into a  sum 
\begin{equation*}
  s = w_1 + w_2, \quad  w_1 \in IC^*L(y_1), \quad w_2 \in    y_{K_i} A (\widetilde { N_{K-K_i}L}) \,\,\frac {df}{f}  \wedge_{j \in K -  \{1\}} \frac {dy_j}{y_j}
\end{equation*}
to show that  $w_2$ is also in $IC^*L(y_1)$, it is enough to check that $ N_{K-K_i}L \subset 
 N_{K-1-K_i}L$  if $1\not\in K_i$, 
as $\frac {df}{f} $ is regular.  The proof of  $  IC^*L(y_1)\subset  IC^*L(z_1) $ is similar.
\end{proof}
\begin{prop}
[$IC^*L$]\label{IC1} 
 The intersection complex $(IC^*\LL)_x$ at a point $x \in Y_M^*$, for a locally unipotent local system $\LL$, is quasi-isomorphic to the complex:
\begin{equation}
 IC^*L := \biggl( 0 \to  L \to  \oplus_{i \in [1, n]} N_i L \cdots \to \oplus_{\{i_1< \ldots < i_k \subset  [1, n]\}}  N_{\{i_1< \ldots < i_k\}}L    \cdots  \to 0 \biggr)
\end{equation}
with differentials induced by the embedding into $\Omega^*L$.
\end{prop}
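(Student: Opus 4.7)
The plan is to decompose the stalk $(IC^*\LL)_x$ as a direct sum of sub-complexes indexed by monomial multi-indices $m_\b\in\N^n$ in the local coordinates $y_1,\dots,y_n$ vanishing on the components of $Y$ through $x$, and to show that only the summand at $m_\b=0$ contributes to cohomology, this summand being exactly the complex $IC^*L$ claimed in the proposition.

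First I would check that the logarithmic differential preserves $(IC^*\LL)_x$. Using $\nabla\tilde v=-\frac{1}{2\pi i}\sum_j\widetilde{N_j v}\otimes\frac{dy_j}{y_j}$ in the locally unipotent case together with $d(y^{m_\b})=\sum_i m_i y^{m_\b}\frac{dy_i}{y_i}$, one sees that a generator $y_B \tilde v\,\frac{dy_K}{y_K}$ with $v\in N_{K-B}L$ is sent to an $\OO_x$-combination of generators of the same form, using the monotonicity $N_{K-B}L\subset N_{K-B'}L$ whenever $B\supset B'$. The same formula shows that $d$ preserves the monomial $y^{m_\b}$, giving a direct-sum splitting $(IC^*\LL)_x=\oplus_{m_\b\in\N^n}(IC^*\LL)_{x,m_\b}$ as complexes. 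Any degree-$k$ element of the $m_\b$-summand is a $\C$-combination of sections $y^{m_\b}\tilde v\,\frac{dy_K}{y_K}$ with $|K|=k$, and the existential condition ``$v\in N_{K-B}L$ for some $B\subset K\cap\mathrm{supp}(m_\b)$'' collapses, by the monotonicity above, to the single condition $v\in N_{S(K,m_\b)}L$ with $S(K,m_\b):=K\cap\{i : m_i=0\}$.

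For $m_\b=0$ one has $S(K,0)=K$, so $(IC^k\LL)_{x,0}=\oplus_{|K|=k}N_K L$ and the induced differential is precisely the one claimed for $IC^*L$. For $m_\b\neq 0$ I would set $T:=\mathrm{supp}(m_\b)$ and split $K=K_T\sqcup K_{T^c}$. The differential adds an index $j$ via $m_j I-\frac{1}{2\pi i}N_j$ when $j\in T$ and via $-\frac{1}{2\pi i}N_j$ when $j\in T^c$, and in both cases the image lies in $N_{S(K',m_\b)}L$. Filtering by $|K_{T^c}|$, each associated graded piece is a Koszul complex on $N_{K_{T^c}}L$ for the commuting endomorphisms $(m_j I-\frac{1}{2\pi i}N_j)_{j\in T}$; since each $N_j$ is nilpotent and each $m_j$ is a positive integer, these operators are all invertible, so the Koszul complex is contractible. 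The spectral sequence of the filtration then yields acyclicity of $(IC^*\LL)_{x,m_\b}$ for every $m_\b\neq 0$.

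The main obstacle is the bookkeeping required to verify that the subspaces $N_{S(K,m_\b)}L$ are stable under every operator appearing in the differential, so that the sub-complexes $(IC^*\LL)_{x,m_\b}$ are genuinely closed and the filtered analysis applies. This reduces to case-by-case checks on the new index $j$ (lying in $K$, in $T-K$, or in $T^c-K$), using only monotonicity of $N_B L$ in $B$ and commutativity of the $N_j$'s. Once this verification is in place, the acyclicity of the Koszul complexes and the identification of the $m_\b=0$ summand with $IC^*L$ follow by direct inspection, and independence from the chosen coordinates (lemma \ref{ind}) ensures that the resulting quasi-isomorphism is intrinsic.
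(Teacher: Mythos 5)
Your proof is correct and follows essentially the same route as the paper: decompose the stalk by monomial degree in the local coordinates, identify the degree-zero piece with $IC^*L$, and show each piece of nonzero degree is acyclic because the operator $m_j\,Id-\frac{1}{2i\pi}N_j$ is invertible whenever $m_j>0$. Your filtration by $|K_{T^c}|$ together with the resulting spectral sequence is a slightly more explicit rendition of the double-complex decomposition $s\bigl(IC(\check p)\wedge\frac{dy_B}{y_B},(m_j\,Id-\frac{1}{2i\pi}N_j)\frac{dy_j}{y_j}\bigr)_{B\subset T}$ used in the paper's lemma \ref{IC2}. One shared imprecision: $(IC^*\LL)_x$ is an $\OO_{X,x}$-module, so it is not literally the direct sum $\oplus_{m_\b\in\N^n}(IC^*\LL)_{x,m_\b}$; this should instead be packaged via the $\m$-adic filtration, whose associated graded pieces are exactly your monomial subcomplexes, the constant part giving $IC^*L$ and the rest being acyclic.
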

We may reduce the proof to  a normal section to $Y^*_M$ at $x$, that is we may suppose $(IC^*\LL)_x$ defined by the complex
\begin{equation*}
\cdots \to \oplus_{K\subset M} \biggl( \sum_{K_i \subset K} y_{K_i} A (\widetilde {N_{K-K_i}L}) \frac{dy_K}{y_K}\biggr)\to \cdots 
\end{equation*}
(instead of being  generated as an $\Omega^*_{X,x}$-module).
In the definition above, 
$N_{\emptyset} = Id$, hence for $K_i = K $, $ N_{K-K_i}L = L$.
The complex $IC^*L$ embeds into $(IC^*\LL)_x$ as 
\begin{equation*} 
  0 \to \widetilde L \to  \oplus_{j \in [1, n]} \widetilde {N_j L } \frac{d y_j}{y_j}  \cdots \to \oplus_{K \subset  [1, n]}  \widetilde {N_KL } \frac{d y_K}{y_K} \cdots  \to 0 
\end{equation*}
We show that the quotient complex is acyclic.  The proof  is similar to the proof of proposition (\ref{support}) and is carried  along $Y$ modulo $\II_Y \Omega^*\LL  $ which is  acyclic. 
 Let    $p \in [1, n]$ be an  integer, and for  each   monomial  $y^{m.} := y_1^{m_{i_1}} \cdots  y_p^{m_{i_p}} $ with all $m_{i_j}> 0$, let $K_{y^{m.}} = \{i_1, \cdots, i_p\} \cap K $. We define the sub-complex   $IC^*L y^{m.}$ of  $\Omega^*L y^{m.}$ (formula \ref{s}) as
\begin{equation*}
IC^*L y^{m.} := (y^{m.} L\cdots \to \oplus_{K, \vert K \vert = k} s  (N_{K-K_{y^{m.}}} L)  \frac{d y_K}{y_K}  \rightarrow \cdots \to 0).
\end{equation*}
\begin{lem}\label{IC2} 
  For each section $y^{m.} := y_1^{m_1} \cdots  y_p^{m_p}$ with at least one $m_i \not= 0$ for $i \in [1, p]$, the complex $IC^*L y^{m.}$ is acyclic: $IC^*L y^{m.}  \overset{\sim}{\longrightarrow}  0$. 
\end{lem}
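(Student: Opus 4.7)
The strategy is to realize $IC^*L\,y^{m.}$ as a mapping cone on an isomorphism, refining the ``cone'' proof of Lemma \ref{sup} so that it respects the sub-complex structure inside $\Omega^*L\,y^{m.}$. Since at least one $m_i \neq 0$ with $i \in [1,p]$, I may assume $m_1 \neq 0$. Then the operator
\[
\phi_1 \ :=\ m_1\,Id - \tfrac{1}{2i\pi} N_1
\]
is invertible on $L$ (a nonzero scalar plus a nilpotent), with $\phi_1^{-1}$ a polynomial in $N_1$. Since the local monodromies $T_j$ commute, so do all $N_j$, so $\phi_1^{\pm 1}$ commutes with every $N_j$ and in particular preserves each subspace of the form $N_J L \subset L$.

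Next I split each term of $IC^*L\,y^{m.}$ according to whether $1 \in K$ or $1 \notin K$. For $K \subset [2,n]$, the $K$-summand $y^{m.}(N_{K \cap [p+1,n]} L)\,\tfrac{dy_K}{y_K}$ assembles, under the part of the differential involving only the $\phi_j$ with $j \geq 2$, into a subcomplex $A^\bullet$. The crucial observation is that since $1 \in [1,p]$, for any $K \subset [2,n]$ one has $K \cap [p+1,n] = (K \cup \{1\}) \cap [p+1,n]$, so the $(K\cup\{1\})$-summand
\[
y^{m.}\,(N_{K \cap [p+1,n]} L)\,\tfrac{dy_1}{y_1}\wedge\tfrac{dy_K}{y_K}
\]
is identified with the $K$-summand of $A^\bullet$ after stripping $\tfrac{dy_1}{y_1}$ and a degree shift. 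The differential contributions with $j \geq 2$ then act identically on the two pieces, and the only cross-component is the map $\omega \mapsto \pm\phi_1(\omega)\,\tfrac{dy_1}{y_1}\wedge\cdot$ from the $(1\notin K)$-piece to the $(1\in K)$-piece. Hence
\begin{equation*}
IC^*L\,y^{m.}\ \cong\ \operatorname{Cone}\!\bigl(\phi_1 \colon A^\bullet \to A^\bullet\bigr).
\end{equation*}

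Because $\phi_1$ commutes with each $\phi_j$ ($j \geq 2$) and preserves every $N_J L$, it is a self chain-isomorphism of $A^\bullet$; the cone over an isomorphism is acyclic, so $IC^*L\,y^{m.}\overset{\sim}{\longrightarrow} 0$, as required. The main subtlety I foresee is the identification of the $(1\in K)$-piece with $A^\bullet$ (up to shift): this works precisely because the ``active'' index $1$ is in $[1,p]$, not in $[p+1,n]$, so removing it does not alter the $N_J$-subspace attached to any $K$. Once this identification is in place, the rest is routine sign bookkeeping in the cone and an appeal to the commutativity of the $N_i$, itself a consequence of the commutativity of the local monodromies around the components of $Y$.
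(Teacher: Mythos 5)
Your argument is correct and is essentially the paper's own: the paper writes $IC^*L\,y^{m.}$ as the total complex of a $p$-cube on copies of $IC(\check p)$ with edge maps $m_j Id - \tfrac{1}{2i\pi}N_j$ and then ``concludes as in the case of $\Omega^*L\,y^{m.}$'', i.e.\ by exhibiting the complex as a cone over one such isomorphism. You simply collapse the cube to a single cone in the direction of an index $j\in[1,p]$ with $m_j\neq 0$, with the same justification (commutativity of the $N_i$, hence preservation of the subspaces $N_{K\cap[p+1,n]}L$, so $\phi_1$ is a chain isomorphism of $A^\bullet$).
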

 For $p = n$, $IC^*L y^{m.} = \Omega^*L y^{m.}$ and  for any $v \in L$, $y^{m.} \widetilde v \in (\II_Y\Omega^* \LL)_x$.
 
 For $p <n $, we introduce the complex of vector spaces  $IC(p)$:
\begin{equation*} 
IC(\check p) :=  \biggl( 0 \to  L \to  \oplus_{j \in [p+1, n]} N_j L \frac{d y_j}{y_j}  \cdots \to \oplus_{K \subset  [p+1, n]}  N_KL   \frac{d y_K}{y_K} \cdots  \to 0 \biggr)
\end{equation*}
where the differential  of  a vector  $ v \in\oplus_{\{i_1< \ldots < i_k\}_ \subset  [p+1, n]\}}  N_{\{i_1< \ldots < i_k\}}L  $ of degree $k $ is: $(d_k  v)_{\{i_1< \ldots < i_{k+1}\}} ) = -\frac{1}{2i\pi} \sum_{j \in [1, k+1] } (-1)^j  N_j v_{\{i_1< \ldots \hat i_j \ldots < i_{k+1}\}} $. 
 
We associate to each subset $B \subset [1, p]$,  the complex  $IC(p) \wedge \frac{d y_B}{y_B}$ and to each index
$j \in [1, p] - B$, a morphism  

\smallskip
\centerline {$(m_jId -\frac{1}{2i\pi} N_j)  \frac{dy_j}{y_j}: IC(p)  \wedge\frac{d y_B}{y_B} \to IC(p)\wedge \frac{dy_j}{y_j} \wedge \frac{d y_B}{y_B} $.}

We check that  $IC^*L y^{m.}$ may be written as  sum of a double complex 

\smallskip
\centerline {$IC^*L y^{m.}:= s \biggl( IC(\check p)  \wedge \frac{d y_B}{y_B}, (m_j Id -\frac{1}{2i\pi} N_j) \frac{d y_j}{y_j} \biggr)_ {B \subset [1, p]}$. }

Then, we conclude  as in the case of $\Omega^*L y^{m.}$.
 
 The quotient complex $(IC^*\LL)_x/IC^*L$ is an inductive limit of direct sum of complexes $IC^*L y^{m.}$ for $ y^{m.} $ is in the ideal of $A$ generated by $y_i$ for all $i \in [1, n]$, in which case $IC^*L y^{m.}$ is acyclic by the lemma;  then the proposition follows.
\subsubsection
{ Local definition of $(IC^*\LL)_x$ for a general local system}
 We introduce in terms of the spectral decomposition of $L$, for each set $\alpha .$, the composition of endomorphisms of $L^{\e(\alpha .)}$:
    $(\alpha .Id -\frac{1}{2i\pi} N .)_J = \Pi_{j \in J} (\alpha_jId -\frac{1}{2i\pi} N_j)$. 
    
    The strict  simplicial  sub-complex
of the de Rham logarithmic complex (\ref{log}) is defined by
$Im (\alpha .Id -\frac{1}{2i\pi} N.)_J$
in $L^{\e(\alpha .)}(J) = L^{\e(\alpha .)}$. 
\begin {defn}[$IC^*L$] The  simple complex defined by the above
simplicial sub-vector spaces  is the intersection complex  $\, IC^*L:= \oplus_{\alpha .} IC^*L^{\e(\alpha .)}$ where
\begin{equation}
IC^*L^{\e(\alpha .)}\colon = s({(\alpha .Id  -\frac{1}{2i\pi} N.)_J L^{\e(\alpha .)}}, \alpha .Id -\frac{1}{2i\pi} N.)_{J \subset [1,n]}.
\end{equation}
\end{defn}
We introduce for each set $\alpha .$
the subset $M(\alpha .) \subset [1,n ]$ such that $j
\in M( \alpha .)$ if and only if ${\alpha}_j = 0$. Let  $N_{J \cap M (\alpha .)} = {\Pi_{j \in J \cap M(\alpha .)}} N_j$ ( it is the identity if $J \cap M (\alpha .) = \emptyset$).
 For each $J \subset [1,n]$, we have the equality of the image subspaces: $  {(\alpha .Id -\frac{1}{2i\pi}N.)}_J
L^{\e(\alpha .)} = N_{J \cap M( \alpha .)} L^{\e(\alpha .)}$ since   the  endomorphism
$ ( \alpha_j Id -\frac{1}{2i\pi} N _j) $  is an isomorphism on $L^{\e(\alpha .)} $ 
whenever  $\alpha_j \neq 0 $, hence
\begin{equation}
 IC^*L \simeq  \oplus_{\alpha .} s(N_{J \cap
M(\alpha .)} {L}^{\e(\alpha .)})_{J \subset M} 
\end{equation}
\begin{lem} Let $L^u$ denote the subspace of $L$ such that all local monodromies are unipotent, then we have quasi-isomorphisms 
$IC^*L^u \simeq  IC^*L \simeq (IC^*\LL)_x$.
\end{lem}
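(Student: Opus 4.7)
\emph{Proof proposal.} The plan is to combine two ingredients already at hand: the simultaneous spectral decomposition of $L$ under the commuting local monodromies $T_i$, and the locally unipotent case already settled in proposition~\ref{IC1}. The overall structure mirrors the reduction from $\Omega^*\LL$ to $\Omega^*L^u$ carried out in proposition~\ref{support}, applied now to the sub-complex $IC^*\LL$.

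By the definition just preceding the lemma, $IC^*L = \oplus_{\alpha.} IC^*L^{\e(\alpha.)}$, and the unipotent summand $L^u$ corresponds to $\alpha. = 0$, so $IC^*L^u = IC^*L^{\e(0)}$. Since $(IC^*\LL)_x$ is a sub-complex of $(\Omega^*\LL)_x$ generated, via the tilda embedding $v \mapsto \widetilde{v}$, by elements of the spectral summands of $L$, it inherits the corresponding spectral decomposition $(IC^*\LL)_x \simeq \oplus_{\alpha.} (IC^*\LL^{\e(\alpha.)})_x$, compatibly with the one of $\Omega^*\LL$ in proposition~\ref{support}. The lemma then follows from: (i) for each $\alpha. \neq 0$ both $IC^*L^{\e(\alpha.)}$ and $(IC^*\LL^{\e(\alpha.)})_x$ are acyclic, and (ii) on the unipotent summand the tilda embedding induces a quasi-isomorphism $IC^*L^u \overset{\sim}{\longrightarrow} (IC^*\LL^u)_x$.

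For (ii), when $\alpha. = 0$ the identity $(\alpha. Id - \frac{1}{2i\pi} N.)_J L^{\e(0)} = N_J L^u$ shows that the definition of $IC^*L^{\e(0)}$ coincides with the complex $IC^*L$ for a locally unipotent system used in proposition~\ref{IC1}, so (ii) is exactly that proposition applied to $L^u$. For (i), fix an index $i$ with $\alpha_i \neq 0$. Since $N_i$ is nilpotent, the endomorphism $\alpha_i \, Id - \frac{1}{2i\pi} N_i$ is invertible on $L^{\e(\alpha.)}$, so the subspace $(\alpha. Id - \frac{1}{2i\pi} N.)_J L^{\e(\alpha.)}$ is independent of whether $i \in J$. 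Writing $IC^*L^{\e(\alpha.)}$ as the total complex of the double complex obtained by splitting $J$ according to $i \in J$ or $i \notin J$, the $i$-th differential is this invertible operator, so the total complex is a cone on an isomorphism and hence acyclic; transporting via the tilda embedding and arguing as in lemma~\ref{sup} that the $y^{m.}$-contributions to $IC^*\LL$ are controlled by the same Koszul cone yields the acyclicity of $(IC^*\LL^{\e(\alpha.)})_x$ as well.

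The main obstacle I expect is step (i): one must check that the subspace structure $(\alpha. Id - \frac{1}{2i\pi} N.)_J L^{\e(\alpha.)}$ appearing in the definition of $IC^*L^{\e(\alpha.)}$ is genuinely preserved by the proposed cone decomposition in the $i$-direction, so that Koszul cancellation on the non-unipotent summands really takes place at the level of the sub-complex $IC^*\LL$ and not merely in the ambient $\Omega^*\LL$. Once this bookkeeping is settled, combining the acyclicity of the non-unipotent parts with proposition~\ref{IC1} on the unipotent part gives both quasi-isomorphisms of the lemma.
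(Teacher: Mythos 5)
Your proposal is correct and takes essentially the same approach as the paper: the paper also decomposes $IC^*L$ into spectral summands, notes that $(\alpha.\,Id-\tfrac{1}{2i\pi}N.)_J L^{\e(\alpha.)}=N_{J\cap M(\alpha.)}L^{\e(\alpha.)}$ is unchanged by adding or removing any index $k$ with $\alpha_k\neq 0$, and writes $IC^*L^{\e(\alpha.)}$ as a cone over the isomorphism $\alpha_k Id-\tfrac{1}{2i\pi}N_k$, hence acyclic. The ``bookkeeping'' concern you flag at the end is already resolved by the invariance you yourself observe, so it is not a genuine obstacle.
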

Indeed, if there exists an index $k$ such that $\alpha_k \not= 0$, then $IC^*L^{\e(\alpha .)}$
may be written as a cone over the quasi-isomorphism 

$\alpha_k Id  -\frac{1}{2i\pi} N_k : s(N_{J \cap
M(\alpha .)} {L}^{\e(\alpha .)})_{J \subset M-k} \to s(N_{J \cap
M(\alpha .)} {L}^{\e(\alpha .)})_{J \subset M-k}$.
\subsubsection
{Global definition of the Intersection complex }
The local  definition of $IC^*L$ is well adapted to computations. As it is   independent of the local coordinates defining $Y$ (\ref{ind}), we deduce a global defintion of the Intersection complex as follows.

 A filtration $W$ of $\LL$ by  sub-local systems, extends to a filtration of $\LL_X$ by sub-bundles $\WW_X$.  For all subsets $M$ of $I$, the decomposition  of the restriction to $Y_M$ is  global:
$
\LL_{Y_M}\,  = \, {\oplus}_{(\alpha .)} {\LL}_{Y_M}^{\e(\alpha .)}$. The endomorphisms ${\NN}_i$ for $i$ in $M$ are defined on ${\LL}_{Y_M}$, and ${\NN}_M = {\Pi_ {i \in M}} {\NN}_i$ is compatible with the decomposition. The image ${\NN}_M {\LL}_{Y_M}$ is an analytic sub-bundle  of ${\LL}_{Y_M}$.
The residue of the
connection
$\nabla $ along each  $Y_j$ defines an endomorphism
$ {\alpha}_jId -\frac{1}{2i\pi} {\NN }_j $ on the component
${\LL}^{\e(\alpha .)}_{Y_j} $ of ${\LL}_{Y_j}$
compatible with the filtration by sub-analytic
bundles $\WW_{Y_j}$.  For subsets $J \subset I$ of the set $I$ of
 indices of  the components of $Y$, let:
 \[  ({\alpha. Id} -\frac{1}{2i\pi} \NN.)_J :=
 ({\alpha}_{i_1}Id -\frac{1}{2i\pi} \NN_{i_1})\cdots ({\alpha}_{i_j}Id -\frac{1}{2i\pi} \NN_{i_j}):\LL^{\e(\alpha .)}_{Y_J} \to \LL^{\e(\alpha .)}_{Y_J}  \]
\begin {defn} 
The Intersection complex
$IC^*\LL$ is the sub-analytic complex
 of ${\Omega}^*_X (Log Y) \otimes {\LL}_X \supset {\Omega}^*_X \otimes {\LL}_X$
 whose fibre at
a point $x \in Y^*_M$ is defined, in terms of a set of
  coordinates $y_i $
defining equations of $Y_M$ for $i\in M$,  as an ${\Omega}^*_{X, x}$ sub-module
generated by the sections $\widetilde {v} \wedge_{j \in J} \frac
{dy_j}{y_j}$ for all $v \in \im \ (\alpha.Id -\frac{1}{2i\pi}N.)_J \subset \oplus_{\alpha .} L^{\e(\alpha .)}$    and $J \subset M$
where $\widetilde {v} \in \LL_{X,x}$ and $L = \LL_X(x)$.
 \end {defn}
 This definition is independent of the
choice of coordinates (lemma \ref{ind}) and   a section of $IC^*\LL$  restricts
to a section in $IC^*\LL$ near $x$, since $(\alpha.Id -\frac{1}{2i\pi}N.)_J L \subset
(\alpha. Id -\frac{1}{2i\pi}N.)_{J-i}L $ for all $i \in J$.

For example in the unipotent case, for $M = \{1,2\}$ and $x \in Y_M^*  $, the sections in
$( IC^*\LL)_x \subset (\Omega^2_X (Log Y) \otimes \LL_X)_x $ are generated by $\tilde {v} \frac {dy_1}{y_1}\wedge \frac
{dy_2}{y_2}$ for $ {v} \in N_M L$, $\tilde {v} \frac
{dy_1}{y_1}\wedge dy_2$ for $ {v} \in N_1 L$, $\tilde
{v} dy_1 \wedge \frac {dy_2}{y_2} $ for ${v} \in N_2 L$
and $\tilde {v} dy_1 \wedge dy_2 $ for ${v} \in L$.

\smallskip
We remark that the notation $IC (X, \LL)$ instead of $IC^*\LL$ is used in (\cite{C}, chapter 8).
The intermediate extension $j_{!*} {\LL}$ of ${\LL}$ is topologically defined by
an explicit formula  in terms of the stratification (\cite{B}, \S 3). Locally its  fiber
$(j_{!*}  {\LL})_x $ at a point $x\in Y^*_M$  is quasi-isomorphic to the above complex 
$IC^*L$. 
\subsection
{Weight filtration}\label{ad}

The description of the weight  filtration on the logarithmic complex is based on the work of Kashiwara \cite{K} and \cite{EY}. Although the declared purpose of the paper \cite{K} is a criteria of admissibility in codimension one,  various  local statements and proofs in  \cite{K} are useful to define the weight filtration (\cite{C}, ch.8).

 We  start with a graded polarized variation of MHS 
on the underlying shifted  local system $\LL$ on  the complement of a NCD $Y$, satisfying certain asymptotic properties at points of $Y$, summarized under the condition of the admissibility property assuming as axioms the properties of the asymptotic behavior of  variation of MHS of geometric origin.
In particular we ask for the filtration $F$ on $\LL_{X^*}$ to extend into a filtration of $\LL_X$ by sub-bundles $F$, and the existence of the monodromy filtration relative to the natural extension of the filtration $W$ along $Y$.

Let $x \in Y$ at the intersection of $n$ locally irreducible components with local monodromy action $T_i$, and let $L:= \LL_X(x) = (\LL_X)_x\otimes_{\OO_{X, x}}\C$ denote the  space of ''multiform'' sections of $\LL$ (to emphasize this point the notation  $L := \psi_{x_1}\cdots \psi_{x_n}\LL$ is used in the literature). The conditions of admissibility, stated in terms of the extension of $F$ and the nilpotent endomorphisms $N_i := Log T^u_i$  logarithm of the unipotent  monodromy $T_i^u$, are defined by the structure of Infinitesimal  mixed Hodge structure (IMHS) on $L$, recalled below.
\subsubsection
{Mixed nilpotent orbit and Infinitesimal  mixed Hodge structure (IMHS)} 

\
We deduce from a variation of MHS on the complement of a NCD $Y$, locally  at a point $x \in Y$, the data: $(L, W, F,  N_1, \ldots, N_n)$ with an increasing filtration $W$ s.t.  $N_j W_k \subset  W_k$, a Hodge filtration $F$ and its conjugate $\overline  F$ with respect to the rational structure inherited from $\LL$. It is  called here mixed nilpotent orbit (pre-infinitesimal mixed Hodge module in (\cite {K}, 4.2).

The filtration $W$  is an extension of the weight $W$ on $\LL$. We may call it the finite filtration in opposition to the relative monodromy filtration to be introduced later called the limit filtration.
  
 {\it Nilpotent orbit}. We consider  an integer $w$ and  a data $(L, F, S; N_1, \ldots, N_n)$ where $ L $ is a finite dimensional complex vector  space with a $\Q$-structure,  $F$ is a decreasing $\C$-filtration,  such that: $N_j F^p \subset F^{p-1}$,  $S$ is a non-degenerate rational bilinear form satisfying
  \[   S(x,  y ) = (-1)^w  { S(y, x )}\quad {\rm for }\,  x,y \in L \quad
 {\rm and }\quad S( F^p, { F}^q)) = 0 \,\, {\rm for } \,\,  p+q > w. \]
and  $N_i$ are $\Q$-nilpotent endomorphisms for all $i$,  mutually commuting, such that  
$ S (N_i x, y) = S( x, N_i y)$ and $N_i F^p \subset F^{p-1}$.
 \begin{defn} 
[Nilpotent orbit](\cite {K}, 4.1) 
The above data is called a (polarized) nilpotent orbit of weight
$w$ if :

i)  The monodromy filtration $M $ of the nilpotent endomorphism $ N = \sum_j t_j N_j $  does not depend on the various $t_j $ for $t_j
> 0$ and all $j$.

ii) The data ($L, M, F$) is 
a MHS on $L$ of weight $w$ (a MHS ($L, M, F$) is  of weight $w$, or shifted by $w$,  if  ($ Gr^M_k, F$) is a   HS  of weight $w+k$). The bilinear form $S_k$
such that $ S_k (x, y) = S( x, N^k y)$ polarizes  the primitive subspace $ P_k = \ke(N^{k+1}: Gr^M_k
\to Gr^M_{-k-2})$ with its induced HS  of
weight $w+k$.
\end{defn}
Henceforth, all nilpotent orbits  are polarized. 
\begin{rem}[Nilpotent orbit]\label{Sch}
i) As $N_i$ acts on $F$, the exponential morphism $e^{i \sum t_j N_j}$ acts on $F$  for  $t_i \in \R$. The nilpotent orbit theorem proved by Schmid states that the above definition is equivalent to the statement:

There exists $c > 0$ such that $(L, e^{i \sum t_j N_j }F, e^{-i \sum t_j N_j} \overline F)$ is a Hodge structure of weight $w$ polarized by $S$ for $t > c$.

The HS is viewed as variation of HS for variable $t_i$ and $M$ is called the limit MHS.

ii) On this formula, it is clear that for each subset of indices $K \subset [1, n]$, the filtrations 
 $F_K := e^{i \sum_ {j \in K} t_j N_j}F, \overline F_K := e^{-i \sum_ {j \in K} t_j N_j} \overline F$ define a variation of HS  on $L$ such that $N_i F_K ^p \subset F^{p-1}_K $ with $i \not\in K$. Then $(L, W(\sum_ {i \not\in K} N_i),  F_K)$  is a MHS for  fixed $t_j, j \in K $.
\end{rem}

\begin{ex}
In the case of  a  polarized variation of HS of pure weight $w$  on the product  $n$-times of a small punctured disc  $D^*$, the limit Hodge filtration 
exists  on the  finite
dimensional vector space $L := \psi_{x_1}\cdots \psi_{x_n}\LL$. It defines with the monodromy filtration $M$ a nilpotent orbit approximating the variation of HS (\cite{CK}, \cite{CKS}). 
\end{ex}

 {\it Mixed nilpotent orbit}. We consider a data $(L,W, F, N_1, \ldots, N_n)$, where $ L $ is a finite dimensional complex vector  space with a $\Q$-structure, $N_i$ are $\Q$-nilpotent endomorphisms for all $i$,  
$W$ (resp. $F$) is an increasing $\Q$-filtration (resp. decreasing $\C$-filtration),  such that: $N_j F^p \subset F^{p-1}$ and $N_j W_k \subset W_k$.
  \begin{defn} 
[Mixed nilpotent orbit] 
The above data is called a mixed nilpotent orbit (graded polarized) if the data with restricted structures 

\smallskip
 \centerline {$ (Gr^W_i L, F_{|}, (N_1)_{|},
\ldots, (N_n)_{|})$ }

\n  is a nilpotent
orbit for each integer $i$,  of weight $i$ with some polarization $S_i$ (it is called pre-infinitesimal mixed Hodge module in (\cite {K}, 4.2).
\end{defn}
\begin{defn}
 [IMHS](\cite {K}, 4.3)

\
 A mixed nilpotent orbit $(L,  W, F, N_1, \ldots,
N_n)$ 
 is called an infinitesimal mixed Hodge structure (IMHS) if
the following conditions are satisfied:\\
 i) For each $J \subset I = \{1,
\ldots,n\}$, the monodromy filtration $ M(J)$ of $\sum_{j \in J}
N_j$ relative to $W$ exists and satisfy $N_j M_i(J) \subset
M_{i-2}(J)$ for all $ j \in J$ and $i \in \Z$.\\
ii) The filtrations $M(I), F $ define a graded
polarized MHS. The filtrations $W$ and $ M(J)$ are compatible with the
MHS and the morphisms $ N_i$ are of type ($-1,-1$).
\end{defn}
IMHS are called IMHM in
 \cite {K}; Deligne remarked, the fact  that if the relative monodromy  filtration
$M(\sum_{i \in I} N_i,W)$ exists  in the case of a mixed nilpotent orbit, then it is necessarily
 the weight filtration of a MHS. 
 
 Kashiwara proves  in  (\cite{K}, theorem 4.4.1) the main theorem: it is enough to check the existence of  $W(N_i)$ (in cxdimension $1$). Moreover, IMHS form an abelian category for which the filtrations $W, F, $ and $M$ are strict  (\cite{K}, prop. 5.2.6). 
 
 The morphism $N_i$ is not an auto-morphism of IMHS as it shifts $F$ but not $W$. However, when combined  with  the descent lemma it leads to an important application below (subsection \ref{Idec}).
\subsubsection{ 
{\bf Definition of the filtration }$N*W$ } (\cite{K},  3.4)

 Let ($L, W, N$) denote an
increasing filtration
  $W$ on a vector
 space $L$ with a nilpotent endomorphism $N$ compatible with $W$ s.t.
 the relative monodromy filtration $M(N,W)$ exists. Then
 a new filtration $N * W$  of $L$ is defined 
 by the formula (\cite{K},  3.4)
\begin{equation}\label{*}
 (N*W)_k := N W_{k+1} + M_k(N,W) \cap W_k = N W_{k+1} +
M_k(N,W) \cap W_{k+1}
 \end{equation}
 where the last equality follows from (\cite{K}, Prop 3.4.1).\\
The endomorphism $ N:
 L \to L$ and
 the identity $I$ on $L$ induce  morphisms
  \begin{equation*}
  N: W_k \to (N* W)_{k-1}, \quad I: (N* W)_{k-1} \to W_k .
  \end{equation*}
 We mention two important properties of $N*W$ ( \cite {K},
 lemma 3.4.2):\\
 The monodromy filtration  relative to $N*W$ exists and $M(N, N*W) = M (N,
W)$. Moreover, the following decomposition property  is satisfied
\begin{equation}\label{DP}
 \begin{split}
 Gr^{N*W}_kL &\simeq \im \ (N: Gr^W_{k+1}L \to Gr^{N*W}_kL )\oplus
 \ker \ (I  : Gr^{N*W}_k L \to
  Gr^W_{k+1}L )\\
 &{Im}\, (N: Gr^W_{k+1} L \to Gr^{N*W}_k L) \overset{\sim}{\longrightarrow} 
 {Im}\, (N: Gr^W_{k+1} L \to Gr^W_{k+1} L).
  \end{split}
   \end{equation}
  To  refer to this decomposition,  the couple ($L, N*W$) is said to form
   a (graded) distinguished pair.
\subsubsection{ {\bf The filtration $W^J$ associated to an IMHS} } 
Let $(L, W, F, N_i, i\in M)$ denote the  IMHS at $x \in Y^*_M$ and $ J \subset M$.
The relative weight filtration 
\begin{equation}
M(J, W):=  M( N, W) \,\, \hbox{\rm for }
N \in C(J):= \{ \Sigma_{j\in J}t_jN_j, t_j > 0\},
 \end{equation}
is well defined by assumption. A basic lemma (\cite {K}, cor. 5.5.4) asserts that:
\begin{lem} $ (L, N_1*W, F, N_i, i\in M)$
and $ (L, M(N_1,W), F, N_i, i\not= 1)$ are IMHS.
\end{lem}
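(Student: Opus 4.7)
The plan is to verify both assertions by checking, in each case, the three axioms defining an IMHS: (i) the mixed nilpotent orbit condition on graded pieces, (ii) the existence of all relative monodromy filtrations $M(J,-)$, and (iii) the MHS property of the total weight filtration with the $N_i$ of type $(-1,-1)$.

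For the first statement $(L, N_1*W, F, N_i, i \in M)$, I would exploit the decomposition property \eqref{DP} for the distinguished pair $(L, N_1*W)$. This splits each graded piece as
\[
Gr^{N_1*W}_k L \;\simeq\; \im\bigl(N_1 : Gr^W_{k+1} L \to Gr^{N_1*W}_k L\bigr) \;\oplus\; \ker\bigl(I : Gr^{N_1*W}_k L \to Gr^W_{k+1} L\bigr),
\]
with the first summand canonically identified with $\im(N_1 : Gr^W_{k+1} L \to Gr^W_{k+1} L)$. Since $(Gr^W_{k+1} L, F, N_j)$ is already a polarized nilpotent orbit of weight $k+1$ by the IMHS hypothesis on $(L,W,F,N_i)$, the first summand inherits a nilpotent orbit of weight $k$ as a Tate-twisted sub-quotient, and the second as a sub-quotient landing in weight $k$ after identification with $\ker I$. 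Compatibility of $F$ and of the $N_j$ ($j \neq 1$) with this decomposition follows from the fact that $N_1$ commutes with $N_j$ and shifts $F$ by $-1$.

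Next, I would verify the relative monodromy filtrations $M(J, N_1*W)$. For $J = \{1\}$ this is the basic identity $M(N_1, N_1*W) = M(N_1, W)$ from the definition \eqref{*}, which exists by hypothesis. For general $J$, I would invoke Kashiwara's main theorem (\cite{K}, 4.4.1) that reduces the existence of relative monodromy filtrations in several variables to the codimension-one case, together with strictness in the abelian category of IMHS (\cite{K}, prop.~5.2.6); the crucial ingredient is that along $C(J)$ the limit filtration stabilizes and coincides with $M(\sum_{j \in J} N_j, W)$, which is unchanged when $W$ is replaced by $N_1*W$ because of the compatibility $M(N, N*W') = M(N, W')$ whenever the monodromy filtration relative to $W'$ exists. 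The MHS condition for $(L, M(I, N_1*W), F)$ then follows from the identity $M(I, N_1*W) = M(I, W)$.

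For the second statement $(L, M(N_1, W), F, N_i, i \neq 1)$, the IMHS axioms guarantee that each $N_j$ ($j \neq 1$) preserves both $W$ and $M(N_1, W)$, and restricts to a morphism of type $(-1,-1)$ on the MHS $(L, M(N_1, W), F)$ once this is known to be a MHS. The mixed nilpotent orbit structure on each $Gr^{M(N_1, W)}_k L$ comes from the $N_1$-primitive decomposition of $Gr^W L$: each primitive component is polarized and the remaining $N_j$ act as commuting nilpotents preserving $F$ and polarization, giving the required nilpotent orbit. The existence of $M(J, M(N_1, W))$ for $J \subset \{2, \ldots, n\}$ hinges on the identity
\[
M\bigl(\textstyle\sum_{j \in J} N_j,\, M(N_1, W)\bigr) \;=\; M\bigl(\textstyle\sum_{j \in J} N_j + N_1,\, W\bigr) \;=\; M(J \cup \{1\}, W),
\]
the second equality being part of the IMHS data, and the first being a general principle about relative monodromy filtrations of sums of commuting nilpotents, valid because $N_1$ is nilpotent on each $Gr^W$ with weight filtration inducing $M(N_1, W)$.

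The main obstacle will be the careful verification of these filtration identities and the compatibility of $F$ with the new weight gradings: in both cases the MHS property on the top weight filtration is the delicate point, and in the second case the reduction from $(n-1)$-tuples of commuting nilpotents down to IMHS with one fewer variable is where Kashiwara's codimension-one criterion does essential work.
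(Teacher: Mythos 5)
The paper does not prove this lemma at all; it simply cites Kashiwara's corollary 5.5.4. Your attempt to sketch an actual argument is therefore a different route, and the ingredients you assemble---the decomposition property of the distinguished pair $(L, N_1*W)$, the descent lemma for the image summand, the identities $M(N_1, N_1*W)=M(N_1,W)$ and $M(\sum_{j\in J}N_j,\, M(N_1,W))=M(J\cup\{1\},\, W)$---are all the right ones. Note, however, that the last identity is not a ``general principle'' but a theorem: the paper records it in remark \ref{key1} as a citation to (\cite{K}, lemma 3.4.2(ii) and proposition 5.2.5).

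Two gaps remain. First, for the summand $\ker I$ of $Gr^{N_1*W}_k L$ you do not justify why it carries a polarized nilpotent orbit of weight $k$: one must observe that $N_1$ vanishes on $\ker I$ (since $N_1 = N_1\circ I_1$ there), identify $\ker I$ with a primitive part of the limit MHS given by $M(N_1,W)$, and check that the remaining $N_j$ ($j\neq 1$) act as infinitesimal isometries of the induced polarization; this is precisely the content of the paper's lemma \ref{ld1}(i) and the argument used in the descent lemma of subsection \ref{Idec}. Second, to invoke Kashiwara's theorem 4.4.1 you must first exhibit the one-variable relative monodromy filtrations $M(N_i,\, N_1*W)$ for \emph{every} $i$, not merely $i=1$; for $i\neq 1$ that existence is exactly what Kashiwara's section 5.5 establishes en route to corollary 5.5.4, so your sketch quietly re-uses the hard content of the statement it sets out to prove. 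The honest resolution is either to cite corollary 5.5.4 outright, as the paper does, or to follow Kashiwara's section 5.5 step by step.
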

\begin{rem}\label{key1}
Geometrically, $N_1*W$   and $M(N_1,W)$ may be  viewed on $\LL_{Y_1}$ near $x$. In term of the equation $z_1$ of $Y_1$, the filtration $N_1*W$   viewed  on $\psi_{z_1}\LL$  is not a variation of MHS. 

\n  In (\cite{K}, lemma 3.4.2 ii and proposition 5.2.5) the following equality   is  proved:
 
\centerline { $M(\sum_{i\in M} N_i,  N_1*W) =  M(\sum_{i\in M- \{1\}} N_i, M(N_1,W)) =  M(\sum_{i\in M} N_i, W)$.}
\end{rem}

\n We deduce a recursive definition of an increasing filtration $W^J$  of $L$ by the star
operation  
\begin{equation}\label{J}
 W^J :=  N_{i_1}*( \ldots (N_{i_j}*
W)\ldots ) \,\,\hbox {for} \,\, J = \{i_1, \ldots, i_j \}
\end{equation}
It is denoted by $\Psi_J
*W$  in
  (\cite {K}, 5.8.2) as it  is  interpreted as defined  repeatedly on $\Psi_{x_i}$ for $x_i \in J$
  in the theory of nearby cycles. 
  
 It followws by induction that  $(L, W^J, F, N_i, i\in M)$ is an  IMHS.  The  filtration $W^J $ does not
depend on the order of composition of the respective
transformations $N_{i_k}*$
 since  $N_{i_p}*
 (N_{i_q}* W )= N_{i_q}* ( N_{i_p}* W) $ for all $i_p, i_q \in J$
  according to a basic result in (\cite{K},   proposition 5.5.5). 
   The star operation has the following
  properties (\cite{K},  formula 5.8.5 and 5.8.6):
  
  \smallskip
\centerline {  $\forall J_1, J_2 \subset M :  M(J_1, W^{J_2} ) = M(J_1, W)^{J_2}$,  $\forall 
 J \subset K \subset M: M(K,  W^J) =  M(K, W)$.}
  
  \smallskip
   \n  The  filtrations $W^J$ fit together to define
  the weight filtration  on $\Omega^*L$:
\begin{defn}\label{lw}
 The filtrations $W$ and $F$ on the de Rham complex
  associated to an IMHS $(L, W, F, N_i, i \in M)$ are defined as 
 simplicial complexes
  \[
  W_k \Omega^*L:= s(W^J_{k-|J|}, N.)_{J \subset M}, \, F^p \Omega^*L:= s(F^{k-|J|}, N.)_{J \subset M}
 \]
\end{defn}    
  \begin{rem}\label{wloc} The induced morphism $N_i: W^J_k \to (N_{i}* W^J)_{k-1}$ drops the degree
of  the filtration $W^J$.
 For further use, it is important to add to the above data, the canonical inclusion
  $I:(N_{i}* W^J)_{k-1} \to W^J_k$.

 Beware that the weight   filtrations $W$  
    on   $\LL$ underlying the variation of MHS,  extends as a constant filtration   $W'$ on $\Omega^*L$. The weight $W$  on $\Omega^*L$ is  the local definition of  a global weight
    on the de Rham complex.
      The relation between $W$ and $W'$ on the terms of $\Omega^*L$ is described  by  (\cite{K} lemma  3.4.2  iii) and corollary  3.4.3).
\end{rem}

 \subsubsection
 {Local decomposition of  $(\Omega^*L, W)$}\label{data}
 
The various  decompositions in each degree, as in the formula \ref{DP}, result into  the decomposition of the whole complex $Gr^W_*(\Omega^*L)$. 
The proof involves the whole  de Rham data $DR(L)$
  attached to an IMHS (\cite{K}, section 5.6):
\[
 DR(L):= \{L_J, W^J, F^J,   I_{J, K}: L_J \to L_K,
 N_{J-K }: L_K \to L_J \}_{K \subset J \subset M}
 \]
where for all $J \subset M$, $L_J := L$, $F_J:= F$,  $W^J$ is the
filtration defined above (formula \ref{J}),  $ N_{J-K }:= \prod_{i \in J-K} N_i$ and
$I_{J, K}:= Id : L \to L$. 

 A set of  properties of  this data stated  by
Kashiwara in seven  formula (5.6.1)-(5.6.7) follow from results  proved in various sections of  (\cite{K}).
   In particular, for each  $J \subset M$, the data: $  (L_J, W^J, F_J,
\{N_j, j \in M \})$ is an IMHS (\cite{K}, formula 5.6.6), which  follows from (\cite{K}, corollary 5.5.4), while formula (5.6.7) with adapted notations states that
  for each  $K \subset J \subset M$,  
  $$ Gr N_{J-K }:Gr^{W^K}_{a}L_K \to Gr^{W^J}_{a- |J-K|}L_J,\, Gr I_{J, K}: Gr^{W^J}_{a- |J-K|}L_J \to  Gr^{W^K}_{a}L_K$$
 form a graded distinguished pair: 
  $$Gr^{W^J}_{a- |J-K|}L_J \simeq \im \, Gr N_{J-K } \oplus \ke  \, Gr I_{J, K}.$$
Now, we are in position to define the ingredients of the local decomposition of  $\Omega^*L$
  
 \begin{lem}[\cite {K}, proposition 2.3.1 and lemma 5.6.2] \label{ld1}

i)  Set for each $J \subset M$
 \[ P^J_k(L):= \cap_{K \subset J, K \not= J}
 \hbox {Ker}\, (Gr I_{J,K}: Gr^{W^J}_k L \to Gr^{W^K}_{k+ \vert J-K\vert} L) \subset
Gr^{W^J}_k L \] then $P^J_k(L)$ has pure weight $k$ with respect to
the weight filtration $M(
\sum_{j \in J} N_j, L)$.\\
 ii ) We have: $ Gr^{W^J}_k L
\overset{\sim}{\longrightarrow}  \oplus_{K \subset J} N_{J-K } P^K_{k + \vert J-K\vert}(L)$.
\end{lem}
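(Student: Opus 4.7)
The plan is to prove (i) and (ii) simultaneously by induction on $|J|$. The base case $J = \emptyset$ is immediate: $P^\emptyset_k(L) = Gr^W_k L$, the decomposition in (ii) is tautological, and purity of weight $k$ holds because $M(0, W) = W$ and $(L, W, F)$ is a MHS by the IMHS axioms.

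For the inductive step, fix any $j_0 \in J$ and set $J' := J \setminus \{j_0\}$. Since $W^J = N_{j_0} * W^{J'}$ by the definition in equation \ref{J}, the graded distinguished pair property (formula \ref{DP}) applied to $N_{j_0}$ and $W^{J'}$ yields
\begin{equation*}
Gr^{W^J}_k L \simeq \im\bigl(Gr\,N_{j_0} : Gr^{W^{J'}}_{k+1} L \to Gr^{W^J}_k L\bigr) \;\oplus\; \ke\bigl(Gr\,I_{J,J'} : Gr^{W^J}_k L \to Gr^{W^{J'}}_{k+1} L\bigr).
\end{equation*}
The inductive hypothesis for $J'$ gives $Gr^{W^{J'}}_{k+1} L \simeq \oplus_{K \subset J'} N_{J'-K}\, P^K_{k+1+|J'-K|}(L)$. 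Applying $N_{j_0}$ produces exactly the summands of (ii) indexed by $K \subset J'$, because $N_{j_0} N_{J'-K} = N_{J-K}$ and $k + |J-K| = (k+1) + |J'-K|$.

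It remains to match the kernel summand with $P^J_k(L)$ and to establish purity. The commutation of star operations $N_i * (N_j * W) = N_j * (N_i * W)$ from \cite{K} (proposition 5.5.5) shows that the kernel summand is independent of the choice of $j_0 \in J$; varying $j_0$ and intersecting gives $\cap_{j \in J} \ke(Gr\,I_{J,J-j})$. A secondary induction on $|J| - |K|$, using the distinguished pairs associated to the intermediate filtrations $W^K$ for $K \subsetneq J$, upgrades this to $\cap_{K \subsetneq J} \ke(Gr\,I_{J,K}) = P^J_k(L)$, which simultaneously handles the $K = J$ term in (ii). For the purity in (i), one invokes that $(L, W^J, F, \{N_i\}_{i \in M})$ is again an IMHS (by the lemma preceding remark \ref{key1}), so $M(\sum_{j \in J} N_j, W^J) = M(\sum_{j \in J} N_j, W)$ induces a MHS on $Gr^{W^J}_k L$. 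The maps $Gr\,I_{J,K}$ encode the $\sum_{j \in J \setminus K} N_j$-lowering operations on the associated $sl_2$-representations, so their joint kernel $P^J_k(L)$ is precisely the primitive part, which is pure of weight $k$ by the standard Lefschetz decomposition for the relative monodromy filtration.

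The main obstacle is the second stage: proving that intersecting the kernels over the single-element omissions $K = J - j$ actually yields the intersection over all proper subsets $K \subsetneq J$, and that this piece is canonically a complement to $\oplus_{K \subsetneq J} N_{J-K} P^K_{\ast}(L)$ rather than just a choice-dependent one. This is where the full set of compatibility relations (5.6.1)--(5.6.7) of \cite{K} for the de Rham data $DR(L)$ is needed, together with the strictness of $W, F, M$ in the abelian category of IMHS.
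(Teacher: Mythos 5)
Your inductive skeleton (choose $j_0 \in J$, split $Gr^{W^J}_k L \simeq \im\, Gr\,N_{j_0} \oplus \ker Gr\,I_{J,J'}$ with $J' = J\setminus\{j_0\}$, decompose the image summand by induction) is the same as the one the paper and Kashiwara use, but the step that closes the induction is wrong. The kernel summand is \emph{not} independent of $j_0$: in the paper's two-variable illustration, $P^{12,1}=\ker I'_2$ and $P^{12,2}=\ker I'_1$ are distinct subspaces whose intersection is the strictly smaller $P^{12}$. Proposition 5.5.5 of \cite{K} says that the filtration $W^J$ is independent of the order of the star operations, not that the complementary summands of different distinguished pairs coincide. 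Hence ``varying $j_0$ and intersecting'' produces something strictly smaller than any single kernel summand and cannot serve as a complement, and the ``secondary induction'' only re-derives the definition of $P^J_k(L)$ (that $\cap_{|K|=|J|-1}\ker Gr\,I_{J,K}=\cap_{K\subsetneq J}\ker Gr\,I_{J,K}$ is just functoriality $I_{J,K}=I_{J',K}\circ I_{J,J'}$). The genuinely missing step is to prove that the complementary summand $\ker\, Gr\,I_{J,J'}$ itself decomposes as $\oplus_{K:\,j_0\in K\subset J}\,N_{J-K}P^K_{k+|J-K|}(L)$; this uses the graded commutation relations $I'_{j_1}N'_{j_2}=N_{j_2}I_{j_1}$ and $I_{j_2}I'_{j_1}=I_{j_1}I'_{j_2}$ from the de Rham data (5.6.1)--(5.6.7) of \cite{K}, which you name at the end but never actually deploy, and comparing the two splittings for $j_0 \ne j_1$ via these relations is exactly what the paper's $|J|=2$ example carries out.

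The purity argument for (i) also has a gap: identifying $P^J_k(L)$ with ``the primitive part'' conflates $\ker Gr\,I_{J,K}$ with $\ker N_{J-K}$. What is true is that $N_i$ as an endomorphism of $Gr^{W^J}_k L$ factors through $I_i$, so $P^J_k\subset\cap_i\ker(N_i|_{Gr^{W^J}_k})\subset\ker N$ for $N=\sum_{j\in J}N_j$; one must then still prove $P^J_k\cap M_{k-1}=0$, where $M$ is the relative monodromy filtration centered at $k$. The missing link, which the paper records as a remark, is that $M_{k-1}\cap\ker N$ lands inside the image component $\im\,Gr\,N'_i$ of the distinguished pair, which meets $P^J_k\subset\ker Gr\,I_i$ trivially. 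Absent that step, ``pure of weight $k$ by the standard Lefschetz decomposition'' asserts the conclusion rather than proving it.
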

 In the definition of  $P^J_k(L)$, we can suppose $\vert K \vert = \vert J \vert - 1$ in i). 
 
 For $i \in J$, let $K = J - i$,  $N_i: Gr^{W^K}_{k+1} L \to Gr^{W^J}_k L$
 and $I_i: Gr^{W^J}_k L \to Gr^{W^K}_{k+1} L$, then $N_i$ on $ Gr^{W^J}_k L $ is equal to  $ N_i \circ I_i $, hence $N_i$ vanish on  $P^J_k \subset \ke \ N_i$ for all $i \in J$, and
  $N:= \sum_{i\in J}N_i$ vanish on $P^J_k$. Then the weight filtration $W(N)$ on the component $P^J_K(L) $ of  $Gr^{W^J}_k L$ coincides with the weight filtration of the morphism $0$, hence $P^J_K(L) \subset Gr^{W(N)}_k \ke \ N  \subset Gr^{W(N)}_k Gr^{W^J}_k L $  is a polarized pure HS.

\n Remark that $W(N)_{k-1} \ke N$  is  a subset of  the component $  \im \,  N_i \subset Gr^{W^J}_k L $.

The statement (ii)
 is proved in (\cite {K}, proposition 2.3.1) with a general terminology. 
  We illustrate here the proof in the case of two nilpotent endomorphisms.  Set  $W^i := N_i*W$ for $i =1, 2$, $
 W^{12} = N_1*N_2*W$ in the following diagram 
   \[  
\xymatrix{      &Gr^{W^1}_i L  \ar[dr]^{N'_2}  & \\
     Gr^{W }_{i+1} \ar[ur]^{N_1} L   \ar[dr]^{N_2}           &  &  Gr^{W^{12}}_{i-1} L    \\
          &   Gr^{W^2}_i   L   \ar[ur]^{N'_1}      & }
          \, 
         \xymatrix{     & \ar[dl]^{I_1} Gr^{W^1}_i L    & \\
     Gr^{W }_{i+1}           &  &  Gr^{W^{12}}_{i-1} L  \ar[ul]^{I'_2} \ar[dl]^{I'_1} \\
          &   Gr^{W^2}_i   L   \ar[ul]^{I_2}    & }
\]
where $ I'_i$ and $N'_i$ are induced by $I_i$ and $N_i$.  The morphisms $ N_i$ and $I_i$ (resp. $ N'_i$ and $I'_i$ ) form  distinguished pairs.
 Let $P^{12,1}_{i-1} L := \ke \, I'_2:  Gr^{W^{12}}_{i-1} L \to Gr^{W^1}_i L $
 
 \n  (resp.  $P^{12,2}_{i-1} L := \ke \, I'_1, \, P^{1}_{i} L := \ke \, I_1, \, P^{2}_{i} L := \ke \, I_2$),
  then
 
\smallskip

 $  Gr^{W^1}_i L \simeq N_1 Gr^{W}_{i+1} L \oplus P^{1}_{i} L$, and $  Gr^{W^2}_i L \simeq N_2 Gr^{W}_{i+1} L \oplus P^2_{i} L$

$ Gr^{W^{12}}_{i-1} L \simeq N'_2  Gr^{W^1}_i L \oplus P^{12,1}_{i-1} L \simeq N'_2 N_1 Gr^{W}_{i+1} L \oplus  N'_2 P^{1}_{i} L \oplus  P^{12,1}_{i-1} L$, and 

 $ Gr^{W^{12}}_{i-1} L \simeq N'_1 N_2  Gr^{W}_{i+1} L \oplus  N'_1 P^2_i L \oplus  P^{12,2}_{i-1} L$,
 
\smallskip
\n  Let $P^{12}_{i-1} L := P^{12,2}_{i-1} L \cap P^{12,1}_{i-1} L$. It remains to prove  

\smallskip
\centerline{ $ P^{12,1}_{i-1} L =   N'_2 P^1_i L \oplus  P^{12}_{i-1} L $ and $ P^{12,2}_{i-1} L =   N'_1 P^2_i L \oplus  P^{12}_{i-1} L$. }

\smallskip
\n which follows from the linear algebra relations:

\smallskip
\centerline{ $I'_2  \circ N'_1 = N_1 \circ I_2$, 
 and  $I_2  \circ I'_1 = I_1  \circ I'_2$.}
\begin{rem}
In the global decomposition of the logarithmic complex, the HS $P^J_k(L)$  is viewed as a fibre of a VHS on $Y_J$ (corollary \ref{decom} below, and theorem \ref{PPP}).
\end{rem}
The following corollary
 is a basic step to check the structure of MHC on the logarithmic
 de Rham complex
\begin{cor}[local decomposition]\label{decom}

The graded vector space of the filtration $W$  on $\Omega^*L$ (definition \ref{lw})
satisfy  the decomposition  property into a direct sum of
Intersection  complexes
\begin{equation}
 Gr^W_k(\Omega^*L)\overset{\sim}{\longrightarrow} 
\oplus IC^*(P^K_{k-|K|}(L)[-|K|))_{K \subset M}
\end{equation}
\end{cor}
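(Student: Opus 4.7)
\smallskip

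\noindent\textbf{Proof plan.}
The plan is to reduce the statement to a reindexing of Lemma \ref{ld1}, thought of as a degree-wise splitting of the simplicial complex $\Omega^*L$. By the definition (\ref{lw}) of the weight filtration on the simplicial complex, the $p$-th term of $Gr^W_k\Omega^*L$ splits as $\bigoplus_{|J|=p} Gr^{W^J}_{k-p}L$. Applying Lemma \ref{ld1}(ii) to each summand yields
$$Gr^W_k(\Omega^*L)^p \;\overset{\sim}{\longrightarrow}\; \bigoplus_{|J|=p}\,\bigoplus_{K \subset J} N_{J-K}\, P^K_{k-|K|}(L),$$
since $k-p+|J-K| = k-|K|$. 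Reorganizing the double sum by the label $K$ rather than $J$, the full graded complex becomes $\bigoplus_{K \subset M} C^{\bullet}_K$, where $C^p_K := \bigoplus_{J \supset K,\, |J|=p} N_{J-K}\,P^K_{k-|K|}(L)$.

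Next I would identify each $C^\bullet_K$ with $IC^*(P^K_{k-|K|}(L))[-|K|]$. Setting $A := J-K \subset M-K$, the summand $N_A P^K_{k-|K|}(L)$ sits in degree $|K|+|A|$; this is exactly the pattern of the intersection complex (Proposition \ref{IC1}) of the subspace $P^K_{k-|K|}(L) \subset L$ with respect to the remaining nilpotent operators $\{N_j : j \in M-K\}$, shifted to begin in degree $|K|$. This identifies the underlying graded vector spaces on both sides.

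The main technical point, and where I would spend most care, is compatibility with differentials. The Koszul differential of $\Omega^*L$ attached to an inclusion $J \subset J \cup \{i\}$ is $\pm N_i$, sending $Gr^{W^J}_{k-|J|}L$ to $Gr^{W^{J\cup\{i\}}}_{k-|J|-1}L$. Under the decomposition of Lemma \ref{ld1}, $N_i$ sends $N_{J-K}P^K_{k-|K|}(L)$ into $N_{(J\cup\{i\})-K}P^K_{k-|K|}(L)$, so the label $K$ is preserved; hence the decomposition is by subcomplexes, and the restriction of the differential to $C^\bullet_K$ is exactly the one appearing in $IC^*(P^K_{k-|K|}(L))$.

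The delicate issue is that this preservation of $K$ relies on the fact that the decompositions supplied by Lemma \ref{ld1}(ii) for different $J$'s are mutually compatible -- equivalently, that the distinguished-pair splittings of $(Gr\,N_{J-K},\,Gr\,I_{J,K})$ described in \ref{data} assemble functorially in $J$. This is the content of the formal properties (5.6.1)--(5.6.7) of the de Rham data recalled in \ref{data}; without that compatibility, the reindexing would not produce sub-complexes. Once this is verified the corollary follows at once from the two combinatorial identifications above.
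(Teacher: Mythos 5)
Your proof is correct and follows essentially the same route as the paper: both rest on the degree-wise splitting from Lemma \ref{ld1}(ii) together with Proposition \ref{IC1} and the reindexing of the double sum over $K$. Your writeup is somewhat more explicit about reorganizing the sum and about why the Koszul differential preserves the $K$-labeling, where the paper instead points to Kashiwara's abstract description of perverse sheaves in the NCD case (\cite{K}, section 2) and the distinguished-pair relations (\ref{DP}) for the same purpose.
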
 
The corollary  is the local statement
of the structure of MHC on the logarithmic de Rham complex.

It follows  from the lemma by definition of  $IC^*$ (proposition \ref{IC1}) by a general  description of
perverse sheaves in the normally crossing divisor case (\cite {K},
section 2). The description is in terms of a family of vector spaces and a diagram of morphisms. The minimal extensions are characterized by relations of distinguished pairs as in the formula \ref{DP}. 

 The family of vector spaces is deduced from a perverse sheaf by repeated application  of  the functors nearby and vanishing cycles with respect to the coordinates. 

\subsubsection{ Decomposition of the 
Intersection  complex $IC^*L$}\label{Idec}

We need to prove a similar decomposition of the Intersection complex $IC^*L$ associated to
 ($L, W, F, N_i, i \in M$)  an IMHS. 
For each $j \in M$, we consider  the  data  ($N_j L, N_j*W, F, N_i, i \in M$) with  induced filtrations from the IMHS ($L, N_i*W, F, N_i, i \in M$) (we should write $ (N_j*W)_{\vert N_jL}, F_{\vert N_jL}, (N_i)_{\vert N_j L}$ but the symbol of restriction is erased). We have a graded exact sequence defined by the induced   filtrations  on $N_jL$:
\begin{equation*}
0  \longrightarrow  Gr^{ W^j}_k N_j L \longrightarrow  Gr^{ W^j}_k  L \longrightarrow  Gr^{ W^j}_k L/ N_j L \longrightarrow  0
\end{equation*}
\begin{lem}[descent lemma for IMHS] i) Let ($ L,  W, F, N_i, i \in M$) be an IMHS, then ($N_j L, W^j:=  ((N_j*W)_{\vert N_j L}, F, N_i, i \in M$)  is an IMHS for all $j \in M$.

ii)   The couple ($ L, W,  F,  N_i $) $ \xrightarrow{N_j}$
($N_j L,  W^j, F, N_i$) form a graded distinguished pair:  $Gr^{ W^j}_k N_jL$ splits into a direct sum 
\begin{equation*}
  \hbox {Im}\,(N_j : Gr^{ W}_{k+1} L \rightarrow Gr^{  W^j}_k NjL ) \,
 \oplus \,  \ke \, (I_j : Gr^{ W^j}_k N_jL  \rightarrow Gr^{  W}_{k+1}L)
\end{equation*}
\end{lem}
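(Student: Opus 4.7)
The plan is to deduce both parts from results already stated in the excerpt: the basic decomposition property (\ref{DP}) of the star-operation, the fact (Kashiwara, quoted just above the lemma) that $(L, N_j*W, F, N_i, i\in M)$ is again an IMHS, and the fact (Kashiwara 5.2.6) that IMHS form an abelian category in which $W$, $F$ and $M$ are strict.

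For part (i), I would interpret $N_j:L\to L$ as a morphism of IMHS
\[
N_j : (L, W, F, N_\bullet) \longrightarrow (L, N_j*W, F, N_\bullet)(-1)
\]
of type $(-1,-1)$. The source and target are both IMHS: the source by hypothesis, the target by the lemma cited just above. That $N_j$ is filtered of the required type is built into the defining inclusions $N_j F^p\subset F^{p-1}$ (from the mixed nilpotent orbit axioms) and $N_j W_k\subset (N_j*W)_{k-1}$ (from definition (\ref{*}) of the star operation, as noted in the discussion following it). Since the category of IMHS is abelian, the image $N_jL$ is an IMHS; by strictness of $W$, $F$, and $M(K,\cdot)$ for every $K\subset M$, the induced filtrations on this image coincide with those obtained by restriction, which are exactly $(N_j*W)_{|N_jL}=W^j$ and $F_{|N_jL}$. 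The endomorphisms $N_i$ restrict because each $N_i$ commutes with $N_j$ and preserves $W$ (hence $N_j*W$, hence $W^j$).

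For part (ii), I would apply formula (\ref{DP}) with $N=N_j$ and filtration $W$ on $L$: this yields
\[
Gr^{N_j*W}_{k}L \;\simeq\; \mathrm{Im}\bigl(N_j:Gr^{W}_{k+1}L\to Gr^{N_j*W}_{k}L\bigr)\;\oplus\;\ker\bigl(I:Gr^{N_j*W}_{k}L\to Gr^{W}_{k+1}L\bigr).
\]
Because $W^j=(N_j*W)_{|N_jL}$, the inclusion $N_jL\hookrightarrow L$ induces an injection $Gr^{W^j}_{k}N_jL\hookrightarrow Gr^{N_j*W}_{k}L$. The image summand $\mathrm{Im}(N_j)$ obviously lies inside $Gr^{W^j}_{k}N_jL$, since it consists of classes of elements in $N_jL$. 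For any $x\in Gr^{W^j}_{k}N_jL$ written via the ambient decomposition as $x=N_j(y)+z$, one then has $z=x-N_j(y)\in Gr^{W^j}_{k}N_jL$, and $I(z)=0$ means $z\in\ker(I_j:Gr^{W^j}_{k}N_jL\to Gr^{W}_{k+1}L)$. This gives the claimed splitting, with the isomorphism $\mathrm{Im}(N_j:Gr^{W}_{k+1}L\to Gr^{W^j}_{k}N_jL)\overset{\sim}{\to}\mathrm{Im}(N_j:Gr^{W}_{k+1}L\to Gr^{W}_{k+1}L)$ inherited from (\ref{DP}).

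The main obstacle I anticipate is purely bookkeeping in part (i): one must check that the relative monodromy filtrations $M(K, W^j)$ exist on $N_jL$ for every $K\subset M$ and that they agree with the restriction of $M(K,N_j*W)$ to $N_jL$. This does not follow by a direct computation but is precisely what the abelian-category formalism supplies, via strictness of the filtration $M$ under morphisms of IMHS (\cite{K}, 5.2.6); without invoking that formalism one would be forced to redo a considerable amount of Kashiwara's axiomatic work. A subsidiary point is the Tate twist: the Hodge filtration on the image is really $F$ shifted by $-1$, but under the convention of the paper (in which $N_j$ is of type $(-1,-1)$ and the weights are tracked by $W$ and $M$) the unshifted $F$ on $N_jL$ is the correct representative, as is already implicit in writing $F$ on the right-hand side.
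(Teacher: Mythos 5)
Your argument for part (ii) is essentially the same as the paper's: both use the decomposition (\ref{DP}) on $Gr^{N_j*W}_k L$, observe that $\im N_j$ lands in the image of the injection $Gr^{W^j}_k N_jL \hookrightarrow Gr^{N_j*W}_k L$, and deduce the splitting of $Gr^{W^j}_k N_jL$ into an image part and a primitive part. That half is fine.

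For part (i) there is a genuine gap. Your entire argument hinges on $N_j:(L,W,F,N_\bullet)\to(L,N_j*W,F,N_\bullet)(-1)$ being a morphism in Kashiwara's abelian category of IMHS, so that the image object inherits an IMHS structure by strictness. But the shifts do not match the Tate-twist convention: $N_j$ sends $W_k$ into $(N_j*W)_{k-1}$ (a shift of the finite filtration by $-1$) and $F^p$ into $F^{p-1}$ (shift by $-1$), whereas a twist by $(-1)$ would require the weight-type filtration to drop by $2$ while $F$ drops by $1$. Indeed the paper remarks explicitly, just after the IMHS definition, that ``the morphism $N_i$ is not an auto-morphism of IMHS as it shifts $F$ but not $W$,'' and the remark \ref{key1} notes that $N_j*W$, viewed on $\psi_{z_j}\LL$, is \emph{not} a variation of MHS. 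So the abelian-category formalism (strictness, images of morphisms are IMHS) is not available for this particular map, and the ``purely bookkeeping'' obstacle you flag is actually the essential difficulty: it is precisely what the descent lemma is designed to circumvent, not something the formalism supplies for free.

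The paper's proof of part (i) is therefore hands-on. After establishing the splitting $Gr^{W^j}_k N_jL = \im N'_j \oplus P'_k$ (your part (ii)), it shows that each summand carries a polarized nilpotent orbit structure of weight $k$, by two separate arguments. For the image summand $\im N'_j \simeq \widetilde\im N_j$ (the image of $N_j$ acting on the nilpotent orbit $Gr^W_{k+1}L$ of weight $k+1$), it invokes the Cattani--Kaplan--Schmid descent lemma (\cite{CKS1}, 1.16), which equips the image of a nilpotent endomorphism of a polarized nilpotent orbit with a polarized nilpotent orbit structure of one lower weight. For the primitive summand $P'_k = \ker I'_j$, it identifies $P_k=\ker I_j$ with $Gr^{W(N_j)}_k(\ker N_j)$ inside a pure polarized HS, and uses the orthogonal decomposition with respect to the polarization $S$, together with the fact that each $N_i$ is an infinitesimal isometry of $S$, to exhibit $P'_k$ as a sub-nilpotent orbit. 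The polarization and the CKS descent lemma are the key inputs; neither appears in your sketch, and they cannot be replaced by an appeal to the abelian category of IMHS because the relevant map is not a morphism in that category.
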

\begin{proof}
The filtration $W^j$ is first constructed on $L$, then induced on $N_jL$. The proof is simultaneous for i) and ii).
We consider the diagram
\begin{equation*}
\begin{array}{ccccc}
0&\to& Gr^{W}_{k+1} L& = &Gr^{W}_{k+1}L\\
&& \downarrow N'_j \uparrow I'_j&& \downarrow N_j\uparrow I_j\\
 0&\to& Gr^{W^j}_k N_jL &\xrightarrow {s} & Gr^{W^j}_k  L \\
   \end{array}
\end{equation*}
where $s$  is injective,  $N'_j$ and $ I'_j$ are other symbols for the restriction of  $N_j$ and $I_j$. 
Set $P_k :=  \ker\,I_j  $, $ \im \, N_j$ for the image of $N_j$, $\im \, N'_j$ for the image of $ N'_j$  and $\widetilde \im \ N_j := \im \, (N_j: Gr^{W}_{k+1} L \to Gr^{W}_{k+1}L)$. 

The terms on the right column form a distinguished pair, that is we have a
  splitting   $Gr^{ W^j}_k L  \simeq  \im \, N_j \oplus
 \ker\, I_j $ and an isomorphism $\im\, N_j \simeq  \widetilde \im\, N_j$. 
 
 Set  $P'_k := (Gr^{ W^j}_k N_jL) \cap P_k $. As  
 $ \im \, N'_j \simeq s (\im \, N'_j) = \im \, N_j$, we deduce the splitting 
 \begin{equation*}
Gr^{ W^j}_k N_jL  = \im \, N'_j \oplus P'_k
\end{equation*}
 where $ P'_k = \ker \ (I'_j  : Gr^{ W^j}_k N_JL \to
 Gr^{\tilde W}_{k+1}N_K L )$. 
 We prove that  both components of this splitting are nilpotent orbits of weight $k$.
 
1)   We apply the descent lemma  (\cite{CKS1} descent lemma 1.16)  to  $\im \, N'_j $ since  it is isomorphic to the image $ \widetilde \im \ N_j$ (the image  of $N_j $ acting on the nilpotent orbit $ Gr^{ W}_{k+1} L$).  We deduce  that the data ($\im \, N'_j, F,  N_i , i \in M$) is a nilpotent orbit of weight $k$.  
  
2) The kernel  $P_k  = \ke \ I_j \subset Gr^{ W}_{k+1}N_j L \subset Gr^{ W}_{k+1} L$  is  compatible with the action of $N_i$ for all $i \in M$,  and since $N_j = N_j \circ I_j $ vanish on $P_k$ we deduce that $P_k \simeq Gr^{W(N_j)}_k (\ke \ N_j)$  has a pure polarized HS induced by the MHS of $W(N_j) \subset Gr^{ W^j}_{k+1} L$. In particular $W(N_j)_{k-1} \ke \ N_j $ is   in  $\im \ N_j \simeq  \widetilde \im \ N_j$ which is consistent with the descent lemma.  

Moreover the orthogonal subspace to $ P'_k = \ke \, I'_j \subset P_k $ in the primitive part $Gr^{W(N_j)}_k \ke \ N_j$ (resp. in $Gr^{W(N_j)}_k \ke \ N_j$) defines a splitting  compatible with the action of $N_i$ for all $i \in M$ as the $N_i$ are infinitesimal isometry of the bilinear product $S:  S(N_i a, b) + S(a, N_i b) = 0$, hence $ P'_k $ is a sub-nilpotent orbit 
of the nilpotent orbit  structure on $Gr_k^{W(N_j)}Gr^{W^j}_{k+1}L$.
\end{proof}
\begin{cor}\label{open} i) Let  ($ L,  W, F, N_i, i \in M$) be an IMHS, then
($N_K L,  W^K, F, N_i, i \in M$) is an IMHS for all $K \subset M$, and the embedding 
($N_K L,  W^K, F$) $\subset$ ($ L,  W^K, F$)  in the category of IMHS is strict for the filtrations.

ii)   The couple $(N_K L, W^K,  F,  N_i )  \xrightarrow{N_j}( N_J L,  W^j, F,  N_i)$ form a graded distinguished pair: $Gr^{W^j}_k N_JL$ splits into a direct sum 
\begin{equation*}
  \im\,(N_j : Gr^{ W^K}_{k+1}N_K L \rightarrow Gr^{ W^J}_k N_JL ) \,
 \oplus \,  \ke \, (I_j : Gr^{W^J}_k N_JL  \rightarrow Gr^{W^K}_{k+1}N_K L)
\end{equation*}
and the image $\im\, N_j$ is isomorphic to $ \im\,(N_j : Gr^{ W^K}_{k+1}N_K L \rightarrow Gr^{ W^K}_{k+1} N_KL )$.
\end{cor}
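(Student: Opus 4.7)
\emph{Plan of proof.} The plan is to establish (i) and (ii) jointly by induction on $|K|$, reducing each inductive step to the preceding descent lemma. The base case $|K|=0$ gives, for (i), the tautology that $(L,W,F,N_\ell)$ is an IMHS, while (ii), for $J=\{j\}$, is precisely the descent lemma. For the inductive step, suppose both parts hold for all subsets of size $<|K|$; write $K=K'\cup\{i\}$ with $i\notin K'$. By the inductive hypothesis applied to $K'$ and the index $i$, the descent lemma delivers $(N_i(N_{K'}L),\,N_i*W^{K'},F,N_\ell)_{\ell\in M}$ as an IMHS. Since the $N_\ell$ commute this underlying space is $N_KL$, and by commutativity of Kashiwara's star operations (\cite{K}, prop.~5.5.5) the filtration $N_i*W^{K'}$ agrees on $L$ with $W^K$, so restricting to $N_KL$ gives (i) for $K$. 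Strictness of $(N_KL,W^K,F)\subset(L,W^K,F)$ in the abelian category of IMHS is then automatic from (\cite{K}, prop.~5.2.6). For (ii) with this $K$ and any $j\notin K$, apply the descent lemma once more, now to the IMHS $(N_KL,W^K,F,N_\ell)$ just produced, with the nilpotent endomorphism $N_j$: this yields the graded distinguished pair
\[
(N_KL,W^K,F,N_\ell)\xrightarrow{N_j}(N_j(N_KL),\,N_j*W^K,F,N_\ell)=(N_JL,W^J,F,N_\ell),
\]
which gives the direct sum decomposition claimed, while the isomorphism $\im N_j\simeq \im(N_j:Gr^{W^K}_{k+1}N_KL\to Gr^{W^K}_{k+1}N_KL)$ is the second half of the distinguished-pair conclusion of the descent lemma applied in this setting.

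The principal technical point is the compatibility between Kashiwara's $*$-operation and restriction to an $N$-stable sub-IMHS: in order to apply the descent lemma inductively inside $N_{K'}L$ one must verify that the star operation performed \emph{inside} $N_{K'}L$ with the induced filtration $W^{K'}|_{N_{K'}L}$ gives the same filtration as the restriction to $N_{K'}L$ of the filtration $N_i*W^{K'}$ defined on $L$. Via the defining formula $(N*W)_k=NW_{k+1}+M_k(N,W)\cap W_k$, this reduces to the compatibility of the relative monodromy filtration $M(N_i,\cdot)$ with restriction to an $N_i$-stable subspace of $L$ on which the $W^{K'}$-filtration is inherited by intersection, together with the strictness of the inclusion $N_{K'}L\subset L$ for $W^{K'}$ (itself ensured by the abelian-category property of IMHS).

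I expect this compatibility to be the main obstacle in a fully detailed write-up; it is however essentially implicit in the proof of the descent lemma and in the general formulas controlling $W^J$ recorded in (\cite{K}, 5.8). Once it is secured, the entire corollary follows by the joint induction described above, and no further input beyond the descent lemma, the commutativity of $*$-operations, and strictness in the category of IMHS is needed.
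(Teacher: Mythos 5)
Your proof is correct and follows essentially the same approach as the paper: induction on $|K|$, invoking the descent lemma at each step, with the key compatibility of the $*$-operation with restriction to $N_K L$ resolved via strictness in the abelian category of IMHS. The paper makes this explicit by comparing the filtrations on $N_J L \subset N_K L \subset L$ and noting that the two inclusions are morphisms of IMHS, hence strict, which forces precisely the agreement you flag as the ``principal technical point.''
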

We prove the corollary by induction on the length of $K$.
By the lemma, the embedding $ (N_jL, W^j, F , N_i)  \subset (L, W^j, F , N_i) $ is an injective  morphism in the abelian category of IMHS. 
 
 We assume by induction that ($N_K L,  W^K, F, N_i, i \in M$) is an IMHS, and let $J := K \cup i$ for $i \not\in K$. First we consider  the induced morphism $N_j: W^K \to W^K$, the filtration  $N_j* W^K_{\vert N_K L}$ defined by the star operation on $N_KL$ and its induced filtration on $N_JL \subset N_KL$. 
 There is another filtration obtained from $W^K$ on $L$: $N_j*W^K$ on $L$ and its induced filtration on $N_KL$ such that we have a diagram 
 \begin{equation*}
(N_j*W^K)_{\vert N_J L} \to  N_j*W^K_{\vert N_K L}\to W^J:=  N_j*W^K
\end{equation*}
 By the lemma the first morphism is in the category of IMHS and by the inductive hypothesis the second morphism is compatible with the IMHS, hence the morphisms are strict and 
  the filtration $W^J$ in $L$ induces $N_j*W^K_{\vert N_J L} $ on $N_JL$.
 Hence ($N_J L,  W^J, F, N_i, i \in M$)  is an IMHS, and the embedding into ($ L,  W^J, F, N_i, i \in M$) is compatible with the IMHS. 
\subsubsection{{\bf Decomposition of} $IC^*L$}
 As in  the case of $\Omega^* L$ (subsection \ref{data}) we 
 
 \n introduce  the data $DR(IC^*(L):=$

\smallskip
\centerline{$  \{ N_JL, W^J,  F^J, I_{J,K}: N_J L \to  N_K L, N_{J-K}: N_K L \to N_J L \}_{K \subset J\subset M}$} 

\smallskip
 \n Set for each $J \subset M$
 \[ P ^J_k(N_J L):= \cap_{K \subset J, K \not= J}
 \hbox {Ker}\, (Gr I_{J,K}: Gr^{W^J}_k N_JL \to Gr^{W^K}_{k+ \vert J-K\vert} N_K L) \subset Gr^{W^J}_k L \]
 then $ P^J_k(N_J L) = P^J_k(L) \cap Gr^{W^J}_k  N_JL$ has pure weight $k$ with respect to
the weight filtration $M(
\sum_{j \in J} N_j, L)$ and it is polarized.
\begin{cor}\label{IC3} i) $Gr^{W^J}_k N_J L
\overset{\sim}{\longrightarrow}  \oplus_{K \subset J} N_{J-K } P^K_{k + \vert J-K\vert}(N_K L)$.

\smallskip
ii) \centerline{
 $Gr^W_k(IC^*L)\overset{\sim}{\longrightarrow} 
\oplus_{K \subset M} IC(P^K_{k-|K|}(N_KL )[-|K|]$.}
\end{cor}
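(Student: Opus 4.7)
The plan is to derive Corollary \ref{IC3} by transporting the proof of Lemma \ref{ld1} and Corollary \ref{decom} verbatim to the intersection-complex setting, using Corollary \ref{open} as the replacement for the distinguished-pair property that was used on $\Omega^*L$. The key observation is that Corollary \ref{open} supplies, for every $K \subset J \subset M$, an IMHS structure on $(N_KL, W^K, F, N_i)$ together with a graded distinguished pair
\[
(N_KL, W^K) \;\xrightarrow{\,N_{J-K}\,}\; (N_JL, W^J),
\]
with the reverse maps $I_{J,K}$ given by the canonical inclusions. This is exactly the data $DR(IC^*L)$, and it satisfies the same formal properties (5.6.1)--(5.6.7) of \cite{K} that were invoked to prove Lemma \ref{ld1}.

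For part (i), I would proceed by induction on $|J|$ applied to the distinguished-pair splitting from Corollary \ref{open}. The base case $J = \emptyset$ is trivial since $P^{\emptyset}_k(L) = Gr^W_k L$. For the inductive step, fix $j \in J$, set $J' = J - \{j\}$, and apply the splitting
\[
Gr^{W^J}_k N_J L \;\simeq\; \im\bigl(N_j: Gr^{W^{J'}}_{k+1} N_{J'}L \to Gr^{W^J}_k N_JL\bigr) \,\oplus\, \ke\bigl(I_j: Gr^{W^J}_k N_JL \to Gr^{W^{J'}}_{k+1} N_{J'}L\bigr).
\]
The inductive hypothesis decomposes the first summand into the pieces $N_{J-K}P^K_{k+|J-K|}(N_KL)$ for $K \subset J'$. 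For the second summand, one iterates the distinguished-pair argument across the remaining indices $i \in J'$ to identify $\ke I_j$ with $\bigoplus_{K \subsetneq J,\, j \notin K} N_{J-K-\{j\}}(\ke\, I_j \cap \ke\, I_{J-\{j\},K})$; collapsing those kernels across all $i \in J$ yields precisely $P^J_k(N_JL)$ as the "new" summand not present in the inductive hypothesis. This is the same linear-algebra bookkeeping illustrated after Lemma \ref{ld1} for $|M| = 2$, done now in the abelian category of IMHS's built out of the $N_KL$.

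For part (ii), I would then mimic the passage from Lemma \ref{ld1} to Corollary \ref{decom}. By Proposition \ref{IC1}, the complex $IC^*L$ is the Koszul-type complex with $k$-th term $\bigoplus_{|K|=k} N_K L$ and differentials induced by the $N_i$, and the weight filtration restricts from the one on $\Omega^*L$ in Definition \ref{lw} so that the $k$-th graded piece in each simplicial degree is $Gr^{W^K}_{k-|K|} N_K L$. Substituting the decomposition of (i) term by term regroups the summands: for each fixed $K \subset M$, the pieces $N_{J-K} P^K_{k-|J|+|J-K|}(N_KL)$ indexed by $J \supset K$ assemble into the Koszul complex associated to the IMHS $(P^K_{k-|K|}(N_KL), 0, N_i)$ shifted by $|K|$, which is by definition $IC(P^K_{k-|K|}(N_KL))[-|K|]$.

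The main obstacle, and the only step that requires care beyond the formal recipe, is verifying that the differentials of the graded Koszul complex regroup correctly along the decomposition of (i) --- i.e. that the component of $N_i$ sending the $K$-summand in degree $|J|$ to the $K$-summand in degree $|J|+1$ coincides with the tautological $N_i$ on $P^K_\bullet(N_KL)$, while the off-diagonal components vanish. This reduces to the strictness of the inclusions $N_{J \cup \{i\}}L \hookrightarrow N_J L$ with respect to $W^{J \cup \{i\}}$ versus the induced filtration from $W^J$, and to the relations $N_i \circ I_i = N_i$ and $I_{J,K} \circ N_{J-K} = \mathrm{id}$-twisted, all of which are contained in Corollary \ref{open} together with the commutation $N_i * (N_j * W) = N_j * (N_i * W)$ recalled in subsection \ref{data}. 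Once these compatibilities are in hand, the regrouping is automatic and Corollary \ref{IC3}(ii) follows.
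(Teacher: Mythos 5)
Your overall strategy is the same one the paper relies on: observe that $DR(IC^*L) = \{N_JL, W^J, F^J, I_{J,K}, N_{J-K}\}_{K\subset J\subset M}$ is a de Rham datum in Kashiwara's sense, with Corollary~\ref{open} supplying the distinguished-pair property that Lemma~\ref{ld1} and Corollary~\ref{decom} used for $DR(L)$; the paper states Corollary~\ref{IC3} without further argument on exactly this basis, outsourcing the combinatorial induction to Kashiwara's Proposition~2.3.1.

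The one place where your write-up is genuinely off is the decomposition of the second summand $\ke\,I_j$ in the inductive step of part~(i). After splitting $Gr^{W^J}_k N_JL \simeq \im\,N_j \oplus \ke\,I_j$ with $J' = J - \{j\}$, the image summand contributes (via the inductive hypothesis on $J'$) exactly the pieces $N_{J-K}P^K_{k+|J-K|}(N_KL)$ with $K \subset J'$, i.e.\ $j\notin K$. The kernel summand must therefore account for \emph{all} the remaining pieces, namely those with $j \in K$:
\[
\ke\,I_j \;\simeq\; \bigoplus_{K'\subset J'} N_{J'-K'}\,P^{K'\cup\{j\}}_{\,k+|J'-K'|}\bigl(N_{K'\cup\{j\}}L\bigr),
\]
not just the single piece $P^J_k(N_JL)$ (which is only the $K'=J'$ term). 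Your formula for $\ke\,I_j$ in terms of $\ke\,I_j \cap \ke\,I_{J-\{j\},K}$ is parameterized by $j\notin K$ and collapses the wrong way; for instance it does not produce the summand $N_{J'}P^{\{j\}}_{k+|J'|}(N_jL)$. Extracting the full splitting of $\ke\,I_j$ is precisely what the commutation identities $I'_2\circ N'_1 = N_1\circ I_2$ and $I_2\circ I'_1 = I_1\circ I'_2$ accomplish in the $|M|=2$ illustration after Lemma~\ref{ld1}, and that secondary argument is the real content the paper delegates to \cite{K}. Once this is stated correctly, your passage to part~(ii) --- re-indexing the graded Koszul complex along the decomposition of (i), with Corollary~\ref{open} guaranteeing strictness so the differentials stay diagonal in $K$ --- is the right argument.
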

\subsection { Global definition and  properties of the weight $W$} 
The local study
ended with  the local decomposition into Intersection complexes.
We develop now the corresponding global result.
The local cohomology of $\Omega^*\LL$ is reduced to $\LL$ on $X-Y$ and to  the various 
analytic restrictions $\LL_{Y_J}$ on $Y$ (proposition \ref{support}). 

In the case of a pure VHS, the weight  starts with the subcomplex  $IC^*\LL$. 
 
{\it Notations in the global case.} We  suppose $\LL$  locally unipotent by the remark \ref{uni} as other components of   the spectral decomposition have  acyclic de Rham complexes.
We recall that $\LL :=  \widetilde \LL[n]$ with $n:= dim. X$ is a shifted variation of MHS, hence all complexes and bundles below are already shifted by $n$ to the left. For all $J \subset I$, let  $ \LL_{Y_J}$ denote the restriction of the analytic module $\LL_X$ to  $Y_J$. 
The weight filtration $W$ on $\LL$ defines a filtration by sub-bundles  $\WW_X$ of $\LL_X$ with  restriction $\WW_{Y_J}$ to $Y_J$, such that
the relative monodromy  filtrations of the endomorphism
$\sum_{i\in J}\NN_i$ of $( \LL_{Y_J}, \WW_{Y_J})$   exist for all $J \subset I$.
We write $\MM (J, \WW_{Y_J}) := \MM (\sum_{i\in J}\NN_i, \WW_{Y_J}) $, it
 is   a filtration by analytic sub-bundles, so
that we can define
\[ (\NN_i*  \WW_{Y_i})_k:= \NN_i \WW_{Y_i, k+1}+\MM_k(\NN_i,
\WW_{Y_i})\cap \WW_k = \NN_i \WW_{Y_i, k+1}+\MM_k(\NN_i,
\WW_{Y_i})\cap \WW_{k+1}\]
and an increasing filtration
$\WW^J$ of $\LL_{Y_J}$  is recursively defined by the star operation
\begin{equation*}
 \WW^J:=   \NN_{i_1 }*( \ldots (\NN_{i_j } *
\WW)\ldots ) \,\,\hbox {for} \,\, J = \{i_1, \ldots, i_j \}.
\end{equation*} 

\begin{defn}[weight $W$ on $\Omega^*\LL$]  Let $(\LL, W, F) $ be an admissible variation of MHS on $U:= X-Y$.
The weight  filtration by sub-analytic complexes, denoted  also by  $ W $ on $\Omega^*\LL$, 
  is defined locally  in terms  of
  the various restrictions $\WW_{Y_J}$ to the strata $Y^*_J$.
  
  At a point  $y \in Y^*_M$ for  $M \subset I$,
 $W_r (\Omega^*_X (Log Y) \otimes {\LL}_X)_y$ is defined  locally in a neighborhood of $y \in Y^*_M$, 
 in terms of the IMHS $(L, W, F, N.)$ at $y$ and a set
  of coordinates $y_i$ for $ i\in M$
 (including local equations of  $Y$ at $y$), as follows:
 
The term $ W_r $ of the filtration is generated as an ${\Omega}^*_{X,y}-$ sub-module
  by the germs of the sections $  \widetilde {v}  \otimes \wedge_{j \in J} \frac
{dy_j}{y_j} $ for $ J \subset M$ and  $v \in W_{r-|J|}^JL$.

In particular, for $M = \emptyset$, at a point $y \in U$, $ W_r $ is generated as an ${\Omega}^*_{X,y}-$ sub-module
  by the germs of the sections  $v \in W_r \LL_y $.
\end{defn}

 The  definition of $W$ above is independent of the choice
of coordinates on a neighborhood $U(y)$, since if we change
the coordinate into $y'_i = f y_i$  with $f$
invertible holomorphic at $y$, we check first that the submodule
$W_{r-|J|}^J (\LL_X)_y$ of $\LL_{X,y}$ defined by the image of $
W_{r-|J|}^JL$ is independent of the coordinates as in the
construction of the canonical extension (see also lemma \ref{ind}). For a
fixed $ \alpha \in W_{r-|J|}^J \LL_{X,y} $, as the difference $
\frac {dy'_i}{y'_i} - \frac {dy_i}{y_i} = \frac {d f}{f}$ is
holomorphic at $y$, the difference of the sections $ \alpha \otimes \wedge_{j \in
J} \frac {dy'_j}{y'_j} -  \alpha \otimes \wedge_{j \in J} \frac
{dy_j}{y_j}  $  is still a section of the
${\Omega}^*_{X,y}-$sub-module generated
  by the germs of the sections $ W_{r-|J|}^J (\LL_X)_y \otimes\wedge_{j \in (J-i)} \frac
{dy_j}{y_j}$.\\
Finally, we remark that the sections defined at $y$
restrict to sections  defined  on $(U(y)- Y^*_M
\cap U(y))$.
\subsubsection{Global statements}\label{globe}
The Hodge filtration on the Intersection complex defined algebraically on $IC^*\LL$ induces a HS on the Intersection cohomology with coefficients in a polarized VHS.

  \begin{thm} Let $(\LL, F)$ be a shifted polarized VHS of weight $a$, then the sub-complex
  $(IC^*\LL, F)$ of the logarithmic
  complex with induced filtration $F$ is a Hodge complex which
   defines a pure HS of weight $a+i$ on its hypercohomology $\H(X, IC^*\LL)$, equal to  the Intersection cohomology $IH^i(X,\LL)$.
\end{thm}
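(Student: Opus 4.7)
The plan is to proceed by induction on $\dim X$, using the weight filtration $W$ already constructed on the logarithmic complex and the local decomposition of Corollary \ref{IC3}. First I would observe that the identification $IC^*\LL \simeq j_{!*}\LL$ in the derived category follows from the local description of the stalks in Proposition \ref{IC1}, which already forces $\H^i(X, IC^*\LL) \simeq IH^i(X,\LL)$. The Hodge filtration $F$ on $IC^*\LL$ is induced from $F$ on the ambient complex $\Omega^*\LL$ via Griffiths transversality, using Schmid's extension of the Hodge bundles, so the coherence/compatibility with the rational structure $j_{!*}\LL_\Q$ is a formal check.

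Next I would use the weight filtration $W$ on $IC^*\LL$ defined via the IMHS data at each point of $Y$. For $\LL$ pure of weight $a$, the initial filtration $W$ on $\LL$ is concentrated in a single degree, and the filtration on $IC^*\LL$ comes entirely from the star operations $N_i*$ applied to this trivial filtration, that is from the monodromy filtrations. Corollary \ref{IC3}(ii) then yields, for each $k$,
\begin{equation*}
Gr^W_k(IC^*\LL) \;\overset{\sim}{\longrightarrow}\; \bigoplus_{K \subset I} (i_{Y_K})_* IC^*\bigl(P^K_{k-|K|}(N_K\LL)\bigr)[-|K|],
\end{equation*}
where each $P^K_j$ globalizes (after reducing to the locally unipotent case by Remark \ref{uni}) to a polarized VHS on $Y_K^*$ of weight $a+j$, by the axioms of an IMHS and the compatibility with the bilinear form $S$. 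Thus each graded piece is a direct sum of intersection complexes, shifted, with coefficients in pure polarized VHS on the strictly lower-dimensional closed strata $Y_K$.

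I would then feed this into the spectral sequence of the weight filtration
\begin{equation*}
E_1^{p,q} \;=\; \H^{p+q}\bigl(X, Gr^W_{-p}(IC^*\LL)\bigr) \;\Longrightarrow\; \H^{p+q}(X, IC^*\LL),
\end{equation*}
with its induced Hodge filtration $F$. By the inductive hypothesis applied to the polarized VHS $P^K_{-p-|K|}$ on the smooth open part of $Y_K$, each summand $\H^{p+q}(Y_K, IC^*(P^K_{-p-|K|})[-|K|]) = IH^{p+q-|K|}(Y_K, P^K_{-p-|K|})$ carries a pure Hodge structure of weight $(a - p - |K|) + (p+q-|K|+|K|) = a+p+q$ (the shifts cancel consistently). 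Hence every $E_1^{p,q}$ is pure of weight $a+p+q$, so all higher differentials vanish for weight reasons, the sequence degenerates at $E_2$, and $\H^i(X, IC^*\LL)$ inherits a pure Hodge structure of weight $a+i$. Polarization of the limit follows from the polarization of the $P^K_k$ and Poincar\'e--Verdier duality of $IC^*\LL$.

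The main obstacle I anticipate is justifying the \emph{global} form of Corollary \ref{IC3}: one must check that the pointwise data $P^K_k(N_K L)$ assembles into a genuine polarized VHS on $Y_K^*$ (compatibility across strata of different codimension, and strictness of $F$ on the graded pieces so that $(IC^*\LL, W, F)$ is a mixed Hodge complex). This requires the global version of the descent lemma (Corollary \ref{open}) applied to Deligne's extensions $\LL_{Y_J}$, together with Kashiwara's equalities $M(K, W^J) = M(K,W)$ for $J \subset K$, which ensure that the monodromy filtrations used to construct $W^J$ patch coherently along intersections of components of $Y$.
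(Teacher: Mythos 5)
Your proposal takes a genuinely different route from the paper: the paper simply cites Kashiwara--Kawai (\cite{K1}, Theorem 1 and Proposition 3) for this result, relying on the $L^2$-comparison of intersection cohomology; the paper's Remark~\ref{Pb} sketches a possible alternative inductive proof via a fibration $f:X\to V$ to a smooth curve and the decomposition theorem, but does not carry it out. Your approach --- run the weight spectral sequence of $(IC^*\LL, W, F)$ and observe that all $E_1$-terms are pure of the same weight --- is different from both.

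Unfortunately there is a gap that makes the argument circular. For a \emph{pure} $\LL$ of weight $a$, the weight filtration $W$ induced on the subcomplex $IC^*\LL \subset \Omega^*\LL$ is \emph{constant}: $Gr^W_k IC^*\LL = 0$ for $k\neq a$, and $Gr^W_a IC^*\LL = IC^*\LL$. This is exactly the content of Proposition~\ref{Ico}, $(IC^*(Gr^W_k\LL), F) \simeq (Gr^W_k IC^*\LL, F)$, which collapses to a single nonzero graded piece when $\LL$ is pure. Concretely, the local decomposition of Corollary~\ref{IC3} for $IC^*L$ involves the pieces $P^K_{k-|K|}(N_K L)$ (note: $N_K L$, not $L$), and these all vanish for $K\neq\emptyset$ when the starting filtration $W$ on $L$ is constant --- this follows from the descent lemma: by the distinguished-pair property $Gr^{W^j}_{a-1}N_jL \simeq \im(N_j) \oplus \ker(I_j)$, and when $W$ is concentrated at $a$ the image $\im(N_j)$ already exhausts $Gr^{W^j}_{a-1}N_jL$, so $\ker(I_j)=P^j_{a-1}(N_jL)=0$, and $Gr^{W^j}_k N_jL=0$ for all other $k$. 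You appear to be conflating the nontrivial decomposition of $Gr^W_k(\Omega^*L)$ (which uses $P^K(L)$, and is genuinely supported on lower strata) with that of $Gr^W_k(IC^*L)$.

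Consequently the weight spectral sequence you set up has only one nonzero column, namely $E_1^{-a,q} = \H^{q-a}(X, IC^*\LL)$, which is precisely the object whose purity you are trying to prove. The inductive hypothesis on lower-dimensional $Y_K$ is never invoked, because the only nonzero summand lives over $Y_\emptyset = X$ at $k=a$. So the argument does not reduce the problem; it restates it. (Your weight computation $(a-p-|K|)+(p+q-|K|+|K|)=a+p+q$ is also not what the MHC axioms give: if $(Gr^W_{-p}, F)$ were a Hodge complex of weight $-p$, one would have $E_1^{p,q}$ of weight $q$, not $a+p+q$; the tension between these two is another symptom that the nontrivial graded pieces you hope to see are in fact absent.) To make an inductive proof work one must produce a different dévissage of $IC^*\LL$ --- e.g.\ the paper's suggestion of fibering over a curve and using the decomposition theorem, where the new strata come from the fibers rather than from the weight filtration --- or else fall back on the Kashiwara--Kawai $L^2$ argument as the paper does.
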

We rely on the proof  by Kashiwara (\cite{K1}, Theorem 1 and Proposition 3).
\begin{rem}\label{Pb}
i)  The proof of the local purity theorem here relies on the polarization of the intersection cohomology in the case of the complement of a NCD in $X$. 

 The original proof of the purity theorem is local at $v$ \cite{DG}.  As noted by the referee of the notes, the MHS on the boundary of the exceptional divisor at a point $v \in V$ (definition \ref{dual0}, lemma \ref{dual1}) is independent of the global variety $X$ but depends only on the components of the NCD $X_v$ and their embedding in $X$. 
  
  It is interesting and possible to adapt such local proof,  then the algebraic-geometric constructions  of $W$ and $F$ leads to an alternative construction of  Hodge theory as follows.
  
  The case of curves is studied in \cite{Z}. We proceed by  induction on dim.$X$, and  assume the purity of $IC^*\LL$ in dimension $ n -1$ in the inductive step.
  We construct  a  morphism $f : X \to V$ to  a smooth curve $V$  for which the decomposition theorem apply with just a local proof of purity, from which we deduce that the intersection cohomology $\H^i(X, \LL)$ carries a pure HS induced by the  Hodge filtration defined in this paper.
  
  ii)  The subtle proof of the comparison in  \cite {K1} is   based on the
auto-duality of the Intersection cohomology.

We are not aware of  a direct comparison of  the filtration $F$ on $IC^*\LL$ defined  here in an algebro-geometric  way, with the filtration $F$    in  terms of $L^2$-cohomology in \cite{K2}
and \cite{CKS}. 
   \end{rem}

\subsubsection
{ The bundles $\PP^J_k(\LL_{Y_J})$} 
Given a subset $J
\subset I$, the filtration $\WW^J$ induces a filtration by
sub-analytic bundles of $\LL_{Y_J}$. We introduce the
following  bundles
 \[ \PP^J_k(\LL_{Y_J}):= \cap_{K \subset J, K \not= J}
 \hbox {Ker}\, (Gr I_{J,K}: Gr^{\WW^J}_k \LL_{Y_J} \to
 Gr^{\WW^K}_{k+ \vert J-K\vert} \LL_{Y_J}) \subset
Gr^{\WW^J}_k \LL_{Y_J} \] where $I_{J,K}$ is defined as in the local case.
 In particular $\PP^{\emptyset}_k(\LL_X)
 = Gr^{\WW}_k \LL_X $ and $\PP^J_k(\LL_{Y_J}) = 0$ if
 $  Y_J^* = \emptyset $.

\begin{thm}\label{PPP}
 i) The weight $W$  is
a filtration by  sub-complexes
of  $Rj_* \LL$   consisting of perverse sheaves defined over $\Q$. 

ii) The bundles $\PP^J_k(\LL_{Y_J})$ are  Deligne's extensions of
polarized VHS  $\PP^J_k(\LL)$ on $Y_J^*$ of weight $k$  with respect to the weight  induced by $\MM( \sum_{j \in J} \NN_j, \LL_{Y_J^*})$. 
 
 iii) The graded
perverse sheaves for the weight filtration, satisfy the
 decomposition property into intermediate extensions for all $k$ 
 \[
Gr^W_k (\Omega^*\LL):= Gr^W_k (\Omega^*_X(Log Y)\otimes \LL_X) \overset{\sim}{\longrightarrow}  \oplus_{J \subset
 I} (i_{Y_J})_* j_{!*}\PP^J_{k-|J|}(\LL)[-|J|].
 \]
 where $j$ denotes uniformly the inclusion of $Y_J^*$ into $Y_J$
 for each $J \subset I$.
 \end{thm}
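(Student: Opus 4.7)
\textbf{Proof proposal for Theorem \ref{PPP}.}

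My plan is to deduce this global theorem from the local analysis of Section 4.2--4.4 by globalizing the three constructions (filtration $W$, polarized VHS $\PP^J_k(\LL)$, local decomposition of $Gr^W_k$). By Remark \ref{uni}, I first reduce to the locally unipotent case, since otherwise both the de Rham complex and the intermediate extension data are acyclic in the non-unipotent part of the spectral decomposition and contribute nothing to either side of the decomposition in (iii).

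For part (i), the sub-sheaves $W_r(\Omega^*_X(\log Y)\otimes \LL_X)$ are already well defined as analytic sub-sheaves because the local expression in terms of $W^J_{r-|J|}L\otimes \wedge_{j\in J}\frac{dy_j}{y_j}$ is independent of the chosen coordinates (as verified in the paper right after the definition of $W$). Rationality of $W$ follows because the input filtration $W$ on $\LL$ is rational, the endomorphisms $N_i$ are rational, and both the relative monodromy filtrations $M(N,W)$ and the star operation $N*W$ are then rational; these combine to give that each $\WW^J_k\LL_{Y_J}$ is rational. To see that each $W_r$ is a sub-perverse sheaf of $Rj_*\LL$, I will argue by induction on $r$ using the local decomposition (Corollary \ref{decom}) which identifies $Gr^W_k$ locally with a direct sum of shifted intersection complexes of pure graded pieces, hence with a perverse sheaf; then $W_r$ is perverse by the five lemma applied to $0\to W_{r-1}\to W_r\to Gr^W_r\to 0$.

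For part (ii), I use the Hodge filtration $F$ on $\LL_{X-Y}$, which extends to a filtration by sub-bundles $\FF_X\subset \LL_X$ by the admissibility hypothesis (Schmid's extension for the pure case and the filtered extension for admissible VMHS). The analytic sub-bundle $\PP^J_k(\LL_{Y_J})\subset Gr^{\WW^J}_k\LL_{Y_J}$ inherits a Hodge filtration $F$; at any point $y\in Y_J^*$ the fiber with its data $(L,W,F,N_i)$ is an IMHS, and Lemma \ref{ld1} shows that the fiber $\PP^J_k(L)$ is a pure Hodge structure of weight $k$ with respect to $M(\sum_{j\in J}N_j,W)$. Griffiths transversality for $\PP^J_k(\LL)$ on $Y_J^*$ is inherited from Griffiths transversality for $(\LL,F)$ via the compatibility $N_iF^p\subset F^{p-1}$ and the star operations. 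The polarization is constructed from the polarization of $\LL$ together with the descent lemma of \cite{CKS1}, as already used in the proof of the descent lemma for IMHS preceding Corollary \ref{open}: the subspace $\PP^J_k$ of the primitive part of $M(\sum N_j, W^J)$ carries the polarization induced by $S_k(x,y)=S(x,(\sum N_j)^ky)$ from the ambient nilpotent orbit.

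For part (iii), I globalize Corollary \ref{decom}. The decomposition there is an isomorphism of perverse sheaves at the stalk level over a point of $Y_M^*$, expressing $Gr^W_k$ as a direct sum of $IC^*(P^K_{k-|K|}(L))[-|K|]$ for $K\subset M$. Since $\PP^K_{k-|K|}(\LL)$ is a VHS on $Y_K^*$ (by (ii)) whose fiber at $y\in Y_M^*\cap \overline{Y_K^*}$ is the space $P^K_{k-|K|}(L)$, and since both sides are perverse sheaves, I will show the local decomposition isomorphisms are canonical (they come from the canonical $I_{J,K}/N_{J-K}$-decomposition of the de Rham data $DR(L)$) and hence glue. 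Concretely, $(i_{Y_K})_*j_{!*}\PP^K_{k-|K|}(\LL)[-|K|]$ is characterized by its local description as $IC^*$ of a VHS, so the local isomorphisms match restriction maps and patch into a global isomorphism on $X$.

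\textbf{Main obstacle.} The delicate point is the globalization in (iii): one must check that the canonical local splitting into $\im N_{J-K}\oplus \ker I_{J,K}$ of $Gr^{W^J}$ is compatible with specialization from a deep stratum to a boundary stratum, i.e.\ that the decomposition at $y\in Y_M^*$ induces (by passing to nearby strata $Y_K^*$ with $K\subsetneq M$) the decomposition at those points. This compatibility is exactly the content of Kashiwara's seven relations (5.6.1)--(5.6.7) for the de Rham data $DR(L)$ in \cite{K}, together with the behavior of the star operation under restriction; once this is checked, the local isomorphisms of Corollary \ref{decom} glue functorially and the global decomposition follows. A secondary but essential point is that the variation $\PP^J_k(\LL)$ on $Y_J^*$ is itself admissible along the boundary $Y_J-Y_J^*$, which is needed to form its intermediate extension; this is inherited from admissibility of $\LL$ through the star operation formalism.
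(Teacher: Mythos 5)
Your proposal is essentially correct and follows the same route as the paper, which itself states that the proof ``is essentially based on the local study above'' and deduces (i)--(iii) from Corollary \ref{decom} plus the gluing of perverse sheaves (\cite{BBD}, cor.\ 2.1.23, 2.2.19). Your reduction to the unipotent case, your use of Lemma \ref{ld1} for the purity and polarization of $\PP^J_k(L)$, and your identification of Kashiwara's seven relations (5.6.1)--(5.6.7) for $DR(L)$ as the precise statement needed to make the local splittings compatible with specialization across strata, are exactly the ingredients the paper's terse proof relies on --- you have simply spelled them out. Two small points: what you call ``the five lemma applied to $0\to W_{r-1}\to W_r\to Gr^W_r\to 0$'' is really the statement that the heart of a t-structure is closed under extensions (applied to the triangle $W_{r-1}\to W_r\to Gr^W_r\xrightarrow{[1]}$), not the five lemma; and for rationality of $W_k$ the paper goes through the $\Q$-structure of the graded pieces as intersection complexes rather than arguing directly from rationality of $W$, $N_i$, $M$, and the star operation as you do --- both arguments are valid and complement each other, with yours being slightly more elementary on this point.
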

The proof is essentially based on the local study above.
The various graded  complexes $Gr^W_k
(\Omega^*_X(Log Y)\otimes \LL_X)$ are direct sum of Intersections complexes defined over $\Q$.  In particular, we deduce that 
 the whole filtration $W_k$
on the de Rham complex is defined over $\Q$, and $W_k$ is 
perverse  with respect to the stratification defined by
$Y$.\\
 The local rational structure
 of the complexes $W_k$ glue into a global rational structure,
 since perverse sheaves may be glued as the usual sheaves, although
 they are not concentrated
 in a unique degree (\cite{BBD} corollary 2.1.23 and 2.2.19).
 
 Another proof of the existence of the rational structure is based
 on Verdier's specialization \cite{E2}. 
 
 The next result  on weights satisfy a property cited in (\cite{WII}, cor 3.3.5).
\begin{cor}  The de Rham logarithmic mixed Hodge complex $(\Omega^*\LL, W, F)$ of
an admissible variation of MHS of weight $\omega \geq a$ induces on the
hypercohomolgy $\H^i (X-Y, \LL)$ a MHS of weight $ \omega \geq a+i$.
\end{cor}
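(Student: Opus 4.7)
The plan is to run the weight spectral sequence of the mixed Hodge complex $(\Omega^*\LL, W, F)$, use the decomposition of Theorem~\ref{PPP} to identify each $E_1$-term as a pure Hodge structure on intersection cohomology of a polarized VHS on a stratum, and then track the hypothesis $\omega \geq a$ through the construction of the filtrations $\WW^J$ to force all nonzero contributions into weight $\geq a+i$. Concretely, the admissibility of $(\LL, W, F)$ and the bifiltered construction of $(\Omega^*\LL, W, F)$ give a mixed Hodge complex, so the weight spectral sequence
\[ E_1^{-k,\,i+k} \;=\; \H^{i}(X,\, Gr^{W}_{k}\Omega^{*}\LL) \;\Longrightarrow\; \H^{i}(X-Y, \LL) \]
degenerates at $E_2$, induces the weight filtration on the abutment, and has each $E_1^{-k,\,i+k}$ pure of weight $k+i$. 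Hence it suffices to show $E_1^{-k,\,i+k} = 0$ whenever $k+i < a+i$, i.e.\ $k < a$. By Theorem~\ref{PPP}(iii),
\[ Gr^{W}_{k}\Omega^{*}\LL \;\simeq\; \bigoplus_{J\subset I} (i_{Y_J})_*\, j_{!*}\PP^{J}_{k-|J|}(\LL)[-|J|], \]
with $\PP^{J}_{m}(\LL)$ a polarized VHS on $Y_J^*$ of weight $m$; each summand contributes a pure intersection cohomology of a polarized VHS, so the required vanishing reduces to $\PP^{J}_{m}(\LL) = 0$ for $m < a$.

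Next, this vanishing is established by propagating $W_{a-1}\LL = 0$ through the iterated star operation defining $\WW^{J}$. The defining equality
\[ (N*W)_{\ell} \;=\; N W_{\ell+1} \,+\, M_\ell(N,W)\cap W_{\ell+1} \]
yields the elementary inclusion $(N*W)_{\ell} \subset W_{\ell+1}$, since both summands on the right are contained in $W_{\ell+1}$. An induction on $|J|$ then gives $\WW^{J}_{m}\LL_{Y_J} = 0$ whenever $m \leq a - |J| - 1$. Since $\PP^{J}_{m}(\LL) \subset Gr^{\WW^{J}}_{m}\LL_{Y_J}$ by lemma~\ref{ld1}, we obtain $\PP^{J}_{m}(\LL) = 0$ for $m < a - |J|$, so the $(J,\, k-|J|)$-summand of $Gr^{W}_{k}\Omega^{*}\LL$ can be nonzero only when $k - |J| \geq a - |J|$, i.e.\ $k \geq a$. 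Combined with the purity of the first step, this forces $\H^{i}(X-Y,\LL)$ to have all weights $\geq a+i$.

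The main obstacle is the clean book-keeping in the inductive step: while the one-step inclusion $(N*W)_\ell \subset W_{\ell+1}$ is elementary, one must verify that the induction closes across all $J \subset I$ simultaneously and that the primitive piece $\PP^{J}_{m}(\LL)$ of lemma~\ref{ld1} indeed inherits the lower bound from $\WW^{J}$, rather than only from the finer monodromy-relative filtrations $M(K, \WW^J)$ appearing in the admissibility axioms; this amounts to checking that the bound $\WW^{J}_{a-|J|-1} = 0$ is compatible with the distinguished pair decomposition that produces $\PP^{J}_{m}$. Once this compatibility is granted, the simple arithmetic $k - |J| \geq a - |J| \Rightarrow k \geq a$ collapses the double-indexed sum to the desired weight inequality, and no further Hodge-theoretic input is needed beyond what Theorem~\ref{PPP} has already supplied.
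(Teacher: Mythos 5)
Your proof is correct, and the core arithmetic — propagating the vanishing $W_{a-1}\LL = 0$ through the iterated star operation via the elementary inclusion $(N*W)_\ell \subset W_{\ell+1}$, to obtain $W^J_m = 0$ for $m < a - |J|$ and hence $W_k\Omega^*\LL = 0$ for $k < a$ — is exactly the content of the paper's one-line proof (``Indeed, $W_k = 0$ on the logarithmic complex for $k \leq a$,'' which is an off-by-one slip for $k < a$; the bound $k < a$ is what is consistent with the stated conclusion $\omega \geq a+i$ under the standard d\'ecalage). Where you diverge is in the packaging: you route the argument through the weight spectral sequence, the $E_1$-purity, and the decomposition of $Gr^W_k\Omega^*\LL$ into intermediate extensions of the $\PP^J_{k-|J|}(\LL)$ from Theorem~\ref{PPP}, whereas the paper exploits none of this. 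This detour is logically sound but unnecessary: once $W_{a-1}\Omega^*\LL = 0$ is established directly from Definition~\ref{lw}, the bound $\omega \geq a+i$ on $\H^i$ is immediate from the general mixed Hodge complex formalism (the d\'ecalage defining the weight on hypercohomology), with no need to know that the $E_1$-page is pure or what its summands are. Likewise, the compatibility you flag as the ``main obstacle'' — whether the primitive piece $\PP^J_m$ inherits the lower bound from $\WW^J$ rather than from the finer filtrations $M(K, \WW^J)$ — is a non-issue: Lemma~\ref{ld1} places $\PP^J_m$ inside $Gr^{\WW^J}_m\LL_{Y_J}$, which is already zero once $\WW^J_m = 0$, so nothing further is required. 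In short: same key inequality, reached by a longer road; the paper's direct observation on the weight filtration of the complex is the cleaner argument, and you may safely delete the spectral-sequence and $\PP^J$ scaffolding.
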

Indeed,  $W_k = 0$ on the logarithmic  complex  for
$k \leq a$.
\begin{prop}\label{Ico}  The Intersection complex $(IC(X, \LL), W, F)$
of an admissible variation of MHS,  as an embedded
sub-complex of $(\Omega^*\LL, W, F)$ with induced filtrations, is a
mixed Hodge complex satisfying for all $k$:
 \[(IC^*(Gr^W_k \LL), F) \simeq
 (Gr ^W_k IC^* \LL, F)
 \]
\end{prop}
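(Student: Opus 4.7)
\medskip

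\noindent\textbf{Proof plan.} The strategy is to lift the local decomposition of Corollary~\ref{IC3} to a global statement on $IC^*\LL$, parallel to Theorem~\ref{PPP} for $\Omega^*\LL$, and then read off both the MHC property and the identity $Gr^W_k IC^*\LL\simeq IC^*(Gr^W_k\LL)$ as consequences.

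\medskip

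\noindent\emph{Step 1: the induced weight filtration on $IC^*\LL$.} First I would check that the local prescription $W_r(IC^*\LL)_y$, defined at $y\in Y^*_M$ as the $\Omega^*_{X,y}$-submodule generated by $\widetilde v\otimes \wedge_{j\in J}\tfrac{dy_j}{y_j}$ for $J\subset M$ and $v\in W^J_{r-|J|}(N_J L)$, is independent of the chosen coordinates (same argument as for $IC^*\LL$ itself in Lemma~\ref{ind}, combined with the star-operation argument used to define $W$ on $\Omega^*\LL$). This yields a filtration $W$ by sub-complexes of $IC^*\LL$ whose terms are perverse sub-sheaves, and it coincides with the restriction to $IC^*\LL$ of the weight filtration on $\Omega^*\LL$.

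\medskip

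\noindent\emph{Step 2: globalization of the local pure pieces.} Corollary~\ref{open} asserts that for every $K\subset M$ the data $(N_KL,W^K,F,N_i)$ is an IMHS and that the inclusion $(N_KL,W^K,F)\hookrightarrow (L,W^K,F)$ is strict in the abelian category of IMHS. Applied fibrewise this shows that the subspaces $\NN_K\LL_{Y_K}\subset\LL_{Y_K}$, together with the filtrations $\WW^K$ and the restriction of $F$, form an admissible variation (in the IMHS sense) over $Y^*_K$. Consequently the analytic bundles
\[
\PP^K_k(\NN_K\LL_{Y_K}):=\bigcap_{K'\subsetneq K}\ke\bigl(Gr\,I_{K,K'}\colon Gr^{\WW^K}_k\NN_K\LL_{Y_K}\to Gr^{\WW^{K'}}_{k+|K-K'|}\NN_{K'}\LL_{Y_{K'}}\bigr)
\]
are Deligne extensions of polarized VHS of weight $k$ on $Y^*_K$, exactly as in Theorem~\ref{PPP}(ii). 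Glueing the local decompositions of Corollary~\ref{IC3} (which are canonical, hence coherent with coordinate changes by Step~1 and Lemma~\ref{ind}) and using that perverse sheaves glue, I would then obtain a global decomposition
\[
Gr^W_k IC^*\LL \;\overset{\sim}{\longrightarrow}\;\bigoplus_{K\subset I}(i_{Y_K})_*\,j_{!*}\PP^K_{k-|K|}(\NN_K\LL)[-|K|].
\]

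\medskip

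\noindent\emph{Step 3: identification with $IC^*(Gr^W_k\LL)$ and MHC property.} The filtration $W$ on the variation $\LL$ extends (Section~\ref{ad}) to a filtration by sub-bundles $\WW_X\subset\LL_X$; each $Gr^W_k\LL$ is an admissible polarized VHS of weight $k$ whose local IMHS is $(Gr^W_k L, F_{|}, N_{i|})$ with its own pure nilpotent-orbit data. Running the construction of $IC^*$ for this pure VHS and then applying Corollary~\ref{IC3} in its pure version gives
\[
IC^*(Gr^W_k\LL)\;\overset{\sim}{\longrightarrow}\;\bigoplus_{K}(i_{Y_K})_*\,j_{!*}\PP^K_{k-|K|}(\NN_K Gr^W_k\LL)[-|K|].
\]
The key comparison is then $\NN_K(Gr^W_k\LL_{Y_K})\simeq Gr^{\WW^K}_k(\NN_K\LL_{Y_K})$ together with the analogous identification of the pure subquotients $\PP^K_{k-|K|}$; both follow from the strictness statement in Corollary~\ref{open} applied fibrewise. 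Substituting these identifications in the decomposition of Step~2 yields the claimed isomorphism $(IC^*(Gr^W_k\LL),F)\simeq (Gr^W_k IC^*\LL,F)$. Finally, since each $IC^*(Gr^W_k\LL)$ is a pure Hodge complex of weight $k$ by the pure-coefficient theorem recalled at the start of Subsection~\ref{globe}, the triple $(IC^*\LL,W,F)$ satisfies the axioms of a mixed Hodge complex.

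\medskip

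\noindent\emph{Main obstacle.} The non-formal step is Step~3: the compatibility $Gr^W\circ IC^*\simeq IC^*\circ Gr^W$. It rests on showing that the star-operation filtrations $\WW^K$ on the sub-bundles $\NN_K\LL_{Y_K}$ are strictly induced from those on $\LL_{Y_K}$, so that the pure nilpotent-orbit data extracted by the operators $\PP^K_\bullet$ commute with $Gr^W$. This is exactly where the strictness of morphisms in the abelian category of IMHS (Kashiwara's Proposition 5.2.6, applied in Corollary~\ref{open}) is indispensable; without it the two natural filtrations on $\NN_K Gr^W_k L$ obtained by ``grade then image'' versus ``image then grade'' could fail to coincide.
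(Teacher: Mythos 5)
Your overall strategy — define $W$ on $IC^*\LL$ as the induced filtration, globalize the local decomposition of Corollary~\ref{IC3} along the lines of Theorem~\ref{PPP}, and then compare with $IC^*(Gr^W_k\LL)$ — is in the same spirit as the paper, whose proof is simply the one-line reference to Corollary~\ref{IC3}. Steps~1 and~2 are sound and fill in what the paper leaves tacit. The concern is with Step~3, which is where the entire content of the proposition lives.

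First, there is an index error: the degree-$K$ component of $Gr^W_k IC^*\LL$ carries the filtration shifted by $|K|$, so the relevant graded piece is $Gr^{\WW^K}_{k-|K|}(\NN_K\LL_{Y_K})$, not $Gr^{\WW^K}_k(\NN_K\LL_{Y_K})$. More seriously, the proposed identification $\NN_K(Gr^W_k\LL)\simeq Gr^{\WW^K}_{k-|K|}(\NN_K\LL)$ does \emph{not} follow from the strictness in Corollary~\ref{open}; indeed by the distinguished-pair property (the descent lemma and formula~\ref{DP}) one has instead a direct sum
$Gr^{\WW^K}_{k-|K|}(\NN_K\LL)\simeq \NN_K(Gr^W_k\LL)\oplus \PP^K_{k-|K|}(\NN_K\LL)$,
so the two sides differ by the extra summand $\PP^K_{k-|K|}(\NN_K\LL)$. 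The whole point of Corollary~\ref{IC3}(ii) is that $Gr^W_kIC^*\LL$ decomposes into the intersection complexes of precisely these $\PP^K$'s over the various strata $Y^*_K$; so one cannot simply discard them. What actually has to be proved in Step~3 is that the pure subquotients $\PP^K_{k-|K|}(\NN_K\LL)$ computed from the mixed IMHS $(L,W,F,N_\bullet)$ coincide with the $\PP^K_{k-|K|}(\NN_K Gr^W_k\LL)$ computed from the pure nilpotent orbit $(Gr^W_kL,F,N_\bullet)$, and then that the resulting sums over $K$ match. Strictness in Corollary~\ref{open} gives the well-definedness of the induced filtrations on $\NN_K L$ but not this compatibility of the $\PP^K$-extraction with $Gr^W$; that compatibility instead hinges on the defining property of the relative monodromy filtration $M(\sum_{j\in K}N_j,W)$ — namely that it induces on each $Gr^W_k$ the monodromy filtration of the restricted $N$, centered at $k$ — combined with the formula $M(\sum_{j\in K}N_j,\Psi_K*W)=M(\sum_{j\in K}N_j,W)$ (remark~\ref{key1}). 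That is the genuinely non-formal ingredient, and it is missing from your argument. As it stands, Step~3 asserts the conclusion rather than proving it, and the conflation of ``$\NN_K$ commutes with $Gr^W_k$'' (which is trivial and not what is needed) with ``the decompositions of $Gr^W_kIC^*\LL$ and of $IC^*(Gr^W_k\LL)$ have the same components'' (which is what is needed) is a genuine gap.
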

 \n The proposition follows from corollary \ref{IC3}. 
 In general the
 intersection complex of an extension of two local systems, is not the
  extension of their intersection complex. 
  \subsection
 {The relative logarithmic complex  $\Omega_f^* \LL := \Omega^*_{X/V}(Log Y)\otimes \LL_X$}\label{rl} 

\

Let $f: X \to V$ be a smooth proper morphism of smooth complex algebraic varieties, and let $Y$ be an ``horizontal''  NCD in $X$, that is  a relative NCD  with smooth components over $V$. For each point $v \in V$, the fiber $Y_v$ is a NCD in the smooth fiber $X_v$. In this  case,  the various intersections  $Y_{i_1,\cdots,i_p}$ of $p$- components  are smooth over $V$ and $Y \to V$ is a topological fiber bundle. 

Let $U:= X-Y$, $ j: U \to X$. The  sheaf of modules $i_{X_v}^*\LL_X$  induced on each fiber  $X_v$, by the canonical extension  $\LL_X$,  is isomorphic to the canonical extension $ (i_{U_v}^*\LL)_{X_v}$ of the induced local system $ i_{U_v}^*\LL$. The family of cohomology spaces $H^i(U_v, \LL)$ (resp.$\H^i(X_v, j_{!*}\LL)$) for $v \in V$, form a variation of MHS. 
 The  logarithmic complex  $\Omega_f^* \LL := \Omega^*_{X/V}(Log Y)\otimes \LL_X$  (\cite{H}, section 2.22) satisfy, in the case of an horizontal NCD:
 $ i_{X_v}^* \Omega^*_{X/V}(Log Y)\otimes \LL_X \simeq \Omega^*_{X_v}(Log Y_v)\otimes (i_{U_v}^*\LL)_{X_v}$. 

 When $\LL$ underlies  an admissible graded polarized  variation of MHS $(\LL, W, F)$,  its restriction to 
the open subset $U_v $ in $X_v$  is also admissible. 
 \subsubsection
 {The relative logarithmic complex with coefficients $\Omega_f^* \LL$ for a smooth $f$} 
 
 The  image of the  filtrations  $ W$ and  $F$, by the map $\Omega^* \LL \to \Omega_f^* \LL := \Omega^*_{X/V}(Log Y)\otimes \LL_X$
 \begin{equation*}
\begin{split}
 F:= & \im (R^i f_*F_X \to R^i f_*(\Omega^*_{X/V}(Log Y)\otimes \LL_X)), \\
    W:= & \im (R^i f_*W_X \to R^i f_*(\Omega^*_{X/V}(Log Y)\otimes \LL_X))
\end{split}
\end{equation*}
  define a variation of MHS on  $R^i (f \circ j)_*\LL$ inducing at each point $v \in V$
the corresponding weight $W$  and Hodge $F$ filtrations of the MHS  on
$(\H^i(U_v, \LL), W, F)$. 
 \begin{prop}
i)  The direct image $R^i (f\circ j)_* \LL$ is a local system on $V$ and 
 $$R^i (f\circ j)_* \LL \otimes \OO_V  \simeq   R^i f_*(\Omega^*_{X/V}(Log Y)\otimes \LL_X)$$
ii)  Moreover, the connecting morphism in Katz-Oda's construction \cite{K-O} coincides with the connection on $V$
 defined by the local system $R^i (f\circ j)_* \LL$
 $$ R^i f_*(\Omega^*_{X/V}(Log Y)\otimes \LL_X) \xrightarrow{\nabla_V} \Omega^1_V\otimes  R^i f_*(\Omega^*_{X/V}(Log Y)\otimes \LL_X)$$
 iii)  The filtration $ F$,  is horizontal with respect to $\nabla_V$, while $W$ is locally constant,
 and   $ (R^i (f\circ j)_* \LL,  W,  F)$ is a graded  polarized variation of MHS on $V$.
 \end{prop}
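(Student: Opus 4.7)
The plan is to treat the three parts in sequence, reducing each to a fiberwise statement together with a standard base-change/comparison argument. For part (i), I would first note that because $f$ is smooth and proper and $Y \to V$ is a topological fibre bundle with smooth components over $V$, the restriction $f_{|U}: U = X-Y \to V$ is a topological fibre bundle as well; hence $R^i(f \circ j)_* \LL$ is a local system on $V$, with stalk $\H^i(U_v, \LL)$ at each $v \in V$. For the de Rham comparison, I would use that the logarithmic complex is coherent and $f$-flat, and that $Y$ is horizontal so that $i_{X_v}^*(\Omega^*_{X/V}(\mathrm{Log}\, Y) \otimes \LL_X) \simeq \Omega^*_{X_v}(\mathrm{Log}\, Y_v) \otimes (i_{U_v}^*\LL)_{X_v}$. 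Combined with the absolute comparison $Rj_{v *}(i_{U_v}^*\LL) \simeq \Omega^*_{X_v}(\mathrm{Log}\, Y_v) \otimes (i_{U_v}^*\LL)_{X_v}$ from subsection~4.2, base change in the proper smooth setting yields the fibrewise isomorphism, which by Grauert's theorem globalises to $R^i(f\circ j)_* \LL \otimes \OO_V \overset{\sim}{\to} R^i f_* \Omega_f^*\LL$.

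For part (ii), I would run the Katz--Oda construction \cite{K-O} on the short exact sequence
\begin{equation*}
0 \to f^*\Omega^1_V \otimes \Omega^{*-1}_{X/V}(\mathrm{Log}\, Y) \otimes \LL_X \to \Omega^*_X(\mathrm{Log}\, Y)\otimes \LL_X \to \Omega^*_{X/V}(\mathrm{Log}\, Y)\otimes \LL_X \to 0,
\end{equation*}
where the left term is written using the projection formula. The connecting morphism in hypercohomology gives $\nabla_{\mathrm{GM}}: R^i f_* \Omega_f^* \LL \to \Omega^1_V \otimes R^i f_* \Omega_f^*\LL$, and by a standard local computation one verifies that $\nabla_{\mathrm{GM}}$ is integrable and regular. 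To identify it with the topological connection on $R^i(f\circ j)_* \LL \otimes \OO_V$, I would check that the horizontal sections of $\nabla_{\mathrm{GM}}$ are precisely the sections of the local system: this can be done locally on $V$ by trivialising the bundle and reducing to the case of a trivial family, or equivalently by the standard fact that both the Gauss--Manin and topological connections are the unique flat connections compatible with the de Rham isomorphism on fibres.

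For part (iii), Griffiths transversality $\nabla_V F^p \subset \Omega^1_V \otimes F^{p-1}$ is inherited fibrewise from the Hodge filtration of the relative logarithmic complex: the filtration by $F^p \Omega^*_{X/V}(\mathrm{Log}\, Y) \otimes \mathcal{F}^{p-\bullet}\LL_X$ combined with the Katz--Oda connecting map shifts $p$ down by one because the absolute Hodge filtration involves $\Omega^{\geq p}_X$ and the quotient by relative forms contributes one extra differential factor. Local constancy of $W$ follows because $W$ on $\LL$ consists of sub-local systems, and their direct images under the topological fibration $f_{|U}$ are again sub-local systems. Finally, for each $k$, the graded $\mathrm{Gr}^W_k R^i(f\circ j)_*\LL$ has fibre $\mathrm{Gr}^W_k \H^i(U_v, \LL)$, which by the theorem on the logarithmic MHC (subsection~4.4) is a pure Hodge structure of weight $k+i$; polarisability in the smooth proper family follows from the relative Hard Lefschetz and Poincar\'e duality on the fibres, producing a VHS polarisation that varies with $v$. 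The main obstacle I anticipate is the precise identification of the Katz--Oda connection with the monodromy connection in the presence of logarithmic poles and coefficients in an admissible VMHS, since the usual smooth-proper proof must be adapted to ensure compatibility of the logarithmic filtrations with base change; once this compatibility is in place, the rest of (iii) is a fibrewise application of the results already established in sections~4.3--4.4.
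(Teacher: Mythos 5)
Your overall strategy matches the paper's (which simply cites Deligne \cite{H}, Proposition~2.28 for (i) and Katz--Oda \cite{K-O} for (ii) and leaves (iii) implicit). Parts (i) and (ii) are essentially right, modulo a small imprecision: in (ii) the middle term of the short exact sequence is the Koszul subquotient $F^0/F^2$ of $\Omega^*_X(\mathrm{Log}\,Y)\otimes\LL_X$ (where $F^p = f^*\Omega^p_V\wedge\Omega^{*-p}_X(\mathrm{Log}\,Y)\otimes\LL_X$), not the full absolute complex; the connecting map is the same, so this is cosmetic.

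The genuine gap is in your polarization argument for part (iii). You assert that polarisability of the graded pieces ``follows from the relative Hard Lefschetz and Poincar\'e duality on the fibres.'' But the fibre cohomology in question is $\H^i(U_v,\LL)$ with $U_v=X_v-Y_v$ an \emph{open} variety, and there is no Hard Lefschetz theorem for the cohomology of an open fibre with coefficients in the logarithmic complex. In this paper the polarization on $Gr^W_k\H^i(U_v,\LL)$ arrives by a different mechanism: theorem~\ref{PPP}(iii) decomposes $Gr^W_k(\Omega^*\LL)$ on the compact fibre $X_v$ as a direct sum of intermediate extensions of polarized VHS $\PP^J_{k-|J|}(\LL)$ supported on the strata $Y_J$, and the purity and polarization of the resulting intersection cohomologies is the content of the theorem in subsection~4.4.5 (after Kashiwara--Kawai \cite{K1}). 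Hard Lefschetz and Poincar\'e duality are used to polarize the \emph{local systems} $\LL^i_S$ appearing in the decomposition theorem for a projective morphism (corollary~\ref{Vpolar}), not to polarize the graded pieces of the fibrewise open cohomology in the smooth case. So the conclusion of (iii) is correct, and the rest of your argument (Griffiths transversality from the $F$-shift in the Katz--Oda connecting map, local constancy of $W$ from perversity of the weight subcomplexes and flatness) is sound, but the cited source of polarization is the wrong one.
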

Deligne's proof of (\cite{H}, proposition 2.28) extends in (i), as well the connecting morphism \cite{K-O} in (ii).
\paragraph{}
\begin{rem}
We refer to the remark \ref{Pb} for the case of a ''vertical'' NCD, that is when $f$ is smooth over the complement of a NCD $W \subset V$ and $Y:= f^{-1}W$.
\end{rem}

  \subsubsection
 {The relative Intersection  complex  $IC_f^* \LL$} 

\

We deduce from the bi-filtered complex $(IC^*\LL, W, F)$   the  $i$-th direct image 
$$(\LL^i, F, W):= (R^i f_* j_{!*}\LL, \,  Im \, (R^i f_* F \to R^i f_*  IC^*\LL), Im \,( R^i f_* W \to  R^i f_*IC^*\LL))$$
To prove that 
  {\it the filtrations $W$ and  $ F$  defined  on  $ \LL^i:= R^i f_*j_{!*}\LL$, form a structure  of  a graded polarized variation of MHS on $V$,}
  we consider  the image   complex of  $IC^*\LL$,
  
\n  $ IC_f^*\LL := \im \biggl(IC^*\LL \to \Omega^*_{X/V}(Log Y)\otimes \LL_X \biggr)$ with image filtrations $W  $ and $ F $,
 then:
 
  $ W_V:=  Im ( R^i f_* W \to R^i f_*(IC_f^*\LL))$ is locally constant on $V$, and

  $ F_V:=  Im ( R^i f_*F \to R^i f_*(IC_f^*\LL))$ on $R^i f_*j_{!*}\LL \otimes \OO_V \simeq  R^i f_*(IC_f^*\LL)$ is horizontal with respect to $\nabla_V$. 
  \begin{prop}\label{ri}
  i)  The direct image $R^i f_*j_{!*}\LL$ of the intersection complex is a local system on $V$ and 
 $$R^i f_*j_{!*} \LL \otimes \OO_V  \simeq   R^i f_*(IC^*_f \LL)$$

ii)  The connecting morphism in Katz-Oda's construction  coincides with the connection on $V$
 defined by the local system $R^i f_*\ilm$
 $$ R^i f_*(IC_f^*\ilm) \xrightarrow{\nabla_V} \Omega^1_V\otimes  R^i f_*(IC_f^*\LL)$$

 ii) The  de Rham complex defined by the connection on  $ (R^i f_*IC_f^*\LL,  W_V, F_V)$ underlies  a structure of mixed Hodge complex on $V$ defined by the  variation of MHS induced on  $R^i f_*\ilm$.  
 
 iii) The truncation  filtration $\tau$ induces a filtration on $\H^{i+j}(X, \ilm)$ compatible with the MHS and $Gr^\tau_i \H^{i+j}(X, \ilm) \simeq \H^j(V, R^i f_*IC_f^*\LL)$. 
 \end{prop}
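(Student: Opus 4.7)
The plan is to run the same strategy Deligne uses for the smooth relative logarithmic complex, but applied to the sub/quotient complex $IC_f^*\LL$, and then to promote the fiberwise Hodge theory of $(IC^*\LL,W,F)$ to a relative statement via base change. First, for (i), I would verify that because $Y\to V$ is a fiber bundle with relative NCD fibers, the formation of $IC_f^*\LL$ commutes with base change to each fiber $v\in V$: the natural map $i_{X_v}^*IC_f^*\LL\to IC^*(i_{U_v}^*\LL)_{X_v}$ is an isomorphism (this is a local statement in coordinates adapted to the relative NCD, using the explicit description of $IC^*L$ at a point of $Y_M^*$ from Definition~\ref{def} and Proposition~\ref{IC1}; the defining image subsheaves $N_K L$ are horizontal relative to $V$). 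Combined with proper smooth base change, this gives a canonical isomorphism $(R^if_*IC_f^*\LL)_v\simeq \H^i(X_v, j_{!*}\LL_{|U_v})\simeq (R^if_*j_{!*}\LL)_v$, and since the right side is the stalk of a local system on $V$ (obtained from the topological fiber bundle $(X,Y)\to V$), we conclude that $R^if_*IC_f^*\LL$ is a locally free $\OO_V$-module whose underlying local system is $R^if_*j_{!*}\LL$.

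For (ii), I would use the Koszul filtration on $\Omega^*_X(\mathrm{Log}\,Y)\otimes\LL_X$ coming from $f^*\Omega^1_V$, exactly as in Katz--Oda \cite{K-O}: the short exact sequence
\[
0\to f^*\Omega^1_V\otimes \Omega^{*-1}_{X/V}(\mathrm{Log}\,Y)\otimes\LL_X \to \Omega^*_X(\mathrm{Log}\,Y)\otimes\LL_X / F^2 \to \Omega^*_{X/V}(\mathrm{Log}\,Y)\otimes\LL_X \to 0
\]
has a connecting morphism which, after passing to $R^if_*$, yields the Gauss--Manin connection $\nabla_V$. The key point is that this Koszul filtration restricts to a filtration of $IC^*\LL$ whose graded pieces are $f^*\Omega^p_V\otimes IC^*_f\LL[-p]$: this is again local, and it follows from the fact that the residue/image conditions defining $IC^*$ in Definition~\ref{def} involve only the vertical coordinates along the relative NCD $Y$, so tensoring by $f^*\Omega^p_V$ preserves membership. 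Therefore the same connecting homomorphism applies to $IC_f^*\LL$, and by uniqueness of the regular singular extension it must coincide with the connection on the local system $R^if_*j_{!*}\LL$.

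For (iii), the Hodge filtration $F_V$ is defined as the image of $R^if_*F$ in $R^if_*IC_f^*\LL$, and similarly $W_V$; horizontality of $F_V$ with respect to $\nabla_V$ and Griffiths transversality follow from the fact that $F$ on $\Omega^*\LL$ satisfies Griffiths transversality, combined with the Koszul filtration argument above. Fiberwise, by the base change isomorphism of (i) and the MHC theorem for the absolute case (the theorem preceding Proposition~\ref{Ico}), we obtain the graded-polarized MHS on each stalk $(R^if_*j_{!*}\LL)_v=\H^i(X_v,j_{!*}\LL)$, with local monodromy being the one induced by the topological fibration; admissibility along boundary strata of $V$ (if any compactification is considered) reduces to the absolute admissibility of $(\LL,W,F)$, so this step is mostly bookkeeping.

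Finally, for (iv) (the claim about the truncation $\tau$ on $\H^{i+j}(X,\ilm)$), I would invoke the Leray/$\tau$-spectral sequence
\[
E_2^{i,j}=\H^j(V, R^if_*IC_f^*\LL)\Rightarrow \H^{i+j}(X,\ilm),
\]
identify $R^if_*IC_f^*\LL$ with the direct image in the derived category via the quasi-isomorphism $Rf_*IC^*\LL\simeq Rf_* IC^*_f\LL$ in the smooth case (using that $Y$ is relative NCD), and appeal to the degeneration at $E_2$ via relative Hard Lefschetz (Lemma~\ref{L3}) and Deligne's criterion \cite{DL}, which gives the identification $Gr^\tau_i\H^{i+j}(X,\ilm)\simeq\H^j(V,R^if_*IC_f^*\LL)$. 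The compatibility with the MHS follows because the spectral sequence is a spectral sequence of mixed Hodge complexes: on the absolute side the filtration $(IC^*\LL,W,F)$ gives an MHC, on the relative side we have just established the VMHS structure on $R^if_*IC_f^*\LL$, and the Koszul filtration identifies the two. The main obstacle I anticipate is step (i)/(iii), namely verifying that $IC_f^*\LL$ really commutes with base change to the fibers in a way that preserves both $W$ and $F$ (the subtle issue being the relative monodromy filtration and the star-operation $N*W$ in families), since a blunt application of cohomology-and-base-change needs to be upgraded to the bifiltered setting; this is where the explicit local decomposition in Corollary~\ref{IC3} and the horizontality of the components of $Y$ are crucial.
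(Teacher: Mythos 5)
Your proposal is correct and takes essentially the same route the paper indicates: the paper itself gives no detailed proof here but only sets up the image filtrations $W_V,F_V$ on $R^if_*(IC_f^*\LL)$ and, for the preceding proposition about $\Omega^*_{X/V}(Log Y)\otimes\LL_X$, remarks that ``Deligne's proof of (\cite{H}, proposition 2.28) extends'' along with the Katz--Oda connecting morphism. You have filled in exactly that extension for the sub-complex $IC_f^*\LL$: horizontality of the relative NCD $Y$ lets the fiberwise intersection complex commute with base change (stalk-by-stalk matching via Proposition~\ref{IC1}), the Koszul filtration on $\Omega^*_X(Log Y)\otimes\LL_X$ induced by $f^*\Omega^1_V$ restricts to $IC^*\LL$ with $Gr^p \simeq f^*\Omega^p_V\otimes IC_f^*\LL[-p]$ because the residue conditions are purely vertical, and the fiberwise Hodge theory of $(IC^*\LL,W,F)$ promotes to the relative VMHS and the $\tau$-degeneration via fiberwise Hard Lefschetz on intersection cohomology (which is indeed the absolute input here, used before Lemma~\ref{L3}, so there is no circularity). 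You correctly single out the genuine technical point, namely upgrading coherent base change to the bifiltered setting so that $W_V$ and $F_V$ really induce the fiber MHS; this is precisely where the explicit local decomposition of Corollary~\ref{IC3} and the horizontality of $N_K L$ are needed, and your discussion is consistent with what the paper's construction of $IC_f^*\LL$ as an image presupposes.
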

 
  \section
{Logarithmic Intersection complex for an open algebraic variety}
 Let  $Z  = \cup_{i\in I_Z \subset I} Y_i \subset Y$ be a sub-divisor of $Y$, union of components of $Y$ with index in a subset $I_Z $ of $ I$, 
  $j_Z := (X-Z) \to X$, and $i_Z: Z \to X$.
 
 We construct a logarithmic  sub-complex   $IC^*\LL(Log Z) \subset \Omega^*\LL$  which is a realization of the direct image $ Rj_{Z *}(( j_{!*}\LL)_{\vert X-Z})$ (denoted  $ \Omega^*( \LL, Z)$ in \cite{
C} definition 8.3.31);  
from which we deduce by duality various logarithmic complexes with weight  and Hodge filtrations,  realizing  cohomological 
constructions associated to  $Z$ in $Y$ such as the functors:

\smallskip
 $ j_{Z !} ((j_{!*}\LL)_{\vert X-Z}), Rj_{Z *}( (j_{!*}\LL)_{\vert X-Z}), i_Z^*Rj_{Z *} ((j_{!*}\LL)_{\vert X-Z}), R i_Z^!j_{!*}\LL$ and $ i_Z^*j_{!*}\LL.$

 \smallskip
 
\n  We describe  a structure of mixed Hodge complexes (MHC) on  $IC^*\LL(LogZ)$
 that is $ Rj_{Z *} ((j_{!*}\LL)_{\vert X-Z})$,  and  $i_{Z*}i_Z^*Rj_{Z *} (j_{!*}\LL)_{\vert X-Z}) := i_{Z*}i_Z^*IC^*\LL(LogZ) $ (def . \ref{dual0} and lemma \ref{dual1} on  cohomology of the boundary of a tubular neighborhood  of $Z$).

\subsection
{ $IC^*\LL(Log Z) \simeq Rj_{Z *} ((j_{!*}\LL)_{\vert X-Z})$ }
 
 \

We suppose  $\LL$  locally unipotent as we refer to the  remark \ref{uni} in general.
The  subcomplex of analytic sheaves  $IC^*\LL(Log Z)$   of the logarithmic de Rham complex
 ${\Omega}^*_X (Log Y) \otimes {\LL}_X$ is defined below
 locally at
a point $x \in Y^*_M$  in terms of a set of
  coordinates $y_i$ for $ i\in M$  defining a set of equations of $Y_M$. 

\subsubsection{The Logarithmic Intersection complex: $IC^*L(Log Z)$}

\

Given an IMHS ($L, W, F, N_i, i \in M)$ and a subset $M_Z \subset M$,  we consider  
for each $J \subset M$ the subsets $ J_Z := J \cap M_Z $ and $J'_Z = J - J_Z$ such that $J = J_Z \cup J'_Z$, in particular  $ M'_Z := M - M_Z$.

 The correspondence which attach to each  index $J \subset M$ the subset  $N_{J'_Z}  L $ of $L$,  define a sub-complex $IC^*L(Log Z)$ of $\Omega^*L$  as a sum over $J \subset M$
\begin{equation}\label{lock}
IC^*L(Log Z) :=  s (N_{J'_Z}  L )_{J \subset M}\subset \Omega^*L
 \end{equation}
\begin{ex}
 On the $3$-dimensional  disc $D^3 \subset \C^3$, $Y = Y_1 \cup Y_2 \cup Y_3$  the NCD defined by the  coordinates $y_1y_2 y_3 = 0$; The local system $\LL$ is defined by a vector space $L$ with the action of $3$ nilpotent endomorphisms $N_i$.
  Let $ Z = Y_1\cup Y_2$ be   defined by $y_1y_2 = 0$, we consider 
  ($ L, N_i, i \in [1, 3]$) and $M_Z = \{1, 2 \}$, then $IC^*(Log Z)$  
 is defined by the diagram:
\[\begin{array}{ccccc}
  L \quad & \xrightarrow{N_1, N_2} & L \oplus L & \xrightarrow{N_1, N_2} & L\quad\\
   \downarrow N_3&  & \quad \downarrow N_3&  &  \downarrow N_3 \\
 N_3L & \xrightarrow{N_1, N_2} & N_3L \oplus N_3L
 & \xrightarrow{N_1, N_2} & N_3L\\
\end{array}
\]
with  differentials defined by  $N_i$ with  + or -
sign. Then, we have a quasi-isomorphism $ \R\Gamma (D^3-(D^3\cap Z), \ilm) \simeq  IC^*(Log Z)$.
\end{ex}
\subsubsection{Weight filtration}
\begin{defn}
The filtration $W^J$ on the term $ N_{J'_Z}  L $  of index $J$ in $IC^*L(Log Z) \subset \Omega^*L$ is induced by the embedding  $(N_{J'_Z}  L , W^J) \subset ( L, W^J)$
 
\centerline { $IC^*L(Log Z) :=  s (N_{J'_Z}  L )_{J \subset M}, \,\, W_k IC^*L(Log Z) := s (W^J_{k - \vert J\vert}N_{J'_Z}  L)_{J \subset M}$.}
\end{defn}

As in the case of $\Omega^*L$ and $IC^*L$, to compute $Gr^W_k IC^*L(Log Z) $
we introduce  the data $DR(IC^*L(Log Z):=$

\smallskip
\centerline{$  \{ N_{J'_Z}L, W^J,  F^J, I_{J, K}: N_{J'_Z} L \to  N_{K'_Z}L, N_{J- K}: N_{K'_Z}L \to N_{J'_Z} L\}_{K \subset J \subset M}$} 

\smallskip
 \n  satisfying  various properties as in the subsection \ref{data}, including the properties of distinguished pairs for consecutive terms $ N_i: N_{K'_Z}L \to N_{J'_Z} L$ for $J = K \cup i$
 (corollary \ref{open}).
 Set for each $J \subset M$
 \[ P ^J_k(N_{J'_Z} L):= \cap_{K \subset J, K \not= J}
 \hbox {Ker}\, (Gr I_{J,K}: Gr^{W^J}_k N_{J'_Z}L \to Gr^{W^K}_{k+ \vert J-K\vert} N_{K'_Z}L) \]
 then $ P^J_k(N_{J'_Z}L) \subset Gr^{W^J}_k N_{J'_Z}L$ has pure weight $k$ with respect to
the weight filtration $M(
\sum_{j \in J} N_j, L)$ and it is polarized.
\begin{cor}\label{IC3} 
We have:  $Gr^{W^J}_k N_{J'_Z} L
\overset{\sim}{\longrightarrow}  \oplus_{K \subset J} N_{J-K } P^K_{k + \vert J-K\vert}(N_{K'_Z} L)$,

\smallskip

and  $Gr^W_k(IC^*L(Log Z))\overset{\sim}{\longrightarrow} 
\oplus_{K \subset M} IC(P^K_{k-|K|}(N_{K'_Z}L )[-|K|]$.
\end{cor}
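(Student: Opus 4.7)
The plan is to mimic the argument already used for the analogous decomposition of $\Omega^*L$ in lemma \ref{ld1} and of $IC^*L$ in corollary \ref{IC3}, the only new feature being that the vector space sitting in multi-index $J$ is now $N_{J'_Z}L$ rather than $L$ or $N_J L$. The essential input is corollary \ref{open}, which tells us that the data $(N_{K'_Z}L,W^K,F,N_i,i\in M)$ is an IMHS for every $K\subset M$, that the inclusion $(N_{K'_Z}L,W^K)\subset(L,W^K)$ is strict, and most importantly that for any $K\subset J$ with $J=K\cup\{i\}$, the pair
\[
(N_{K'_Z}L,W^K,F,N_i)\ \xrightarrow{N_i}\ (N_{J'_Z}L,W^J,F,N_i)
\]
is a graded distinguished pair in the sense of \eqref{DP}. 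This supplies exactly the linear-algebra input that drives the decomposition.

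First I would prove (i), the pointwise statement $Gr^{W^J}_k N_{J'_Z}L\simeq\bigoplus_{K\subset J}N_{J-K}P^K_{k+|J-K|}(N_{K'_Z}L)$, by induction on $|J|$. The case $|J|=0$ is tautological since $P^{\emptyset}_k(L)=Gr^W_k L$. For the inductive step, pick $i\in J$, set $K=J-\{i\}$, and apply the distinguished pair splitting from corollary \ref{open}:
\[
Gr^{W^J}_k N_{J'_Z}L\ \simeq\ \operatorname{Im}\bigl(N_i\colon Gr^{W^K}_{k+1}N_{K'_Z}L\to Gr^{W^J}_k N_{J'_Z}L\bigr)\ \oplus\ \ker\bigl(I_i\bigr).
\]
The kernel of $I_i$ must then be further cut down by the other kernels $\ker(I_{J,J-\{j\}})$ for $j\in J$, $j\ne i$, and a diagram chase identical to the one carried out in lemma \ref{ld1} (there sketched in the two-nilpotent case via the commutative identities $I'_2\circ N'_1=N_1\circ I_2$ and $I_2\circ I'_1=I_1\circ I'_2$, which still hold verbatim since $I_{\cdot,\cdot}$ are inclusions and $N_j$ commute) produces the direct sum indexed by $K\subsetneq J$. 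The inductive hypothesis applied to $Gr^{W^K}_{k+1}N_{K'_Z}L$ expands the $N_i$-image summand into the sum indexed by $K'\subsetneq K$, yielding the total decomposition.

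Next I would deduce (ii). Because $IC^*L(Log Z)=s(N_{J'_Z}L)_{J\subset M}$ as a simple complex with differentials given by the $N_j$'s, the filtration $W_k$ is by definition $s(W^J_{k-|J|}N_{J'_Z}L)_{J\subset M}$, so
\[
Gr^W_k IC^*L(Log Z)\ =\ s\bigl(Gr^{W^J}_{k-|J|}N_{J'_Z}L\bigr)_{J\subset M}.
\]
Substituting the pointwise decomposition from (i) and regrouping the sum according to the inner index $K\subset J\subset M$, the $K$-th summand is the simple complex
\[
s\bigl(N_{J-K}P^K_{k-|K|}(N_{K'_Z}L)\bigr)_{K\subset J\subset M},
\]
which is the local model $IC^*\bigl(P^K_{k-|K|}(N_{K'_Z}L)\bigr)[-|K|]$ placed along $Y_K$, by the very definition of the intersection complex of the pure nilpotent orbit $P^K_{k-|K|}(N_{K'_Z}L)$ with its commuting nilpotent operators $N_j$, $j\notin K$ (compare proposition \ref{IC1} and corollary \ref{IC3}). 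Summing over $K\subset M$ produces the claimed formula.

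The main obstacle I expect is bookkeeping of the signs and of the filtration shifts when iterating the distinguished-pair splittings: one must verify that the diagonal terms arising from applying the splitting for $i$ after the splitting for $j$ match those obtained in the opposite order, so that the direct sum really is well-defined and independent of the order of induction. This is guaranteed by the commutation $N_i*N_j*W=N_j*N_i*W$ recalled in subsection~\ref{ad} and by the relations among $I_{J,K}$ and $N_{J-K}$ in the de Rham data $DR(IC^*L(Log Z))$, but writing it out carefully — in particular checking that on the $N_{J'_Z}L$-strata the new terms do not interfere with the pure part $P^K$ — is the only non-formal ingredient.
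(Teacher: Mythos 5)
Your proof is correct and follows the same strategy the paper intends: iterate the distinguished-pair splitting supplied by corollary \ref{open} over the indices $i\in J$, collect the kernels into the pure pieces $P^K$, and then regroup the resulting double sum over $K\subset J\subset M$ into the shifted intersection complexes along each $Y_K$ (exactly as was done for $\Omega^*L$ in lemma \ref{ld1} and for $IC^*L$ in the section-4 corollary). The one small point worth flagging — present already in the paper — is that corollary \ref{open} as literally stated concerns $(N_KL, W^K)$ with matching subscripts, whereas here one needs the distinguished-pair property for $(N_{K'_Z}L, W^K)\xrightarrow{N_i}(N_{J'_Z}L, W^J)$ with $K'_Z$ possibly a proper subset of $K$; this follows by interleaving descent steps (for $i\notin I_Z$) and pure star-operation steps (for $i\in I_Z$, where the vector space is unchanged and only the filtration moves), using the commutativity and strictness established in the proof of corollary \ref{open}, but it is not a verbatim citation.
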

\subsubsection{Global statements} In terms of the diagram  $ (X-Y) \xrightarrow{j'} (X - Z) \xrightarrow{j_Z} X$,  $ j = j_Z \circ j' $ such that  $R j_{Z *}(( j_{! *}\LL)_{\vert X-Z}) \simeq R j_{Z *}  j'_{! *}\LL$. 

 \begin {defn}
[$IC^*\LL(Log Z)$] 
 For $J \subset M$, let $J_Z = J \cap I_Z, J'_Z  = J - J_Z$.
The fiber of $IC^*\LL(Log Z)$ is generated at  $x \subset Y^*_M$
   as an ${\Omega}^*_{X,x}$-submodule,
by the sections $\tilde {v} \wedge_{j \in J} \frac {dy_j}{y_j}$
for   $J
\subset M$ for all $v \in N_{J'_Z} L$.
\end {defn}
 Here $N_{\emptyset} = Id$. In other terms, in degree $k$, for  each subset $K \subset M$ of length $\vert K\vert = k$,  let $K' = (I-I_Z) \cap K$, and  $K_i \subset K'$ denote a subset of length $\vert K_i\vert = k-i$ and  $A := \OO_{X,x}$,  
the subcomplex  $ IC^*\LL(Log Z)_x \subset \Omega^*\LL_x$ is defined with the notations of (\ref{note}), as the $\Omega^*_{X,x}$-submodule:
\begin{equation*}
\oplus_ {|K|=k}
 \biggl(  y_{K'} A \widetilde L  \frac{dy_K}{y_{K}}+ \cdots   + \sum_{K_i \subset K'} y_{K_i} A (\widetilde {N_{K'-K_i}L}) \frac{dy_K}{y_{K}}+\cdots \biggr) 
\end{equation*}
 Next we check the isomorphism  $ (R j_{Z *}  j'_{! *}\LL)_x \simeq IC^*L(Log Z)$ of    
  the fiber at a point $x \subset Y^*_M$. 
 \begin {lem}
  We have:
  $(R j_{Z *} ( j_{! *}\LL)_{\vert X-Z})_x \overset{\sim}{\longrightarrow}  (IC^*\LL(Log Z)_x$
\end{lem}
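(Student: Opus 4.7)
The approach is to use the factorization $j = j_Z \circ j'$ with $j'\colon X - Y \to X - Z$, so that $(j_{!*}\LL)_{|X-Z} = j'_{!*}\LL$ (intermediate extension being local) and the stalk to compute becomes $(Rj_{Z*}\,j'_{!*}\LL)_x$. Proposition~\ref{IC1} applied locally to $X - Z$ with its residual NCD $Y - Z$ gives an explicit model for $j'_{!*}\LL$: at a point lying in the intersection of components indexed by $M'_Z \subset I - I_Z$, the stalk is the $IC^*$-complex built from $L$ using only the endomorphisms $N_j$ with $j \in M'_Z$. The functor $Rj_{Z*}$ is then realized, as in Deligne's logarithmic de Rham theorem, by the logarithmic extension along $Z$: it wedges the local generators of $j'_{!*}\LL$ with differentials $dy_i/y_i$ for $i \in I_Z$, imposing no new constraint on the coefficient $v$. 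At $x \in Y_M^*$ the resulting sub-$\OO_{X,x}$-module of $(\Omega^*\LL)_x$ is therefore generated by $\widetilde v \wedge_{j \in J}(dy_j/y_j)$ with $J \subset M$ and $v \in N_{J'_Z}L$, which matches the defining formula for $IC^*\LL(Log Z)_x$ verbatim.

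To promote this description to a genuine quasi-isomorphism I would exhibit the natural inclusion $IC^*\LL(Log Z)_x \hookrightarrow (\Omega^*\LL)_x = (Rj_*\LL)_x$, verify that it factors through $(Rj_{Z*}\,j'_{!*}\LL)_x$ via the composition $Rj_* \simeq Rj_{Z*}Rj'_*$, and then reduce the proof to showing that the quotient $(\Omega^*\LL)_x / IC^*\LL(Log Z)_x$ is acyclic. Following the strategy used in the proofs of Proposition~\ref{support} and Proposition~\ref{IC1}, I would expand the quotient as an inductive limit of monomial subcomplexes of the form $\Omega^*L\,y^{m.} / IC^*L(Log Z)\,y^{m.}$, indexed by monomials $y^{m.} = y_1^{m_1}\cdots y_{|M|}^{m_{|M|}}$, and verify acyclicity monomial by monomial; coordinate-independence of the statement follows from Lemma~\ref{ind} adapted to this setting.

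The main obstacle is the monomial computation in the mixed case, where one must separate the contributions from $M_Z$ and from $M'_Z$. For a fixed monomial $y^{m.}$ the quotient complex decomposes as a double (Koszul) complex: the $M_Z$-factor, along which no IC constraint is imposed, is a Koszul complex for the operators $(m_i + \alpha_i)\mathrm{Id} - (2\pi i)^{-1}N_i$ acting on the full $L$, and is acyclic whenever some $m_i \neq 0$ with $i \in M_Z$ (by the Koszul argument of Lemma~\ref{sup}); the $M'_Z$-factor imposes the IC constraint $v \in N_{K'_Z}L$ and is acyclic whenever some $m_j \neq 0$ with $j \in M'_Z$, by the argument of Lemma~\ref{IC2}. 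Since every monomial yielding a non-zero contribution to the quotient has positive exponent in at least one direction, at least one factor of the double complex is acyclic, hence the total complex is acyclic, and the required quasi-isomorphism $(Rj_{Z*}\,(j_{!*}\LL)_{|X-Z})_x \simeq IC^*\LL(Log Z)_x$ follows.
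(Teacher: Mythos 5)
Your first paragraph (factor $j = j_Z \circ j'$, identify $(j_{!*}\LL)_{\vert X-Z}$ with $j'_{!*}\LL$, and recognize $IC^*L(Log Z)$ as a double complex with a Koszul factor in the $M_Z$-directions and an $IC$-factor in the $M'_Z$-directions) is the same skeleton the paper uses; the paper phrases the last point as the truncation spectral sequence $E_2^{p,q} = R^p j_{Z*}\,\underline{H}^q((j'_{!*}\LL)_x)$ converging to $H^{p+q}(IC^*L(Log Z))$. So the approach is on the right track.

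The middle paragraph, however, contains a genuine error. You reduce the lemma to showing that the quotient $(\Omega^*\LL)_x / (IC^*\LL(Log Z))_x$ is acyclic. This cannot be right, for two reasons. First, acyclicity of that quotient would say $(IC^*\LL(Log Z))_x \simeq (\Omega^*\LL)_x = (Rj_*\LL)_x$, i.e.\ that $Rj_{Z*}j'_{!*}\LL \to Rj_{Z*}Rj'_*\LL$ is an isomorphism; that is false whenever $j'_{!*}\LL \neq Rj'_*\LL$. Second, the quotient simply is not acyclic: the $r=0$ (constant-coefficient) piece is $s(L/N_{J'_Z}L)_{J\subset M}$, and already for $Z=\emptyset$ this is $\Omega^*L/IC^*L$ with terms $L/N_J L$, which has nonzero cohomology as soon as some $N_j$ is not surjective. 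Consequently your claim that ``every monomial yielding a non-zero contribution to the quotient has positive exponent in at least one direction'' fails for the constant monomial, and the monomial-by-monomial argument collapses.

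What the monomial analysis actually proves (correctly, by combining the two Koszul factors as you describe) is that $(IC^*\LL(Log Z))_x / IC^*L(Log Z)$ is acyclic, i.e.\ the inclusion of the $\C$-model $IC^*L(Log Z) \hookrightarrow (IC^*\LL(Log Z))_x$ is a quasi-isomorphism — the analogue of what Proposition~\ref{IC1} does for $Z=\emptyset$. The remaining step — identifying $IC^*L(Log Z)$ with $(Rj_{Z*}j'_{!*}\LL)_x$ — is not a quotient-acyclicity statement inside $\Omega^*\LL$ at all; it is where the double-complex/spectral-sequence comparison of the paper comes in, and your appeal to ``Deligne's logarithmic de Rham theorem'' does not substitute for it, since that theorem computes $Rj_{Z*}$ of the \emph{full} local system (giving $\Omega^*\LL$), not of the intermediate extension $j'_{!*}\LL$. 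So the two halves of your argument are individually salvageable, but the reduction connecting them is the gap.
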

 We check the proof on the terms of the
spectral sequence with respect to the truncation filtration on $j'_{!*}\LL$, that is  $E_2^{p,q} =  R^p j'_{Z *} H^q ((j'_{!*}\LL)_x)$ (after d\'ecalage in Deligne's notations), converging to $H^{p+q}(IC^*L(Log Z))$.

Indeed, the complex $IC^*L(Log Z)$ may be
   described   as the exterior algebra 
defined by the action  of $N_i, i \in M_Z$ on  the  Koszul complex $ s(N_{J'_Z}  L )_{(J'_Z) \subset (M'_Z)}$. 
That is the simple complex associated to the double complex 
 $ s ( s(N_{J'_Z}  L )_{(J'_Z) \subset (M'_Z)}, N_i)_ {i \in M_Z}$.

\centerline { $IC^*L(Log Z) :=  s ( \Omega (N_{J'_Z}  L, N_i,  i \in M_Z)_ {J'_Z \subset M'_Z} =   \Omega ( IC^* (L, N_i, i \in M'_Z), N_i, i \in M_Z)$.}

\n then the spectral sequence is associated to the double complex.

\smallskip

We define as well the modules $ P^J_k(N_{J'_Z}L) $,  the corresponding global sheaves $ \PP^J_k(\NN_{J'_Z}\LL) $ on $Y^*_M$ and deduce from the above corollary 
\begin{prop}
  The induced filtrations $W$ and $F$  on $IC^*\LL(Log Z)$ form a structure of MHC
 such
that the graded perverse sheaves for the weight filtration,
satisfy the
 decomposition property into intermediate extensions for all $k$ (corollary \ref{IC3})
 \[
 Gr^W_k IC^*\LL(Log Z) \overset{\sim}{\longrightarrow}  \oplus_{J \subset
 I} (i_{Y_{J_Z}})_* j_{!*} \PP^{J}_{k-|J|}(\NN_{J'_Z}\LL)[-|J|].
 \]
 where $j$ denotes uniformly the inclusion of $Y_{J}^*$ into $Y_{J}$
 for each $J  \subset I, Z:= \cup_{i\in I_Z} Y_i,  J_Z := J \cap I_Z$ and $Y_{J_Z}:= \cap_{i \in J_Z} Y_i$.
   In particular, for $J = \emptyset$
 we have $j_{!*}Gr^W_k \LL$, otherwise the summands are
 supported by $Z$.  
 
 A MHS is deduced on the hypercohomology $\H^*(X-Z, j_{!*}\LL)$.
\end{prop}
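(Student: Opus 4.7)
The plan is to globalize the stalkwise decomposition obtained in Corollary \ref{IC3} and to inherit the mixed Hodge complex structure from the ambient bi-filtered logarithmic complex $(\Omega^*\LL, W, F)$ of Theorem \ref{PPP}. The bi-filtered complex $IC^*\LL(Log Z)$ embeds into $\Omega^*\LL$ as a subcomplex with induced filtrations $W$ and $F$, and by construction the analogous decomposition at the level of IMHS is already in place pointwise.

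First, I would globalize the spaces $P^J_k(N_{J'_Z}L)$ into analytic sub-bundles $\PP^J_k(\NN_{J'_Z}\LL)$ on $Y_J^*$. The key input is Corollary \ref{open}, which identifies $(N_{J'_Z}L, W^J, F, N_i)$ as a sub-IMHS of $(L, W^J, F, N_i)$ with strict embedding; combined with the independence from the local coordinates (as in Lemma \ref{ind} and Remark \ref{wloc}), the local data glue into analytic bundles carrying a connection with logarithmic singularities along $Y_J \setminus Y_J^*$. By the same argument as in Theorem \ref{PPP}(ii), these are Deligne extensions of polarized VHS on $Y_J^*$, of pure weight $k$ with respect to $M(\sum_{j \in J} N_j, \LL)$, the polarization being induced by the given one on $L$ via the primitive decomposition in the nilpotent orbit.

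Second, I would assemble the graded decomposition. Corollary \ref{IC3} gives pointwise
\[
Gr^W_k IC^*L(Log Z) \;\overset{\sim}{\longrightarrow}\; \bigoplus_{K \subset M} IC\bigl(P^K_{k-|K|}(N_{K'_Z}L)\bigr)[-|K|],
\]
and the local model of $j_{!*}$ from Proposition \ref{IC1}, applied to the VHS $\PP^K_{k-|K|}(\NN_{K'_Z}\LL)$ on $Y_K^*$, identifies each summand with the stalk of $(i_{Y_{K_Z}})_* j_{!*}\PP^K_{k-|K|}(\NN_{K'_Z}\LL)[-|K|]$ --- the push-forward along $Y_{K_Z}$ reflects the fact that the directions with index in $K_Z$ contribute via $N_{K_Z}L \subset L$, hence produce a summand supported on $Y_{K_Z}$. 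Gluing these local intermediate extensions uses the same argument as in Theorem \ref{PPP}: although the local perverse decomposition is phrased analytically, the $\Q$-structure descends via the BBD gluing results (\cite{BBD} Corollary 2.1.23 and Proposition 2.2.19), or alternatively via Verdier specialization, thereby upgrading the sum of intermediate extensions to a well-defined decomposition of perverse $\Q$-sheaves.

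Third, the MHC axioms for $(IC^*\LL(Log Z), W, F)$ follow. Since each $Gr^W_k$ is a direct sum of intermediate extensions (shifted) of polarized VHS, the purity theorem for the Intersection complex with coefficients in a polarized VHS (cited from \cite{K1}) gives that every $Gr^W_k$ is a Hodge complex of weight $k$; strictness of $F$ on stalks reduces to strictness in the abelian category of IMHS (Kashiwara's Proposition 5.2.6), and the rational structure on $W$ follows as above. A MHS on $\H^*(X, IC^*\LL(Log Z)) = \H^*(X-Z, j_{!*}\LL)$ is then produced by the usual mixed Hodge complex formalism, using the quasi-isomorphism $IC^*\LL(Log Z) \simeq Rj_{Z *}((j_{!*}\LL)_{|X-Z})$ established in the preceding lemma.

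The main obstacle is the compatibility of the gluing in the second step with the rational (perverse) structure: locally one works analytically with spectral decompositions of the monodromy and with the star filtration $W^J$, but the intermediate-extension summands must be defined over $\Q$ and glue to genuine perverse sheaves on $X$. The parallel step in Theorem \ref{PPP} resolves this either via the perverse-sheaf gluing results of \cite{BBD}, or via Verdier specialization \cite{E2}; the present case requires only that the gluing argument commute with the further truncation imposed by restricting to $N_{J'_Z}L$, which is guaranteed because this restriction is itself an operation in the abelian category of IMHS by Corollary \ref{open}.
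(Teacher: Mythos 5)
Your proposal is correct and takes essentially the same route as the paper, which at this point is quite terse: the paper simply introduces the global sheaves $\PP^J_k(\NN_{J'_Z}\LL)$ and asserts that the decomposition "follows from the above corollary," tacitly invoking the same globalization, BBD $\Q$-gluing, and purity arguments already deployed for Theorem \ref{PPP}. Your version correctly fills in the ingredients the paper leaves implicit — notably that Corollary \ref{open} is what guarantees the restriction to $N_{J'_Z}L$ is an operation inside the abelian category of IMHS (giving strictness and independence of coordinates), and that the resulting local intermediate-extension summands glue to perverse $\Q$-sheaves exactly as in the proof of Theorem \ref{PPP}.
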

\begin{rem}
 \label{enlarge}
 For any NCD $Z$ such that $Z \cup Y $ is still a NCD, we may always suppose that $\LL$ is a  variation of MHS on  $X- (Y \cup Z)$ ( by enlarging $Y$)  and consider $ Z$  equal to a union of components of $Y \cup Z$.
 \end{rem}
\subsubsection{Thom-Gysin isomorphism}\label{TG}  
  Let $H $ be a smooth hypersurface intersecting transversally $Y $ such that $H \cup Y $ is a NCD, $i_H: H \to X, \, j_{H}: H- (Y\cap H) \to H$,  then  $ i_H^* j_{!*}\LL$ is isomorphic to  the (shifted) intermediate extension $ (j_{H})_{!*} (i_H^*\LL)$ of the restriction of $\LL$ to  $H$,
  with abuse of notations as we should write $ ((j_{H})_{!*} ((i_H^*\LL)[-1]))[1]$.
 The residue with respect to $H$, $R_H: i_H^* ( IC^*\LL(Log H)/ IC^*\LL) \overset{\sim}{\longrightarrow}  i_H^*IC^*\LL [-1]$ induce an isomorphism, inverse to Thom-Gysin isomorphism 

 \centerline {$ i_H^*j_{!*}\LL  [-1] \overset{\sim}{\longrightarrow}  i_H^!j_{!*}\LL [1] \overset{\sim}{\longrightarrow}   i_H^*(IC^*\LL(Log  H)/IC^*\LL)$.} 
 
  Moreover, if $H $  intersects transversally $Y \cup Z$ such that $H \cup Y \cup Z$ is a NCD,  we have    a triangle
\begin{equation*}  (i_H)_* R i_H^!IC^*\LL(Log Z) \to  IC^*\LL(Log Z) \to IC^*\LL(Log Z \cup H) \stackrel{[1]}{\rightarrow} \end{equation*}
 then we deduce an isomorphism of the quotient complex with the cohomology with support:
 $ (  IC^*\LL(Log Z \cup H)/  IC^*\LL(Log Z)) \overset{\sim}{\longrightarrow} R i_H^!IC^*\LL(Log Z)[1] $, induced by  the connecting isomorphism.
 
  We have also an   
  isomorphism of the restriction  $ i_H^* IC^*\LL(Log Z)$ with  the complex $ IC^* (i_H^* \LL) (Log Z \cap H)$ 
  constructed directly on $H$.  
  
   The residue with respect to $H$:  $IC^*\LL(Log Z \cup H) [1] \to 
 i_{H *}  IC^* i_H^* \LL (Log Z \cap H)$ vanish  on $IC^*\LL(Log Z)$ and induces 
  an inverse to  the Thom-Gysin isomorphism $ i_H^* IC^*\LL(Log Z) \overset{\sim}{\longrightarrow}  
 R i_H^!IC^*\LL(Log Z)[2] $.   The above constructions  are compatible with the filtrations up to  a shift in degrees.  
 
 \subsection{Duality and hypercohomlogy of the link}  The  link  at a point $v \in V$ refers to the boundary of a  ball with center $v$ intersecting   transversally the strata in a small neighborhood of $v$. It is a topological invariant, hence  its hypercohomology is well defined. In the case of a morphism $f:X \to V$, it refers to the hypercohomology of the boundary of a tubular neighboorhood  of $Z := f^{-1}(v)$, that is the inverse image of the link at $v$.
 
  The duality functor  $D$  in the derived category of sheaves of  vector spaces over $\Q$ (resp. $\C$)  (see \cite{BBD}, the references there and subsection \ref {6}),  
   and the cone construction,  are used here to deduce various  logarithmic complexes from the structure of mixed Hodge complex on $ IC^*\LL(Log Z) $. 
 \subsubsection{Duality}
  To develop a comprehensive theory of weights, we need to fix some conventions.
  The fields $\Q$ and $\C$ form a HS of weight $0$. If $H$ is a HS of weight $a$, we write $H(r)$ for the HS of weight $a-2r$.
 
 i) A Hodge complex  $K$ of $\Q-$vector spaces is  of  weight $a$ if its cohomology $H^i(K) $ is  a HS of weight $a+i$.  We set $\Q[r]$ for the complex with $\Q$  in degree $-r$ and zero otherwise, then $H^{-r}(\Q[r]) = \Q $ is a HS of weight $0$, hence $\Q[r]$ must be a HC of weight $r$.
 
 On a smooth proper variety $X$, the  dual of $H^i(X, \Q)$ has weight $-i$, while $H^{2n-i}(X, \Q) = \H^{-i}(X, \Q[2n]) $ has weight $2n-i$. We can write Poincar\'e duality with value in $\Q[2n](n)$ of weight $0$ , such that $H^{2n-i}(X, \Q(n)) = \H^{-i}(X, \Q[2n](n)) $ with weight 
 $-i$ corresponds to the  dual HS on $H^i(X, \Q)$.

  Hence, it is convenient to set $\omega_X := \Q_X[2n](n)$ for the dualizing complex in the category of  sheaves of $\Q-$ HS.  

ii) A  {\it pure variation} of HS $\widetilde \LL $ on a Zariski open subset $U \subset X$ of weight $w
(\widetilde \LL) = b$, has a dual 
 variation $\widetilde \LL^*$  of HS of weight $w (\widetilde \LL^*) = - b$.
   A polarization on  $ \widetilde \LL$ induces an isomorphism defined by the underlying non degenerate bilinear product
 $S: \widetilde \LL  \overset{\sim}{\longrightarrow}  \widetilde \LL^* (-b) $.
 
 As a complex of sheaves,  $\LL := \widetilde \LL[n]$ has weight $w = a = b+n $. 
Its dual in the derived category  $ D \LL := \HH om (\LL, \Q_X[2n] (n)) \overset {\sim}{\longrightarrow}  \widetilde {\LL}^*(n)[n]$,  has weight $w = -b+n-2n= -a$. Then, the  polarization  induces an isomorphism  $S:  \LL  \overset{\sim}{\longrightarrow}  D\LL (-a) $.
This isomorphism extends in the  category of topological constructible sheaves, into:

 $ j_{!*}\LL \simeq  j_{!*}(D \LL(-a) ) \overset{\sim}{\longrightarrow}    D(j_{!*}\LL)(- a):= R \HH om (\ilm, \Q_X[2n])(- a) $. 
\begin{lem}
For $X$ compact and  a shifted polarized  variation of HS  $\LL$ of weight $a$, we have the auto - duality isomorphism with  equal  weights
\begin{equation}
 \H^{i}(X, j_{!*}\LL)^* \overset{\sim}{\longrightarrow}  H^{-i}(X, j_{!*}\LL) (a)
 \end{equation}
  where $ w(\H^i(X,j_{!*}\LL)^*) = -(a+i)$ and $ w(H^{-i}(X, j_{!*}\LL)(a)) = a-i-2a$.
\end{lem}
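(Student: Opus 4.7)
The plan is to combine three ingredients already available: the polarization-induced auto-duality isomorphism $S \colon j_{!*}\LL \overset{\sim}{\to} D(j_{!*}\LL)(-a)$ stated in the paragraph preceding the lemma, the global Verdier duality for the proper map $X \to \mathrm{pt}$ (available because $X$ is compact), and the fact that the mixed Hodge complex structure on $IC^*\LL$ (and the related bifiltered logarithmic complexes of Section~4) endows the hypercohomology with the expected Hodge-theoretic weights, so that all the identifications below are automatically compatible with the MHS.

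First I would apply $R\Gamma(X,-)$ to the polarization isomorphism $S$. Since $j_{!*}\LL$ is obtained as the intermediate extension of $\widetilde{\LL}[n]$ and $S$ comes from the underlying non-degenerate bilinear form on $\widetilde\LL$, it extends to an isomorphism in the derived category $D^b_c(X,\Q)$ and, after taking hypercohomology, gives an isomorphism
\begin{equation*}
\H^i(X, j_{!*}\LL) \;\overset{\sim}{\longrightarrow}\; \H^i(X, D(j_{!*}\LL))(-a).
\end{equation*}
Next I would invoke Verdier duality. For the structure morphism $p \colon X \to \mathrm{pt}$, with $X$ compact (so $Rp_! = Rp_*$), one has $\H^i(X, DK) \simeq \H^{-i}(X,K)^*$ for any $K \in D^b_c(X,\Q)$. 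Applying this with $K = j_{!*}\LL$ yields
\begin{equation*}
\H^i(X, D(j_{!*}\LL)) \;\overset{\sim}{\longrightarrow}\; \H^{-i}(X, j_{!*}\LL)^*.
\end{equation*}
Composing, we get the stated isomorphism, up to the Tate twist by $(-a)$ that one transposes to the right-hand side.

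The weight bookkeeping is straightforward once the MHC structure of Section~4 is in place: by the theorem giving the MHC structure on $(IC^*\LL, W, F)$ (with the convention that a pure VHS of weight $a$ gives rise to a HC of weight $a$), the space $\H^i(X, j_{!*}\LL)$ is pure of weight $a+i$, so its linear dual has weight $-(a+i)$; meanwhile $\H^{-i}(X, j_{!*}\LL)$ has weight $a-i$, which after the Tate twist $(a)$ becomes $a-i-2a = -(a+i)$. The two weights agree, so the isomorphism is one of Hodge structures.

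The main obstacle I anticipate is checking that each step is an isomorphism of Hodge structures and not merely of vector spaces: one needs that $S$ lifts to a morphism of the logarithmic MHC realizing $j_{!*}\LL$ (this is where the polarization, and in particular its compatibility with the filtrations $W$ and $F$ coming from the admissible VHS structure, is used), and that Verdier duality for $X \to \mathrm{pt}$ is compatible with the MHS on both sides. The first point reduces to the local statement that the polarization $S$ induces a morphism of IMHS on the fibres $L = \LL_X(x)$ at points of the NCD, which was already invoked in Section~\ref{ad}; the second is the standard fact that Poincar\'e--Verdier duality on a compact variety is a duality in the category of mixed Hodge structures, which follows from Proposition~\ref{Ico} applied to both $j_{!*}\LL$ and its Verdier dual.
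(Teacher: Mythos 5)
Your proof is correct and takes essentially the same route as the paper: combine the polarization isomorphism $S \colon j_{!*}\LL \overset{\sim}{\to} D(j_{!*}\LL)(-a)$ with global Verdier duality for the proper map $X \to \mathrm{pt}$, then confirm the weight $-(a+i)$ appears on both sides. The paper chains these in the reverse order (starting from $\H^i(X,j_{!*}\LL)^*$ and unfolding), but the content is identical.
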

\n The lemma follows from  the polarization $S: j_{!*}\LL \overset{\sim}{\longrightarrow}  D(j_{!*}\LL)(-a)$ and the duality 
\begin{equation*}
 \H^i(X, j_{!*}\LL)^* = H^{-i}(Hom(\R
\Gamma(X, j_{!*}\LL),
\Q) \overset{\sim}{\rightarrow}   H^{-i}(X, D(j_{!*}\LL))\overset{\sim}{\rightarrow}   H^{-i}(X, j_{!*}\LL)(a)
 \end{equation*}
 iii) \smallskip
  {\it Dual filtrations}.   The   dual of a  variation of MHS $(\widetilde \LL, W, F) $ on  a smooth Zariski open subset $U$, is   a variation of MHS $({\widetilde \LL}^*, W, F)$,  
  with the  filtrations dual to  $W$ and $F$ on $\LL$, such that $Gr^W_{-i}({\widetilde \LL}^*)\simeq 
 (Gr^W_i{\widetilde \LL})^*$.
 
In the bifiltered derived category of an abelian category with a dualizing functor  $D$,
  the dual of a bifiltered complex $(K, W, F)$, is denoted 
 by $(D K, W, F)$      with dual filtrations  defined   by:
 \[   W_{-i} D K:=
  D (K/W_{i-1}),  F^{-i}
D K:= D (K/F^{i+1}) \]
 such that:
$D Gr^W_i K \overset{\sim}{\longrightarrow}  Gr^W_{-i} D K$ and $D Gr_F^i K \overset{\sim}{\longrightarrow} 
Gr_F^{-i} D K$.

 As a complex of sheaves,  $(\LL := \widetilde \LL[n], W, F)$ has  its   weight increased by $n$, such that $ W^i(\LL) :=  W^{i-n}(\widetilde \LL)[n]$, and $ F_i(\LL) :=  F_i( \widetilde \LL)[n]$. Thus the filtration on its cohomology $H^{-n}(\LL)$, decreased by $n$, satisfy $(H^{-n}(\LL), W) = (\widetilde \LL, W)$.
 
The 
 dual in the bifiltered derived category
 
 \smallskip
 \centerline{ $ D (\LL, W, F) := \R
Hom ((\LL, W, F), \Q_U[2n](n)) $,}
  
 \smallskip
\n is a shifted variation of MHS ($ D{\LL}, W, F $) such that:
 $Gr^W_{-i}(D \LL, F)\simeq D (Gr^W_i \LL, F)$.
 
   Thus,   the
 dual of a mixed Hodge complex on a variety $X$  is a MHC.
 
  A graded polarization  on a  variation of MHS is defined by a family of polarizations  
   $S_i$ on $ (Gr^W_i\LL, F) $. It induces isomorphisms:
    
  $$ (Gr^W_i\LL, F)  \overset{\sim}{\longrightarrow}  D (Gr^W_{ i} \LL, F)(-i) \overset{\sim}{\longrightarrow}   (Gr^W_{-i} D \LL, F) (-i)  $$

 \subsubsection
  {Structure of mixed Hodge complex on  $R i_Z^!j_{!*} \LL$, $i_Z^* \ilm$ and $ j_{Z !}( \ilm_{\vert X - Z})$}\label{mc}
  We use the duality $D R i_Z^!j_{!*} \LL \simeq i_Z^*D j_{!*} \LL \simeq i_Z^* j_{!*} \LL(a)$ to deduce the weight filtrations on $i_Z^* j_{!*} \LL$.
   \begin{defn}
  [Dual filtrations] \label{df} 
  Let $\LL $ be a  shifted  polarized  VHS of weight $a$ on a Zariski open subset $U:= X-Y$ where $Y$ is a NCD in $X$ smooth projective, and $Z$ a closed sub-NCD of  $Y$. 

 i) 
 The filtrations  on $R i_Z^!j_{!*} \LL$ are defined by the isomorphism
 
  \smallskip
 \centerline{ $(R i_Z^!j_{!*} \LL, W, F)\simeq i_Z^*(IC^*\LL(Log Z) / IC^*\LL, W, F) [-1]$.}
  such that  $W_i(i_{Z *}R i_{Z}^{!}\ilm) =
  W_{i+1} (IC^*\LL (Log Z)/IC^*\LL)[-1] $. 
 
 \smallskip
 
 ii)   The   filtrations on the  complex $i_Z^*  j_{!*} \LL$ 
  are  deduced by duality  
  
   \smallskip
\centerline{$       (i_Z^*  j_{!*}  \LL, W, F) 
 \simeq D (i_Z^!  j_{!*}  \LL(a), W, F)$}
  
  \smallskip
 iii)   The  filtrations on  $ j_{Z !}  (\ilm_{\vert X - Z}) $  are deduced  by duality:
 
 \smallskip
 
\centerline{$( j_{Z !} ((\ilm)_{\vert X - Z}), W, F)\simeq D ( IC^*\LL(a)(Log Z) , W, F) $} 
\end{defn}
\begin{cor}\label{wi}
The weights $w$ satisfy the following inequalities:
\begin{equation*}
\begin{split}
&w \H^i (X-Z, \ilm) \geq a +i, w \H^i_Z(X, \ilm) \geq a+i\\
 &w \H^i ( Z, \ilm) \leq a+i, w \H_c^i (X-Z, \ilm) \leq a+i
\end{split}
\end{equation*}
\end{cor}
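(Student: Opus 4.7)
The plan is to extract all four inequalities from the bifiltered logarithmic complexes of Definition \ref{df} by combining the decomposition of the associated graded with a formal Verdier duality argument.

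First, I would pin down the key purity input: each building block $\PP^J_m(\NN_{J'_Z}\LL)$ in the decomposition of $Gr^W_k IC^*\LL(Log Z)$ is a pure polarized VHS of weight $m$ with respect to $\MM(\sum_{j\in J}\NN_j,\WW_{Y_J})$. Since $\LL$ is pure of weight $a$, one has $\WW_{a-1}\LL_X = 0$ and $Gr^{\WW}_a\LL_X = \LL_X$, so the relative monodromy filtration starts at index $a$ and hence $\PP^J_m(\NN_{J'_Z}\LL) = 0$ for $m < a$. Applying the decomposition formula from the proposition preceding gives $Gr^W_k IC^*\LL(Log Z) = 0$ for $k < a$.

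Second, the inequality $w\,\H^i(X-Z,\ilm)\geq a+i$ follows by invoking the standard mechanism for mixed Hodge complexes: the spectral sequence of the weight filtration of $(IC^*\LL(Log Z),W,F)$ degenerates at $E_2$, and the vanishing $Gr^W_k = 0$ for $k < a$ forces every subquotient of $\H^i(X-Z,\ilm)$ to be pure of weight $\geq a+i$. Third, for $w\,\H^i_Z(X,\ilm)\geq a+i$, I would apply the same argument to the MHC structure on $i_{Z\,*}Ri_Z^{!}j_{!*}\LL$ obtained in Definition \ref{df} (i) from the quotient $IC^*\LL(Log Z)/IC^*\LL[-1]$, whose graded pieces inherit the vanishing $Gr^W_k = 0$ for $k<a$; the shift by $-1$ in the complex is compensated by the shift $W_i \leftrightarrow W_{i+1}$ in the definition so the bound $\geq a+i$ is preserved.

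Finally, the two upper inequalities follow formally from Verdier duality via Definition \ref{df} (ii)--(iii). Concretely, the isomorphism $i_Z^*j_{!*}\LL \simeq D(Ri_Z^{!}j_{!*}\LL(a))$ of bifiltered complexes gives
\[
\H^i(Z,\ilm)^{*} \simeq \H^{-i}_Z(X,\ilm)(a),
\]
so the lower bound $w\,\H^{-i}_Z(X,\ilm)\geq a-i$ translates, after the Tate twist by $a$ and dualization, into the upper bound $w\,\H^i(Z,\ilm)\leq a+i$. The same recipe applied to $j_{Z\,!}(\ilm_{\vert X-Z}) \simeq D(IC^*\LL(a)(Log Z))$ converts the bound on $\H^i(X-Z,\ilm)$ into the bound on $\H^i_c(X-Z,\ilm)$.

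The only step with real content is the initial one, namely verifying that $\PP^J_m(\NN_{J'_Z}\LL) = 0$ for $m<a$; everything else is a formal consequence of MHC theory (degeneration of the weight spectral sequence) and the duality identifications already set up in Definition \ref{df}. The principal obstacle, if any, is keeping the index shifts consistent between the shift $[-1]$ in Definition \ref{df} (i) and the Tate twists by $a$ in (ii)--(iii), so that the naive bounds $\geq a+i$ and $\leq a+i$ come out with exactly the expected indices.
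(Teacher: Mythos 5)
Your proof reproduces the paper's route: first bound the weights of the mixed Hodge complexes $IC^*\LL(\mathrm{Log}\,Z)$ and of the quotient $IC^*\LL(\mathrm{Log}\,Z)/IC^*\LL$ that realizes $Ri_Z^!\ilm$, then transfer the two upper bounds across Verdier duality using the bifiltered identifications of Definition \ref{df}. The paper states the first two bounds ``by construction'' and then carries out exactly your Tate-twist computation on the graded level, so your elaboration and its conclusion agree with the source.

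One intermediate claim in your justification of $Gr^W_k IC^*\LL(\mathrm{Log}\,Z)=0$ for $k<a$ is not correct as stated. You assert that for $\LL$ pure of weight $a$ ``the relative monodromy filtration starts at index $a$'' and from this deduce $\PP^J_m(\NN_{J'_Z}\LL)=0$ for $m<a$. The relative monodromy filtration $\MM(\sum_{j\in J}\NN_j, W)$ for $W$ concentrated in weight $a$ is the ordinary monodromy weight filtration of $\sum_j N_j$ centered at $a$: it is symmetric about $a$ and extends strictly below $a$ as soon as one of the $N_j$ is nonzero, so it does not ``start at index $a$.'' What actually produces the vanishing is the star filtration $W^J$: from $(N*W)_k = NW_{k+1}+M_k(N,W)\cap W_k$ one gets, by induction on $|J|$, that $W^J_m=0$ for $m<a-|J|$, hence $\PP^J_m \subset Gr^{W^J}_m L$ vanishes for $m<a-|J|$; substituting $m=k-|J|$ in the decomposition formula of Corollary \ref{IC3} makes the whole $J$-summand vanish for $k<a$. (The sharper vanishing $\PP^J_m=0$ for $m<a$, which you invoked, is also true but comes from the primitivity constraint defining $\PP^J_m$, not from the shape of the monodromy filtration.) Once this step is repaired, the rest of your argument --- the degeneration of the weight spectral sequence for the lower bounds and the Tate-twist bookkeeping for the upper bounds --- is sound and parallels the paper's.
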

The weights of  $IC^*\LL(Log Z)$ as a MHC,  are $\geq a$ by construction. The weights of  $R i_Z^!\LL$ as a MHC, are $\geq a$ since the weights of $IC^*\LL(Log Z)/IC^*\LL$ are $\geq a+1$.

\n We use duality computations for the weights of $\H^i ( Z, \ilm)$:

\n $\H^i ( Z, Gr^W_j i_Z^*\ilm) \simeq \H^i ( Z, Gr^W_j D i_Z^!\ilm (a)) \simeq \H^i ( Z, D (Gr^W_{-j} i_Z^!\ilm (a))(-j)) \simeq \H^{-i} ( Z,  (Gr^W_{-j} i_Z^!\ilm (a))(-j)) ^*$ 

\n  where 
$w(Gr^W_{-j} i_Z^!\ilm (a))(-j)) = -a-j +2j$,
  hence $w \H^{-i} ( Z,  (Gr^W_{-j} i_Z^!\ilm (a))(-j)) ^* = a+i -j \leq a+i$ since it vanishes for 
  $j < 0$.
\begin{lem}
 Let $\LL$ be a polarized VHS of weight $a$ on a Zariski open subset $U$ and $Z$ a closed subvariety of  $X$ projective. 

 i) There exists a long exact sequence  of MHS
\begin{equation}
 \cdots
\to \H^i_Z(X, j_{!*} \LL) \to \H^i (X, j_{!*}\LL)
\to \H^i( X - Z,  j_{!*} \LL) \to \H^{i+1}_Z(X, j_{!*} \LL) \to \cdots
\end{equation}
with weights 

\n  $w (\H^i_Z(X, j_{!*} \LL) ) \geq a+i, w (\H^i (X, j_{!*}\LL))  = a +i, w (\H^i( X - Z,  j_{!*} \LL) ) \geq a+i$.

ii) We have a dual exact sequence  of MHS
\begin{equation}
\to \H^i_c ( X-Z, j_{!*} \LL) \to \H^i (X, j_{!*}\LL)
\to \H^i( Z,  j_{!*} \LL) \to \H^{i+1}_c (X-Z, j_{!*} \LL) \to \cdots
\end{equation}
with weights 

\n  $w (\H^i_c (X-Z, j_{!*} \LL))  \leq a+i, w (\H^i (X, j_{!*}\LL) ) = a +i, w (\H^i(Z,  j_{!*} \LL))  \leq a+i$.
\end{lem}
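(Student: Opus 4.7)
The two long exact sequences are the cohomology sequences of the standard distinguished triangles
\[
i_{Z*}Ri_Z^!\,j_{!*}\LL \;\to\; j_{!*}\LL \;\to\; Rj_{Z*}\bigl((j_{!*}\LL)_{\vert X-Z}\bigr) \stackrel{[1]}{\to}
\]
and, by Verdier duality, its dual
\[
j_{Z!}\bigl((j_{!*}\LL)_{\vert X-Z}\bigr) \;\to\; j_{!*}\LL \;\to\; i_{Z*}i_Z^*\,j_{!*}\LL \stackrel{[1]}{\to}.
\]
The key point is to realize these three complexes simultaneously as bifiltered logarithmic complexes so that the connecting maps are morphisms of MHC and the resulting sequences are exact in the category of MHS.

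First I would identify the terms: by the material of section~4, $j_{!*}\LL$ is represented by $(IC^*\LL, W, F)$ (the pure Hodge complex of weight $a$), $Rj_{Z*}((j_{!*}\LL)_{\vert X-Z})$ by the logarithmic intersection complex $(IC^*\LL(\log Z), W, F)$ built in subsection~5.1, and $i_{Z*}Ri_Z^!\,j_{!*}\LL$ by the quotient $(IC^*\LL(\log Z)/IC^*\LL, W, F)[-1]$ (Definition~\ref{df}). The canonical inclusion $IC^*\LL \hookrightarrow IC^*\LL(\log Z)$ is strict for both $W$ and $F$ (this is visible from the local description in subsection~5.1.1, where $IC^*L$ is a direct $W$- and $F$-summand in each degree of $IC^*L(\log Z)$ up to acyclic pieces), so the short exact sequence of bifiltered complexes degenerates into a distinguished triangle of MHC and the associated long exact sequence of hypercohomology is an exact sequence of MHS, proving the first part of~(i).

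Next, for the weight bounds in~(i), I would invoke Corollary~\ref{wi}: the MHC $IC^*\LL(\log Z)$ has weights $\geq a$, hence $\H^i(X-Z, j_{!*}\LL)$ has weight $\geq a+i$; the cone computing $Ri_Z^!\,j_{!*}\LL$ has weights $\geq a$, giving $w\,\H^i_Z(X, j_{!*}\LL) \geq a+i$; and for the middle term I quote the purity theorem for $(IC^*\LL, W, F)$ recalled immediately before Remark~\ref{Pb}, which says that $\H^i(X, j_{!*}\LL)$ is pure of weight $a+i$.

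Finally, part~(ii) follows from part~(i) by Verdier duality on the compact $X$. The polarization of $\LL$ gives an auto-duality isomorphism $D(j_{!*}\LL) \simeq j_{!*}\LL(-a)$ in the bifiltered derived category, exchanging the two triangles above and sending $\H^i_Z \leftrightarrow \H^{-i}(Z,-)^*$ and $\H^i(X-Z,-) \leftrightarrow \H^{-i}_c(X-Z,-)^*$ with the opposite inequalities on weights. The inequalities of~(ii) then follow directly from those of~(i) by dualizing. The main (and only nontrivial) obstacle is verifying that the inclusion $IC^*\LL \hookrightarrow IC^*\LL(\log Z)$ is strictly compatible with $W$ and $F$, so that it underlies a genuine morphism of MHC; this reduces, via the local decomposition of Corollary~\ref{IC3} applied to both complexes, to the compatibility already established in the descent lemma~\ref{open} for the filtrations $W^J$ on the relevant images $N_{J'_Z}L$.
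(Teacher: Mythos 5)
Your proof proposal is essentially correct for the special situation it handles, and it follows the paper's approach: the same triangle of bifiltered logarithmic complexes, the same identification via Definition~\ref{df}, the same invocation of Corollary~\ref{wi} for the weight bounds, and the same appeal to the purity of $(IC^*\LL,W,F)$ for the middle term. One small point: you flag the strictness of $IC^*\LL\hookrightarrow IC^*\LL(\log Z)$ for $W$ and $F$ as ``the main (and only nontrivial) obstacle'', but once one knows the terms are mixed Hodge complexes and the inclusion respects the filtrations, strictness of the induced maps on hypercohomology comes for free from Deligne's theory of MHS. So that is not really where the difficulty lies.

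Where there is a genuine gap is that your argument establishes the lemma only when $X$ is smooth, $U=X-Y$ is the complement of a NCD, and $Z$ is a sub-NCD of $Y$. But the statement is completely general: $X$ is projective (possibly singular), $U$ is any Zariski open subset, and $Z$ is any closed subvariety. None of the tools you invoke --- the logarithmic complex $IC^*\LL(\log Z)$, Corollary~\ref{IC3}, the local descent Lemma~\ref{open} --- apply directly in that generality; they are all built on the NCD geometry. The paper explicitly signals this: it proves the lemma first in the NCD case and then reduces the general case to it by passing through an adapted desingularization $\pi:X'\to X$ (a fibration by NCD over the strata, as in subsection~\ref{gen} and Lemma~\ref{polar}), transporting the MHS on $\H^*(X',j'_{!*}\LL)$ via the decomposition theorem for $\pi$ and the compatibility of the perverse filtration with MHS. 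Without that reduction step your proof does not cover the stated hypothesis; you would need to add it, and in doing so note the slight circularity-avoidance in the paper (the decomposition theorem is applied to the already-proved NCD-fibration case, not to the general morphism whose decomposition is being established).
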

The lemma is proved first for  $X$ smooth projective and $U:= X-Y$ is the complement of  a NCD with $Z$ a sub-NCD of $Y$.

For example, the exact sequence is associated to the triangle $ Ri_Z^!j_{!*} \LL\to j_{!*}\LL
\to Rj_{Z *}( \LL_{\vert X - Z})$ and the inequalities on the weights follow from the corollary above.

 In the general case, the Hodge theory is deduced from the case of  NCD by application of the decomposition theorem as in the subsection \ref{gen}.
\begin{ex}
i)  For a pure $\LL$ of weight $a$,    $i_{Z *}R i_{Z}^{!}\ilm$,  is supported by $Z$, of weights $w_i \geq a $,  and 
 $ i_{Z *} i_{Z}^* \ilm$  of  weights  $w_i \leq a $. 
  The morphism $I$ induces the   morphism:
  $Gr^W_i i_{Z *}R i_{Z}^{!}\ilm \to Gr^W_i \ilm \to Gr^W_i i_{Z *} i_{Z}^* \ilm$ for $i = a$ and $0$ for $i \not= a$. 
  
 ii) A polarized VHS  on $\C^*$ corresponds to the action of a nilpotent endomorphism $ N$ on $L$. The  duality in the  assertion ii) at $0$  between $Gr^W_{-r+1} i_Z^* ( j_{!*}  \LL)) [-1] $ and $Gr^W_r i_Z^*(IC^*\LL(Log Z) /IC^*\LL)$
   corresponds to the duality between $ Gr^W_{-r+1} \ke N$ and $Gr^W_{r-1}L/NL $.
   For higher dimension, one needs to introduce strict simplicial coverings of a NCD to have such interpretation.
\end{ex}
\subsubsection{Structure on the
 Mixed cone.} We set $  W_{j+1}(K[1]) := (W_{j}K) [1]$. The mixed cone over a morphism $f: (K,W, F) \to (K', W', F')$ is the cone $ (C(f), W, F) $ with the corresponding filtrations on $K[1] \oplus K'$. 

 We  construct  a MHC as the mixed cone over a morphism of MHC. Due to the definition of morphisms of complexes up to homotopy, a special attention is needed for the compatibility of the constructions over $\Q$  and $\C$. This  can be achieved by using simplicial coverings
 as in \cite{HIII} (see section \ref{6} in our case). 
 
 \subsubsection
  { Structure of MHC on    $i_{Z * }i_Z^* Rj_{Z*} (( j_{!*}\LL)_{\vert X-Z})$ and  the cohomology of the boundary of a tubular neighborhood of $Z$ }\label{tube}
  
  \
  
Let $\KK := (j_{! *}\LL)_{\vert X-Z}$, we have  the following  triangles 
\begin{equation}
  j_{Z !} \KK \overset{can}{\longrightarrow}  R j_{Z *}\KK {\longrightarrow} 
 i_{Z *}i_Z^* R j_{Z *}\KK, \quad  R i_Z^! j_{! *}\LL \overset{I}{\longrightarrow}  i_Z^*j_{! *} \LL  {\longrightarrow}  i_Z^*  Rj_{Z *} \KK
 \end{equation}
 where $can$ is the composition of $ j_{Z !} \KK \xrightarrow{p }\ilm \xrightarrow{i} R j_{Z *}\KK $ and $I$ is the restriction of the composition of $ i_{Z *} R i_Z^! j_{! *}\LL \overset{q} \to \ilm \overset{r} \to   i_{Z *}  i_Z^*  Rj_{Z *} \KK $. 
 From which we deduce  two descriptions of the complex $i_Z^*  Rj_{Z *} \KK$, as  the restriction of the cone complex over the morphism $can$ or as  the cone over $I$. 
 
 We remark that the morphism $p$ is a morphism of perverse sheaves so that we have a triangle 
 $ \ke \, p  \to j_{Z !} \KK \xrightarrow{p }\ilm \xrightarrow{[1] }$, and  $ \ke \, p \simeq i_{Z *}  i_Z^*  Rj_{Z *} \KK[-1]$. A description of $ \ke \, p$ is given by lifting to a strict simplicial covering of $Z$ in the spirit of Deligne's simplicial construction, which is  dual to 
  the logarithmic construction of $IC^*\LL (Log Z)/IC^*\LL $. In particular we put on $ \ke \, p$ the dual filtrations $W$ and $F$ (subsection \ref{auto} and proposition \ref{App}).
  
  \begin{defn}
[$i_{Z * }i_Z^* Rj_{Z*} (j_{!*}\LL_{\vert X-Z})$]
\label{dual0}
 Let $can$ be defined as a bifiltered morphism of complexes $can:  j_{Z!} (j_{! *}\LL_{\vert X-Z})  {\rightarrow} Rj_{Z*} ( j_{!*}\LL_{\vert X-Z})$ with their filtrations $W$ and $F$.
 The mixed cone $C(can)$  over $can$ is isomorphic to $ i_{Z * }i_Z^* Rj_{Z*}(j_{! *}\LL)_{\vert X-Z}$.
 
 The cone depends only on the neighborhood of $Z$ and not on $X$, and we have a duality isomorphism
$$ i_Z^*Rj_{Z *}(\ilm_{\vert X - Z})[-1] \overset{\sim}{\longrightarrow}  D(i_Z^*Rj_{Z *}(\ilm_{\vert X - Z})).$$
\end{defn}
To realize the morphism $I$, we introduce  the shifted cone $C(i)[-1]$ over the embedding $i: IC^*\LL \to IC^*\LL (Log Z)$. We have an   isomorphism 
  $ C(i)[-1] \simeq  i_{X_v *}R i_{Z}^{!}\ilm $ such that   the projection $C(i)[-1] \to IC^*\LL$ is well defined and   compatible with the filtrations; then $I$ is the composition  with the restriction map  $IC^*\LL  \to i_{Z *} i_{Z}^* \ilm$ (see also definition \ref {dual}).

  Let $B_Z$ denote a small neighborhood of $Z$ in $X$,  then 
 $$\H^i( Z, i_{Z * }i_Z^* Rj_{Z*} (( j_{!*}\LL)_{\vert X-Z})) \simeq  \H^i(B_Z-Z, \ilm) $$
 and we have an exact sequence of MHS
\begin{equation*}
\cdots \to \H^i(X-Z, j_{! *}\LL) \to \H^i(B_Z-Z, \ilm) 
\to \H^{i+1}(X, j_! (j_{! *}\LL_{\vert X-Z})) \to \cdots
\end{equation*}
The MHS on the hypercohomology of $B_{Z}^*:= B_{Z}-Z$ is defined equivalently by the mixed cone over the morphism $can$ or the Intersection  morphism $I$
  \begin{lem}\label{dual1}
There exist an isomorphism of mixed cones $C (I) \overset{\sim}{\longrightarrow}  i_{Z}^* C(can)$  compatible  with both filtrations $W$ and $F$.

\end{lem}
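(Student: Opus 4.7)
The plan is to realize both mixed cones $C(I)$ and $i_Z^* C(can)$ as bifiltered logarithmic complexes and to identify them by an explicit octahedral construction performed at the level of actual filtered complexes, not merely abstractly in the derived category.

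First I would assemble the bifiltered realizations established in sections \ref{4} and \ref{ad}: $(Rj_{Z*}\KK, W, F) \simeq (IC^*\LL(Log\,Z), W, F)$, and $(j_{!*}\LL, W, F) \simeq (IC^*\LL, W, F)$ sits inside it as a bifiltered subcomplex. The shifted cone $C(i)[-1]$ of the embedding $i: IC^*\LL \hookrightarrow IC^*\LL(Log\,Z)$ realizes $i_{Z*}Ri_Z^!\ilm$ bifiltered, by definition \ref{df}(i). Dually, $(j_{Z!}\KK, W, F) \simeq D(IC^*\LL(a)(Log\,Z), W, F)$ by definition \ref{df}(iii), and $i_{Z*}i_Z^*\ilm$ is realized as the bifiltered mixed cone of $p: j_{Z!}\KK \to \ilm$ (compare the discussion preceding definition \ref{dual0}). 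In particular, both $I$ and $can$ become genuine morphisms of bifiltered complexes: $I$ is the composition $C(i)[-1] \xrightarrow{q} IC^*\LL \xrightarrow{r} i_{Z*}i_Z^*IC^*\LL$, where $q$ is the projection from the shifted cone and $r$ is the restriction to $Z$; while $can$ is the composition $j_{Z!}\KK \xrightarrow{p} IC^*\LL \xrightarrow{\rho} IC^*\LL(Log\,Z)$.

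Second, I would apply the octahedral axiom to the factorization $I = r \circ q$. Abstractly, the triangles $C(q) \simeq Rj_{Z*}\KK$ (from $Ri_Z^!\ilm \to \ilm \to Rj_{Z*}\KK$) and $C(r) \simeq j_{Z!}\KK[1]$ (from $j_{Z!}\KK \to \ilm \to i_{Z*}i_Z^*\ilm$) combine to yield a distinguished triangle
\begin{equation*}
Rj_{Z*}\KK \longrightarrow C(I) \longrightarrow j_{Z!}\KK[1]
\end{equation*}
whose connecting morphism $j_{Z!}\KK \to Rj_{Z*}\KK$ is precisely $can$. Hence $C(I) \simeq C(can) \simeq i_{Z*}i_Z^* Rj_{Z*}\KK$, and since $C(I)$ is already supported on $Z$, applying $i_Z^*$ gives the underlying isomorphism $C(I) \overset{\sim}{\longrightarrow} i_Z^* C(can)$. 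The essential point is that at the logarithmic level this octahedral identification is realized by a concrete bifiltered quasi-isomorphism: both mixed cones can be presented as a single bicomplex built from $IC^*\LL(Log\,Z)$, $IC^*\LL$, and the dual complex $D(IC^*\LL(a)(Log\,Z))$, with mutually inverse bifiltered chain maps obtained from $p$, $q$, $r$, and $\rho$.

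The main obstacle is the strictification: one needs all the morphisms involved (in particular $p$ and its dual $r$) to be lifted to genuine morphisms of bifiltered $\Q$-complexes rather than derived-category morphisms, so that the octahedral construction produces a bifiltered quasi-isomorphism and not merely an abstract isomorphism. This is handled by the simplicial covering techniques recalled in section \ref{6}, together with the functoriality of $IC^*\LL(Log\,Z)$ in $Z$ (remark \ref{enlarge}) which ensures that the canonical maps $p$, $\rho$, $q$, $r$ are all simultaneously representable as bifiltered morphisms. Once this is in place, the filtrations $W$ and $F$ are preserved automatically because every triangle used arises from an honest short exact sequence of bifiltered logarithmic complexes.
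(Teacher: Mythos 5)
Your proposal takes essentially the same approach as the paper: both identify $C(I)$ with $i_Z^*C(can)$ by passing through a common comparison object encoding the octahedral identity, with the filtrations $W$ and $F$ tracked by realizing everything at the level of bifiltered logarithmic and simplicial complexes. The paper makes the "single bicomplex" you allude to explicit — it is the mixed cone over $j_{Z!}(j_{!*}\LL)_{\vert X-Z}\oplus i_{Z*}Ri_Z^!\ilm \to \ilm$ — and working directly with this one concrete bifiltered complex subsumes your separate strictification step, since the two projections to $C(I)$ and $i_Z^*C(can)$ are then automatically filtered quasi-isomorphisms as recorded in diagram \ref{dia}.
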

The isomorphism is constructed by comparison with the cone  over the morphism  
$j_{Z!}  ( j_{! *}\LL_{\vert X-Z}) \oplus   i_{Z *}R i_{Z}^{!}\ilm  \to \ilm$, such that we have a diagram with coefficients in $\ilm$ where
 $X^* = X -  Z$
\begin{equation}\label{dia}
\begin{array}{ccccccccc}
 \H^{r }_{Z}(X)&&\to&&\H^{r }(Z)&&&&\H^{r +1}_{Z}(X)\\
 &\searrow&& \nearrow &   & \searrow&&\nearrow &\\
  && \H^{r}(X)&&& &
  \H^{r }(B_{Z}^*)&&\\
  &\nearrow&& \searrow &   & \nearrow&&\searrow &\\
  \H_c^{r }(X^*)&&\to&& \H^{r }(X^*) &&&&\H_c^{r+1 }(X^*)
   \end{array}
\end{equation}
\subsection
{Compatibility of the perverse filtration with MHS}\label{C6}
Let $V$  be a quasi-projective variety and     $K \in D^b_c (V, \Q)$  a complex with constructible cohomology  sheaves. The perverse filtration $\pt$ on $K$ induces a perverse filtration $\pt$
  on the hypercohomology  groups ${\H}^k(V,  K)$.  This topological filtration has an interesting 
 construction described by algebraic-geometric techniques in \cite{DM}. 
 
 We consider 
two  families  of hyperplanes  $\Lambda_*:= \Lambda_i$ and $\Lambda'_*:= \Lambda'_i$  for $ 1 \leq i \leq n$  in  $\P^N$, defining by intersection with $V$ two  families $H_*$  and $W_*$  of increasing closed sub-varieties  on $V$, 
where $H_{-r} := \cap_{1 \leq i \leq r} \Lambda_i \cap V$ and  $W_{-r} := \cap_{1 \leq i \leq r} \Lambda'_i \cap V$:
   $$
   H_*:  V = H_0 \supset H_{-1}  \supset \ldots  \supset H_{-n},\quad W_*:  V= W_0 \supset W_{-1}  \supset \ldots  \supset W_{-n} 
   $$
and $H_{-n-1}= \emptyset =  W_{-n-1} $. Let $h_i: (V- H_{i-1} ) \to V$ with indices in [-n, 0] denote the open embeddings. The following filtration 
\begin{equation}\label{delta}
\delta_p \H^*(V,K) := \hbox{Im} \biggl(\oplus_{i+j = p} \H^*_{W_{-j}}(V,(h_i)_! h_i^* K) \to  \H^*(V,K) \biggr) 
\end{equation}
 is defined in (\cite{DM}, remark 3.6.6), where  the main  result in  (\cite{DM}, Thm 4.2.1) states that for an affine embedding of $V$ into a projective space and for a  generic choice of both families depending on $K$ and the embedding of  $V$, the filtration $\delta$ is equal to the
perverse filtration $\pt$ up to a shift in  indices. 

\subsubsection{Relative case}  
   Let $f: X \to V$ be a projective morphism of complex  varieties where  $V$ is projective. Let $Y\subset X$ be a closed subvariety, $\LL$ a local system on $X-Y$, $j: (X-Y) \to X$,  $W$  a closed subspace of $V$,  $Z := f^{-1} W$  the inverse image of $W$ and $j_Z: (X-Z) \to X$. We  apply the above result in the following cases: $K_1:= Rf_* R(j_Z)_* (j_Z)* \ilm $ (including the case  $Z = Y$  or $ Z = \emptyset) $, and $K_2:= Rf_* R i_Z^! \ilm $ (the cases $Rf_* (j_Z)_! (j_Z)* \ilm $ and $Rf_* (i_Z)^* \ilm $ are dual). 
     
      The   perverse  filtration $\pt$ is defined on  $\H^k(X-Z,  j_{!*}\LL)$   via the isomorphism  $\H^k(X-Z,  j_{!*}\LL) \simeq {\H}^k(V, K_1)$ (resp. on $\H_Z^k(X,  j_{!*}\LL) $ via $\H_Z^k(X,  j_{!*}\LL) \simeq \H_W^k(V, K_2)$). 
   
\subsubsection
{Proof of the proposition \ref{global}}\label{pglobal}
 We suppose $f$ a fibration by NCD over the strata, $Y$  a NCD in $X$ and $\LL$  an admissible variation of MHS on $X-Y$,  such that   
  $Z := f^{-1} W$ and $Z \cup Y$ are NCD and $W$ union of strata.  We consider the  bifiltered logarithmic complex   $K_1 = Rf_* IC^*\LL (Log Z)$ (resp.  $K_2 $ as in definition \ref{df} i).

We check  that the induced filtration ${\pt}$  on $\H^k(X-Z,  j_{!*}\LL)$ (resp.   on $\H_Z^k(X, \ilm)$) is compatible with the MHS, that is a  filtration  by sub-MHS  as stated in the propositions \ref{global}. 
    
Starting with the complex $(IC^*\LL(Log Z), W, F)$ (\ref{note}),  the system of truncations maps $\cdots \to\pt_{\leq -i} Rf_* IC^*\LL(Log Z)\to \pt_{\leq -i+1} Rf_* IC^*\LL(Log Z) \cdots$ is isomorphic in $D^b_c(V, \Q)$ to  a system of inclusions maps $\cdots \to P^i K \to P^{i-1} K \cdots$ where we  suppose $K$, all $P^iK$ and $Gr^i_P K$ are injective complexes (\cite{BBD}, 3.1.2.7), then the filtration $P$ on $K$  is defined up to unique isomorphism in the category of filtered complexes $(D^b_c(V, \Q), F)$. 
 
We need to consider not only a fine Dolbeault resolution 
of  $(IC^*\LL(Log Z), W, F)$ to deal with Hodge theory, but an acyclic resolution for the filtrations $P, W$ and $F$ on the same complex  on $V$. 

Let $(K_1, P, W, F)$ be  a representative complex of $ Rf_*IC^*\LL(Log Z), W, F) $ with acyclic filtrations  on $V$ and where $P$ represents the perverse filtration.
To prove that $R\Gamma( V, Gr^P_iK_1, W, F )$ is a MHC, or $\H^r(V, Gr^P_iK_1, W, F )$ is a MHS, we need to prove that the filtration $\delta$ defined by the formula \ref{delta} above is a filtration by sub-MHS.

\begin{lem}
 i) Let $f: X \to V$ be a fibration by NCD over the strata, and $W$ a closed  subvariety  of $V$  such that $Z := f^{-1}(W)$ is  a NCD in $X$. Then the MHS on $ \H^j( X, IC^*\LL(Log Z))$ (resp. $  \H^j( X, R i_Z^!\ilm)$ is well defined, as well on the terms of the  filtration $\delta$ (\ref{delta}) for $Rf_* IC^*\LL(Log Z)$ (resp. $Rf_* Ri_Z^! \ilm $).  Consequently the perverse  filtration
  $\pt$  on $\H^j(X-Z), \ilm)$ (resp. $\H^j_Z(X, \ilm) $) is compatible with the MHS.
 
 ii) The dual statement  and the corresponding {\em filtration $\delta$} show that the perverse  filtration
  $\pt$  on $\H_c^j(X-Z, \ilm)$ )(resp.  $\H^j(Z, \ilm)$) is compatible with the MHS.
  
  \end{lem}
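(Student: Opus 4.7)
The strategy is to combine the construction of MHS on bifiltered logarithmic complexes from Sections 4--5 with the concrete description of the perverse filtration in \cite{DM} recalled in formula (\ref{delta}), after arranging all the auxiliary hyperplane sections to be transversal to the fibration by NCD.

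First, I would fix the two flags of hyperplane sections $H_* = (H_{-r})$ and $W_* = (W_{-r})$ of $V$ coming from generic hyperplane families $\Lambda_*$, $\Lambda'_*$ in a projective embedding of $V$, generic enough in the sense of \cite{DM} (Thm 4.2.1) so that the associated filtration $\delta$ agrees, up to shift, with the perverse filtration $\pt$ on $\H^*(V, K)$ for $K = Rf_* IC^*\LL(Log Z)$ and $K = Rf_* Ri_Z^!\ilm$. Because $f$ is a fibration by NCD over the strata and all relevant hyperplanes move in base-point free linear systems, I may further impose by Bertini-type genericity that each $f^{-1}(H_{-r})$ and $f^{-1}(W_{-r})$ is smooth, meets every stratum of $\SS$ transversally, and that the total union $Y \cup Z \cup f^{-1}(H_*) \cup f^{-1}(W_*)$ is a normal crossing divisor in $X$. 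This puts the whole setup inside the framework of the bifiltered logarithmic complexes of Section 4.

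Next, I would translate each summand in formula (\ref{delta}) into the hypercohomology of a bifiltered logarithmic complex on $X$. For the complex $K = Rf_* IC^*\LL(Log Z)$, the term $\H^{*}_{W_{-j}}(V,(h_i)_! h_i^* K)$ is isomorphic to $\H^{*}_{f^{-1}(W_{-j})}(X, (j_{H_{i-1}})_! j_{H_{i-1}}^* IC^*\LL(Log Z))$, where $H_{i-1}$ now stands for its preimage in $X$. By the transversality achieved in the previous step, I can iterate the Thom--Gysin and $j_{!}$ constructions of subsection \ref{TG} and definition \ref{df} to produce a canonical bifiltered logarithmic complex that realizes this local cohomology as a MHC, and whose hypercohomology is therefore a MHS. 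The canonical maps and Gysin morphisms that combine these pieces into the image defining $\delta_p$ are all morphisms of MHC (as in the exact sequences of Section 5). Hence $\delta_p \H^*(V,K)$ is, by strictness of morphisms of MHS, a sub-MHS of $\H^*(V,K)$. The case $K = Rf_* Ri_Z^!\ilm$ is treated identically, using the logarithmic realization of $Ri_Z^!\ilm$ from definition \ref{df}(i).

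Invoking \cite{DM} Thm 4.2.1, $\delta = \pt$ up to the shift, so the perverse filtration on $\H^j(X-Z,\ilm)$ and on $\H^j_Z(X,\ilm)$ is a filtration by sub-MHS, proving (i). Part (ii) follows by applying the dual filtration $\delta$ to the Verdier duals $Rf_* j_{Z!}(j_{!*}\LL|_{X-Z})$ and $Rf_* i_Z^* j_{!*}\LL$, whose bifiltered logarithmic realizations are given in definition \ref{df}(ii)--(iii); duality interchanges the roles of $(h_i)_!$ and $R(h_i)_*$, and of support and restriction, and is compatible with MHS in the MHC framework, so the same argument applies.

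The main obstacle I anticipate is the simultaneous genericity: I need the hyperplanes to be generic in the sense of \cite{DM} \emph{and} to have preimages forming NCDs transversal to the stratification so that the logarithmic apparatus of Section 4 applies. Once this transversality is secured (which works because being in general position is an open condition and the fibration-by-NCD hypothesis means generic hyperplanes on $V$ pull back to NCDs on $X$), the identification of each piece of $\delta_p$ with a MHS-carrying hypercohomology of a bifiltered logarithmic complex, and the MHC-compatibility of the morphisms between them, is routine within the framework already developed.
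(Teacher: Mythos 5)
You correctly identify the skeleton of the argument: arrange transversality so the flag pullbacks become part of a large normal crossing divisor in $X$, realize each $\delta$-term $\H^*_{W_{-j}}(V,(h_i)_! h_i^* K)$ as hypercohomology of a bifiltered logarithmic mixed Hodge complex via the Thom--Gysin and $j_!$ machinery of subsection \ref{TG} and definition \ref{df}, and conclude by strictness together with \cite{DM}.

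However, you take the flags in a projective embedding of $V$ and apply \cite{DM} Theorem 4.2.1 directly. As the paper reads that theorem, the equality $\delta = \pt$ is established under the hypothesis of an \emph{affine} embedding of the base, which a projective $V$ cannot have. The paper therefore begins with a reduction you omit: it blows up $W$ to a modification $\pi: \widetilde V \to V$ for which $U := V-W$ embeds as an affine open subset of $\widetilde V$, uses the hypothesis that $Z = f^{-1}(W)$ is a NCD to lift $f$ to $g: X \to \widetilde V$, and only then chooses the two hyperplane families $\Lambda_*$, $\Lambda'_*$ in $\widetilde V$, not in $V$. Working over $\widetilde V$ has the further benefit that $W$ has been turned into a divisor, so the flags can simultaneously be de Cataldo--Migliorini-generic and pull back under $g$ to divisors transverse to $Y \cup Z$ and to every stratum of $\SS$; a flag chosen in $V$ alone has no such control over its interaction with a possibly high-codimension $W$. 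The rest of your plan --- the identification of the $\delta$-summands with logarithmic MHCs and the dual argument for part (ii) --- coincides with the paper's once the reduction is in place. Supplying the blowup and the affine embedding of $V-W$ is the missing step.
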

\begin{proof}
We  reduce the proof to the case of an affine embedding of $V-W$.
Let $\pi: \widetilde V \to V$ be a blowing up of $W$ such that the embedding of $U:= V-W$ in $\widetilde V$ is affine.  Since $f^{-1}(U)$ is a NCD in $X$, the morphism $f: X \to V$ factors as $\pi \circ g$ for $g: X \to \widetilde V$. We apply the lemma to $U$  in $\widetilde V$, to construct the families $H_*$ and $W_*$ as intersection of two general families of hypersurfaces $\Lambda_*$ and $\Lambda'_*$ in $\widetilde V$ such that the union of their inverse image $g^{-1}\Lambda_*$ and $g^{-1}\Lambda'_*$, the  existing NCD $Y$ and $f^{-1}(U)$, form together a large  NCD in $X$. In particular,   the  intersections of the  various inverse  image  $ f^{-1}(H_i \cap W_j ) $  are {\it transversal and intersect transversally}   the various NCD  in $X$.

Then, we consider this large NCD to construct  $IC^*\LL(Log Z) \subset \Omega^*\LL$ with the weight and Hodge filtrations on it.

  The inverse image of $H'_* := g^{-1}(H_*)$ and $W'_* := g^{-1}(W_*)$ 
 are transversal in $X$ and the restrictions
 $i_{H'_i}^*\LL$  are shifted VHS on $H'_i \cap Y$ for various indices $i$,  whenever $\LL$ is a shifted VHS.  Let $h'_i: (X-H'_i) \to X$,  we  construct by the techniques of sections \ref{St1}, \ref{4},  and  Thom-Gysin isomorphisms (subsection  \ref{TG}), bifiltered logarithmic mixed Hodge complexes   $R i_{W_j}^! (h'_i)_!{h'_i}^*\ilm$, such that  the embedding in the formula \ref{delta} is compatible with MHS.  The proof for all other hypercohomology functors is similar.
\end{proof} 
\subsubsection
{The proof of the proposition  \ref{mainp} is similar but does not follow from \cite{DM}}\label{Link1}
The hypercohomology  of  a   tubular neighborhood with the  central  fibre deleted
  $B_{X_v}^* = B_{X_v} - X_v$  with coefficients  in $\ilm$,
  is defined  (definition \ref{dual0}, lemma \ref{dual1}, see also  \ref{dual})   by the cone  over  the morphism 
  $ can : j_{X_v!}  ( j_{! *}\LL_{\vert X-X_v})  \to Rj_{X_v*}  (j_{! *}\LL_{\vert X-X_v})$ which is a realization of   $i_{X_v * }i_{X_v}^* Rj_{X_v*} (( j_{!*}\LL)_{\vert X-X_v})$. 
   
   The mixed cone $ (C(can), W, F)$ is endowed with the filtrations $W$ and $F$ deduced from the filtrations  on its components.

We remark that the preceding characterization  of the perverse filtration, as a filtration $\delta$ (\ref{delta}),
 apply to a punctured ball $B_v - \{v\}$ as $B_v$ is Stein. In fact the preceding lemma 
apply to two hyperplane sections  in general position containing $v$.  As a consequence the filtration $\delta$ on $\H^*(B_{X_v}^*, \ilm)$ coincide up to indices with the  perverse filtration and moreover it consists of sub-MHS.

\subsection
{Complements on Duality }\label{6}

We deduced from $\Omega^*\LL$, by duality, various logarithmic complexes. In  this section,   we develop what we may call duality calculus to  describe the filtered realizations of the complexes $ i_Y^*j_{!*} \LL$,  $Ri_Y^!j_{!*} \LL$,  and $j_{!}{\LL}$ obtained from the filtered structure  of $  \Omega^*\LL$.

 An explicit { \it filtered version of the auto-duality isomorphism of the Intersection  complex }:
$j_{!*}{\LL} \overset{\sim}{\longrightarrow} R Hom(j_{!*}{\LL},\Q_X[2n]) $ is also described (proposition \ref{App}). 

For a  local system $\LL$ with singularities along  a normal crossing divisor $Y$, the dual of  $\Omega^*\LL$ with value in  Grothendieck's residual complex is described, in terms of residues and  Grothendieck's symbols, using the simplicial covering space of $Y$ (formula \ref{formula}).

In  the case of a NCD sub-divisor  $Z\subset Y$, we deduce   from  $i_Z^* \Omega^*\LL(Log Z)$ various logarithmic  complexes needed  in the proof of the decomposition, 
in particular a cone construction in the case of the tubular neighborhood.
The description  presents similarity with Deligne's simplicial techniques in \cite{HII}. 

In order to deduce a MHC by the cone construction over a morphism of MHC, the compatibility between $\Q$ and $\C$ coefficients must be well defined, not only up to homotopy, which is achieved by the simplicial constructions.  

 \subsubsection
{Serre  duality} On a smooth
compact  complex algebraic variety $X$ of dimension $n$,
Serre duality  for  locally free modules, has value in the  
 canonical sheaf of modules $\Omega_X^n$ \cite{Se}. 
 Since  the logarithmic complex $ \Omega^*\LL := \Omega_X^* (Log Y)\otimes_{\OO_X}\LL_X $ consists of locally free sheaves, 
  its dual can be  defined in the following way.
   
Let  $y_1, \ldots,y_p$ denote a subset of a coordinate set of parameters defining an equation   of $Y$ locally  at a point $x$. The ideal $\II_Y $ of $Y$ is generated at $x$ by the product $ (y_1 \ldots y_p) := \II_{Y,x}$. 
The wedge product defines a map
 
 $\II_Y  \Omega_X^j(Log Y)\otimes \Omega_X^{n-j}(LogY) \to  \Omega_X^n$ for all $j$.
  
\n  In each degree the sub-module $ \II_Y  \Omega^i \LL := \II_Y  (\Omega_X^i (Log Y)\otimes_{\OO_X}\LL_X)$ is contained in $ \Omega_X^i \otimes_{\OO_X}\LL_X$;  it is acyclic at points of $Y$  and it  is  a sub-complex of the Intersection sub-complex $IC^*\LL$; 
We use  the product $S: \LL \otimes \LL \to \C$, to define a morphism $\phi:
\II_Y \Omega^j\LL
 \xrightarrow{\phi} \HH om_{\OO_X}  ( \Omega^{n-j}\LL,  \Omega_X^n)$
  as follows: for $ \omega  \in \II_Y  \Omega^j\LL$ and  $  \alpha \in \Omega^{n-j}\LL$
\begin{equation}\label{formule}
  \omega  \otimes \widetilde v  \mapsto 
 \{ \alpha  \otimes \widetilde {v'}  \mapsto  S( v, v')
  \,  \omega \wedge \alpha. \}
\end{equation}
as  $\omega \wedge \alpha \in \Omega_X^n$ for $ \omega  \in \II_Y  \Omega^j\LL$. We check locally term by term, that $\phi$  is an isomorphism (with $\C$-linear differentials and not $\OO_X$-linear). As the modules of the 
logarithmic complex are locally free, the duality isomorphism  apply 
\begin{equation}
 \II_Y  \Omega^*\LL \overset{\sim}{\longrightarrow}   \HH om_{\OO_X}  ( \Omega^*\LL,  \Omega_X^n )) \overset{\sim}{\longrightarrow}   D R j_* \LL \overset{\sim}{\longrightarrow}  j_! \LL.
\end{equation}
Let $i_Y: Y \to X$,  we remark  the following  quasi-isomorphisms:

\centerline {$i_{Y *} i_Y^* Rj_* \LL \overset{\sim}{\longrightarrow}   \Omega^*\LL / \II_Y \Omega^*\LL, \quad
i_{Y *} i_Y^* j_{! *} \LL \overset{\sim}{\longrightarrow}   IC^*\LL / \II_Y\Omega^*\LL$.}
\subsubsection
{Grothendieck-Verdier duality  and the Dualizing complex}
  
  \

 Grothendieck introduced the concept of  residual complex  $K^\b_X $ on complex  algebraic varieties \cite{Gr2, Ha0} to extend  duality theory to the category of coherent sheaves; the complex $K^\b_X $ consists  of injective sheaves. In the case of a smooth $X$, it is an injective resolution of $\Omega_X^n[n]$.  
  
  The complex  $\C[2n]$ is  dualizing in the category of sheaves of $\C$-vector spaces on  $X^{an}$ (the terminology is related to Poincar\'e duality).  Both duality theory,  in the category of sheaves of  modules and in the category of $\C$-vector spaces, are related.
  To explain the relation between  Serre duality and Poincar\'e duality, the category of complexes of $\OO_X$-modules with differential operators  of order $\leq 1$  is used in (\cite{He}, \S 2)  
  where a graded module over the algebra  of differential forms $\Omega_X^*$ (resp. $\Omega_{X^{an}}^*$ in the analytic case) is associated to such  complexes. A basic formula (\cite{He}, proposition (2.9), 3) establishes for each $\Omega_X^*$-graded module $\FF^\b$ an isomorphism
  \begin{equation}\label{gra}
\HH om_{\Omega_X^*} ^k(\FF^\b, \Omega_X^*) \simeq \HH om_{\OO_X}(\FF^{n-k}, \Omega_X^n)
\end{equation}
Then Poincar\'e duality on $X^{an}$ between the  hypercohomology $\H^*(X^{an}, \Omega_{X^{an}}^*)$
  and $\hbox{Ext}^*_{\Omega_{X^{an}}^*}( \Omega_{X^{an}}^*, \Omega_{X^{an}}^*[2n]) \simeq \hbox{Ext}^*_{\OO_{X^{an}}}( \Omega_{X^{an}}^*, \Omega_{X^{an}}^n[n])$ follows from Serre duality at the level of associated spectral sequences  (\cite{He}, Introduction).

As a consequence of Grothendieck's algebraic de Rham theory \cite{Gr}, we may introduce the  algebraic  complex  $\HH om_{\OO_X} (\Omega_X^*, \Omega_X^n[n])$ in duality theory \cite{Ha, E0}, although it is not a resolution  of $\C[2n]$ in the algebraic case. 

The dual of  a complex of $\OO_X$-modules $\FF^\b$ with differential operators of order $\leq 1$, is the complex  $\HH om_{\OO_X}^\b (\FF^b,  K^\b_X)$ with   differential operators of order $\leq 1$, with value in the injective resolution $K^\b_X$ of $\Omega_X^n[n])$. 

 \begin{defn}[Dual complex] \label{dualc} 
 The dual complex of $\Omega^*\LL $ is  the complex with   differential operators of order $\leq 1$ 
 
 $\HH om_{\OO_X}^\b (\Omega^*\LL,  K^\b_X) \simeq R \HH om_{\OO_X}^\b (\Omega^*\LL,  \Omega_X^n[n]) $. 
 
\n  The dual filtrations of $W$ and $F$  are defined with value in $ K^\b_X$.
 \end{defn}
 
 \begin{rem} \label{dualan} 
 i) On the analytic variety $X^{an}$, the complex $\OO_{X^{an}}\otimes K^\b_X$ is dualizing for  complexes of analytic sheaves associated to algebraic sheaves. However, we continue to use the notation $K^\b_X$ in both algebraic and analytic cases.
  A dualizing complex exists for  analytic coherent sheaves \cite{R}, but it is not needed here.
 
 ii)
The relation between the dualizing complex and currents  is discussed in  (\cite{R-G},\S 5). In fact $K_X^\b$ embeds into  
 currents on $X^{an}$(see also  \cite{E3}).
   \end{rem}
  \subsubsection{The simplicial covering of a NCD and the duality map}
  To construct a  duality map  for MHS, we use  a simplicial covering of $Y$.
  
  We attach to $Y$ with normally crossing components $Y_i$ indexed by $i \in I$,  the  strict simplicial covering $Y_\b$
    (resp. $Y^+_\b$)   defined as follows:   $ \pi: Y_\b \to X$ is indexed by  $\N^*$ such that $Y_n := \coprod Y_A,  A \subset I, \vert A \vert = n, n \not =  0$ (hence $A \not= \emptyset $) where $Y_A := \cap_{i\in A}Y_i$, with  connecting morphisms   defined by  the natural embeddings for each inclusion $A \subset B \subset I$; 
resp. we add $ Y_0 := Y_\emptyset := X$ with index $0$ and write $ \pi_+: Y^+_\b \to X$.
 For example, in the case of  two components:
   $Y_i \cup Y_j$,  $Y^+_\b$ = $( Y_2 :=  Y_{i,j} \to Y_1:= Y_i \amalg Y_j \to  Y_0 := X )$.  

\n A complex $K \in D ( X, \Z) $ on $X$, lifts to a simplicial complex $ \pi^*  K $ on $Y_\b$
(resp. $ \pi_+^*  K $ on $Y_\b^+$). We write $\pi_* \pi^*  K$ or  $s(\pi^* K)$ (resp. $\pi_{+ *} \pi_+^*  K$ or  $s(\pi_+^* K$)  for the simple associated complex, then we have a quasi-isomorphism   $ i_Y^* K  \overset{\sim}{\longrightarrow}   s(\pi^* K) $  (resp. $  j_! (K_{\vert X-Y})  \overset{\sim}{\longrightarrow}  s(\pi_+^* K)$ where $ j: (X-Y) \to X$).
In some sense, simplicial coverings correspond in topological terms, 
to simplicial resolutions of complexes.
\begin{defn}\label{point}
 The  simplicial topological inverse image by  $\pi_+^*$ (resp. $\pi^*$) of  $  \Omega^*\LL$ (resp. $  IC^*\LL$) is denoted by  $\Omega^*\LL_{\vert Y^+_\b} $   on $Y^+_\b$
(resp.   $IC^*\LL_{\vert Y^+_\b} $ on $Y^+_\b$, and  $\Omega^*\LL _{\vert Y_\b}$,   $IC^*\LL _{\vert Y_\b}$ on $Y_\b$).
 The  direct images by $\pi_{+ *}$  on $X$ (resp. $\pi_*$ on $Y$) are: 
 \begin{equation*}
  \begin{split}
\Omega_+\LL(\b) := s(\Omega^*\LL_{\vert Y^+_\b}) \simeq  j_!\LL, &\quad  IC_+\LL(\b) := s(IC^*\LL_{\vert Y^+_\b})\simeq   j_!\LL\\
 \Omega\LL(\b) := s(\Omega^*\LL_{\vert Y_\b})\simeq   \Omega^*\LL_{\vert Y}, &\quad
   IC\LL(\b) := s(IC^*\LL_{\vert Y_\b})\simeq   IC^*\LL_{\vert Y}.
\end{split}
\end{equation*}
\end{defn}
For example, in dimension $2$, $ \Omega\LL(\b)$ is defined by the double complex
 
\n $\Omega^*\LL_{\vert  Y_1} \oplus \Omega^*\LL_{\vert Y_2} \to \Omega^*\LL_{\vert Y_{1,2}}$, while $\Omega_+\LL(\b)$   is defined by $ (\Omega^*\LL \to \Omega\LL(\b)[-1])$.

We have an exact sequence: $ 0 \to  IC\LL(\b)_{\vert Y}[-1] \to  IC_+\LL(\b) \to   IC^*\LL \to 0$.

 \subsubsection
{Duality map and  Grothendieck's symbols}\label{symbol}
  
 We  construct a dualizing map with value in the residual complex by local computations with Grothendieck's symbols
 \begin{equation}\label{duall} 
 \Omega_+\LL (\b) := s(\Omega^*\LL_{\vert Y^+_\b}) \xrightarrow{\phi}  \HH om^\b_{\OO_X}  ( \Omega^*\LL, K_X^\b).
  \end{equation}
    Let $z$ denotes a 
 generic point of an irreducible sub-variety $Z$ of codimension $i$ in $X$. The module of local cohomology  $H_z^i (\Omega_X^n)$ defines a constant sheaf  on $Z$ denoted by  $i_zH_z^i (\Omega_X^n)$,
then $K_X^i =  \sum_{{\rm codim}. Z = i}  i_zH_z^i (\Omega_X^n)$ in degree $i$. 

A set of local equations  $ y_j$ for $ j \in [1,i]$ of $Z$ at $z$ generates the ideal of $Z$ at $z$, so that the  local cohomology group  is computed by an inductive limit of Koszul resolutions defined by various $m-$th powers of $ y_j$, hence an element $\overline w \in H_z^i (\Omega_X^n)$ is written  as a Grothendieck symbol  $ \left[
\begin{array}{c}
\omega   \\
  y_1^m,\ldots,y_i^m   
\end{array}
\right] $. 

Since $IC^*\LL$ consists of modules which are not  locally free, the dualizing map has value in 
the injective resolution $K_X^\b$ of $ \Omega_X^n[n]$.  In the following computation, we need only the terms with value  in the subcomplex ${\underline \Gamma}_Y K^\b_X$ with support in the algebraic divisor  $Y$.
 A duality calculus based on Grothendieck's  symbols \cite{Gr2, Ha0, E0}  leads to the construction of  the dual weight filtrations and the duality map $\phi$ (formula \ref{formula} below). 

 Let $x_0 \in X$ be the generic point of $X$,  the wedge product defines a map :
 $ \Omega_X^i(Log Y)\otimes \Omega_X^j(LogY) \to  i_{x_0}\Omega_{X,x_0}^{i+j}$ for $i+j = n$ with value in the constant sheaf of rational $n$-forms. 
 We define the duality map as:
\begin{equation}\label{for}
\phi:  \Omega^*\LL \otimes_{\OO_X} \Omega^*\LL \to  i_{x_0}\Omega_{X,x_0}^*[n]:  (\omega  \otimes \widetilde v ) \otimes (\alpha  \otimes \widetilde {v'})  \mapsto  S( v, v')
 \omega \wedge \alpha  
\end{equation}
where the product $S: \LL \otimes \LL \to \C$ is defined by the polarization, and $\omega $ and  $ \alpha$ have logarithmic singularities.  This map is compatible with the differentials of forms, as :
$d ( S( v, v')
 \omega \wedge \alpha ) =  S( v, v')
( (d \omega) \wedge \alpha  + (-1)^i 
 \omega \wedge (d \alpha) $ since $ S( v, v')$ is constant as $v$ and $ v'$ are flat.
In the analytic setting,  we consider  the complex of analytic sheaves $K^\b_X \otimes \OO_{X^{an}}$ as a dualizing complex on $X^{an}$. As the NCD $Y$ is algebraic, the denominator terms are algebraic  and $\OO_{X^{an}}$ is flat over $\OO_X$.
  For example on $X^{an}$, we use the sheaf  $\OO_{X^{an}}\otimes i_{x_0}\Omega_{X,x_0}^*$, instead of  $i_{x_0}\Omega_{X,x_0}^*$. But we omit the corresponding notation for $X^{an}$.

However, to construct a map:
$  \Omega^*\LL \otimes_{\OO_X} \Omega^*\LL   \to  K^\b_X $, we view $ \Omega_{X,x_0}^n $
as the term $ H^0_{x_0}(\Omega^n_X)$ of the complex $K_X^\b $. This map is no more compatible with the total differential since  we introduce  the differential $d_1:  H^0_{x_0}(\Omega^n_X) \to \oplus_{y_i} H^1_{y_i}(\Omega^n_X)$ where the codimension of the closure $Y_i$ of $y_i$ is  $1$.

 For example, in the case of 
$
\Omega^i\LL
 \xrightarrow{\phi} \HH om_{\OO_X}  ( \Omega^{n-i}\LL,  \Omega_{X,x}^n)$

$d_1 (\omega \wedge \frac{d y_{i_1}}{ y_{i_1}}\wedge \cdots \wedge \frac{d y_{i_k}}{y_{i_k}}) = \oplus_{s}  \left[
\begin{array}{c}
(-1)^s  \omega \wedge \frac{d y_{i_1}}{ y_{i_1}}\wedge \cdots d y_{i_s} \cdots \wedge \frac{d y_{i_k}}{y_{i_k}}   \\
  y_{i_s}   
\end{array}
\right] $

(In comparison with  the case of currents instead of  $K_X^\b$  (\cite{Dol}),  the defect of compatibility is corrected  there by the residue in the formula 
  $\phi (D \omega) - D \phi (\omega) = \phi (Res \omega)$ (\cite{Dol} formula 9)). 

 We  extend the map  $\phi$ to the terms $\Omega^*\LL_{\vert Y^+_\b}:= \pi_+^*(\Omega^*(LogY)\otimes_{\OO_X}\LL_X)$ on  the semi-simplicial covering $ Y^+_\b$. For  sections $\alpha  \in \Omega^i\LL_{\vert Y_{i_1, \ldots, i_k}}$ and $\beta \in \Omega^{n-i}\LL$, 
we use in the next formula, the symbols in $ Ext_{\OO_X}^k (\OO_{Y_{i_1, \ldots, i_k}}, \Omega^*\LL)_{y_{i_1, \ldots, i_k}} $
 considered  as a subset of  $H^k_{y_{i_1, \ldots, i_k}}(\Omega^*_X)$ where $y_{i_1, \ldots, i_k}$ is a generic point of a component of $\cap_j Y_{i_j}$ and $y_{i_1} \cdots y_{i_k} \alpha \wedge \beta$ is regular at $y_{i_1, \ldots, i_k}$:
 \begin{equation}\label{formula}
\begin{split}
 ( \Omega^i\LL)_{i_1, \ldots,i_k}& \otimes_{\OO_X}  \Omega^{n-i}\LL  \xrightarrow{\phi_{i_1, \ldots, i_k}}  H^k_{y_{i_1, \ldots, i_k}}(\Omega^n_X) \\
  \alpha \otimes \beta & \mapsto \epsilon(i_1, \ldots, i_k) \left[
\begin{array}{c}
y_{i_1} \cdots y_{i_k} \alpha \wedge \beta  \\
  y_{i_1}, \cdots, y_{i_k}   
\end{array}
\right]  \in H^k_{y_{i_1, \ldots, i_k}}(\Omega^n_X).
\end{split}
\end{equation}
 that is the class of  $y_{i_1} \cdots y_{i_k} \alpha \wedge \beta \in \Omega^n_{X, y_{i_1, \ldots, i_k}}$  with value in the module of $n$-forms at the generic point $y_{i_1, \ldots, i_k}$ modulo the sub-module generated by $y_{i_1}, \cdots , y_{i_k}$. This class is   also called $ Res_{Y_{i_1, \ldots, i_k}}(\alpha \wedge \beta )$.

  We deduce from the above, an extension of  $\phi$ (formula \ref{formula}) to $  \Omega_+\LL (\b)$ on $X$ (definition \ref{point}),  
 defining the duality isomorphism  of complexes
  \begin{equation}\label{dual11}
  \begin{array}{ccc} 
   \Omega_+\LL(\b) &\xrightarrow{\phi}  & \HH om^\b_{\OO_X}  (\Omega^*\LL, K_X^\b)\\
   \downarrow \simeq&  & \downarrow \simeq \\
 j_!\LL& \overset{\sim}{\longrightarrow}& D_X R j_* \LL\\
\end{array}
\end{equation}
 \subsubsection
 {The complex $\ke \,\phi (\b)$ kernel of $\phi$ and the auto-duality of $j_{!*}\LL$}\label{auto}

We   construct a filtered version of  the triangles of perverse sheaves 

\smallskip 
\centerline {$i_{Y *} i_Y^*\ilm[-1] \to j_!\LL \to D \ilm$ dual to $IC^*\LL \to \Omega^*\LL \to \Omega^*\LL / IC^*\LL$}

\smallskip 
\n We deduce from  the  duality map $\phi$  an induced map
\begin{equation}\label{phi2}
 IC_+\LL (\b) := s(IC^*\LL_{\vert Y^+_\b}) \xrightarrow{\phi}  \HH om^\b_{\OO_X}  ( IC^*\LL, K_X^\b).
 \end{equation}
 The map $\phi$ does not vanish on the subcomplex $s(IC^*\LL_{\vert Y_\b})[-1] \simeq i_Y^* IC^*\LL [-1]$. 
 The kernel of $\phi $, denoted  $\ke \,\phi (\b)[-1]$   is a sub-complex  of  $s(IC^*\LL_{\vert Y_\b})[-1]$. 
 \begin{prop}\label{App}
[$i_Y^* IC^*\LL\simeq \ke \,\phi (\b)$ and  auto-duality]\label{autodual}
\

\n i) The kernel of $\phi $ (formula \ref{dual11}, \ref{phi2},  \ref{formula}) satisfy

\smallskip 
\centerline { $\ke \,\phi (\b)\overset{\sim}{\longrightarrow}s(IC^*\LL_{\vert Y_\b})
\overset{\sim}{\longrightarrow}  i_Y^* IC^*\LL$.}

and  $\ke \,\phi (\b) [-1]$ is perverse.

\n ii)  We have a filtered auto-duality isomorphism induced by $\phi$: 
 \begin{equation}\label{phi1}
 \begin{split}
  IC^*\LL \overset {\sim}\rightarrow  IC_+\LL(\b)/&\ke \,\phi (\b)[-1]  \xrightarrow{\phi } \HH om^\b_{\OO_X}  ( IC^*\LL, K_X^\b)  \\
\ke \,\phi (\b) [-1] \xrightarrow{\phi }&  \HH om^\b_{\OO_Y}  ( \Omega^*\LL/IC^*\LL, K_Y^\b)
  \end{split}
\end{equation}
where $K_Y^\b :=  \HH om^\b (\OO_Y, K_X^\b)$ is the dualizing complex on $Y$.
\end{prop}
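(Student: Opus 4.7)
\textbf{Proof plan for Proposition \ref{App}.} The argument is local at a point $x \in Y^*_M$, so I reduce at once to the case where $Y$ has $n$ components meeting at $x$ and work with the IMHS data $(L,W,F,N_i,i\in M)$. I exploit the explicit local descriptions of $\Omega^*L$ and $IC^*L$ (Propositions \ref{support} and \ref{IC1}), the simplicial quasi-isomorphism $IC_+\LL(\b)\overset{\sim}{\to} j_!\LL$, and the formula \eqref{formula} for $\phi$. The proof naturally splits as (i) identifying $\ke\phi(\b)$ and (ii) deducing both duality isomorphisms from the factorization $IC_+\LL(\b)\twoheadrightarrow IC^*\LL \hookrightarrow \HH om^\b_{\OO_X}(IC^*\LL,K_X^\b)$.

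For (i), I decompose a local section $\alpha\in(IC_+\LL(\b))_x$ according to the simplicial degree: a component at $Y_J$ is a section $\alpha_J$ of $i_J^*IC^*\LL$. Its image under $\phi$ pairs with $\beta\in IC^*\LL$ by taking the Grothendieck symbol $[y_J\alpha_J\wedge\beta\,|\,y_{i_1},\dots,y_{i_k}]$ at the generic point of $Y_J$. On the top stratum ($J=\emptyset$), this is just $\alpha\wedge\beta$ in $i_{x_0}\Omega_{X,x_0}^n$, and nondegeneracy of the restriction of $S$ to the generic fibre forces this component of $\ke\phi$ to vanish. On a stratum $Y_J$ with $|J|\geq 1$, the generators of $IC^*L$ at $x$ contributing to $\alpha_J$ lie in $\widetilde{N_J L}\cdot\tfrac{dy_J}{y_J}\wedge(\cdots)$, and the fact that each $N_j$ is an infinitesimal isometry for $S$ (giving $S(N_J v,v')=\pm S(v,N_J v')$) together with the description $IC^*L^{\e(\alpha.)}=s(N_{J\cap M(\alpha.)}L^{\e(\alpha.)})$ shows that the residue pairing into $H^k_{y_J}(\Omega^n_X)$ vanishes identically for \emph{all} such $\alpha_J$ against every generator of $IC^*\LL$. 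Hence the entire simplicial subcomplex $s(IC^*\LL_{|Y_\b})\subset IC_+\LL(\b)$ lies in the kernel, and the computation in the previous sentence shows the kernel is exactly this subcomplex. This gives $\ke\phi(\b)\overset{\sim}{\to}s(IC^*\LL_{|Y_\b})\overset{\sim}{\to}i_Y^*IC^*\LL$; the perversity of the shift $\ke\phi(\b)[-1]$ then follows from the general perversity of the cone of $j_!\LL\to j_{!*}\LL$, which in the simplicial realization is precisely this shifted kernel.

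For (ii), the exact triangle $IC\LL(\b)[-1]\to IC_+\LL(\b)\to IC^*\LL\xrightarrow{[1]}$ (Definition \ref{point}) combined with the identification $\ke\phi(\b)\simeq IC\LL(\b)$ gives a canonical quasi-isomorphism $IC_+\LL(\b)/\ke\phi(\b)[-1]\overset{\sim}{\to}IC^*\LL$. The map $\phi$ descends through this quotient to $\HH om^\b_{\OO_X}(IC^*\LL,K_X^\b)$, and on the top component it agrees with the Serre pairing via $S$ (formula \eqref{for}), whose composition with the inclusion $\II_Y\Omega^*\LL\hookrightarrow IC^*\LL$ is the isomorphism \eqref{formule} established at the beginning of Section~\ref{6}. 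A local check using the generating sections $\widetilde{N_KL}\cdot dy_K/y_K$ of $IC^*L$ and the nondegeneracy of the polarizations on the primitive pieces $P^K_k(N_{K'_Z}L)$ (Corollary \ref{IC3}) then shows the induced map is an isomorphism in every degree. The complementary duality $\ke\phi(\b)[-1]\xrightarrow{\phi}\HH om^\b_{\OO_Y}(\Omega^*\LL/IC^*\LL,K_Y^\b)$ is obtained by restricting $\phi$ to the kernel: the target is identified using the local description $(\Omega^*\LL/IC^*\LL)_x=\bigoplus_K L/N_KL\cdot dy_K/y_K$ and the isomorphism $K_Y^\b=\HH om^\b(\OO_Y,K_X^\b)$, and the induced pairing between $N_KL$ and $L/N_KL$ is nondegenerate by the same infinitesimal isometry argument.

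The main obstacle is verifying that both duality isomorphisms respect the filtrations $W$ and $F$. For $F$, this is automatic since $\phi$ is defined by the wedge pairing on differential forms. For $W$, I use Theorem \ref{PPP} (local decomposition of $Gr^W_k$ into intermediate extensions of polarized VHS $\PP^J_k$) together with Corollary \ref{IC3} to reduce to checking that on each graded piece the induced pairing is the Poincaré duality of an intermediate extension of a polarized VHS on $Y_J$—which is precisely the polarization $S_k$ on the primitive part $P^J_k$. A secondary technical point is to pin down the simplicial combinatorics of signs in the Grothendieck symbols \eqref{formula} so that the passage to the quotient is genuinely a map of complexes (not merely up to homotopy); this follows by the same strict simplicial bookkeeping used by Deligne in \cite{HIII} and is compatible with the MHC structure used earlier in Section~\ref{mc}.
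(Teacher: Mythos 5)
There is a genuine gap at the heart of your argument for part (i). You assert that the residue pairing "vanishes identically for \emph{all} such $\alpha_J$ against every generator of $IC^*\LL$," and conclude that the entire simplicial subcomplex $s(IC^*\LL_{\vert Y_\b})$ lies in $\ke\,\phi$. This is false, and the paper says so explicitly in the line preceding the proposition: "The map $\phi$ does \emph{not} vanish on the subcomplex $s(IC^*\LL_{\vert Y_\b})[-1]$. The kernel of $\phi$ \dots is a sub-complex of $s(IC^*\LL_{\vert Y_\b})[-1]$." The quasi-isomorphism $\ke\,\phi(\b) \overset{\sim}{\to} s(IC^*\LL_{\vert Y_\b})$ asserted in part (i) is an \emph{inclusion that is a quasi-isomorphism but not an equality}, and this is precisely the nontrivial content of the proposition.

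The source of the error is a misuse of the infinitesimal isometry property. The relation $S(N_J v, v') = (-1)^{\vert J\vert} S(v, N_J v')$ does not make the pairing between $N_Q L$ and $N_P L$ vanish; it says the pairing of $\beta = \widetilde v\, dy_Q/y_Q$ (with $v \in N_Q L$) against $\alpha = \widetilde{v'}\, dy_P/y_P$ equals $S(v',v)$, which is zero for all $v \in N_Q L$ if and only if $v' \in \ke\,N_Q$. Since $\alpha$ is required to be a section of $IC^*\LL_{\vert Y_J}$, one also has $v' \in N_P L$, so the kernel condition is $v' \in N_P L \cap \ke\,N_Q$ — exactly the subspace appearing in Definition~\ref{ker}, which in general is strictly smaller than $N_P L$ (already in one variable, the degree-zero kernel term is $\ke\,N$, not $L$). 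The proof therefore still requires two things you do not supply: the identification of $\ke\,\phi(\b)$ with the complex built from $\ke\,N_{J - (J\cap(K-K_i))}\cap N_{K-K_i}L$, and a separate verification that this strict inclusion into $s(IC^*\LL_{\vert Y_\b})$ is a quasi-isomorphism. The same confusion propagates into part (ii): you treat $\ke\,\phi(\b)\simeq IC\LL(\b)$ as an equality, so the quotient $IC_+\LL(\b)/\ke\,\phi(\b)[-1]$ is silently replaced by $IC^*\LL$, whereas the proposition's statement is that this quotient is a \emph{larger} complex quasi-isomorphic to $IC^*\LL$ on which $\phi$ is injective — that is what makes it an auto-duality and not a tautology. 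Your final paragraph about compatibility with $W$ and $F$ also leans on the same unjustified equality (and invokes $P^K_k(N_{K'_Z}L)$, which belongs to the logarithmic-$Z$ construction, not to the unconditional case treated here).
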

The complex $\ke \,\phi (\b) [-1]$ is denoted by $\ke \, p$ in subsection \ref{tube}.  
 
With the notations of  subsection \ref{note}, let  $Y_J $ denote a component space of  the simplicial space $ Y^+_\b$, $x \in Y^*_M$ and $L$ the fiber of the local system. To each subset $K_i \subset K \subset M$ and to each subspace 
$ V := (\ke \, N_{J  - J \cap K}) \cap  (N_{K-K_i}L)$,  we associate the submodule 
 $A \widetilde V := A (\widetilde {(\ke \, N_{J  - J \cap K}) \cap ( N_{K-K_i}L})) \subset \LL_x$.
 
\begin{defn}[The Kernel complex  $\ke \,\phi (\b)$]\label{ker}

\

 The simplicial sub-complex  of modules  $(\ke \,\phi)^*_J \subset IC^*\LL_{J,x} $
consist of fibers of analytic sheaves. It is defined  as  the subcomplex  generated as an $ (\Omega^*_{Y_J})_x$-algebra by the sub-modules
\begin{equation*}
 \oplus_ {|K|=k} \biggl(  \sum_{K_i \subset K} y_{K_i} A ((\widetilde {\ke \, N_{J  - (J \cap (K-K_i))} \cap  N_{K-K_i}L}) \frac{dy_K}{y_K}  \biggr) 
\end{equation*}
 $\ke \,\phi(\b) : = s((\ke \,\phi)^*_J )_{\emptyset \not= J \subset M}$
  is the sum. 
 \end{defn}
 We explain, how $\ke \, N_{J  - (J \cap (K-K_i))} \cap  N_{K-K_i}L$ appears in the description of the kernel of $\phi$.
Let $ K := \{i_1, \cdots, i_k \}$,  $Y_K := \cap_{i \in K}Y_i $  with generic point $y_K$,   and $dy_K/y_K :=  d y_{i_1}/y_{i_1} \wedge  \cdots \wedge d y_{i_k}/y_{i_k}  $. 
 For  $\alpha \in i_{Y_K}^*\Omega^p\LL$ and $ \beta \in \Omega^q\LL$, the pairing at a point  $x \in  Y_K$:
 
  $(\alpha, \beta):= \phi_{i_1, \ldots, i_k} (\alpha \wedge \beta)  \in H^k_{y_K}(\Omega^{p+q}_X) \subset  K^\b_X (\Omega_X^*)_x$ vanish unless $dy_K/y_K$ divides $\alpha \wedge \beta$ (as its value is considered modulo the ideal generated by $ \{y_{i_1}, \cdots, y_{i_k} \})$. 
  
  For $\beta =  \widetilde v dy_Q/y_Q$ for $Q \subset K$, let $P := K - Q$ and $\alpha  =  \widetilde v' dy_P/y_P$, the pairing  $(\alpha, \beta) = 0$ vanish
 if and only if  $S(v',v) = 0$. 
 
{\it  In particular  $(\alpha, \beta) = 0$ for all $v \in N_Q L$ if and only $v' \in \ke N_Q$.}

 Respectively, to describe the dual filtrations we remark that for $v \in W_{-r-1} L$,  $S(v',v) = 0$ vanish   if and only if  $v' \in (W_r)^* L$.

\subsubsection{The filtration $N!W$  dual to $N*W$}
  The dual filtrations may be constructed directly. Let $N$ be a nilpotent endomorphism respecting $(L, W)$ a filtered space with relative monodromy filtration $M$. We introduce the filtration  (\cite[3.4.2]{K})
  \begin{equation*}
 (N!W)_k := W_{k-1} + M_k(N,W) \cap N^{-1}W_{k-1}, 
 \end{equation*}
with  induced morphisms $I:  W_{k-1}  \to  (N!W)_k$ and  $N: (N!W)_k  \to  W_{k-1}$
 satisfying the relations  $N \circ I = N $ and $I \circ N = N$.  

Then $N!W$ is dual to the filtration $N*W$ on $L$   (formula \ref{*}),
with    induced morphisms $ W_{k+1} \xrightarrow{N} (N*W)_k \xrightarrow{I} W_{k+1}$ satisfying  $I \circ N = N$ and $I \circ N = N$. 

 Let  $(L^*, N^*)$ denote the dual of $(L,N)$ and  $W^*$  the dual filtration to $W$  defined  by: $W^*_k := Hom ( L/W_{-k-1}, \Q)$
 such that $Gr^{W^*}_k L^* \overset{\sim}{\longrightarrow}   (Gr^W_{-k} L)^*$. 
 There is  a duality relation between $(N^*!W^*) $ and  $(N*W)^*$, in the following sense:
\begin{lem}\label{d}  Let $W^*$ denote the filtration on the vector space
$L^*:= Hom(L, \Q)$ dual to a filtration $W$ on $L$; then for all
$a$,
\begin{equation*}
 (N^*!W^*)_a = (N*W)^*_a \subset L^*.
 \end{equation*}
\end{lem}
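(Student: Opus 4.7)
The plan is a direct unwinding of the definitions of the star and bang operations, together with two standard facts: duality exchanges sums and intersections of subspaces (in finite dimensions), and the relative monodromy filtration is itself self-dual in the sense that $M(N^{*},W^{*})=M(N,W)^{*}$.

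First I would recall the sign convention: for an increasing filtration $W$ on $L$, the dual increasing filtration on $L^{*}$ is $W^{*}_{k}=\operatorname{ann}(W_{-k-1})$, so $(N*W)^{*}_{a}=\operatorname{ann}\bigl((N*W)_{-a-1}\bigr)$. Unwinding the definition $(N*W)_{-a-1}=NW_{-a}+M_{-a-1}(N,W)\cap W_{-a}$ and using that the annihilator of a sum is the intersection of annihilators, the first step is to compute
\[
(N*W)^{*}_{a}=\operatorname{ann}(NW_{-a})\;\cap\;\operatorname{ann}\bigl(M_{-a-1}(N,W)\cap W_{-a}\bigr).
\]
For the first factor, $f\in\operatorname{ann}(NW_{-a})$ iff $N^{*}f\in\operatorname{ann}(W_{-a})=W^{*}_{a-1}$, so this factor is $(N^{*})^{-1}W^{*}_{a-1}$. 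For the second, I would use that dually $\operatorname{ann}(A\cap B)=\operatorname{ann}(A)+\operatorname{ann}(B)$ in finite dimensions together with the identity $M(N^{*},W^{*})=M(N,W)^{*}$ (which follows from the characterizing axioms of the relative monodromy filtration by transposition, or equivalently from its uniqueness) to get $\operatorname{ann}\bigl(M_{-a-1}(N,W)\cap W_{-a}\bigr)=M_{a}(N^{*},W^{*})+W^{*}_{a-1}$. Substituting yields
\[
(N*W)^{*}_{a}=(N^{*})^{-1}W^{*}_{a-1}\;\cap\;\bigl(M_{a}(N^{*},W^{*})+W^{*}_{a-1}\bigr).
\]

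The second step is to apply the modular (Dedekind) law in the lattice of subspaces. Since $N$ preserves $W$, the transpose $N^{*}$ preserves $W^{*}$, so in particular $W^{*}_{a-1}\subset (N^{*})^{-1}W^{*}_{a-1}$. The modular law then gives
\[
(N^{*})^{-1}W^{*}_{a-1}\cap\bigl(M_{a}(N^{*},W^{*})+W^{*}_{a-1}\bigr)=W^{*}_{a-1}+M_{a}(N^{*},W^{*})\cap(N^{*})^{-1}W^{*}_{a-1},
\]
and the right-hand side is exactly the definition of $(N^{*}!W^{*})_{a}$.

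The only genuinely non-formal input is the identification $M(N^{*},W^{*})=M(N,W)^{*}$, and that is the step I would treat most carefully: verify that the pair $(M(N,W)^{*},N^{*})$ satisfies the defining axioms of a relative monodromy filtration, namely $N^{*}M(N,W)^{*}_{k}\subset M(N,W)^{*}_{k-2}$ (immediate by transposing $NM_{k}\subset M_{k-2}$) and that $(N^{*})^{k}$ induces an isomorphism $\operatorname{Gr}^{M(N,W)^{*}}_{w+k}\operatorname{Gr}^{W^{*}}_{w}\xrightarrow{\sim}\operatorname{Gr}^{M(N,W)^{*}}_{w-k}\operatorname{Gr}^{W^{*}}_{w}$ (obtained by transposing the corresponding isomorphism for $N$ on the graded pieces of $W$). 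The uniqueness of the relative monodromy filtration then forces the identification. Everything else is lattice bookkeeping, so this duality of monodromy filtrations is the only potential obstacle, but it is standard.
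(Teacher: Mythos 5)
Your argument is correct, and in fact the paper states Lemma \ref{d} without proof, so you have supplied a complete argument where none was given. The two ingredients you isolate are exactly the right ones: the elementary facts that annihilators turn sums into intersections and intersections into sums (in finite dimensions), together with the duality $M(N^{*},W^{*})=M(N,W)^{*}$ of relative monodromy filtrations, which you correctly reduce to the uniqueness characterization by transposing the defining axioms. The final step, applying the modular law with $W^{*}_{a-1}\subset (N^{*})^{-1}W^{*}_{a-1}$, is the clean way to convert the intersection description of $(N*W)^{*}_{a}$ into the sum description defining $(N^{*}!W^{*})_{a}$. One small remark: you quietly used the second form $(N*W)_{k}=NW_{k+1}+M_{k}(N,W)\cap W_{k+1}$ from the paper's display (\ref{*}); this is the right choice because it matches the weight index on $W$ to the one appearing in $\operatorname{ann}(W_{-a})=W^{*}_{a-1}$, and using the first form $NW_{k+1}+M_{k}\cap W_{k}$ would force an extra (harmless but distracting) comparison of $\operatorname{ann}(W_{-a-1})=W^{*}_{a}$ against $W^{*}_{a-1}$ before the modular law can be applied.
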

\begin{rem} We may develop this construction, in parallel to the constructions corresponding to $N*W$, and define the dual  weight  filtration  on  $\ke \,\phi (\b)$.
\end{rem}
\subsubsection{Simplicial coverings of  $Z \subset Y$.}

We realize the cone construction of the boundary of the tubular neighborhood of $Z$.

   To dualize the filtration $W$ on $IC^*\LL(Log Z) $,
we use  the strict  simplicial covering $ \pi_+: Z^+_\b \to X$   (resp. $ \pi: Z_\b \to Z$) of $X$,  defined by the NCD $Z$, with index subsets $A \subset I_Z$ including $Z_\emptyset:= X$ (resp. excluding  $Z_\emptyset$). It is naturally embedded in $Y^+_\b$ (resp. $Y_\b$). For any complex $\KK$ on $X$, the sum    $\pi_{+ *} \pi_+^* \KK := s(\pi_+^* \KK) \overset{\sim}{\longrightarrow}  j_{Z !} (\KK_{\vert X-Z})$. If $Z_\emptyset:= X$ is excluded, then    $ i_Z^*\KK \overset{\sim}{\longrightarrow}  \pi_* \pi^* \KK$.
\subsubsection{Simplicial complex}

  The Intersection complex $IC^*\LL$  lifts  to a simplicial complex $ IC^*\LL_{\vert Z^+_\b} := (IC^*\LL_{\vert Z_J})_{J\subset I_Z} $ on $Z^+_\b$. The simple complex 
$ IC_+\LL (Z,\b)$ is defined by summing $IC^* \LL_{\vert Z_J}$ over $Z_J$ for $J \subset I_Z$  including $J = \emptyset$.
We write $IC \LL (Z, \b)$ when $Z_\emptyset:= X$ is  excluded from the sum:
  \begin{equation*}
   \begin{split}
IC_+\LL (Z,\b):= & s(IC^*\LL_{\vert Z_J})_{J\subset I_Z} \overset{\sim}{\longrightarrow}   j_{Z !}(( j_{!*}\LL)_{\vert X-Z}), \\
 IC \LL (Z,\b):= & s(IC^* \LL_{\vert Z_J})_{J \not= \emptyset \subset I_Z}  \overset{\sim}{\longrightarrow}  i_Z^*( j_{!*}\LL).
 \end{split}
 \end{equation*}
 With the notations of section \ref{auto}
the restriction of $\phi$ (formula \ref{formula}) on components of  $Z_\b$ 
 \begin{equation*}
IC_+\LL (Z,\b)   \xrightarrow{\phi}   \HH om^\b_{\OO_X}  (IC^*\LL(Log Z), K_X^\b)  
\end{equation*}
induce the duality isomorphism
$ j_{Z !}(( j_{!*}\LL)_{\vert X-Z}) \overset{\sim}{\longrightarrow}   D Rj_{Z*} (( j_{!*}\LL)_{\vert X-Z})$.

The kernel of $\phi$ (prop. \ref{App}), is a simplicial sub-complex $(\ke \,\phi (\b)_J^*)_{J \subset I_Z, J \not= \emptyset} $. Its sum $\ke \,\phi (\b) (Z, \b):= s(\ke \,\phi (\b)_J^*)_{J \subset I_Z, J \not= \emptyset} $ is a sub-complex of 
$ IC \LL(Z,\b) $.

It is embedded with a shift in the degree,  as a subcomplex
$\ke \,\phi (\b) (Z, \b) [-1]\subset IC_+\LL (Z,\b)$ equal to $\ke \, \phi $ on $Z^+_\b$ .
There exist quasi-isomorphisms
  $$ \ke \,\phi (\b) (Z, \b)  \overset{\sim}{\longrightarrow}  IC \LL(Z,\b) \overset{\sim}{\longrightarrow}  i_Z^*  j_{! *}\LL .$$
 The morphism $\phi$ induces a duality isomorphism 
    \begin{equation*}
   IC_+\LL (Z,\b) [-1] \overset{\sim}{\longrightarrow}  D_X (IC^*\LL(Log Z) /IC^*\LL)
  \end{equation*} 

\subsubsection{ $i_Y^* IC^*\LL$ and $j_! \LL$}  
 
The  filtration $W$ on $ \ke \,\phi (\b) (Z, \b)$  is defined by duality.

   The complex $IC_+ \LL(Z,\b) \simeq j_{Z !} j^*_Z  \ilm$ has a structure of mixed Hodge complex with the following weight filtration:
\begin{equation}\label{0}
W_r IC_+ \LL(Z, \b) := W_r ( IC^* \LL)\oplus (W_{r+1} \ke \,\phi (\b) (Z, \b)) [-1]
\end{equation}
 It defines a MHS on  $\H^*(X, j_{Z !} j^*_Z  \ilm)$
dual to the MHS on $\H^*(X, Rj_{Z *} j^*_Z  \ilm)$.

 There exist filtered  duality isomorphisms 
 \begin{equation*}
 (i_Z^* j_{!*}\LL, W)  \overset{\sim}{\longrightarrow}    D_X(Ri_Z^! j_{!*}\LL, W) \overset{\sim}{\longrightarrow}  D_X(IC^*\LL(Log Z)/IC^*\LL)[-1], W).
  \end{equation*}
  induced by the morphism $\phi:
 \ke \,\phi (\b) (Z, \b) [-1] \overset{\sim}{\longrightarrow}  D_X ((IC^*\LL(Log Z) /IC^*\LL)$,
  
\n and a filtered  duality isomorphisms  $(j_{Z !}j_Z^*\ilm, W) \overset{\sim}{\longrightarrow}   D_X (IC^* \LL(Log Z),W) $.

\subsubsection{Cohomology with support $R i_Z^! \ilm[1]$}

  The complex $IC^*\LL(Log Z)$  lifts  to a simplicial complex on $Z_\b$ 
  
 $IC^*\LL(Log Z)_{\vert Z_J}:= i^*_{Z_J}IC^*\LL(Log Z)$. The simple  associated complex is the sum 
    \begin{equation*}
 IC^*\LL (Log Z)( \b):=  s(IC^*\LL_{\vert Z_J})_{J\subset I_Z, J \not= \emptyset} \overset{\sim}{\longrightarrow}   i_Z^*R j_{Z *}( j_Z^*  j_{!*}\LL).
 \end{equation*}
We have an embedding  $IC^*\LL (Z,\b) \subset IC^*\LL (Log Z)( \b)$ with quotient complex: 

$IC^*\LL (Log Z)( \b)/ IC^*\LL (Z, \b) \simeq i_Z^*(IC^*\LL (Log Z)/ IC^*\LL) \simeq R i_Z^! \ilm[1]$.

\subsubsection
{The complex $i_{Z * }i_Z^* Rj_{Z*} (( j_{!*}\LL)_{\vert X-Z})$ and the cohomology of the boundary of a tubular neighborhood of $Z$ }
 Let $B_Z$ denotes a small neighborhood of $Z$ in $X$
 $$\H^i( Z, i_Z^* Rj_{Z*} (( j_{!*}\LL)_{\vert X-Z})) \simeq  \H^i(B_Z-Z, \ilm) $$
 and we have an exact sequence 
\begin{equation*}
\cdots \to \H^i(X-Z, j_{! *}\LL) \to \H^i(B_Z-Z, \ilm) 
\to \H^{i+1}(X, j_! (j_{! *}\LL_{\vert X-Z})) \to \cdots
\end{equation*}
\begin{defn}[$i_{Z * }i_Z^* Rj_{Z*} (( j_{!*}\LL)_{\vert X-Z})$]\label{dual}
 Let  $p: j_{Z!} (j_{! *}\LL)_{\vert X-Z} =  IC_+\LL(Z, \b)  \to IC_\emptyset^*\LL = IC^*\LL$ denote the   projection  on the term with index $\emptyset $,  and $  i : IC^*\LL \to  IC^*\LL(Log Z) $   the natural embedding, the composition morphism $\widetilde I:= i\circ p $ defines a morphism 
\begin{equation*}
\widetilde I :  IC_+\LL(Z, \b) \to IC^*\LL(Log Z), \quad (\widetilde I:  j_{Z!} j_Z^*j_{! *}\LL \to Rj_{Z*}j_Z^* j_{! *}\LL)
\end{equation*}
The cone $C(\widetilde I)$  over $\widetilde I$ is isomorphic to $ i_{Z * }i_Z^* Rj_{Z*}(j_{! *}\LL)_{\vert X-Z}$.

The weight filtration $W$ is deduced  from the weights on $IC^*\LL( Log Z) $ 

\n and $ IC_+\LL(Z, \b)$: $W_r C(\widetilde I):= W_{r-1}(IC_+\LL(Z, \b)) [1] \oplus W_r(IC^*\LL(Log Z))$. 
\end{defn}
\begin{rem}
i) The cone depends only on the neighborhood of $Z$ and not on $X$ as we have a triangle   $  i_{Z * }   \ke \,\phi (\b) (Z, \b) \to C(\widetilde I) \to (IC^*\LL(Log Z)/IC^*\LL)$.

 ii) We have a duality isomorphism :
 
  $D(Gr^W_r C(\widetilde I,\LL)\simeq Gr^W_{-r} D C(\widetilde I,\LL)\simeq (Gr^W_{-r+1}  C(\widetilde I,D\LL))[-1]$,
since 

\n $D i_{Z * }i_Z^* Rj_{Z*} (( j_{!*}\LL)_{\vert X-Z})\simeq i_{Z * } Ri_Z^! Rj_{Z !} (( j_{!*} D\LL)_{\vert X-Z}) \simeq $
$i_{Z * }i_Z^* Rj_{Z*} (( j_{!*}D \LL)_{\vert X-Z})[1]$,
hence  $\H^j(Z,  C(\widetilde I))^* \simeq \H^{-j-1}(Z,  C(\widetilde I))$.

iii) If $\LL $ is a polarized VHS of weight $a$, we have 

$Gr^W_{a+i} C(\widetilde I) \simeq  Gr^W_{a+i} IC^*\LL(Log Z) $ for $i >0$,
and 
$Gr^W_{a+i} C(\widetilde I)  \simeq  Gr^W_{a+i} \ke \,\phi (\b) (Z, \b)$ for $i \leq 0$.
\end{rem}

\end{document}